\documentclass[a4paper,10pt]{article}

\usepackage[english]{babel}
\usepackage[utf8]{inputenc}
\usepackage[T1]{fontenc}
\usepackage{amsmath,mathtools}
\usepackage{amsfonts,amssymb}
\usepackage{xspace,enumitem}
\usepackage{amsthm,stmaryrd}
\usepackage[margin=1in,headheight=13.6pt]{geometry}
 \setlength{\oddsidemargin}{4pt}
 \setlength{\evensidemargin}{4pt}
\RequirePackage[scaled=.92]{helvet}

\newenvironment{equivenum}%
	{\begin{enumerate}[label=(\roman{*})]}{\end{enumerate}}
\newenvironment{thmenum}%
	{\begin{enumerate}[label={\rm (\roman{*})}]}{\end{enumerate}}

\theoremstyle{plain}
  \newtheorem{thm}{Theorem}
   \newtheorem*{thm*}{Theorem}
  \newtheorem{defn}[thm]{Definition}
  \newtheorem{prop}[thm]{Proposition}
  \newtheorem{lem}[thm]{Lemma}
  \newtheorem{cor}[thm]{Corollary}
 \newtheorem{rem}[thm]{Remark}
  \newtheorem*{rem*}{Remark}
  
 \theoremstyle{definition}

  \newtheorem*{defn*}{Definition}


\numberwithin{equation}{section}
\numberwithin{thm}{section}

\newcommand{\Op}{\ensuremath{P}\xspace}
\renewcommand{\Im}{\mathrm{Im}}
\renewcommand{\Re}{\mathrm{Re}}

\newcommand{\SpecDom}{\ensuremath{{\C}^+}\xspace}

\newcommand{\Fc}{\ensuremath{\chi_1}}
\newcommand{\refl}{\ensuremath{\mathcal{S}_{rec}}}

\newcommand{\Gap}[1][n]{\ensuremath{G_{#1}}}

\newcommand{\Grad}[2]{\ensuremath{\left\llbracket #1  \right\rrbracket_{q,#2} }\xspace}

\newcommand{\Hp}[1][1]{\ensuremath{H_c^{#1}}\xspace}
\newcommand{\Hr}[1][1]{\ensuremath{H_r^{#1}}\xspace}


\newcommand{\set}[2]{\ensuremath{\{\; #1 \;\colon \; #2\;\}}}

\DeclareMathOperator{\Res}{Res}

\let\existsorig\exists
\renewcommand{\exists}{\ \existsorig\ }
\let\forallorig\forall
\renewcommand{\forall}{\ \forallorig\ }

\newcommand{\pfrac}[2]{\ensuremath{\left(\frac{#1}{#2}\right)}}

\renewcommand{\Re}{\mathfrak{R}}
\renewcommand{\Im}{\mathfrak{I}}


\DeclarePairedDelimiter{\norm}{\lVert}{\rVert}

\newcommand{\ii}{\mathrm{i}}

\DeclareMathOperator{\dist}{dist}

\DeclareMathOperator\trace{tr}%


\newcommand{\C}{\mathbb{C}}%
\newcommand{\N}{\mathbb{N}}%
\newcommand{\R}{\mathbb{R}}%
\newcommand{\Z}{\mathbb{Z}}%
\newcommand{\T}{\mathbb{T}}%

\newcommand{\gm}[1][n]{\ensuremath{\gamma_{#1}}\xspace}
\newcommand{\Gm}[1][n]{{\Gamma_{#1}}}
\newcommand{\dl}{\delta}     \newcommand{\Dl}{\Delta}
\newcommand{\lm}{\lambda}    
\newcommand{\ph}{\varphi}    

\newcommand{\upd}{\mathinner{\mathrm{d}\kern.04em\!}}


\newcommand{\dmu}{\upd \mu}

\newcommand{\dlm}{\upd{\lm}}



\newcommand{\ds}{\upd{s}}
\newcommand{\dt}{\upd{t}}

\newcommand{\dx}{\upd{x}}

\newcommand{\ddh}[1]{\upd #1 [\mathring{v}]}

         \title{On nomalized differentials on spectral curves associated to the sinh-Gordon equation}
         \author{
         Thomas Kappeler\footnote{Supported in part by the Swiss National Science Foundation.} ,
	Yannick Widmer\footnote{Supported in part by the Swiss National Science Foundation.}
         }
         \date{\today}

         \makeatletter

        \makeatother

        \begin{document}
\graphicspath{{images/}} 

        \maketitle

     \noindent
{\bf Abstract.} 
The spectral curve associated with the sinh-Gordon equation on the torus 
is defined in terms of the spectrum of the Lax operator appearing in the
Lax pair formulation of the equation. If the spectrum is simple, it is
an open Riemann surface of infinite genus. In this paper we construct
normalized differentials and derive estimates for the location of their zeroes.

\medskip

\noindent
{\em Keywords:} Spectral curve, normalized differentials, sine-Gordon equation, sinh-Gordon equation.

\medskip

\noindent
{\em MSC 2010:} 37K10, 35Q55


\tableofcontents

\section{Introduction}\label{introduction}
We consider the complex sine Gordon equation,
 \begin{equation}\label{eq:sineGordonComplex}
 u_{tt}-u_{xx} = -\sinh u , \quad x\in\T =\R/\Z, \; t\in\R \, ,
 \end{equation}
 where $u$ is assumed to be complex valued. In case $u$ is real valued, \eqref{eq:sineGordonComplex}  is referred to as
 the sinh Gordon equation and in case $u$ is purely imaginary, as real sine Gordon equation.
 Equation  \eqref{eq:sineGordonComplex}  is  a nonlinear perturbation of the (complex) Klein-Gordon equation $u_{tt} -u_{xx} = mu$ (with $m=-1$).
 It has wide ranging applications in geometry and quantum mechanics and has been extensively studied, although most of the work has been done
 for its version in light cone coordinates and hence does not apply to the periodic in space setup of \eqref{eq:sineGordonComplex}.
 According to \cite{FaddeevTakhtajanZakharov1974Sine} (cf also \cite{mckean1981sineLight}), it admits a Lax pair. To describe it note that
\eqref{eq:sineGordonComplex} can be written as a system for $(u_1, u_2) := (u, u_t)$
\begin{equation} \label{eq:HamiltonianEquationpv}
\begin{pmatrix} u_{1t}\\u_{2t}\end{pmatrix} = \begin{pmatrix} u_2\\ u_{1xx} - \sinh u_1
\end{pmatrix} \, .
\end{equation}
In order to work with function spaces consisting of pairs of functions of equal regularity  we introduce
\begin{equation}\label{eq:kap1.4bis}
 (q, p) : = (u_1, - \Op^{-1} u_2) \in \Hp
 \end{equation}
where for any $s$, $\Hp[s]$ denotes the Sobolev space
$H^s(\mathbb T, \mathbb C) \times H^{s}(\mathbb T, \mathbb C)$ $(\equiv H^s(\mathbb T, \mathbb C^2) )$ and
$\Op$ the Fourier multiplier operator $\Op:=\sqrt{1-\partial_x^2}$.
When expressed in these coordinates, equation \eqref{eq:HamiltonianEquationpv} becomes
\[
\begin{pmatrix} q_t\\ p_t \end{pmatrix} = \begin{pmatrix} -\Op p \\ \Op q + \Op^{-1}(\sinh(q)-q)\end{pmatrix}.
\]
For any $v = (q,p)\in\Hp$, define the following differential operators:
\begin{equation}\label{eq:Qoperator}
Q(v) = Q_1\partial_x + Q_0(v), \quad K(v) = K_1\partial_x + K_0(v)
\end{equation}
where the coefficients $Q_1,Q_0,K_1,K_0$ are the $4\times 4$ matrices given by
\begin{align*}
\begin{split}
Q_1 =\begin{pmatrix} -J&\\&\end{pmatrix}, \quad Q_0(v)&=\begin{pmatrix} A(v)&B(v)\\B(v)&\end{pmatrix},\\
K_1 = \begin{pmatrix}-I&\\&I\end{pmatrix}, \quad K_0(v)&=\begin{pmatrix} & -2JB(v)\\-2B(v)J&                      \end{pmatrix},\end{split}
 \end{align*}
with $I, J, R, Z, A(v), B(v)$ denoting the $2\times2$ matrices
\begin{align}
I = \begin{pmatrix}1\\&1\end{pmatrix},
\quad J = \begin{pmatrix}&1\\-1\end{pmatrix},
\quad R=\begin{pmatrix}\ii& \\ &-\ii\end{pmatrix},
\quad Z=\begin{pmatrix}&1 \\1 &\end{pmatrix}
\end{align}
\begin{align}
A(v):=-\frac14 (\Op p+ q_x)Z,\quad
B(v) := \frac14\begin{pmatrix} \exp(-q/2)&\\&\exp(q/2)\end{pmatrix}. \label{eq:QoperatorAandB}
\end{align}
Here and in the sequel, we suppress matrix coefficients which vanish, so e.g. \begin{small}$\begin{pmatrix}&1\\-1\end{pmatrix}$\end{small} stands for \begin{small}$\begin{pmatrix} 0 & 1 \\-1 & 0\end{pmatrix}$\end{small}.
One verifies that $t\mapsto v(t)$ is a solution of \eqref{eq:sineGordonComplex} iff
$t\mapsto ( Q(v(t)), K(v(t)) )$ satisfies
\begin{equation} \label{eq:laxPairCondition}
Q_t = [K, Q].
\end{equation}
The pair of operators $Q, K$ is referred to as Lax pair for the sine-Gordon equation and $Q$ as the corresponding Lax operator.
Note that \eqref{eq:laxPairCondition} leads to a family of first integrals of \eqref{eq:sineGordonComplex}: expressed in a somewhat informal way
(i.e., without addressing issues of regularity) it follows from \eqref{eq:laxPairCondition} that for any solution $t\mapsto v(t)$ of \eqref{eq:sineGordonComplex},
the periodic spectrum $\mathrm{spec}_{per} Q(v(t))$ of the operator $Q(v(t))$ is independent of $t$.
Here for any $v \in \Hp,$ the periodic spectrum of $Q(v)$ is the spectrum of the operator $Q(v)$ on  $L^2(\mathbb R / (2 \mathbb Z),\C^2)$,
with domain given by the Sobolev space $H^1(\mathbb R / (2 \mathbb Z),\C^2)$.
Since for any $v \in \Hp,$
$\mathrm{spec}_{per} Q(v)$ is discrete this is saying that the periodic eigenvalues of $Q(v)$ are first integrals for the sine-Gordon equation.
We remark that the periodic spectrum of $Q(v)$ is the union of the spectrum of $Q(v)$, when considered as an operator on 
$L^2([0, 1],\C^2)$ with periodic boundary conditions ($F(1) = F(0)$) and the one of $Q(v)$,  when considered as an operator on 
$L^2([0, 1],\C^2)$ with antiperiodic  boundary conditions ($F(1)= - F(0)$).

The aim of this paper is to construct for $v$ near \Hr normalized differentials on the spectral curve $\Sigma_v$, associated with the periodic spectrum of $Q(v)$,
and study their dependence on $v$. 
Besides being of interest in their own right, our motivation to study such differentials stems from the fact that they are a key ingredient for the
construction of normal form coordinates of the sine-Gordon equation. 


To define the spectral curve and state our main results, we first need to review spectral properties of the operator $Q(v)$
-- see \cite{LOperator2018} for detailed account.
Introduce the domains $D_0:=\set{z\in\C}{|z-\frac14|< \frac1{4\pi} }$ and
 \[
 D_n := \set{\lm\in\C}{|\lm-n\pi|<\pi/3}, \quad
 D_{-n}:=\set{\lm\in\C}{\frac{1}{16\lm}\in D_{n}} \, , \qquad \forall n\geq 1\, .
 \]
 Furthermore, let $B_0:=\set{\lm\in \C}{|\lm|\leq \pi/2}$,
 \begin{equation}\label{definition Bn}
 B_n := \set{\lm\in\C}{|\lm|< n\pi+\pi/2}, \quad B_{-n} := \set{\lm\in\C}{ |\lm| \leq \frac{1}{16(n\pi+\pi/2)} },  \qquad \forall  n \ge 1,
\end{equation}
 and denote by $A_n$, $n \ge 1$, the open annulus  $A_n := B_n\setminus B_{-n}.$
\noindent  By the Counting Lemmas (cf. \cite[Lemma 3.4 and Lemma 3.11]{LOperator2018}), the following holds:
any potential in $\Hp$ admits a neighborhood $W$ in $\Hp$ and an integer $N \ge 1$ so that for any $v \in W$
and any $n > N$,
the operator $Q(v)$ has precisely two periodic eigenvalues
in each  of the domains  $D_n$, $-D_n$, $D_{-n}$, and $ - D_{-n}$
and exactly $4+8N$ periodic
eigenvalues in the annulus $A_{N}$, counted with their multiplicities.
There are no further periodic
eigenvalues.
We denote the periodic eigenvalues in $D_n$, $|n| > N$, by $\lambda_n^-, \lambda_n^+$, listed as $\lambda_n^-  \preceq \lambda_n^+ $ , 
where $\preceq$ is an order on $\C^+$ defined as follows:  for any elements $a, b \in \C^+$
one has $a  \preceq  b$ if
$$
\big[ |a| < |b| \big] \quad \text{ or } \qquad \big[  |a| = |b| \,\,\, {\text{and}} \,\,\, \Im a \le \Im b \big] \, .
$$
By symmetry (cf. \cite[Theorem 3.9]{LOperator2018}), for any $|n| > N$, the two periodic eigenvalues in $-D_n$
are given by  $- \lm^+_n$ and $- \lm^-_n$.
As a consequence, we only need to consider the periodic
eigenvalues in
$$
\SpecDom :=\set{\lm\in\C}{\Re\lm >0} \cup \set{\lm\in\C}{\Im \lm>0 \text{ and }\Re\lm =0} \, .
$$
Note that besides $\lambda_n^- , \lambda_n^+$, $| n | > N$, there are $2+4N$ periodic eigenvalues of $Q(v)$ in  $\SpecDom$.
When listed according to the order $\preceq$, the periodic eigenvalues of  $Q(v)$,
contained in $\SpecDom$,  form a bi-infinite sequence,
 \begin{equation}\label{eq:listingPeriodic}
  0 \prec \cdots  \preceq \lm_{-1}^-  \preceq \lm_{-1}^+  \preceq \lm_0^-  \preceq \lm_0^+  \preceq \lm_1^- \preceq \lm_1^+ \preceq \cdots \, .
 \end{equation}
Note that for $v\in H^s_r = H^s(\T,\R) \times H^s(\T,\R)$ $(\equiv H^s(\T,\R^2))$,  the operator $Q(v)$ is self-adjoint and hence the periodic spectrum real. The  above sequence of
eigenvalues satisfies
\begin{equation}\label{real ev}
  0 < \cdots < \lm_{-1}^- \leq \lm_{-1}^+  <  \lm_0^- \leq \lm_0^+ <  \lm_1^-\leq \lm_1^+  <  \cdots .
\end{equation}
By a slight abuse of terminology, for any $v \in \Hp$ and $n \in \Z$, we refer to the complex interval
$$
G_n := [\lambda_n^- , \lambda_n^+] = \{ (1 - t)\lambda_n^- + t \lambda^+_n  \ : \ 0 \le t \le 1 \}
$$
as the nth gap and to the difference $\gamma_n := \lambda^+_n - \lambda^-_n$ as the nth gap length.

Let $v$ be an arbitrary element in $H^1_c$ and $F$ a function in $H^1_{loc}(\R,\C^4)$.
According to  \cite[Section 2]{LOperator2018}  $ Q(v) F = 0$ if and only if $F=0$ and that  for any given $\lm\in \C^* = \C\setminus\{0\}$,  $F$ is a solution of
 \begin{equation} \label{eq:Qeigenvalues}
 QF=\lm F
 \end{equation}
 if and only if $F = (f, \lm^{-1}B f)$ and $f$ satisfies the following first order ODE
\begin{equation} \label{eq:reducedEquation}
-J\partial_x f+(A+B^{2}/\lm)f=\lm f.
 \end{equation}
 (Here and in the sequel we often write a column vector such as $F$ or $f$ as a row vector
and write $\partial_x f$ for $f_x$.)
Let $M = M(x,\lm,v)\in \C^{2\times2}$ be the fundamental solution
for equation \eqref{eq:reducedEquation}, meaning that 
 \begin{equation} \label{eq:dxM}
 \partial_x M =J\left(\lm - A - B^2/\lm \right) M,  \quad M(0,\lm,v) = I \, .
 \end{equation}
 By \cite[Lemma 2.14]{LOperator2018},  the discriminant of $Q(v)$,
 \[
   \Dl(\lm,v) : = \frac12 \trace M(1,\lm,v)\, ,
 \]
is analytic in $\lambda \in \C^*$. As discussed in Section \ref{setup}, the zeroes of $\Dl^2(\lm,v) -1$ are in one-to-one correspondance with the periodic eigenvalues of $Q(v)$
(with multiplicities). We refer to $\chi_p(\cdot, v):=  \Dl^2(\cdot, v) - 1$ as the characteristic function of $Q(v)$.
The object of study of this paper is the spectral curve $ \Sigma_v$, associated to the characteristic function $\chi_p(\cdot, v)$.
It is defined as
 \begin{equation}\label{spectral curve}
 \Sigma_v := \{(\lambda , y) \in  \C^*\times \C \ : \  y^2 =   \chi_p(\lambda, v)  \} \, .
 \end{equation}
In case all periodic eigenvalues of $Q(v)$ are simple, $\Sigma_v$ is an (open) hyperelliptic Riemann surface of infinite genus.

Our results on normalized differentials on $\Sigma_v$ are formulated for potentials in a sufficiently small neighborhood of \Hr in \Hp. 
To define the latter, we introduce the notion of isolating neighborhoods: Assume that $v_0$ is an arbitrary element in \Hr. It is shown in Section \ref{sec:realAmostReal}
that there exists a ball $W_{v_0}$ in \Hp, centered at $v_0$, and a sequence $(U_n)_{n\in \Z}$ of pairwise disjoint open neighborhoods in $\C^+$ 
so that for any $v$ in $W_{v_0}$,  the following holds:
\begin{itemize}
  \item[] (IN-1)  $\Gap = [\lambda_n^-, \lambda_n^+] \subset U_n\subset \C^+$,   $\forall n\in \Z$.
  \item[] (IN-2) For any $n\geq 0$, $U_n$ is a disc centered on the real axis so that for some constant $c\geq 1$
  \[
  c^{-1}|m-n|\leq dist(U_m,U_n)\leq c|m-n| \quad \forall m,n\geq 0, \, m\not=n.
  \]
  \item[] (IN-3) The sets  $ \set{ \frac{1}{16\lm}}{\lm\in U_{-n}}$, $n\geq 0$, satisfy (IN-2) with the same constant c.
  \item[] (IN-4) For $|n|$ sufficiently large, $U_n=D_n$.
\end{itemize}
The sequence  $(U_n)_{n\in \Z}$ is referred to as a sequence of isolating neighborhoods for $v$.
We denote by $\hat W$ the connected neighborhood of $\Hr$ in \Hp, given by
$$
\hat W := \bigcup_{v \in H^1_r} W_v \, .
$$
Denote by $\mathcal I_{rec}$ the involution
$$
\mathcal I_{rec} : \Hp \to \Hp\, , \ (q, p) \mapsto (-q, p) \, .
$$
By choosing the balls $W_v$ so that  for any $v \in \Hr$, $W_{\mathcal I_{rec}(v)} = \mathcal I_{rec}(W_v)$, it follows that
$\hat W$ is left invariant by $\mathcal I_{rec}$. 
Furthermore, to simplify the statement of our results, we introduce the following alternative notation for the periodic eigenvalues.
For $v\in\hat W$ define
 \begin{equation}\label{eq:kap6.3.1}
 \lm_{1,k}^+ := \left\{ \begin{array}{ll}
                        \lm_k^+ & k\geq 0 \\ -\lm_{-k}^- & k\leq -1
                        \end{array}\right.
 \quad \lm_{1,k}^- := \left\{ \begin{array}{ll}
                        \lm_k^- & k\geq 0 \\ -\lm_{-k}^+ & k\leq -1
                        \end{array}\right. 
 \end{equation}
 \begin{equation}\label{eq:kap6.3.2}
 \lm_{2,k}^+ := \left\{ \begin{array}{ll}
                        \frac{1}{16\lm_{-k}^-} & k\geq 0 \\ -\frac{1}{16 \lm_{k}^+} & k\leq -1
                        \end{array}\right.
 \quad \lm_{2,k}^- := \left\{ \begin{array}{ll}
                        \frac{1}{16\,\lm_{-k}^+} & k\geq 0 \\ -\frac{1}{16\,\lm_{k}^-} & k\leq -1
                        \end{array}\right.
 \end{equation}
 Correspondingly, we define
 \begin{equation}\label{G_1m}
 G_{1,m} : = [\lm_{m}^-,\lm_{m}^+]\ ,\quad \forall m \geq0, \qquad \qquad \,\, \, G_{1,-m}:= -G_{1,m} \ , \quad \forall m \geq1 \, ,
 \end{equation}
 \begin{equation}\label{G_2m}
 G_{2,m} := [-\lm_{-m}^+,-\lm_{-m}^-]\ , \,\, \forall m \geq 0 , \qquad \,\, G_{2,-m} := -G_{2,m} \ , \quad \forall m\geq1\, ,
 \end{equation}
and
 \begin{equation}\label{U_1m}
 U_{1,m} := U_m \ , \quad  \forall m\geq0\ , \qquad \quad   U_{1,-m} := -U_m \; \, \forall m\geq1, \quad
 \end{equation}
 \begin{equation}\label{U_2m}
 U_{2,m} := -U_{-m} \ , \,\, \forall m \geq0 \ , \qquad \,\, U_{2,-m} := U_{-m} \ , \quad \forall  m\geq1
 \end{equation}
 where $U_m$, $m\in\Z$, 
 are isolating neighborhoods for $v\in\hat W$. 
For any $m \in \Z$, choose within the neighborhood $ U_{m}$  a 
counter clockwise oriented contour $\Gamma_m$  around $G_{m}$ and define
\begin{equation}\label{Gamma_1m}
 \Gamma_{1,m} := \Gamma_m \ , \quad \forall m\geq0\ , \qquad \quad \,\,\,\, \Gamma_{1,-m} := (\Gamma_m)^- \; \, \forall m\geq1, 
 \end{equation}
 \begin{equation}\label{Gamma_2m}
 \Gamma_{2,m} := (\Gamma_{-m})^-  , \,\,  \forall m \geq0 \ , \qquad \,\,\, \Gamma_{2,-m} := \Gamma_{-m} \; \, \forall m\geq1 \, , \,\,\,
 \end{equation}
 where for any $m \in \Z$, $(\Gamma_m)^-$ denotes the closed curve $\{ - \lambda \ : \ \lambda \in \Gamma_m  \}$ with counterclockwise orientation.
 
\begin{figure}
  \centering
    \includegraphics[width=\textwidth]{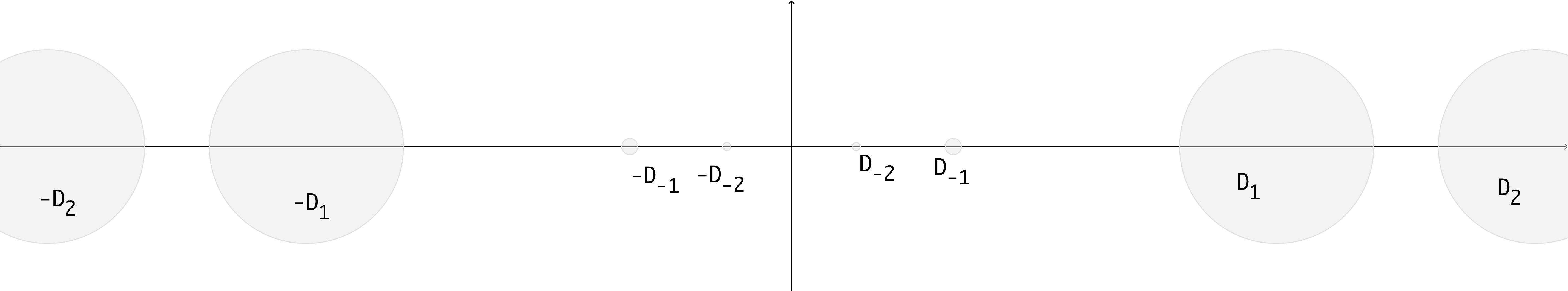}
         \caption{Illustration of the domains $D_n$, $D_{-n}$, $-D_{-n}$, $-D_n$ for $n=1,2$}
 \end{figure}
 
Finally, we need to choose an analytic branch of the square root of $\chi_p(\lambda) = \Dl^2(\lambda)-1$. 
In Section \ref{sec:roots} (cf. \eqref{eq:kap6.3.40})
we describe such a branch on $\C^* \setminus \bigcup_{m\in\Z} G_{1,m} \cup G_{2,m}$ 
and denote it by $\sqrt[c]{\chi_p(\lambda)}$.
It is locally analytic in $v\in \hat W$ away from $\bigcup_{m\in\Z} G_{1,m} \cup G_{2,m}$.
For $v$ in a neighborhood of $\Hr$, contained in $\hat W$, we construct normalized differentials on the spectral curve $\Sigma_v$,
$$
 \frac{\psi_n(\lm)}{\sqrt{\chi_p(\lm)}}
$$
where $\psi_n(\lm)$, $n\geq0$, are functions on $\C^*$ of the form
\begin{equation} \label{eq:kap8.2.9}
\psi_n(\lm) := -\frac{1}{\pi_n} C_n \psi_{n,1}(\lm) \cdot \psi_{n,2}(\lm) 
\end{equation}
with $\pi_n := n\pi$ for any $n \ne 0$ and $\pi_0 := 1$. Here
$ \psi_{n,1}(\lm)$, $ \psi_{n,2}(\lm)$ are analytic functions on $\C^*$, given by infinite products,
\begin{equation}\label{eq:kap8.2.11}
 \psi_{n,1}(\lm) = \prod_{\substack{k\in\Z \\ k\not=n}} \frac{\sigma_{1,k}^n-\lm}{\pi_k} , \qquad \psi_{n,2}(\lm) = \prod_{k\in\Z} \frac{\sigma_{2,k}^n + \frac{1}{16\lm}}{\pi_k} \, ,
\end{equation}
where $\sigma_{1,k}^n$, $\sigma_{1,k}^n$ satisfy 
\begin{equation}\label{eq:kap8.2.12}
\sigma_{1,k}^n\in U_{1,k} , \quad  k\not=n\, ,  \qquad (-16\sigma_{2,k}^n)^{-1} \in U_{2,k}, \quad \forall k \in \Z  \, ,
\end{equation}
as well as the asymptotics  $\sigma_{j, k}^n = k\pi + \ell_k^2$, 
and
\begin{equation}\label{eq:kap8.2.10}
C_n \equiv C_{\psi_{n,2}} : = 1/\psi_{n,2}(\infty)\, , \qquad   \psi_{n,2}(\infty):= \prod_{k\in\Z} \frac{\sigma_{2,k}^n}{\pi_k} \, .
\end{equation}
For what follows it is convenient to introduce the notation $\sigma_{1,n}^n := (\lm_{1,n}^+ + \lm_{1,n}^-)/2$.
The main result of this paper is the following one.
\begin{thm}\label{thm:kap8.2.12} 
  There exists a complex neighborhood $W$ of $\Hr$, contained in $\hat W$ and satisfying $\mathcal I_{rec} (W) =W$, with the following property:
   for any $v \in W$ there exist functions $\psi_n(\lm) \equiv \psi_n(\lm,v)$, $n \ge 0$,
  of the form \eqref{eq:kap8.2.9}--\eqref{eq:kap8.2.12} so that for any $m\in\Z$,
  \begin{equation}\label{eq:kap8.2.13}
  \frac{1}{2\pi} \int_{\Gm[1,m]} \frac{\psi_n(\lm)}{\sqrt[c]{\chi_p(\lm)}}\dlm = \dl_{nm},
  \end{equation}
  \begin{equation}\label{eq:kap8.2.14}
  \frac{1}{2\pi} \int_{\Gm[2,m]} \frac{\psi_n(\lm)}{\sqrt[c]{\chi_p(\lm)}}\dlm = 0.
  \end{equation}
  The possibly complex roots $\sigma_{1,k}^n$ ($k\not=n$) and $(-16\sigma_{2,k}^n)^{-1}$ ($k\in\Z$) of $\psi_n(\lm)$ depend analytically on $v$.
  Furthermore,  $\sigma_{j,k}^n$ satisfy the estimates
  \begin{equation}\label{eq:kap8.2.15}
  \sigma_{j,k}^n - \tau_{j,k} = \gm[j,k]^2\ell_k^2 \quad \forall k\in\Z,\;j=1,2
  \end{equation}
  where
  \begin{equation}\label{eq:kap8.2.16}
  \tau_{j,k} := (\lm_{j,k}^+ + \lm_{j,k}^-)/2
  \end{equation}
  and if $v$ is real valued, then $\lm_{j,k}^- \leq \sigma_{j,k}^n \leq \lm_{j,k}^+$ for any $k\in\Z$ and $j=1,2$.
  \end{thm}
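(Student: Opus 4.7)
The plan is to cast the existence of $\psi_n$ as an analytic implicit function theorem problem for the sequence of roots $\sigma = (\sigma_{j,k}^n)_{j\in\{1,2\},\, k\in\Z}$, with $\sigma_{1,n}^n := \tau_{1,n}$ held fixed. First, I would let $X_n$ denote the closed subset of an appropriate complex Banach space of sequences satisfying the localization \eqref{eq:kap8.2.12} together with the quantitative tail bound $\sigma_{j,k}^n - \tau_{j,k} \in \gm[j,k]^2 \ell_k^2$ from \eqref{eq:kap8.2.15}. Using the counting lemma asymptotics $\lambda_{j,k}^\pm = k\pi + \ell_k^2$ from \cite{LOperator2018} together with standard infinite product estimates, for every $\sigma \in X_n$ and every $v\in\hat W$ the products in \eqref{eq:kap8.2.11} converge absolutely and locally uniformly on $\C^*$ to analytic functions, so $\psi_n(\lambda, \sigma, v)$ is well-defined and depends analytically on $(\sigma, v) \in X_n \times \hat W$.

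Next, I would introduce the period map
$$
\Pi_{j,m}(\sigma, v) := \frac{1}{2\pi} \int_{\Gm[j,m]} \frac{\psi_n(\lambda, \sigma, v)}{\sqrt[c]{\chi_p(\lambda, v)}}\,\dlm - \delta_{(j,m),(1,n)}, \qquad (j,m)\in\{1,2\}\times\Z,
$$
so the theorem reduces to solving $\Pi(\sigma, v) = 0$ in a weighted sequence space, from which analytic dependence of $\sigma$ on $v$, and hence of $\psi_n$, is read off directly; the estimate \eqref{eq:kap8.2.15} is then automatic from $\sigma \in X_n$. The reference point is $\sigma^0 = (\tau_{j,k})$, at which $\Pi(\sigma^0, v)$ is small for $v$ close to $\Hr$. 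Differentiating in the roots gives, up to the Jacobian $-1/(16\lambda^2)$ in the block $j'=2$ and the normalization $C_n$, the identity $\partial_{\sigma_{j',k'}^n}\psi_n = \psi_n/(\sigma_{j',k'}^n - \lambda)$; substituting into $\Pi_{j,m}$ and contour-deforming shows that $D_\sigma\Pi(\sigma^0, v)$ splits as a dominant diagonal piece, of order one in the weighted norm and computed from a residue-type calculation on the removable singularity of $\psi_n/((\sigma_{j,m}^n - \lambda)\sqrt[c]{\chi_p})$ inside $\Gm[j,m]$, plus off-diagonal entries of order $\gm[j',k']^2$ weighted by a summable decay in $|k'-m|$ coming from the separation conditions (IN-2)--(IN-3) on the isolating neighborhoods. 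The analytic implicit function theorem then yields a unique $\sigma = \sigma(v) \in X_n$, analytic in $v$ on a neighborhood of each $v_0\in\Hr$; the union $W$ of these neighborhoods lies in $\hat W$ and can be chosen $\mathcal I_{rec}$-invariant by taking the balls $W_{v_0}$ symmetrically, as arranged earlier.

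The main obstacle is securing the uniform (in $n$ and $v$) invertibility of $D_\sigma\Pi(\sigma^0, v)$ in the weighted space. This requires three quantitative ingredients: (i) a sharp local expansion of $\sqrt[c]{\chi_p}$ near each gap $G_{j,m}$, essentially a Puiseux factorization isolating the branch points $\lambda_{j,m}^\pm$, (ii) sharp comparisons of $\psi_{n,1}(\cdot,\sigma), \psi_{n,2}(\cdot,\sigma)$ to the model products at $\sigma^0$, with the factor omitted at $k=n$ in $\psi_{n,1}$ tracked carefully so that the expected $(k-n)^{-1}$-type decay appears in the off-diagonal estimates, and (iii) summability of the resulting residual operator, which follows from the $\ell^2$ structure of $X_n$ together with $\gm[j,k]\in\ell_k^2$. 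Once these estimates are in place, the invertibility (and hence the application of the IFT) is routine.

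Finally, the reality statement follows by combining uniqueness with the symmetry of the data under complex conjugation when $v\in\Hr$. In this case $\chi_p(\cdot,v)$ is real on $\R$, the $\lambda_{j,k}^\pm$ and $\tau_{j,k}$ are real, and one may choose the contours $\Gm[j,m]$ and the branch $\sqrt[c]{\chi_p}$ symmetric under $\lambda\mapsto\bar\lambda$; hence $\Pi(\bar\sigma, v) = \overline{\Pi(\sigma, v)}$, and the uniqueness of the solution constructed above forces $\sigma_{j,k}^n\in\R$. To pin $\sigma_{j,k}^n$ inside $[\lambda_{j,k}^-, \lambda_{j,k}^+]$, I would freeze all other roots at their IFT values and view $\sigma\mapsto \Pi_{j,k}$ as a real-analytic real-valued function of $\sigma\in U_{j,k}\cap\R$; a direct evaluation of the period integral using $\sqrt[c]{\chi_p}$ on the real axis above and below the real gap (where $\chi_p$ is negative) shows that the sign of $\Pi_{j,k}$ at $\sigma = \lambda_{j,k}^-$ and $\sigma = \lambda_{j,k}^+$ differ, so the unique real zero produced by the IFT must lie in $[\lambda_{j,k}^-, \lambda_{j,k}^+]$.
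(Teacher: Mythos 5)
Your overall strategy — reformulate the normalization conditions as a fixed sequence of equations for the roots $\sigma^n_{j,k}$ and apply the implicit function theorem — is the same as the paper's. But there are three substantive gaps in your execution.

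\medskip

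\emph{Existence at each $v_0\in\Hr$.} You start the IFT at the reference sequence $\sigma^0 = (\tau_{j,k}(v))$ and claim that $\Pi(\sigma^0, v)$ is ``small for $v$ close to $\Hr$.'' For $v$ near the zero potential this is true (there $\gamma_{j,k}=0$ and $\sigma^0$ is in fact an exact solution), but for a general $v_0\in\Hr$ there is no reason for $\Pi(\sigma^0(v_0),v_0)$ to be small; it is a finite, $O(1)$ quantity. The IFT can only exchange an exact solution at $v_0$ for an analytic branch near $v_0$; it cannot conjure a first solution. The paper closes this gap with a continuation argument along $\Hr$ (using compactness of the spectral data and the invertibility of the Jacobian everywhere on the relevant set), starting from $v=0$; this is a genuine extra step that your proposal omits.

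\medskip

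\emph{The refined localization \eqref{eq:kap8.2.15}.} You propose to build the estimate $\sigma^n_{j,k}-\tau_{j,k}\in\gamma_{j,k}^2\ell_k^2$ into the domain $X_n$, so that it is ``automatic'' once the IFT produces a solution in $X_n$. This cannot work: a Banach-space norm reproducing that bound would carry the weight $\gamma_{j,k}^{-2}$, which becomes infinite whenever a gap collapses, so $X_n$ is not an open subset of any Banach space to which the IFT applies. (Moreover for $\gamma_{j,k}=0$ the constraint simply pins $\sigma^n_{j,k}=\tau_{j,k}$ with zero margin, so any ball degenerates there.) The IFT must be run in the unweighted space — $\ell^2\times\ell^2$ with the open constraints $\sigma^n_{j,k}\in U_{j,k}$ — and the sharper bound \eqref{eq:kap8.2.15} has to be \emph{derived} afterward directly from the equations $F^n_{j,m}(v,s^n(v))=0$ by a second, careful expansion around $\tau_{j,m}$. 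This is precisely what the paper does in its last lemma; it is not a free consequence of the construction.

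\medskip

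\emph{Invertibility of the differential and uniformity in $n$.} You treat the invertibility of $D_\sigma\Pi$ as ``routine'' once the diagonal/off-diagonal splitting is set up. Two non-trivial points hide here. First, injectivity of the Jacobian on $\ell^2\times\ell^2$ is not a consequence of diagonal dominance — the off-diagonal terms do not decay fast enough for a simple Neumann-series argument on all of $\Omega_r$ — and the paper proves it by an interpolation theorem on $\C^*$ that characterizes entire functions by their values at the $\sigma^n_{j,k}$. Second, to obtain a complex neighborhood $W$ of $\Hr$ that works simultaneously for all $n\geq 0$ one must bound $\|(D_\sigma F^n)^{-1}\|$ uniformly in $n$; the paper achieves this by identifying the limit operator $Q^*=\lim_n Q^n$, proving operator-norm convergence, and using compactness of the set of admissible sequences for each $v\in\Hr$. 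Without this step you can only conclude an $n$-dependent neighborhood, which is weaker than the statement of the theorem.
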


  \begin{rem}\label{rem:kap8.2.12bis}
    One can show that the functions $\psi_n(\lm)$ of the form \eqref{eq:kap8.2.9}-\eqref{eq:kap8.2.12} are uniquely determined by \eqref{eq:kap8.2.13}-\eqref{eq:kap8.2.14}. For a precise statement see Proposition \ref{prop:kap8.2.20}.
    \end{rem}
Theorem \ref{thm:kap8.2.12} can be used to complement the  family of differentials $ \frac{\psi_n(\lm)}{\sqrt{\chi_p(\lm)}}$, $n \ge 0$,
by the family $ \frac{\psi_{-n}(\lm)}{\sqrt{\chi_p(\lm)}}$, $n \ge 1$, where for any $v=(q,p) \in W$ and $\lm \in \C^*$, 
 \begin{equation}\label{def psi_-n}
     \psi_{-n}(\lm,q,p) := \psi_n(\frac{1}{16\lm},-q,p)\frac{1}{16\lm^2} \, .
 \end{equation}

    \begin{cor}\label{cor:kap8.2.12}
      For any $n\geq1$ and $v=(q,p) \in W$, $\psi_{-n}(\lm,q,p)$  satisfies
    \[
     \int_{\Gm[1,m]} \frac{\psi_{-n}(\lm,v)}{\sqrt[c]{\chi_p(\lm,v)}} \dlm = 0,
    \qquad  \int_{\Gm[2,m]} \frac{\psi_{-n}(\lm,v)}{\sqrt[c]{\chi_p(\lm,v)}} \dlm = 2\pi \dl_{-n,m} \, , \qquad \forall m \in \Z  \, .
    \]
    The possibly complex roots $\sigma_{1,k}^{-n}(v)$ ($k\in\Z$) and $(-16\sigma_{2,k}^{-n}(v))^{-1}$ ($k\not=-n$) of $\psi_{-n}(\cdot, v)$ depend analytically on $v$ and satisfy
    \[
    \sigma_{j,k}^{-n}(v) - \tau_{j,k}(v) = \gm[j,k](v)^2 \ell_k^2\quad \forall (j,k)\not=(2,-n).
    \]
    Furthermore, if $v$ is real valued, then
    \[
    \lm_{j,k}^-(v) \leq \sigma_{j,k}^{-n} (v) \leq \lm_{j,k}^+(v) \quad \forall (j,k)\not= (2,-n).
    \]
    \end{cor}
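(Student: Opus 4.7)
The plan is to apply Theorem \ref{thm:kap8.2.12} at the reflected potential $(-q,p)\in W$ (which lies in $W$ because $\mathcal{I}_{rec}(W)=W$) and transport the resulting normalization under the substitution $\mu=1/(16\lm)$. By the definition \eqref{def psi_-n} this is precisely what turns $\psi_n(\cdot,-q,p)$ into $\psi_{-n}(\cdot,q,p)$ up to the Jacobian factor $1/(16\lm^2)$. The central analytic input is the reflection symmetry $\refl$ of the spectral problem: the identity
\[
\chi_p\bigl(\tfrac{1}{16\lm},-q,p\bigr)=\chi_p(\lm,q,p)
\]
and, consistent with the choice of branch cuts in Section \ref{sec:roots}, the induced relation $\sqrt[c]{\chi_p(1/(16\lm),-q,p)}=\varepsilon\,\sqrt[c]{\chi_p(\lm,q,p)}$ for a sign $\varepsilon\in\{\pm1\}$ determined by the construction of $\sqrt[c]{\cdot}$.

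At the level of contours, $\refl$ permutes the isolating neighbourhoods as encoded in \eqref{U_1m}--\eqref{Gamma_2m}: the substitution $\mu=1/(16\lm)$ sends the oriented loop $\Gm[1,m](-q,p)$ to $\Gm[2,-m](q,p)$ and $\Gm[2,m](-q,p)$ to $\Gm[1,-m](q,p)$, the orientation reversal coming from the inversion cancelling against the sign flips built into the labelling. Inserting $\dmu=-\dlm/(16\lm^2)$ in the two identities \eqref{eq:kap8.2.13}--\eqref{eq:kap8.2.14} for $\psi_n(\cdot,-q,p)$ and using these contour matchings yields
\begin{align*}
\dl_{nm} &=\frac{1}{2\pi}\int_{\Gm[1,m](-q,p)}\frac{\psi_n(\mu,-q,p)}{\sqrt[c]{\chi_p(\mu,-q,p)}}\,\dmu \\
&=\varepsilon\,\frac{1}{2\pi}\int_{\Gm[2,-m](q,p)}\frac{\psi_{-n}(\lm,q,p)}{\sqrt[c]{\chi_p(\lm,q,p)}}\,\dlm,\\
0 &=\frac{1}{2\pi}\int_{\Gm[2,m](-q,p)}\frac{\psi_n(\mu,-q,p)}{\sqrt[c]{\chi_p(\mu,-q,p)}}\,\dmu \\
&=\varepsilon\,\frac{1}{2\pi}\int_{\Gm[1,-m](q,p)}\frac{\psi_{-n}(\lm,q,p)}{\sqrt[c]{\chi_p(\lm,q,p)}}\,\dlm.
\end{align*}
Relabelling $m\mapsto-m$ and pinning down $\varepsilon=1$ from the branch construction in Section \ref{sec:roots} gives the two claimed integral identities.

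The zero structure follows by reading off the product form \eqref{eq:kap8.2.9}--\eqref{eq:kap8.2.12} of $\psi_n(1/(16\lm),-q,p)/(16\lm^2)$: under $\mu\mapsto1/(16\mu)$ an index-$1$ zero of $\psi_n(\cdot,-q,p)$ becomes an index-$2$ zero of $\psi_{-n}(\cdot,q,p)$ and vice versa, with the exceptional index $(1,n)$ for $\psi_n$ transferring to the exceptional index $(2,-n)$ for $\psi_{-n}$ and the index shift $k\mapsto-k$ dictated by the contour correspondence. Analyticity in $v$, the gap-quadratic decay \eqref{eq:kap8.2.15} of $\sigma_{j,k}^{-n}-\tau_{j,k}$, and the real-valued inequalities then transfer directly from the corresponding statements in Theorem \ref{thm:kap8.2.12} applied to $(-q,p)$.

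The main obstacle is the bookkeeping: pinning down the sign $\varepsilon$ of the branch $\sqrt[c]{\cdot}$ under $\refl$ and verifying the correspondence $\Gm[1,m](-q,p)\leftrightarrow\Gm[2,-m](q,p)$ of oriented contours against the labelling conventions \eqref{eq:kap6.3.1}--\eqref{Gamma_2m}. Neither step introduces genuinely new analysis beyond tracking the definitions from Section \ref{sec:roots} and the isolating-neighbourhood setup.
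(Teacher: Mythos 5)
Your route is essentially the paper's own: apply Theorem \ref{thm:kap8.2.12} at $(-q,p)$, substitute $\mu=\frac{1}{16\lambda}$, and invoke the reflection identities for the canonical root (Lemma \ref{lem:standardRoot}(iii)) together with the contour relabelling \eqref{U_1m}--\eqref{Gamma_2m}. One bookkeeping caveat: the sign you call $\varepsilon$ in $\sqrt[c]{\chi_p(1/(16\lambda),-q,p)}=\varepsilon\,\sqrt[c]{\chi_p(\lambda,q,p)}$ is in fact $-1$, not $+1$ (combine $\sqrt[c]{\chi_p(-(16\lambda)^{-1},-q,p)}=\sqrt[c]{\chi_p(\lambda,q,p)}$ with the oddness $\sqrt[c]{\chi_p(-\lambda)}=-\sqrt[c]{\chi_p(\lambda)}$); this is exactly offset by the $-1$ in $\dmu=-\dlm/(16\lambda^2)$, so the displayed conclusions remain correct.
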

    
   \bigskip 
    
\noindent
{\em Comments:} The proof of Theorem \ref{thm:kap8.2.12} relies on a perturbation argument.
To describe it, let us first consider the case where $v$ is in \Hr. In such a case, $Q(v)$ is
self-adjoint and hence its periodic spectrum consists of real eigenvalues. The ones in $\C^+$
form a bi-infinite sequence of the form 
\[
  0 < \cdots \leq \lm_{-1}^- \leq \lm_{-1}^+  <  \lm_0^- \leq \lm_0^+  <  \lm_1^-\leq \lm_1^+\leq \cdots .
\]
It is not hard to show that for any $\psi_n(\lambda)$ of the form \eqref{eq:kap8.2.9}--\eqref{eq:kap8.2.12}, 
the equations \eqref{eq:kap8.2.13} - \eqref{eq:kap8.2.14} imply that 
$$
 \lm_{1, k}^- \leq  \sigma_{1,k}^n  \leq \lm_{1, k}^+\, ,\quad
 \lm_{2, k}^- \leq  \sigma_{2,k}^n  \leq \lm_{2, k}^+\, , \qquad \forall k \in \Z\, .
$$
In fact, this observation has motivated us to make the ansatz \eqref{eq:kap8.2.9}--\eqref{eq:kap8.2.12}
for $\psi_n$ in the first place. To prove Theorem \ref{thm:kap8.2.12}, we reformulate  in a first step the statement 
about the existence of the functions $\psi_n(\lambda)$, $n \ge 0$, as a functional
equation for the sequences $( \sigma_{1,k}^n)_{n \in \Z}$, $( \sigma_{2,k}^n)_{n \in \Z}$,
which then is solved by the means of the implicit function theorem.

The method of proof described above has been initiated in \cite{kappeler2003kdv} to prove 
results, similar to the ones of Theorem \ref{thm:kap8.2.12}, for spectral curves associated to the KdV equation.
Subsequently, it has been applied in \cite{nlsbook} to obtain such results for the spectral curves
associated to the defocusing NLS equation.  The current work is an advancement of this method
to the more complicated spectral curves at hand. We remark that a key prerequisite is that for 
any real valued potential, the zeroes of $\psi_n(\lambda)$ are confined within the gaps.
Such a confinement no longer holds in the case of the spectral curves associated to the focusing NLS equation,
since in this case  the corresponding Lax operator is not self-adjoint. To handle this case,  
a quite different approach was developed in  \cite{kappeler2017HyperellipticCurves}
(cf. also \cite{kappeler2011Shubinconference}). 

In the case where $v$ is in \Hr, using different methods, the existence of differentials of the kind described in Theorem \ref{thm:kap8.2.12}
can also be derived from \cite[Sections 26-28]{mckean1981sineLight}. Note however that in order to construct 
analytic normal form coordinates for the sinh-Gordon equation, one needs that these differentials are defined
on a complex neighborhood of \Hr in \Hp and that on this neighborhhod, their zeroes are analytic and satisfy the estimates stated
in Theorem \ref{thm:kap8.2.12}. The same comments apply to work about holomorphic differentials 
on Riemann surfaces of infinite genus such as \cite{ahlfors1960} or \cite{feldman2003}
and references therein.

\medskip

\noindent
{\em Organisation:} In Section \ref{setup}, we introduce additional notation and review
results from \cite{LOperator2018}, needed throughout the paper. In particular, 
we discuss the spectrum of the operator $Q(v),$
when considered with Dirichlet boundary conditions, as well as the roots of $\partial_\lambda \Delta(\lambda)$.
Since results for Dirichlet eigenvalues of $Q(v)$, corresponding to the ones  derived in Section \ref{gradients} - Section \ref{sec:roots}
for the periodic eigenvalues,
can be obtained in a  similar way,  we included these results in the latter sections. We remark that the results
for Dirichlet eigenvalues are needed for the construction of the normal form coordinates for the sinh-Gordon equation
and hence will be useful for future work.
In Section \ref{gradients}, we derive formulas for the gradients of various quantities, needed for the
proof of Theorem \ref{thm:kap8.2.12}. 
In Section \ref{sec:realAmostReal} we discuss spectral properties of potentials near \Hr.
In particular we prove that potentials in \Hp, which are sufficiently close to \Hr, admit isolating neighborhoods. 
In Section \ref{sec:prod}, we discuss product representations of various quantities, related to
the infinite products \eqref{eq:kap8.2.12} and in Section \ref{sec:roots} we introduce specific
branches of various square roots, in particular the canonical square root of the characteristic function $\chi_p$.
Finally, in Section \ref{sec:psi} we prove Theorem  \ref{thm:kap8.2.12} and Corollary \ref{cor:kap8.2.12}.
For the convenience of the reader, we have included two appendices which collect some technical results,
needed in the proof of Theorem  \ref{thm:kap8.2.12}.
In Appendix \ref{Liouville} we formulate a version of Liouville's theorem for analytic functions on $\C^+$
and in Appendix \ref{interpolation} we discuss representations of infinite products, obtained by interpolation.


 \section{Setup}\label{setup}
 
In this section, we introduce additional notation and review some results from \cite{LOperator2018}, needed in the sequel.

Recall that by \eqref{eq:dxM}, for any $v=(q,p)\in\Hp$ and $\lambda \in \mathbb C^*$, 
$M = M(x,\lm,v)\in \C^{2\times2}$ denotes the fundamental solution
for equation \eqref{eq:reducedEquation},
$$
\partial_x M =J\left(\lm - A - B^2/\lm \right) M,  \quad M(0,\lm,v) = I \, ,
$$
or, taking the definitions in \eqref{eq:QoperatorAandB} into account,
\[
\partial_x M(x,\lm,v) = J\left(\lm + \frac14(\Op p(x)+q_x(x))\begin{pmatrix}&1\\1\end{pmatrix} - \frac1{16\,\lm}
\begin{pmatrix}e^{-q(x)}\\ &e^{q(x)}\end{pmatrix}\right) M.
\]
For any $\lm\in \C^*$, let $\grave{M}(\lm, v) := M(1, \lm, v)$ and denote the matrix coefficients 
of $M$ and $\grave{M}$ as follows
\[
M = \begin{pmatrix} m_1 & m_2 \\ m_3 & m_4\end{pmatrix},
  \quad
\grave{M} = \begin{pmatrix} \grave{m}_1 & \grave{m}_2 \\ \grave{m}_3 & \grave{m}_4\end{pmatrix} \, .
\]
Furthermore, it is convenient to introduce 
 $$     
         E_{\nu}(x) := 
          \begin{pmatrix}
         \cos(\nu x) & \sin(\nu x )\\ -\sin(\nu x) & \cos(\nu x)
           \end{pmatrix} \, , \quad \nu \in \C \, .
 $$      
We note that
  \begin{equation}\label{eq:Eomega}
 M(x,\lm,0) = E_{\omega(\lm)}(x) \, , \quad  \quad  \omega(\lambda):= \lambda - \frac{1}{16 \lambda}\, ,  \qquad \forall \lambda \in \C^* \, , x \in \R \, .
 \end{equation}            
Denote by $Q_{dir} \equiv Q_{dir}(v)$ the operator $Q = Q_1\partial_x + Q_0$ with domain
\[
 H_{dir} := \set{F = (F_1,F_2,F_3,F_4)\in H^1([0,1],\C^4)}{ F_1(0) = 0, \, F_1(1) =0} 
\]
where $H^1([0,1],\C^4)$ denotes the standard Sobolev space of functions on the interval $[0, 1]$ with values in $\C^4$.
Its spectrum, denoted by $spec_{dir}(Q(v))$, is discrete and coincides with the spectrum of the operator
$-J\partial_x +(A+B^{2}/\lm)$ in \eqref{eq:reducedEquation}
with domain consisting of functions $f = (f_1,f_2)\in H^1([0,1],\C^2)$, satisfying the Dirichlet boundary
conditions $f_1(0)=0,$ $f_1(1) = 0$. The corresponding eigenvalues are referred to as Dirichlet eigenvalues.
Clearly, $\mu\in\C^*$ is a Dirichlet eigenvalue of $-J\partial_x +(A+B^{2}/\lm)$  if there exists $a\in\C^*$ such that
\begin{equation}
	\grave{M}(\mu,v)\begin{pmatrix} 0 \\ 1 \end{pmatrix} = a\begin{pmatrix} 0 \\1 \end{pmatrix}.
\end{equation}
One infers that $spec_{dir}(Q(v))$, referred to as the Dirichlet spectrum of $Q(v)$, is given by  the zeroes  of the function 
$\chi_D(\lm, v) :=\grave{m}_2(\lm, v)$  (cf. \cite[Theorem 3.1]{LOperator2018}).
We refer to $\chi_D(\cdot, v) $ as the characteristic function of $Q_{dir}(v)$.
Furthermore, the multiplicity of any root of $\chi_D(\cdot, v)$ coincides with its algebraic multiplicity as a Dirichlet eigenvalue.
%

The spectrum $spec_{per}(Q(v))$ of the operator $Q(v) = Q_1\partial_x + Q_0$, considered with the domain 
\[
 H_{per} := \set{F\in H_{loc}^1(\R,\C^4)}{ F(x+2) =  F(x)\; \forall x\in\R}\, ,
\]
is referred to as periodic spectrum of $Q(v)$ and can be described in a similar way.
It is discrete and coincides with the periodic spectrum of the operator $-J\partial_x +(A+B^{2}/\lm)$ of \eqref{eq:reducedEquation}. Hence 
a complex number $\lm\in\C^*$ is in $spec_{per}(Q(v))$ 
iff $\grave{M}(\lm) \equiv \grave{M}(\lm,v)$ has an eigenvalue $\pm1$. 
Since in view of \eqref{eq:dxM},  $M$ satisfies the Wronskian identity, one has $\det(\grave{M}(\lambda)) = 1$ and hence
the eigenvalues $\xi_\pm$ of $\grave{M}(\lm)$ satisfy
\begin{equation}
 0=\det (\xi_\pm I-\grave{M}(\lm))=\xi_\pm^2-2\Delta(\lm) \xi_\pm+1\, , \qquad \Delta(\lm) = (\grave{m}_1(\lm)  + \grave{m}_4(\lm))/2,
\end{equation}
and thus are given by
\begin{equation}\label{eq:mEigenvalue}
  \xi_\pm = \Dl(\lm) \pm \sqrt{\Dl^2(\lm)-1}.
\end{equation}
(Note that by \eqref{eq:mEigenvalue}, $\xi_+$ and $\xi_-$ are determined up to the choice of a branch of $\sqrt{\Dl^2(\lm)-1}$.)
Altogether we conclude that $\lambda$ is a periodic eigenvalue of $Q(v)$ if and only if it is a zero 
of the function $ \chi_p(\lm,v) = \Dl^2(\lm,v)-1$ (cf. \cite[Theorem 3.9]{LOperator2018}). 
We refer to $ \chi_p(\cdot, v)$  as the characteristic function of the operator $Q(v)$ with periodic boundary conditions.
Furthermore, the multiplicity of any root of $\chi_p(\cdot, v)$ coincides with its algebraic multiplicity as a periodic eigenvalue. 

For what follows it is convenient to consider the periodic and the Dirichlet spectrum of $Q(v)$ together.
By the Counting Lemmas (cf. \cite[Lemma 3.4, Lemma 3.11]{LOperator2018}), the following holds:
any potential in $\Hp$ admits a neighborhood $W$ in $\Hp$ and an integer $N \ge 1$ so that for any $v \in W$
and any $n > N$,
the operator $Q(v)$ has precisely two periodic eigenvalues
and one Dirichlet eigenvalue
in each  of the domains  $D_n$, $-D_n$, $D_{-n}$, and $ - D_{-n}$
and exactly $4+8N$ periodic
and $2+4N$ Dirichlet
eigenvalues in the annulus $A_{N}$, counted with their multiplicities.
There are no further periodic
or Dirichlet
eigenvalues.
Recall from Section \ref{introduction} that we denote the periodic eigenvalues in $D_n$, $|n| > N$, by $\lambda_n^-, \lambda_n^+$ and list them so that $\lambda_n^-  \preceq \lambda_n^+ $.
By \cite[Lemma 2.14(i),(ii)]{LOperator2018} the periodic spectrum of $Q(v)$ is invariant under the involution $\lm\to -\lm$ and for any periodic eigenvalue $\lambda$ 
of $Q(q,p)$, $\frac{1}{16\lm}$ is a periodic eigenvalue of $Q(-q,p)$.
Furthermore, for any $|n| > N$, the two periodic eigenvalues in $-D_n$
are given by  $- \lm^+_n$ and $- \lm^-_n$ (cf. \cite[Theorem 3.9]{LOperator2018}).
The periodic eigenvalues of $Q(v)$, contained in $\C^+$, can be listed
as a bi-infinite sequence (cf. \eqref{eq:listingPeriodic}),
$$
0 \preceq \dots\preceq \lm_{-1}^- \preceq \lm_{-1}^+ \preceq \lm_{0}^-\preceq\lm_{0}^+ \preceq\lm_1^- \preceq \lm_1^+ \preceq \cdots \, .
$$
We note that for $v = 0$, one has
\begin{equation}\label{eq:kap3.170}
\Dl(\lm_k^+,0) = \Dl(\lm_k^-,0) = (-1)^k \, , \; \forall k\in\Z,
\qquad
\lm_{-k}^+ = \frac{1}{16\,\lm_k^+}\, , \,\,  \forall k\geq 0 \, .
\end{equation}
Since the line segment $\{ tv\in\Hp :  t\in [0,1] \}$, connecting $v$ to $0$ in \Hp, is compact, the integer $N$ in the Counting Lemma 
 \cite[Lemma 3.11]{LOperator2018} can be chosen uniformly with respect to $0\leq t\leq 1$. 
 Furthermore, for any $|k| > N$ $\Dl(\lm_k^+(tv),tv) =\Dl(\lm_k^-(tv),tv)$ and its sign is constant in $t$. We conclude that 
 \begin{equation}\label{sign Delta}
 \Dl(\lm_k^\pm,v)= (-1)^k \, , \quad \forall |k| > N \, .
 \end{equation} 
 Such an identity does not hold for the finitely many eigenvalues $\lm_k^\pm$, $|k| \le N$, unless $v$ satisfies further conditions 
 such as being (almost) real valued -- see Section \ref{sec:realAmostReal} for details.
 
 \begin{figure}
 \centering
   \includegraphics[width=\textwidth]{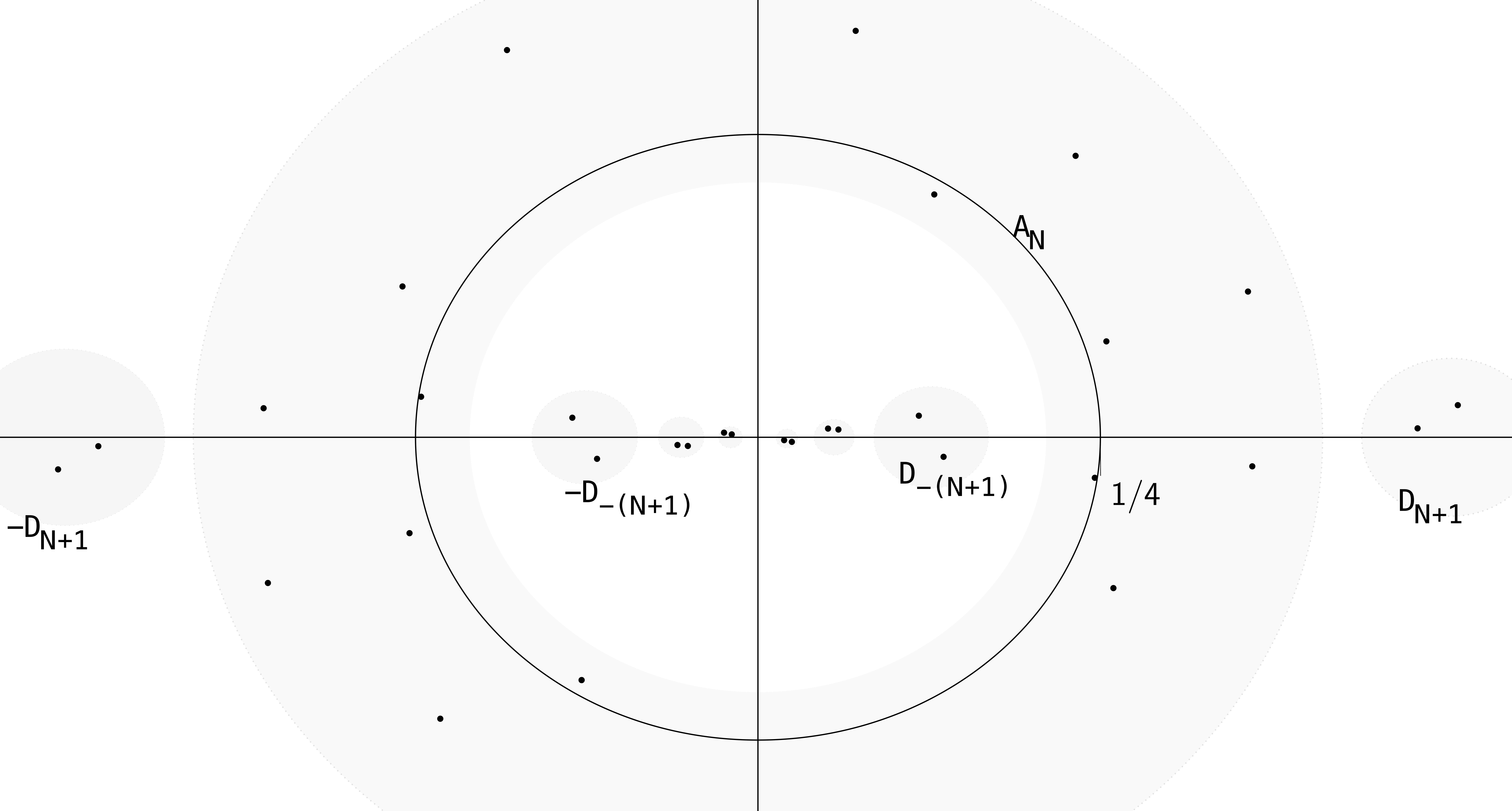}
        \caption{Distribution of periodic eigenvalues}
\end{figure}
The Dirichlet eigenvalue in $D_n$, $|n| > N$, is denoted by $\mu_n$.
By \cite[Theorem 3.1, Lemma 3.2]{LOperator2018},  the Dirichlet eigenvalue in $-D_n$ is given by $-\mu_n$.
Besides $\mu_n$, $| n | > N$, there are $1+2N$ Dirichlet eigenvalues of $Q(v)$ in $\SpecDom$.
Altogether, the eigenvalues in $\C^+$ can be listed as a bi-infinite sequence,
which is increasing with respect to the order $\preceq$,
\[
  0 \prec \cdots  \preceq \mu_{-1} \preceq \mu_0 \preceq \mu_1  \preceq  \cdots \, . 
\]


\medskip

 We finish this section with a discussion of the roots of $\dot\Dl(\lm,v) \equiv \partial_\lm \Dl(\lm,v)$.
 Since $\Dl(\lm, v)$ is even with respect to $\lm$ and hence $\dot\Dl(\lm, v)$ odd, we only need to study the roots of $\dot\Dl(\cdot, v)$ in \SpecDom.
 For $v=0$ one has $\Dl(\lm,0) = \cos(\omega(\lm))$ (cf. \eqref{eq:Eomega})
 and hence
\begin{equation}\label{eq:dotDlzero}
 \dot\Dl(\lm,0) = - (1+ \frac{1}{16\lm^2})\sin(\omega(\lm)).
\end{equation}
The roots of $\dot\Dl(\cdot, 0)$ in \SpecDom are given by the bi-infinite sequence
\[
\dot\lm_k \equiv \dot\lm_k(0) = \lm_k^+(0), \quad \forall k\in\Z
\]
together with the additional root $\dot\lm_*(0) = \frac{i}{4}$. Each of these roots has multiplicity one.
Since by \cite[Lemma 2.14(ii)]{LOperator2018},  $-\frac{1}{16\lm^2}\dot{\Dl}(\frac{1}{16\lm},q,p) =\dot{\Dl}(\lm,-q,p)$,  it follows that $\dot{\Dl}(\frac{1}{16\lm}, q, p)$ and $\dot{\Dl}(\lm, -q, p)$ 
have the same roots in $\C^*$ (counted with their multiplicities).
By \cite[Lemma 3.12]{LOperator2018}, the following Counting Lemma holds  for $\dot{\Dl}$:
For any potential in \Hp there exist a neighborhood $W \subset \Hp$ and $N \ge 1$ so that for any $v\in W$, $ \dot{\Dl}(\cdot,v)$ 
has exactly one root in each  of the domains  $D_n$, $-D_n$, $D_{-n}$, and $-D_{-n}$ with $n>N$ and $4+4N$ roots in the annulus $A_N$. There are no other roots.
(We remark that in  Lemma 3.4, Lemma 3.11, and Lemma 3.12,  in \cite{LOperator2018}, $W$ and $N$ can be chosen to be the same.)
Note that the roots of $\dot\Dl(\cdot,v)$ in $\SpecDom\setminus A_N$ can be listed as a bi-infinite sequence,
\begin{equation}\label{eq:dotLmListing}
        0 \preceq \dots\preceq \dot\lm_{-N-2} \preceq \dot\lm_{-N-1} \preceq \dot\lm_{N+1}\preceq \dot\lm_{N+2}\preceq \cdots, \quad \dot\lm_k\in D_k \; \forall |k|>N
\end{equation}
such that any remaining root $\dot\lm$ of $\dot\Dl(\cdot,v)$  in \SpecDom satisfies $\dot\lm_{-N-1}\preceq \dot\lm\preceq \dot\lm_{N+1}$.
It turns out that for arbitrary $v\in\Hp$, these remaining roots cannot be listed  in a way useful  for our purposes. 
However in the case where $v$ is in \Hr, such a listing is possible.
Indeed in this case, the algebraic multiplicity of any periodic eigenvalue 
of $Q(v)$ is at most two, whereas all  roots of  $\dot\Dl$ are simple.
The ones which are real and contained  in  \SpecDom can be listed in form of a bi-infinite sequence
\begin{equation}\label{lambda dot real}
        0 <  \cdots < \dot\lm_{-2} <\dot\lm_{-1} <\dot\lm_0 < \dot\lm_1 < \cdots 
\end{equation}
and satisfy $\lambda_k^- \le \dot\lm_k \le \lm_k^+$ for any $k \in \Z$.
There is one additional root of  $\dot\Dl$ in  \SpecDom. It is purely imaginary and denoted by  $\dot\lm_*$.
We refer to Section \ref{sec:realAmostReal} where we will also consider potentials $v \in \Hp$ near $\Hr$.



\section{Gradients}\label{gradients}

In this section we derive formulas for the $L^2-$gradients of various quantities, needed later. 
In Section \ref{subsec:gradients} we  compute the $L^2-$gradient of the Floquet matrix $\grave{M}(\lm,v) = M(1,\lm,v)$ and then use it to obtain formulas for the $L^2-$gradients 
of simple eigenvalues of $Q(v)$ such as simple periodic or Dirichlet 
eigenvalues. In Section \ref{subsec:floquet}, we compute the Floquet solutions of $Q(v)$ 
and use them to simplify the formulas for the $L^2-$gradients of periodic eigenvalues of $Q(v)$,
obtained in Section \ref{subsec:gradients}. In Section \ref{asymptotics}, we derive asymptotics
of the $L^2-$gradients of various quantities such as $\grave{M}$.
Throughout this section we use the regularity properties of the fundamental solution $M(x, \lambda, v)$ established in
\cite[Section 2]{LOperator2018} (cf.  in particular \cite[Theorem 2.2]{LOperator2018}).

We denote by $\mathrm{d}F$ the differential of a map $F:\Hp \to X$ with values in a complex Banach space $X$ and by $\mathrm{d}F[\mathring{v}]$ 
the directional derivative of $F$ in direction $\mathring{v}=(\mathring{q},\mathring{p})\in\Hp$. Furthermore if $F$ takes values in $\C$ then $\partial_q F,\partial_pF$ denote the $L^2$-gradients of $F$ with respect to $q$ and $p$ and $\partial F \equiv \partial_v F$ the one of $F$,  $\partial F= (\partial_q F,\partial_p F)$. Here $\langle \partial_q F,\mathring{q}\rangle_r = \mathrm{d}
F[\mathring{q},0]$ where $\langle\cdot,\cdot\rangle_r$
denotes the $L^2$-pairing (no complex conjugation)
\[
\langle f,g\rangle_r = \int_0^1 f(x)g(x)\dx \quad \forall  f,g\in L^2(\T,\C)
\]
and its extension to pairings between $H^n_\C$ and $H^{-n}_\C$.

\subsection{Formulas for $L^2-$gradients. Part 1.}\label{subsec:gradients}

Recall that $M(x,\lm,v)$ denotes the fundamental solution of \eqref{eq:dxM} and $I,J,Z,R,$ and $P$ are given by
\[
I=\begin{pmatrix}1&\\&1\end{pmatrix}, \quad J=\begin{pmatrix}&1\\-1\end{pmatrix},\quad Z= \begin{pmatrix}&1\\1\end{pmatrix},\quad R=\begin{pmatrix} \ii \\ & -\ii\end{pmatrix}, \quad \Op=\sqrt{1-\partial_x^2}.
\]
For a matrix valued function $A(x)$ of a real variable $x$,  $\Op(A)(x)$ is the matrix valued function obtained by applying $\Op$ to each matrix coefficient of $A(x)$. 
Furthermore, we denote by $EV_0$ the evaluation map $\Hp\to\C,(q,p)\mapsto q(0)$ and by $\text{Mat}_{2\times 2}(\C)$
the $\mathbb C-$vector space of $2\times 2$ matrices with complex coefficients.
\begin{prop} \label{prop:Mgradient}
For any fixed $\lm\in\C^*$, the $L^2-$gradient of $\grave{M}(\lm,v) $ at $v\in \Hp[1]$ is given by
\begin{align}\label{eq:prodMgradientOne}
\partial_q \grave{M} =&-\frac14 \grave{M} M^{-1}\ii RM\cdot \partial_x(\cdot)
  -\frac{1}{16\,\lm}\grave{M}M^{-1} \begin{pmatrix}&e^q \\ e^{-q}\end{pmatrix} M
\\ \partial_p \grave{M} =& -\frac\ii4 \grave{M}  M^{-1} R M \cdot \Op (\cdot). \label{eq:prodMgradientTwo}
\end{align}
Alternatively $\partial_q\grave{M}$ can be written as
\begin{equation} \label{eq:prodMgradientThree}
\partial_q\grave{M} = \frac{1}{2}\begin{pmatrix}&\grave{m}_2\\ -\grave{m}_3 &\end{pmatrix}EV_0 - \frac12 \grave{M}M^{-1}\Big(\lm Z + \frac{1}{16\,\lm}\begin{pmatrix}&e^q \\ e^{-q}\end{pmatrix}\Big) M.
\end{equation}
Here $\partial_q\grave{M}$ and $\partial_p \grave{M}$ are viewed as elements in $H^{-1}(\T, {\rm{Mat}}_{2\times 2}(\C))$, meaning that for
any $\mathring{q}, \mathring{p}\in H^1(\T,\C)$, one has
$$
\langle \partial_p \grave{M},\mathring{p}\rangle_r = \langle -\frac\ii4 \grave{M} M^{-1} RM\, ,\, \Op(\mathring{p})\rangle_r
$$ 
$$
\langle \partial_q \grave{M},\mathring{q}\rangle_r =
\langle -\frac14\grave{M} M^{-1}\ii RM,\partial_x \mathring{q}\rangle_r - \langle \frac{1}{16\,\lm}\grave{M}M^{-1} \begin{pmatrix}&e^q \\ e^{-q}\end{pmatrix} M,\mathring{q}\rangle_r.
$$
\end{prop}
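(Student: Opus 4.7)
The strategy is standard first-order perturbation theory for the fundamental solution. Writing
$$V(\lambda, v) := \lambda I + \tfrac{1}{4}(\Op p + q_x) Z - \tfrac{1}{16\lambda}\begin{pmatrix}e^{-q} & \\ & e^q\end{pmatrix},$$
so that $\partial_x M = JV M$ with $M(0) = I$, I would differentiate this ODE in direction $\mathring v = (\mathring q, \mathring p)$. Setting $N := \mathrm dM[\mathring v]$ gives the inhomogeneous linear equation $\partial_x N = JVN + J\,\mathrm dV[\mathring v]\cdot M$ with $N(0) = 0$, which by variation of constants integrates to
$$\mathrm d\grave M[\mathring v] = \grave M \int_0^1 M^{-1}(s)\, J\,\mathrm dV(s)[\mathring v]\, M(s)\, ds.$$
The regularity needed to differentiate $M$ with respect to $v$ and to manipulate this integral is supplied by \cite[Theorem 2.2]{LOperator2018}.

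A direct computation gives $\mathrm dV[\mathring v] = \tfrac{1}{4}(\Op\mathring p + \mathring q_x)Z + \tfrac{1}{16\lambda}\mathrm{diag}(e^{-q}, -e^q)\mathring q$. The key algebraic identity is $\ii R = -JZ$ (an immediate $2\times 2$ check), which turns $J\cdot \tfrac{1}{4}Z$ into $-\tfrac{\ii}{4}R$; together with $J\cdot\mathrm{diag}(-e^{-q}, e^q) = \bigl(\begin{smallmatrix}0 & e^q \\ e^{-q} & 0\end{smallmatrix}\bigr)$ this lets me read off the $\mathring q$- and $\mathring p$-contributions to the integrand separately. The $\mathring p$-part gives $\langle -\tfrac{\ii}{4}\grave M M^{-1}R M,\,\Op\mathring p\rangle_r$, and self-adjointness of $\Op = \sqrt{1-\partial_x^2}$ with respect to the bilinear pairing $\langle\cdot,\cdot\rangle_r$ (no conjugation) lets me transfer $\Op$ to the kernel, yielding \eqref{eq:prodMgradientTwo} as an element of $H^{-1}(\T,\mathrm{Mat}_{2\times 2}(\C))$. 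The $\mathring q$-part splits into an $\mathring q_x$-piece and a pointwise $\mathring q$-piece, giving \eqref{eq:prodMgradientOne}.

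For the alternative form \eqref{eq:prodMgradientThree}, I would integrate the $\mathring q_x$-term by parts. Using $\partial_x M = JVM$ and $\partial_x M^{-1} = -M^{-1}JV$,
$$\partial_x\bigl(M^{-1}(-\tfrac{1}{4}\ii R)M\bigr) = \tfrac{1}{4}M^{-1}[JZ, JV]M.$$
The commutators are short calculations: $[JZ, J] = 2Z$, $[JZ, JZ] = 0$, and $[JZ, J\,\mathrm{diag}(e^{-q}, e^q)] = 2\bigl(\begin{smallmatrix}0 & e^q \\ e^{-q} & 0\end{smallmatrix}\bigr)$, so the interior integrand becomes $M^{-1}\bigl(\tfrac{\lambda}{2}Z - \tfrac{1}{32\lambda}\bigl(\begin{smallmatrix}0 & e^q\\e^{-q}&0\end{smallmatrix}\bigr)\bigr)M$. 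The boundary term, using $M(0) = I$, $M(1) = \grave M$ and the periodicity $\mathring q(0) = \mathring q(1) = EV_0(\mathring q)$, telescopes to $\tfrac{\ii}{4}[\grave M, R]\, EV_0 = \tfrac{1}{2}\bigl(\begin{smallmatrix}0 & \grave m_2\\ -\grave m_3 & 0\end{smallmatrix}\bigr)EV_0$. Combining the integrated-by-parts interior term with the exponential term already present in \eqref{eq:prodMgradientOne} cancels half of the $\bigl(\begin{smallmatrix}0 & e^q\\e^{-q}&0\end{smallmatrix}\bigr)$-coefficient and leaves precisely $-\tfrac{1}{2}\grave M M^{-1}\bigl(\lambda Z + \tfrac{1}{16\lambda}\bigl(\begin{smallmatrix}0 & e^q\\e^{-q}&0\end{smallmatrix}\bigr)\bigr)M$, which is \eqref{eq:prodMgradientThree}.

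The only non-routine point is the sign/coefficient bookkeeping when combining the integration-by-parts interior contribution with the exponential term; everything else is elementary matrix algebra together with the identity $\ii R = -JZ$. The boundary contribution produces a distribution supported at a point (the $EV_0$ term) precisely because $M(0) = I$ makes the $x=0$ endpoint contribution independent of $\grave M$, while the $x=1$ contribution conjugates to the same constant matrix; their difference, pre-multiplied by $\grave M$, collapses to the commutator $[\grave M, R]$.
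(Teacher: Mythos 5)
Your proof is correct and follows essentially the paper's approach: differentiate the ODE $\partial_x M = JVM$ in $v$, solve for $\mathrm{d}\grave M[\mathring v]$ by variation of constants, use $\ii R = -JZ$ to split the integrand into the $\mathring p$- and $\mathring q$-parts, and integrate the $\mathring q_x$-piece by parts to reach \eqref{eq:prodMgradientThree}; your commutator and boundary-term computations match the paper's. One small prose slip: the identity actually needed for the exponential term is $J\,\mathrm{diag}(e^{-q},-e^q)=-\bigl(\begin{smallmatrix}0&e^q\\ e^{-q}&0\end{smallmatrix}\bigr)$, the negative of the one you quoted, but the sign ultimately lands correctly in your stated final formulas.
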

\begin{proof}
By \cite[Theorem 2.2]{LOperator2018}, for any given $\lambda \in \C^*$,
the Floquet matrix $M(1,\lm,v)$ is analytic in $v$. Since all terms in the above formulas depend continuously on $v$ it suffices to verify them for sufficiently smooth $v$ for which we may interchange differentiation with respect to $x$ and $v$. For $\mathring{v}=(\mathring{q},\mathring{p})\in \Hp$ take the derivative of both sides of equation \eqref{eq:dxM} in direction $\mathring{v}$, to  obtain
\begin{equation}\label{eq:dM}
 \partial_x \mathrm{d}M[\mathring{v}] =J\left(\lm - A - B^2/\lm \right)\mathrm{d} M[\mathring{v}] -J\left(\mathrm{d}A[\mathring{v}]+\mathrm{d}(B^2)[\mathring{v}]/\lm\right)M
 \end{equation}
where by \eqref{eq:QoperatorAandB}
\[
  \mathrm{d}A [\mathring{v}]=-\frac1 4(\Op\mathring{p}+\mathring{q}_x) Z \qquad \mathrm{d}(B^2 )
  [\mathring{v}]=\frac{1}{16}\begin{pmatrix}-e^{-q}
 & \\ & e^q
 \end{pmatrix}\mathring{q}.
\]
Furthermore since $\left.M(x)\right|_{x=0}=I$ one has $\mathrm{d}M(x)[\mathring{v}]\Big|_{x=0} = 0$. Since $\mathrm{d}M(x)[\mathring{v}]$ solves the linear differential equation \eqref{eq:dM} it then follows that
\begin{equation}\label{eq:dMh}
  \mathrm{d}M(x) [\mathring{v}]= - M(x)\int_0^x M^{-1}(s) J \big(\mathrm{d}  A[\mathring{v}] +\mathrm{d} (B^2)[\mathring{v}]/\lm \big)M(s)\mathrm{d}s.
\end{equation}
For $\mathring{q} = 0$ the integrand equals
\[
M^{-1}(s)J\mathrm{d} A[0,\mathring{p}]M(s)
= -\frac14 (\Op\mathring{p}) M^{-1}(s)JZM(s)
= \frac14(\Op\mathring{p}) M^{-1}(s)\ii R M(s).
\]
Evaluating $\mathrm{d}M(x)[0,\mathring{p}]$ at $x=1$ yields  the claimed formula \eqref{eq:prodMgradientTwo} for $\partial_p \grave{M}$.\\
For $\mathring{p}= 0$, the integrand of \eqref{eq:dMh} equals
\begin{align}
\begin{split}\label{eq:gradientInp}
M^{-1}(s)J& (\mathrm{d}  A[\mathring{q},0]+\mathrm{d} B^2[\mathring{q},0]/\lm)M(s)
\\&=\frac14 \mathring{q}_x M^{-1}(s)\ii RM(s)
  +\mathring{q}\frac{1}{16\,\lm}M^{-1}(s) J\begin{pmatrix}-e^{-q}& \\ & e^q \end{pmatrix} M(s).
\end{split}
\end{align}
Evaluating $-M(x)\int_0^xM^{-1}(s)J\mathrm{d}A[\mathring{q},0]M(s)\mathrm{d}s$ at $x=1$ one obtains \eqref{eq:prodMgradientOne}. Furthermore, integrating by parts yields
\[
 -\grave{M}\int_0^1 \frac14\mathring{q}_x(s) M^{-1}(s)\ii R M(s)\ds = \frac{1}{2}\begin{pmatrix}&\grave{m}_2\\ -\grave{m}_3 &\end{pmatrix} \mathring{q}(0) 
 + \frac14 \grave{M}\int_0^1 \mathring{q}(s) \partial_s \big( M^{-1}(s)\ii R M(s) \big)\ds.
\]
Since
\begin{align*}
\partial_s(M^{-1}\ii R M) =& -M^{-1}(\partial_s M)M^{-1}\ii RM+ M^{-1}\ii R(\partial_s M)
= M^{-1}[\ii R\;,\;(\partial_s M)M^{-1}]M
\\=& M^{-1} [\ii R\;,\; J(\lm-A-B^2/\lm)] M = M^{-1}\Big(-2\lm Z + \frac{2}{16\,\lm}\begin{pmatrix}&e^q \\ e^{-q}\end{pmatrix}\Big) M
\end{align*}
(where $[A,B]$ denotes the commutator $AB-BA$ of two square matrices $A,B$) one concludes that
\begin{align*}
 \mathrm{d}\grave{M}[\mathring{q},0]=& -\grave{M}\int_0^1 M^{-1}(s)J \Big(\mathrm{d}  A[\mathring{q},0]+\mathrm{d} B^2[\mathring{q},0]/\lm\Big)M(s)\dx
 \\=&\frac{1}{2}\begin{pmatrix}&\grave{m}_2\\ -\grave{m}_3 &\end{pmatrix} \mathring{q}(0)
 \\ &-\frac12\grave{M}\int_0^1 \mathring{q}M^{-1}\Big(\lm Z - \frac{1}{16\,\lm}\begin{pmatrix}&e^q \\ e^{-q}\end{pmatrix}\Big) M  +  \mathring{q}\frac{2}{16\,\lm}M^{-1} J\begin{pmatrix}-e^{-q}& \\ & e^q \end{pmatrix} M\dx
 \end{align*}
 or
 \[
 \partial_q\grave{M} =\frac{1}{2}\begin{pmatrix}&\grave{m}_2\\ -\grave{m}_3 &\end{pmatrix} Ev_0
 -\frac12\grave{M} M^{-1}\Big(\lm Z + \frac{1}{16\,\lm}\begin{pmatrix}&e^q \\ e^{-q}\end{pmatrix}\Big) M .
\]
This proves \eqref{eq:prodMgradientThree}
\end{proof}

Proposition \ref{prop:Mgradient} can be used to compute the gradients of the discriminant $\Dl = (\grave{m}_1+\grave{m}_4)/2$ and the anti-discriminant $\dl = (\grave{m}_1-\grave{m}_4)/2$.

\begin{lem} \label{lem:gradientDldl}

\begin{thmenum}
\item For any fixed $\lm\in\C^*$, the $L^2-$gradient of $\Dl = \Dl(\lm)$ at $v\in\Hp[1]$ is given by
\begin{align*}
  \partial_q\Dl = & \frac\lm4\Big( \grave{m}_2(m_3^2-m_1^2) + \grave{m}_3(m_2^2-m_4^2) + 2\dl \cdot (m_1m_2-m_3m_4) \Big)
  \\ &\indent +\frac{1}{64 \,\lm} \Big(e^{-q}(\grave{m}_3 m_2^2 -\grave{m}_2 m_1^2  + 2\dl \cdot m_1m_2 ) + e^q ( \grave{m}_2 m_3^2-\grave{m}_3 m_4^2 - 2\dl \cdot m_3m_4)\Big)
\\ \partial_p\Dl = &\frac{1}{4}\Big(-\grave{m}_2m_1m_3+\grave{m}_3m_2m_4 + \delta \cdot  ({m}_1{m}_4+{m}_2{m}_3\Big)\Op(\cdot) ,
\end{align*}
\item For any fixed $\lm\in\C^*$, the $L^2-$gradient of the anti-discriminant is given by
\begin{align*}
\partial_q \delta =& \frac\lm4\Big(\grave{m}_2 (m_3^2-m_1^2) +  \grave{m}_3( m_4^2 - m_2^2   ) + 2\Dl\cdot(m_1m_2-m_3m_4 )\Big)
\\ &\indent -\frac{1}{64\,\lm} \Big(e^{-q}(\grave{m}_2 m_1^2 + \grave{m}_3m_2^2 - 2\Dl \cdot m_1m_2) + e^q( -\grave{m}_3m_4^2 - \grave{m}_2m_3^2 + 2\Dl\cdot  m_3m_4 )\Big)
\\	\partial_p\delta =&  \frac{1}{4}\Big(- \grave{m}_3m_2m_4 - \grave{m}_2m_1m_3  +\Dl \cdot ({m}_1{m}_4+ {m}_2{m}_3) \Big)\Op(\cdot).
\end{align*}
\item At the zero potential $v=0$, one has $\partial\Dl =0$ and $\partial \dl$ is given by
\begin{align*}
\partial_q\dl(\lm,0) =& \frac12\Big(\lm +\frac{1}{16\,\lm}\Big)\Big(\cos(\omega(\lm))\sin(2\omega(\lm)x)-\sin(\omega(\lm))\cos(2\omega(\lm)x)\Big)
\\
\partial_p\delta(\lm,0) =& \frac{\cos(\omega(\lm))}{4}\Big(\cos(2\omega(\lm)x)\Big)\Op(\cdot) +\frac{\sin(\omega(\lm))}4\Big(\sin(2\omega(\lm)x)\Big)\Op(\cdot).
\end{align*}
\end{thmenum}
\end{lem}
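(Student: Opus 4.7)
My plan is to deduce parts (i) and (ii) directly from Proposition~\ref{prop:Mgradient}. Since $\Delta = (\grave m_1 + \grave m_4)/2$ and $\delta = (\grave m_1 - \grave m_4)/2$ are, respectively, half the trace and half the difference of the diagonal entries of $\grave M$, their $L^2$-gradients are obtained from $\partial_p \grave M$ and $\partial_q \grave M$ by applying these two linear operations. The computation then reduces to expanding the $2\times 2$ matrix products appearing in \eqref{eq:prodMgradientOne}--\eqref{eq:prodMgradientThree}, using $M^{-1} = \begin{pmatrix} m_4 & -m_2 \\ -m_3 & m_1 \end{pmatrix}$, which is valid because the Wronskian identity gives $\det M = m_1 m_4 - m_2 m_3 = 1$.

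For the $\partial_p$-gradients, I would start from \eqref{eq:prodMgradientTwo} and compute the four entries of $\grave M M^{-1} R M$ explicitly. Writing $R = \mathrm{diag}(\ii, -\ii)$, one finds that its trace equals $\ii\bigl[2\delta(m_1 m_4 + m_2 m_3) + 2(\grave m_3 m_2 m_4 - \grave m_2 m_1 m_3)\bigr]$ and the difference of its diagonal entries equals $\ii\bigl[2\Delta(m_1 m_4 + m_2 m_3) - 2(\grave m_2 m_1 m_3 + \grave m_3 m_2 m_4)\bigr]$; multiplying by the prefactor $-\ii/4$ from \eqref{eq:prodMgradientTwo} and by the factor $1/2$ from taking the trace or diagonal-difference of $\grave M$ then yields the announced formulas. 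For the $\partial_q$-gradients I would use the alternative form \eqref{eq:prodMgradientThree} rather than \eqref{eq:prodMgradientOne}, since the off-diagonal boundary term $\tfrac12\begin{pmatrix} 0 & \grave m_2 \\ -\grave m_3 & 0 \end{pmatrix} EV_0$ contributes neither to the trace nor to the diagonal-entry difference and one thereby avoids any integration by parts. One is then reduced to computing the trace and the diagonal-difference of $\grave M M^{-1} Z M$ and of $\grave M M^{-1} \begin{pmatrix} 0 & e^q \\ e^{-q} & 0 \end{pmatrix} M$. The former gives the terms proportional to $\lambda$ in (i) and (ii); the latter, coupled with the prefactor $1/(16\lambda)$, yields the $1/(64\lambda)$ contributions involving $e^{\pm q}$.

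Part (iii) is a direct substitution: at $v=0$, \eqref{eq:Eomega} gives $m_1 = m_4 = \cos(\omega x)$ and $m_2 = -m_3 = \sin(\omega x)$, hence $\grave m_1 = \grave m_4 = \cos \omega$, $\grave m_2 = -\grave m_3 = \sin\omega$, and in particular $\delta(\lambda, 0) = 0$. Plugging these into the formulas of (i), the symmetries $\grave m_1 = \grave m_4$ and $\grave m_2 + \grave m_3 = 0$, together with $\delta(\lambda, 0) = 0$, force all the terms of $\partial_q \Delta(\lambda, 0)$ and $\partial_p \Delta(\lambda, 0)$ to cancel in pairs, so $\partial \Delta(\lambda, 0) = 0$. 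Inserting the same values into the formulas of (ii) and applying the double-angle identities $\cos^2(\omega x) - \sin^2(\omega x) = \cos(2\omega x)$ and $2\sin(\omega x)\cos(\omega x) = \sin(2\omega x)$, one obtains precisely the combinations $\cos\omega\,\sin(2\omega x) - \sin\omega\,\cos(2\omega x)$ and $\cos\omega\,\cos(2\omega x) + \sin\omega\,\sin(2\omega x)$; the $\lambda/4$ and $1/(64\lambda)$ contributions to $\partial_q \delta(\lambda, 0)$ then combine into the prefactor $\tfrac12(\lambda + \tfrac{1}{16\lambda})$, giving the stated identities. No step is conceptually difficult; the only real pitfall is bookkeeping the signs coming from the right-multiplication by $Z$ (which swaps columns) and from the diagonal-difference operation.
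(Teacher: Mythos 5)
Your proposal is correct and follows essentially the same route as the paper: deduce (i) and (ii) from Proposition~\ref{prop:Mgradient} by taking the trace and diagonal-difference of $\partial_q\grave M$ and $\partial_p\grave M$, then obtain (iii) by substituting $M=E_{\omega(\lambda)}$ and simplifying with double-angle identities. Your observation that the boundary term $\tfrac12\begin{pmatrix} 0 & \grave m_2 \\ -\grave m_3 & 0\end{pmatrix}EV_0$ in \eqref{eq:prodMgradientThree} is off-diagonal and hence drops out of both $\partial_q\Delta$ and $\partial_q\delta$ is exactly the right shortcut, and the intermediate trace/diagonal-difference expressions you record for $\grave M M^{-1}RM$ are correct.
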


\begin{proof} Items (i) and (ii) follow from
Proposition \ref{prop:Mgradient}. To prove item (iii) substitute $E_{\omega(\lm)}$, defined in \eqref{eq:Eomega}, for $M$ into the formulas in item (i) to conclude that $\partial\Dl$ vanishes. For $\dl$ we obtain
\begin{align*}
\partial_q\dl(\lm,0) =& \frac14\Big(\lm +\frac{1}{16\,\lm}\Big)\Big(2\sin(\omega(\lm))\big( \sin^2(\omega(\lm)x)-\cos^2(\omega(\lm)x)  \big) + 4\cos(\omega(\lm))\sin(\omega(\lm)x)\cos(\omega(\lm)x) \Big) \
\end{align*}
and
\begin{align*}
\partial_p\delta(\lm,0) =& \frac{\cos(\omega(\lm))}4 \Big(\cos^2(\omega(\lm)x)-\sin^2(\omega(\lm)x)\Big)\Op(\cdot)
+\frac12\sin(\omega(\lm)) \Big(\sin(\omega(\lm)x)\cos(\omega(\lm)x)\Big)\Op(\cdot)
\end{align*}
which can be further simplified using that $\cos^2(x)-\sin^2(x) = \cos(2x)$ and $2\sin(x)\cos(x) = \sin(2x)$.
\end{proof}

Next we want to obtain formulas for the $L^2-$gradient of simple periodic, antiperiodic, and Dirichlet
eigenvalues of the operator $Q(v)$, when considered on $L^2([0, 1],\C^2)$.
(Recall that the periodic spectrum of $Q(v)$ is the union of the spectrum of $Q(v)$, when considered as an operator on 
$L^2([0, 1],\C^2)$ with periodic boundary conditions ($F(1) = F(0)$) and the one of $Q(v)$,  when considered as an operator on 
$L^2([0, 1],\C^2)$ with antiperiodic  boundary conditions ($F(1)= - F(0)$).
First we prove the following auxiliary result.
Recall that for any $\lm \in \C^*$, a solution $F$ of $QF=\lm F$  has the form $F = (f, \frac{1}{\lm}Bf)$ with $f(x) = M(x,\lm)a$ and $a\in \C^2$.
\begin{prop}\label{prop:spectrumGradientGeneral}
Assume that $\kappa \equiv \kappa(v)\in\C^*$ and $a(v)\in\C^2$ are  analytic functions on some open set in $\Hp[1]$ and define $f(x,v) = (f_1(x,v),f_2(x,v))=M(x,\kappa(v),v)a(v)\in H^1([0,1],\C^2)$. Then for any $\mathring{v}=(\mathring{q},\mathring{p})\in \Hp[1]$, the derivative $\ddh\kappa$ of $\kappa$ at $v$ in direction $\mathring{v}$ is given by
 \begin{align*}
 \ddh \kappa =  \frac{1}{\int_0^1 f\cdot (I+\frac{1}{\kappa^2}B^2)f\dx} \Big( & \left[\ddh f\cdot Jf - \frac12 \mathring{q}f_1f_2 \right]_0^1 + \int_0^1\Big(\frac{\kappa}{2} (f_2^2-f_1^2) +\frac{1}{32\,\kappa}(f_2^2e^q-f_1^2e^{-q}) \Big)\mathring{q}\dx
 \\&\indent- \frac12\int_0^1  f_1f_2\Op (\mathring{p})\dx\Big).
\end{align*}
\end{prop}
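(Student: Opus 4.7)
My plan is to use the fact that, although $\kappa(v)$ and $a(v)$ are arbitrary analytic maps, by \eqref{eq:dxM} the function $f(x,v)=M(x,\kappa(v),v)a(v)$ automatically satisfies the first-order system
\[
-J\partial_x f + (A + B^2/\kappa)f = \kappa f
\]
identically in $x$ and $v$. Differentiating this identity in direction $\mathring v$, and using the chain rule on the factor $1/\kappa$ so that $\ddh\kappa$ enters through $-B^2\ddh\kappa/\kappa^2$, yields the inhomogeneous equation
\[
(-J\partial_x + A + B^2/\kappa - \kappa)\ddh f \;=\; \ddh\kappa\,(I + B^2/\kappa^2)f \;-\; \bigl(\mathrm{d}A[\mathring v] + \mathrm{d}(B^2)[\mathring v]/\kappa\bigr)f.
\]

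The next step is to take the $\C$-bilinear $L^2$-pairing (dot product, no complex conjugation) with $f$ and move the operator onto $f$. The key algebraic observation is that the $2\times 2$ matrices $A$ and $B^2$ are symmetric while $J$ is antisymmetric, so a single integration by parts gives the bilinear adjoint identity
\[
\int_0^1 f\cdot\mathcal{L}g\,\dx - \int_0^1 \mathcal{L}f\cdot g\,\dx = [g\cdot Jf]_0^1, \qquad \mathcal{L} := -J\partial_x + A + B^2/\kappa,
\]
for any $g\in H^1([0,1],\C^2)$. Applying it with $g=\ddh f$ and using $\mathcal{L}f=\kappa f$ to cancel the bulk term on the right, the pairing of the differentiated equation with $f$ collapses to the scalar identity
\[
[\ddh f\cdot Jf]_0^1 \;=\; \ddh\kappa\int_0^1 f\cdot(I+B^2/\kappa^2)f\,\dx \;-\; \int_0^1 f\cdot\bigl(\mathrm{d}A[\mathring v] + \mathrm{d}(B^2)[\mathring v]/\kappa\bigr)f\,\dx,
\]
from which $\ddh\kappa$ is solved for (giving the denominator displayed in the proposition).

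It remains to unpack the last integral using \eqref{eq:QoperatorAandB}, which gives $f\cdot\mathrm{d}A[\mathring v]f = -\tfrac12(\Op\mathring p + \mathring q_x)f_1f_2$ and $f\cdot\mathrm{d}(B^2)[\mathring v]f = \tfrac{1}{16}\mathring q(e^q f_2^2 - e^{-q}f_1^2)$. The $\Op\mathring p$ contribution is already in the stated form; the $\mathring q_x$ contribution must be converted to a $\mathring q$ contribution by a second integration by parts. This produces the boundary term $-\tfrac12[\mathring q f_1f_2]_0^1$, which combines with $[\ddh f\cdot Jf]_0^1$ into the bracket $[\ddh f\cdot Jf - \tfrac12\mathring q f_1f_2]_0^1$, and an interior term $\tfrac12\int_0^1\mathring q(f_1f_2)_x\,\dx$, where $(f_1f_2)_x$ is computed from the first-order system $\partial_x f = J(\kappa I - A - B^2/\kappa)f$ as $\kappa(f_2^2-f_1^2) + \tfrac{1}{16\kappa}(e^{-q}f_1^2 - e^q f_2^2)$. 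Adding the two $1/\kappa$ pieces (this one and the $\mathrm{d}(B^2)[\mathring v]/\kappa$ contribution) gives after partial cancellation the coefficient $\tfrac{1}{32\kappa}(f_2^2 e^q - f_1^2 e^{-q})$, exactly as claimed. The only delicate point is this last partial cancellation combined with careful sign bookkeeping in the bilinear pairing: it is the antisymmetry of $J$, rather than Hermitian skew-adjointness, that produces the boundary pairing $[g\cdot Jf]_0^1$, so all signs must be tracked through the non-Hermitian dot product.
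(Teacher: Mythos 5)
Your argument is correct and essentially follows the paper's own route: differentiate the eigenvalue equation in the direction $\mathring v$, pair with the eigenfunction and move the first-order operator across to produce the boundary term $[\ddh f\cdot Jf]_0^1$, compute the contributions of $\ddh A$ and $\ddh(B^2)$, and integrate by parts a second time to trade $\mathring q_x$ for $\mathring q$ via $\partial_x(f_1f_2)$. The one cosmetic difference is that you work directly with the two-dimensional reduced equation $\mathcal L f=\kappa f$, where $\mathcal L=-J\partial_x+A+B^2/\kappa$ (so the weight $\int_0^1 f\cdot(I+B^2/\kappa^2)f$ arises from the chain rule applied to $B^2/\kappa$), whereas the paper first differentiates $QF=\kappa F$ with $F=(f,\kappa^{-1}Bf)$ and obtains the same weight as $\int_0^1 F\cdot F$; both produce the intermediate identity of Lemma~\ref{lem:eigenfunctionGrad} and then conclude exactly as you do.
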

To prove Proposition \ref{prop:spectrumGradientGeneral} we first prove the following two lemmas.
\begin{lem}\label{lem:eigenfunctionGrad}
 Let $\kappa(v)$ and $f(v) = (f_1(v),f_2(v))$ be given as in Proposition \ref{prop:spectrumGradientGeneral}. Then for any $\mathring{v}=(\mathring{q},\mathring{p})\in \Hp[1]$,  $\ddh \kappa$ equals
\begin{equation}\label{eq:lemeigenfunctionGrad}\frac{1}{\int_0^1 f\cdot(I+\frac{1}{\kappa^2}B^2)f\dx} \left([ \ddh f\cdot Jf]_0^1 + \int_0^1\frac{1}{16\kappa}(f_2^2e^q - f_1^2e^{-q})\mathring{q} -\frac12(\Op (\mathring{p}) +\partial_x \mathring{q})f_1f_2\dx\right).
\end{equation}
\end{lem}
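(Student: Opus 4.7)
The starting point is the observation that, by the discussion preceding \eqref{eq:reducedEquation}, the function $f(\cdot,v) = M(\cdot,\kappa(v),v)a(v)$ satisfies the reduced equation
\begin{equation*}
L_v f := \bigl(-J\partial_x + A(v) + B(v)^2/\kappa(v) - \kappa(v) I\bigr) f = 0 \, .
\end{equation*}
The plan is to differentiate this identity in the direction $\mathring v = (\mathring q,\mathring p)$, pair the resulting equation against $f$ with respect to the symmetric bilinear pairing $\langle\cdot,\cdot\rangle_r$, and integrate by parts to isolate $d\kappa[\mathring v]$. Differentiating $L_v f \equiv 0$ yields
\begin{equation*}
L_v\bigl(\ddh f\bigr) \;=\; -\bigl(\ddh A + \ddh(B^2)/\kappa\bigr) f \;+\; \ddh \kappa \,\bigl(I + B^2/\kappa^2\bigr) f \, ,
\end{equation*}
where by \eqref{eq:QoperatorAandB} we have $\ddh A = -\tfrac14(\Op\mathring p + \mathring q_x)Z$ and $\ddh(B^2) = \tfrac{1}{16}\mathring q\,\mathrm{diag}(-e^{-q},e^q)$.

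The main step is then a ``formal self-adjointness'' computation. Since $J^T = -J$ while $Z$, $B^2$ and $I$ are symmetric $2{\times}2$ matrices, integration by parts gives, for any two sufficiently smooth $\C^2$-valued functions $u,w$ on $[0,1]$,
\begin{equation*}
\int_0^1 (L_v u)\cdot w \,\dx \;=\; \bigl[u\cdot Jw\bigr]_0^1 \;+\; \int_0^1 u\cdot (L_v w)\,\dx \, .
\end{equation*}
Applying this with $u = \ddh f$ and $w = f$, and using that $L_v f = 0$, the left-hand side of the differentiated equation paired with $f$ reduces to the boundary contribution $[\ddh f\cdot Jf]_0^1$. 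On the right-hand side, the quadratic forms simplify using $Zf\cdot f = 2 f_1 f_2$ and $\mathrm{diag}(-e^{-q},e^q)f\cdot f = -e^{-q}f_1^2 + e^{q}f_2^2$, so that
\begin{equation*}
\int_0^1 \bigl(\ddh A + \ddh(B^2)/\kappa\bigr) f\cdot f\,\dx
\;=\; \int_0^1 \Bigl( \tfrac{1}{16\kappa}(f_2^2 e^q - f_1^2 e^{-q})\mathring q - \tfrac12 (\Op\mathring p + \mathring q_x) f_1 f_2 \Bigr)\dx \, .
\end{equation*}
Solving the resulting scalar equation for $\ddh\kappa$ gives exactly \eqref{eq:lemeigenfunctionGrad}, provided the normalizing denominator $\int_0^1 f\cdot(I + B^2/\kappa^2)f\,\dx$ is nonzero; this nonvanishing will be addressed separately (it is an open condition in $v$ and holds in the situations of interest, in particular near simple eigenvalues, by the very argument leading to \eqref{eq:lemeigenfunctionGrad}).

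The only delicate point is keeping track of the boundary terms in the integration by parts; the anticipated obstacle is purely bookkeeping, since $J$ appears only through $\partial_x$ and the remaining zeroth-order terms in $L_v$ are manifestly symmetric. No regularity issue arises because $f\in H^1([0,1],\C^2)$ and $\ddh f$ inherits the same regularity from the smooth $v$-dependence of $M(x,\lambda,v)$ recorded in \cite[Theorem 2.2]{LOperator2018}; by continuity in $v$, the identity then extends from smooth $v$ to all $v\in H^1_c$ as in the proof of Proposition \ref{prop:Mgradient}.
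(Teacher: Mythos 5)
Your proof is correct and follows essentially the same route as the paper's: differentiate the eigenvalue relation in direction $\mathring v$, pair symmetrically against the eigenfunction, and use the skew-symmetry of $J$ together with the symmetry of the zeroth-order part to shift $L_v$ (respectively $Q$) across the pairing at the cost of the boundary term $[\ddh f\cdot Jf]_0^1$. The only cosmetic difference is that you work directly with the $2\times 2$ reduced operator from \eqref{eq:reducedEquation}, whereas the paper carries out the parallel computation for the $4\times 4$ operator $Q$ on $F=(f,\tfrac{1}{\kappa}Bf)$ and then reads off the $2\times 2$ quantities at the end; both lead to the same cancellations and the same formula.
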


\begin{proof}[Proof of Lemma \ref{lem:eigenfunctionGrad}]
Let $F= ( f, \frac{1}{\kappa}Bf )$ and differentiate both sides of the identity
\[
 QF(x,v) = \kappa(v) F(x,v)
\]
in the direction $\mathring{v} =(\mathring{q},\mathring{p})\in \Hp$  to obtain
\begin{equation}\label{eq:ddhQ}
(\ddh Q) F + Q\ddh F = (\ddh \kappa )F + \kappa \ddh F.
\end{equation}
Since $Q_0(v)$ is symmetric, one obtains
\begin{align*}
 \int_0^1 (Q\ddh F-\kappa \ddh F)\cdot F \dx =& \int_0^1 (Q_1 \partial_x + Q_0)\ddh F\cdot F - \ddh F\cdot (Q_1 \partial_x + Q_0) F
 \\ =& \int_0^1 \big(Q_1 \partial_x \ddh F\big)\cdot F - \ddh F\cdot (Q_1 \partial_x  F)
 \\=& \int_0^1 -J\partial_x (\ddh f)\cdot f - \ddh f\cdot (-J\partial_x f)\dx = [\ddh f\cdot J f]_{0}^1 .
\end{align*}
Thus \eqref{eq:ddhQ} yields
\begin{equation}\label{eq:proofLemddhQ}
 \int_0^1 (\ddh Q )F \cdot F\dx + [\ddh f\cdot Jf]_{0}^1 =  \ddh\kappa \int_0^1 F\cdot F\dx
 = \ddh\kappa \int_0^1 f(I+\frac{1}{\kappa^2} B^2)f\dx.
\end{equation}
Finally
\begin{align*}
 (\ddh Q F)\cdot F =& \begin{pmatrix} \ddh A & \ddh B\\ \ddh B \end{pmatrix} \begin{pmatrix} f \\ \frac{1}{\kappa}B f\end{pmatrix} \cdot \begin{pmatrix} f \\ \frac{1}{\kappa} Bf\end{pmatrix}
 \\ =& \ddh A f\cdot f +\frac{1}{\kappa} (\ddh B) B f\cdot f + \frac{1}{\kappa}(\ddh B) f\cdot B f
 \\ =& \ddh A f\cdot f + \frac{2}{\kappa}(\ddh B) B f\cdot f
\end{align*}
where $\frac{2}{\kappa}(\ddh B)B = \frac{1}{16\,\kappa} \begin{pmatrix}-e^{-q}\\&e^q\end{pmatrix}\mathring{q}$ and
\begin{align*}
   \ddh A f\cdot f =&  -\frac14( \Op\mathring{p}+ \partial_x \mathring{q}) f\cdot Zf.
\end{align*}
Hence
\[
 \ddh Q  F\cdot F = -\frac{1}{2}( \Op(\mathring{p}) +\partial_x\mathring{q})f_1f_2+\frac{1}{16\kappa}\left(  f_2^2e^q-f_1^2e^{-q} \right)\mathring{q}.
 \]
 Combining this with \eqref{eq:proofLemddhQ} leads to the claimed identity.
\end{proof}

\begin{lem} \label{lem:dxfstarf}
For $v\in\Hp[1]$, $\lm,\mu\in\C^*$, and $a_\lm,a_\mu\in\C^2$, let $f(x) = (f_1(x),f_2(x))=M(x,\lm,v)a_\lm$ and $g(x) = (g_1(x),g_2(x))=M(x,\mu,v)a_\mu$. Then
\begin{align*}
 \partial_x(f_1g_2+f_2g_1)  &= (\lm+\mu) (f_2g_2-f_1g_1)+\frac{1}{16}\left(\frac{1}{\lm}+\frac{1}{\mu}\right)(f_1g_1e^{-q}-f_2g_2e^q).
 \end{align*}
\end{lem}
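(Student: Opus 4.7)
The statement is a direct consequence of the first order ODE
\[
\partial_x f = J\bigl(\lm - A - B^2/\lm\bigr)f,
\]
satisfied by $f(x)=M(x,\lm,v)a_\lm$ (by \eqref{eq:dxM} and the fact that the columns of $M$ solve this equation), together with the analogous one for $g$ with $\mu$ in place of $\lm$. Writing out the matrix products using
\[
A = -\tfrac14(\Op p+q_x)Z, \qquad B^2 = \tfrac1{16}\begin{pmatrix}e^{-q}&\\&e^q\end{pmatrix},
\]
and $J=\left(\begin{smallmatrix}0&1\\-1&0\end{smallmatrix}\right)$, $Z=\left(\begin{smallmatrix}0&1\\1&0\end{smallmatrix}\right)$, I would first record the componentwise identities
\[
\partial_x f_1 = \lm f_2 + \tfrac14(\Op p+q_x)f_1 - \tfrac1{16\lm}e^q f_2,\qquad
\partial_x f_2 = -\lm f_1 - \tfrac14(\Op p+q_x)f_2 + \tfrac1{16\lm}e^{-q}f_1,
\]
and the corresponding ones for $g_1,g_2$ with $\mu$ in place of $\lm$.

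The plan is then to apply the Leibniz rule to $\partial_x(f_1 g_2 + f_2 g_1)$ and simply collect terms by their algebraic origin ($\lm$/$\mu$-terms, $A$-terms, $B^2$-terms). The $\lm$/$\mu$-contributions immediately combine to $(\lm+\mu)(f_2 g_2 - f_1 g_1)$. The $A$-contributions read
\[
\tfrac14(\Op p+q_x)\bigl(f_1 g_2 - f_1 g_2 - f_2 g_1 + f_2 g_1\bigr)=0,
\]
so they cancel; this is the one genuine observation in the proof, and it is forced by the particular symmetric combination $f_1 g_2+f_2 g_1$ together with the antidiagonal form of $JZ$. Finally the $B^2$-contributions assemble into
\[
\tfrac1{16}\Bigl(\tfrac1\lm+\tfrac1\mu\Bigr)\bigl(e^{-q}f_1 g_1 - e^q f_2 g_2\bigr),
\]
yielding exactly the claimed identity.

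There is no real obstacle here beyond the bookkeeping: once the ODE is written componentwise and the Leibniz rule is applied, the two $A$-terms cancel by inspection, and the remaining terms pair up symmetrically in $(\lm,\mu)$ because the dependence on the spectral parameter enters only through the scalar factors $\lm$ and $1/\lm$ (resp.\ $\mu$ and $1/\mu$). For this reason I would present the argument in a single short calculation, emphasizing the symmetric form of the combination $f_1g_2+f_2g_1$ that causes the cancellation of the $A$-terms.
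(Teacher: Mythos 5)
Your proof is correct and follows essentially the same route as the paper: differentiate $f_1 g_2 + f_2 g_1$ by the Leibniz rule, substitute the first-order ODE $\partial_x f = J(\lm - A - B^2/\lm)f$ (and its analogue for $g$), and collect terms. The only difference is presentation: the paper carries out the computation in matrix form, writing $\partial_x(f\cdot Zg)$ and using the identity $JZ = -ZJ$ to cancel the $A$-contribution, whereas you write the ODE componentwise and observe the same cancellation by inspection --- the algebraic content is identical.
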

\begin{proof}[Proof of Lemma \ref{lem:dxfstarf}]
Using that $f$ and $g$ fulfill \eqref{eq:dxM} we compute
\begin{align*}
\partial_x(f_1g_1+f_2g_2)&= \partial_x(f\cdot Z g) =  (J(\lambda - A- B^2/\lambda )f\cdot Z g)
   + (f\cdot  ZJ(\mu - A- B^2/\mu )g)
  \\ &= (\lm +\mu)f\cdot ZJ g +\frac14( \Op (p)+q_x)\left(JZf\cdot Zg + f\cdot ZJZ g\right)
  \\&\indent- \frac{1}{16}\left( \frac{1}{\lm}Je^{\ii Rq}f\cdot Zg + \frac{1}{\mu}f\cdot ZJe^{\ii Rq}g\right)
  \\ &= (\lm+\mu)(f_2g_2-f_1g_1) +\frac{1}{16}\left(\frac{1}{\lm}+\frac{1}{\mu}\right)(f_1g_1e^{-q}-f_2g_2e^q).
  \end{align*}
\end{proof}
\begin{proof}[Proof of Proposition \ref{prop:spectrumGradientGeneral}]
Our starting point is formula \eqref{eq:lemeigenfunctionGrad}. To obtain a formula for the directional derivative $\ddh\kappa$ we need to integrate the term $-\frac12 \int_0^1 (\partial_x \mathring{q})f_1f_2\dx$ in \eqref{eq:lemeigenfunctionGrad} by parts. Since by Lemma \ref{lem:dxfstarf} with $g=f$ and $\lm=\kappa$, $\mu=\kappa$
\[
 \int_0^1 (\partial_x \mathring{q})f_1f_2\dx = [\mathring{q}f_1f_2]_0^1 -   \int_0^1 \kappa\mathring{q} (f_2^2-f_1^2)+ \frac{1}{16\,\kappa}\mathring{q}(f_1^2 e^{-q}-f_2^2 e^q)\dx \, ,
\]
formula \eqref{eq:lemeigenfunctionGrad} yields
 \begin{align*}
 \ddh \kappa =  \frac{1}{\int_0^1 f\cdot (I+\frac{1}{\kappa^2}B^2)f\dx} &  \Big(\big[\ddh f\cdot Jf - \frac12 \mathring{q}f_1f_2 \big]_0^1 + \int_0^1\Big(\frac{\kappa}{2} (f_2^2-f_1^2) +\frac{1}{32\,\kappa}(f_2^2e^q-f_1^2e^{-q}) \Big)\mathring{q}\dx
 \\&\indent- \frac12\int_0^1  f_1f_2\Op (\mathring{p})\dx\Big).
\end{align*}
\end{proof}

For what follows it is useful to introduce for any for $f=(f_1,f_2) \in H^1_c([0, 1], \C^2)$, $v \in H^1_c$, and $\lm \in \C^*$ the expression
\begin{equation}\label{eq:Grad}
  \Grad{f}{\lm} := \begin{pmatrix} \frac{\lm}{2} (f_2^2-f_1^2) + \frac{1}{32\lm} (f_2^2 e^q-f_1^2e^{-q})\\   -\frac12 f_1f_2\Op(\cdot)\end{pmatrix}
\end{equation}
The first component of $ \Grad{f}{\lm}$ acts on $H^1([0,1], \C)$ as a multiplication operator and the second one 
as a pseudodifferential operator.
Furthermore, we denote  by $M_1$ and $M_2$  the first, respectively second column of $M$. 
(By a slight abuse of notation, we will often write  $M_1$ and $M_2$ as row vectors $(m_1,m_3)$  and respectively, $(m_2,m_4)$.) 

We will consider the operator $Q(v)$ on $L^2([0,1],\C^2)$ with either periodic, antiperiodic, or Dirichlet boundary conditions and 
assume that on an open subset $V$ in $\Hp$, $\kappa=\kappa(v) \in \C^*$ is a simple periodic, antiperiodic, 
or Dirichlet eigenvalue. Then $\kappa(v)$ is analytic in $v$ and we can choose corresponding eigenfunctions $f\equiv f_\kappa$ so that $V \to H^1([0,1],\C^2),$ $v \mapsto f =(f_1, f_2)$ is analytic. 
Since $f$ satisfies either Dirichlet, periodic, or antiperiodic boundary conditions, one has
in all three cases that for any $v \in V$ and any $ \mathring{v}=(\mathring{q}, \mathring{p}) \in H^1([0,1],\C^2)$, satisfying the corresponding boundary conditions,
\[
\ddh f\cdot Jf\Big|_0^1 = 0 \, , \qquad   \frac12f_1f_2 \mathring{q} \Big|_0^1 = 0 \, .
\]
Then according to Proposition \ref{prop:spectrumGradientGeneral},
\begin{equation}\label{eq:ddhlmWithGrad}
 \ddh \kappa = \frac{1}{\int_0^1 f\cdot (I+\frac{1}{\kappa^2}B^2)f\dx}  \int_0^1 \Grad{f}{\kappa}\cdot \mathring{v} \dx \, .
\end{equation}
(Note that in order to simplify notation we write $\Grad{f}{\kappa}$ instead of $\Grad{f_\kappa}{\kappa}$.)
\begin{prop} \label{lem:gradientDirichlet}
Assume that for a given $v_0\in\Hp$,  $\mu(v_0) \in \C^*$ is a simple Dirichlet eigenvalue of $Q(v_0)$. Then there exists a neighborhood $V$ of $v_0$ in \Hp and $\epsilon>0$ 
so that for any $v\in V$ there is a unique Dirichlet eigenvalue $\mu(v)$ of $Q(v)$ in the disc of radius $\epsilon$ centered at $\mu(v_0)$. 
The eigenvalue $\mu$ is analytic on $V$ and
$\partial \mu   =\frac{\grave{m}_1}{\dot\chi_D}\Grad{M_2}{\mu}$.
(Note that $M_2 = M_2(\cdot,\mu(v))$ is a convenient choice for an eigenfunction, corresponding to the eigenvalue $\mu(v)$.) More explicitly, one has
for any $v \in V$ and any $ \mathring{v}=(\mathring{q}, \mathring{p}) \in H^1([0,1],\C^2)$, satisfying Dirichlet boundary conditions,
 \[
 \ddh \mu
 = - \frac{\ddh{\grave{m}_2} \big|_{\lm=\mu}}{\dot\chi_D(\mu)}
= \frac{\grave{m}_1(\mu)}{\dot\chi_D(\mu)} \int_0^1
\begin{pmatrix} \frac\mu2(m_4^2-m_2^2) + \frac{1}{32\mu}(m_4^2e^q-m_2^2e^{-q})
\\ -\frac12  m_2m_4
\end{pmatrix}\cdot
\begin{pmatrix}
\mathring{q}\\\Op(\mathring{p})
\end{pmatrix}
\dx.
 \]
\end{prop}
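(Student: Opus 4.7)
The plan is to apply the analytic implicit function theorem to the equation $\chi_D(\lambda,v) = \grave{m}_2(\lambda,v) = 0$ and then reconcile the resulting formula for $d\mu$ with the general eigenvalue gradient identity \eqref{eq:ddhlmWithGrad} via a Wronskian-type calculation.

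Since $\mu(v_0)$ is a simple Dirichlet eigenvalue, it is a simple zero of the analytic function $\chi_D(\cdot,v_0)$ on $\C^*$, so $\dot\chi_D(\mu(v_0),v_0)\neq 0$. The analytic implicit function theorem, applied to the jointly analytic map $\chi_D:\C^*\times \Hp \to \C$, then produces a neighborhood $V$ of $v_0$ in $\Hp$ and some $\epsilon>0$ such that for every $v\in V$ the function $\chi_D(\cdot,v)$ has a unique zero $\mu(v)$ in the disc of radius $\epsilon$ about $\mu(v_0)$, depending analytically on $v$. Differentiating the identity $\chi_D(\mu(v),v)\equiv 0$ in a direction $\mathring{v}$ immediately gives the first expression of the proposition,
\[
d\mu[\mathring{v}] \;=\; -\frac{d\grave{m}_2[\mathring{v}]\big|_{\lambda=\mu}}{\dot\chi_D(\mu)}.
\]

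For the explicit expression in terms of $\Grad{M_2}{\mu}$, the natural choice of Dirichlet eigenfunction is $f := M_2(\cdot,\mu,v) = (m_2,m_4)^T$: the boundary conditions are automatic, since $m_2(0,\mu)=0$ because $M(0)=I$, and $m_2(1,\mu) = \grave{m}_2(\mu) = 0$ because $\mu$ is a Dirichlet eigenvalue. Thus \eqref{eq:ddhlmWithGrad} applies with $\kappa=\mu$ and $f = M_2$, and it remains to establish the normalization identity
\[
\int_0^1 M_2\cdot (I+B^2/\mu^2)\,M_2\,dx \;=\; \frac{\dot\chi_D(\mu)}{\grave{m}_1(\mu)}.
\]

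This is the main technical step and is obtained by a Wronskian argument. Setting $\dot f := \partial_\lambda f$ and differentiating the ODE $\partial_x M = J(\lambda - A - B^2/\lambda)M$ in $\lambda$ gives $\partial_x\dot f = J(I+B^2/\lambda^2)f + J(\lambda-A-B^2/\lambda)\dot f$ together with $\dot f(0)=0$. Using that $A$ and $B^2$ are symmetric $2\times 2$ matrices, together with $J^T = -J$ and $J^TJ = I$, a direct computation yields
\[
\partial_x\bigl(\dot f\cdot J f\bigr) \;=\; f\cdot (I+B^2/\lambda^2)\,f.
\]
Integrating from $0$ to $1$, the boundary term at $x=0$ vanishes since $\dot f(0)=0$, and at $x=1$ the boundary term equals $\dot{\grave{m}}_2(\lambda)\grave{m}_4(\lambda) - \dot{\grave{m}}_4(\lambda)\grave{m}_2(\lambda)$. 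Specializing to $\lambda=\mu$, where $\grave{m}_2(\mu)=0$, and combining with the Wronskian identity $\det\grave{M}\equiv 1$ (which gives $\grave{m}_4(\mu) = 1/\grave{m}_1(\mu)$) proves the normalization identity. Substituting it into \eqref{eq:ddhlmWithGrad} and reading off the $\mathring{q}$- and $\mathring{p}$-components from the definition \eqref{eq:Grad} of $\Grad{\cdot}{\cdot}$ yields the integral formula in the statement. The one delicate point is the Wronskian computation above, which depends essentially on the symmetry of $A$ and $B^2$; everything else is bookkeeping.
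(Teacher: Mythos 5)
Your proof is correct. The existence and analyticity claim is established via the implicit function theorem applied to $\chi_D(\lambda,v)=0$, which is interchangeable with the argument-principle argument the paper uses, and the chain-rule identity $d\mu = -d\grave{m}_2\big|_{\lambda=\mu}/\dot\chi_D(\mu)$ is obtained exactly as in the paper. The genuine difference is in how you reach the normalization identity $\int_0^1 M_2\cdot(I+B^2/\mu^2)M_2\,dx = \dot\chi_D(\mu)/\grave m_1(\mu)$. The paper arrives at it \emph{indirectly}: it computes $d\mu[0,\mathring p]$ twice --- once from the general eigenvalue-gradient formula \eqref{eq:ddhlmWithGrad} with eigenfunction $M_2$, once from the chain rule together with the expression for $\partial_p\grave m_2$ from Proposition \ref{prop:Mgradient} --- and reads off the normalization constant by forcing agreement for all test directions $\mathring p$. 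You instead prove the identity \emph{directly}: differentiating the fundamental-solution ODE in $\lambda$, using the symmetry of $A$ and $B^2$ together with $J^\top J = I$, you get the pointwise Wronskian-type identity $\partial_x(\dot f\cdot Jf) = f\cdot(I+B^2/\lambda^2)f$, integrate, observe that $\dot M(0,\lambda)=0$, and evaluate the $x=1$ boundary term as $\dot{\grave m}_2\grave m_4 - \dot{\grave m}_4\grave m_2$, which at $\lambda=\mu$ reduces to $\dot\chi_D(\mu)\grave m_4(\mu) = \dot\chi_D(\mu)/\grave m_1(\mu)$ by the Wronskian $\det\grave M=1$. Each step checks out: the symmetry manipulation is exactly what it claims, and the boundary term sign and factorization agree with $Jf=(m_4,-m_2)$. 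Your route has the modest advantage of establishing the identity
\[
\int_0^1 M_2\cdot(I+B^2/\lambda^2)M_2\,dx \;=\; \dot{\grave m}_2(\lambda)\grave m_4(\lambda) - \dot{\grave m}_4(\lambda)\grave m_2(\lambda)
\]
for \emph{all} $\lambda\in\C^*$, not only at a Dirichlet eigenvalue, and it makes manifest that the normalization constant is nonzero (since $\dot\chi_D(\mu)\neq0$ by simplicity and $\grave m_1(\mu)\neq 0$ from $\grave m_1\grave m_4=1$), whereas in the paper this nonvanishing is implicit in invoking \eqref{eq:ddhlmWithGrad}. The paper's route is shorter given that Proposition \ref{prop:Mgradient} has already been proved; yours is more self-contained.
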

\begin{proof}
Existence of $\epsilon > 0$ and neighborhood $V$ with the claimed properties 
follow by the argument principle.
By \eqref{eq:Grad}-\eqref{eq:ddhlmWithGrad} we have
\begin{equation}\label{eq:pfLemkappa}
 \mathrm{d} \mu[0,\mathring{p}] = \frac{1}{\int_0^1 M_2\cdot (I+\frac{1}{\mu^2}B^2)M_2\dx}  \int_0^1 -\frac12 m_2m_4\Op (\mathring{p})\dx.
\end{equation}
On the other hand, the characteristic function $\chi_D(\lm, v) = \grave{m}_2(\lm, v)$ statisfies $\chi_D(\mu(v),v) = 0$. 
Applying the chain rule to $\chi_D(\mu(v),v) = 0$ and using that $\dot \chi_D(\mu(v), v)\not=0$ since $\mu(v)$ is simple, we get
\begin{equation}\label{eq:pfLemkappa2}
\ddh\mu =  -\frac{\ddh{\chi_D}\big|_{\lm=\mu}}{\dot\chi_D(\mu)} = -\frac{\ddh{\grave{m}_2}\big|_{\lm=\mu}}{\dot \chi_D(\mu)}
\end{equation}
where by Proposition \ref{prop:Mgradient}
\begin{align*}
\mathrm{d} \grave{m}_2(\mu)[0,\mathring{p}] =& \frac{1}{4} \int_0^1\left(2\grave{m}_1 m_2m_4 - \grave{m}_2 (m_1m_4+m_2m_3) \right)\Big|_{\lm=\mu}\Op(\mathring{p})\dx.
\end{align*}
Since at a Dirichlet eigenvalue, $\grave{m}_2=0$, one obtains by
comparing  \eqref{eq:pfLemkappa} and \eqref{eq:pfLemkappa2},
\begin{equation}\label{identity for Dirichlet}
  \frac{1}{\int_0^1 M_2\cdot (I+\frac{1}{\mu^2}B^2)M_2\dx} = \frac{\grave{m}_1(\mu)}
  {\dot\chi_D(\mu)} .
\end{equation}
This together with \eqref{eq:ddhlmWithGrad} yields the claimed formula.
\end{proof}
To obtain practical formulas for the gradients of simple periodic eigenvalues of $Q(v)$
is more involved than in the case of Dirichlet eigenvalues
and will be the topic of the subsequent section.

\subsection{Formulas for $L^2-$gradients. Part 2.}\label{subsec:floquet}
As a first step in this section, we express the $L^2-$gradient of the Floquet matrix $\grave{M}(\lm,v)$ of $Q(v)$, computed in Proposition \ref{eq:prodMgradientOne}, in terms of the Floquet solutions of $Q(v)$.
As an application, one obtains practical formulas for the $L^2-$gradient of the discriminant $\Delta$ and the anti-discriminant $\delta$.

Fix $v\in\Hp$, $\lm\in\C^*$ and consider the two eigenvalues $\xi_\pm = \Dl(\lm)\pm \sqrt{\Dl^2(\lm)-1}$ of $\grave{M}= \grave{M}(\lm,v)$. If $\lm$ is a periodic eigenvalue of $Q(v)$, then 
$\xi_+ = \xi_-$ and $\xi_+ \in\{ 1,-1\}$. 
In case $\lm$ is a double periodic eigenvalue of $Q(v)$, $\grave{M}$ equals $I$ or $-I$ and we choose $a_+ := (1, 0)$ and $a_-:= (0, 1)$ as linearly independent eigenvectors
corresponding to the eigenvalue $\xi_+$.
 If $\lm$ is not a periodic eigenvalue of $Q(v)$, then $\xi_+\not=\xi_-$ and we denote by $a_+$ and  $a_-$  corresponding eigenvectors of $\grave{M}$, $\grave{M}a_{\pm} = \xi_\pm a_\pm$.
Hence, except when $\lm$ is a simple periodic eigenvalue, there are two distinct Floquet solutions, $f_+ \equiv (f_1^+,f_2^+) := M(x,\lm,v)a_+$ and $f_-\equiv(f_1^-,f_2^-) :=M(x,\lm,v)a_-$. Using these two solutions, 
we express the gradient of $\grave{M}$ and hence the one of $\Dl$ and $\delta$ in terms of the Floquet solutions $f_+,$ $f_-$.
Since $a_+,a_-$ are linearly independent, the $2\times 2$ matrix $(a_+\; a_-)$ with columns $a_+$ and $a_-$ is regular. Note that
\begin{equation}  \label{eq:MbyFloquet}
  \left(f_+ \;f_-\right)\left(a_+ \; a_-\right)^{-1} = M \left(a_+ \;a_-\right)\left(a_+\; a_-\right)^{-1} = M.
\end{equation}
To simplify the formulas below, introduce for any column vector $\begin{pmatrix}a_1\\a_2\end{pmatrix}\in\C^2$ the row vector
$$
\begin{pmatrix}a_1\\a_2\end{pmatrix}^\bot= \left(-a_2 \  a_1\right) \,.
$$
Then for any linearly independent column vectors $a,b\in\C^2$, the determinant of the $2\times 2$ matrix \begin{small}$\begin{pmatrix}a & b\end{pmatrix}$\end{small}  
equals the $1\times1$ matrix $a^\bot  b$ and the matrix inverse \begin{small}$\begin{pmatrix}a & b\end{pmatrix}^{-1}$\end{small} can be written as
\begin{equation}\label{eq:matrixInverse}
(a\; b)^{-1} = \frac{1}{a^\bot b}\begin{pmatrix}-b^\bot\\a^\bot\end{pmatrix}.
\end{equation}
Furthermore, we introduce the star product
\[
\begin{pmatrix}a_1\\a_2\end{pmatrix}\star \begin{pmatrix}b_1\\b_2\end{pmatrix} = a_1b_1-a_2b_2.
\]
\begin{lem} \label{lem:gradMFloquet}
Let $v\in\Hp$ and assume that $\lm\in\C^*$ is not a simple periodic eigenvalue of $Q(v)$. Then,
 \begin{align*}
  \partial_q \grave{M} =&\frac{1}{2}\begin{pmatrix}&\grave{m}_2\\ -\grave{m}_3 &\end{pmatrix}EV_0
    + \frac{\lm}{2 a_+^\bot a_-} \begin{pmatrix}\xi_+a_+ &\xi_-a_-\end{pmatrix} \begin{pmatrix} f_+\star f_- & f_-\star f_-\\ -f_+ \star f_+ & -f_+ \star f_-\end{pmatrix}(a_+ \; a_-)^{-1}
    \\ &\indent +\frac{1}{32\,\lm\, a_+^\bot a_-}\begin{pmatrix}\xi_+a_+ & \xi_-a_- \end{pmatrix} \left(e^q\begin{pmatrix}-f_2^+f_2^- & -(f_2^-)^2 \\ (f_2^+)^2 & f_2^+f_2^- \end{pmatrix}+ e^{-q}\begin{pmatrix}f_1^+f_1^- & (f_1^-)^2 \\ -(f_1^+)^2 & -f_1^+f_1^-\end{pmatrix}\right) (a_+ \; a_-)^{-1}
  \\ \partial_p \grave{M} =&  \frac{1}{4 a_+^\bot a_-} \begin{pmatrix}\xi_+a_+ & \xi_-a_- \end{pmatrix} \begin{pmatrix} f_+\cdot Zf_- & f_-\cdot Zf_-\\ -f_+\cdot Zf_+ & -f_+\cdot Zf_-\end{pmatrix}\begin{pmatrix}a_+ & a_-\end{pmatrix}^{-1}\Op(\cdot).
 \end{align*}
\end{lem}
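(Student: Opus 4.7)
The strategy is to substitute the Floquet factorisation $M = (f_+\;f_-)(a_+\;a_-)^{-1}$ from \eqref{eq:MbyFloquet} into the formulas for $\partial_q\grave M$ and $\partial_p\grave M$ established in Proposition \ref{prop:Mgradient}. Concretely I would start from the expression \eqref{eq:prodMgradientThree} for $\partial_q\grave M$ and \eqref{eq:prodMgradientTwo} for $\partial_p\grave M$, and rewrite the factor $\grave M\,M^{-1}$ using
\[
\grave M\,(a_+\;a_-) = (\xi_+a_+\;\xi_-a_-),\qquad M^{-1} = (a_+\;a_-)(f_+\;f_-)^{-1},
\]
which gives $\grave M\,M^{-1} = (\xi_+a_+\;\xi_-a_-)(f_+\;f_-)^{-1}$.

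The key observation that converts this into the compact form claimed by the lemma is the Wronskian identity $\det M = 1$, which yields
\[
\det(f_+\;f_-) = \det M\cdot\det(a_+\;a_-) = a_+^\bot a_-,
\]
so by \eqref{eq:matrixInverse}
\[
(f_+\;f_-)^{-1} = \frac{1}{a_+^\bot a_-}\begin{pmatrix}-f_-^\bot\\ f_+^\bot\end{pmatrix},\qquad \grave M\,M^{-1} = \frac{1}{a_+^\bot a_-}(\xi_+a_+\;\xi_-a_-)\begin{pmatrix}-f_-^\bot\\ f_+^\bot\end{pmatrix}.
\]
Plugging this into \eqref{eq:prodMgradientThree} and \eqref{eq:prodMgradientTwo}, everything reduces to evaluating, for $f,g\in\{f_+,f_-\}$, the scalars $f^\bot Z g$, $f^\bot\begin{pmatrix}&e^{q}\\e^{-q}\end{pmatrix}g$ and $f^\bot R g$. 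A direct matrix computation gives
\[
f^\bot Z g = f\star g,\qquad f^\bot\!\begin{pmatrix}&e^{q}\\e^{-q}\end{pmatrix}\!g = f_1 g_1 e^{-q} - f_2 g_2 e^{q},\qquad -\tfrac{\ii}{4}f^\bot R g = -\tfrac14 f\cdot Zg.
\]
Assembling the four entries of the $2\times 2$ matrix $\begin{pmatrix}-f_-^\bot (\cdot) f_+ & -f_-^\bot (\cdot) f_-\\ f_+^\bot (\cdot) f_+ & f_+^\bot (\cdot) f_-\end{pmatrix}$ for each of the operators $\lm Z$, $\tfrac1{16\lm}\!\begin{pmatrix}&e^q\\e^{-q}\end{pmatrix}$, and $-\tfrac{\ii}{4}R$, and carrying along the extra boundary term $\tfrac12\!\begin{pmatrix}&\grave m_2\\-\grave m_3&\end{pmatrix}\!EV_0$ from \eqref{eq:prodMgradientThree}, yields exactly the two matrix expressions stated in the lemma.

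There is essentially no conceptual difficulty: the hypothesis that $\lambda$ is not a simple periodic eigenvalue is used only to guarantee the existence of linearly independent eigenvectors $a_+,a_-$ of $\grave M$ (either $\xi_+\ne\xi_-$, or $\grave M=\pm I$ and the canonical choice $a_+=(1,0),\,a_-=(0,1)$ works), so that $a_+^\bot a_-\ne 0$ and the above inverses are defined. The only real work is the bookkeeping of signs and of the star product versus the dot product, which I would handle by writing out one entry of each matrix in full and then quoting the pattern for the remaining three by symmetry. The main potential pitfall is keeping track of the transpose conventions (row versus column, and the $(\cdot)^\bot$ notation), so I would be careful to verify one diagonal entry and one off-diagonal entry explicitly before asserting the general pattern.
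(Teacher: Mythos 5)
Your proposal is correct and follows essentially the same route as the paper: substitute $M=(f_+\,f_-)(a_+\,a_-)^{-1}$ and $\grave M=(\xi_+a_+\,\xi_-a_-)(a_+\,a_-)^{-1}$ into the formulas of Proposition~\ref{prop:Mgradient}, invoke the identity \eqref{eq:matrixInverse} together with $f_+^\bot f_-=\det(f_+\,f_-)=\det M\cdot a_+^\bot a_-=a_+^\bot a_-$ from the Wronskian, and then read off the $2\times 2$ inner matrix via the scalars $f^\bot Zg=f\star g$, $f^\bot\!\begin{pmatrix}&e^{q}\\e^{-q}\end{pmatrix}\!g=f_1g_1e^{-q}-f_2g_2e^{q}$ and $f^\bot Rg=-\ii\,f\cdot Zg$, all of which you state correctly. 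This is precisely the paper's computation and there is no gap.
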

\begin{proof}
By Proposition \ref{prop:Mgradient} and \eqref{eq:MbyFloquet}
\begin{align*}
 \partial_p \grave{M} =& -\frac\ii4 \grave{M} M^{-1}R M { \Op(\cdot)}
 \\ =& -\frac\ii4 \begin{pmatrix}\xi_+a_+ & \xi_-a_-\end{pmatrix}(a_+ \; a_-)^{-1}(a_+\; a_-)(f_+\; f_-)^{-1}R(f_+\; f_-)(a_+\;a_-)^{-1} \Op(\cdot).
 \end{align*}
 By \eqref{eq:matrixInverse} it then follows
 \begin{align*}
 \partial_p \grave{M} =& -\frac\ii4 \begin{pmatrix}\xi_+a_+ & \xi_-a_-\end{pmatrix}
\frac{1}{f_+^\bot  f_-}\begin{pmatrix}-f_-^\bot\\f_+^\bot\end{pmatrix}
 R(f_+ \; f_-)(a_+\;  a_-)^{-1}\Op(\cdot)
 \\=& \frac14 \begin{pmatrix}\xi_+a_+ & \xi_-a_-\end{pmatrix} \frac{1}{f_+^{\bot} f_-}
 \begin{pmatrix}f_+\cdot Zf_- & f_-\cdot Zf_-\\ -f_+\cdot Zf_+ & -f_+\cdot Zf_-\end{pmatrix}(a_+\; a_-)^{-1}\Op(\cdot).
\end{align*}
Since in view of \eqref{eq:dxM} $M$ satisfies the Wronskian identity, $\det M =1$, it follows from
\eqref{eq:MbyFloquet} that  $ f_+^\bot f_- = \det(f_+\;f_- ) =   a_+^\bot a_-$
 and hence the claimed formula for $\partial_p \grave{M}$ is proved.
Similarly, by the formula \eqref{eq:prodMgradientThree} of Proposition \ref{prop:Mgradient},
\begin{align*}
\partial_q \grave{M} =& \frac{1}{2}\begin{pmatrix}&\grave{m}_2\\ -\grave{m}_3 &\end{pmatrix}EV_0 - \frac12 \grave{M}M^{-1}\Big(\lm Z + \frac{1}{16\,\lm}\begin{pmatrix}&e^q \\ e^{-q}\end{pmatrix}\Big) M.
\end{align*}
One computes
\begin{align*}
\grave{M}M^{-1}Z M
=&  \begin{pmatrix}\xi_+a_+ & \xi_-a_-\end{pmatrix}(f_+\; f_-)^{-1}Z(f_+\;f_-)(a_+\; a_-)^{-1}
\\ =& \begin{pmatrix}\xi_+a_+ & \xi_-a_- \end{pmatrix} \frac{1}{f_+^{\bot} f_-}
 \begin{pmatrix}- f_+\star f_- & -f_-\star f_-\\ f_+ \star f_+ & f_+ \star f_-\end{pmatrix}(a_+ \; a_-)^{-1}
\end{align*}
and
\begin{align*}
\grave{M}M^{-1} &\begin{pmatrix}&e^q \\ e^{-q}\end{pmatrix}  M
=  \begin{pmatrix}\xi_+a_+ & \xi_-a_-\end{pmatrix}(f_+\;f_-)^{-1}\begin{pmatrix}&e^q \\ e^{-q}\end{pmatrix}(f_+\; f_-)(a_+\; a_-)^{-1}
\\ =&  \begin{pmatrix}\xi_+a_+ & \xi_-a_-\end{pmatrix}\frac{1}{f_+^{\bot} f_-} \left(e^q\begin{pmatrix}f_2^+f_2^- & (f_2^-)^2 \\ -(f_2^+)^2 & -f_2^+f_2^- \end{pmatrix}+ e^{-q}\begin{pmatrix}-f_1^+f_1^- & -(f_1^-)^2 \\ (f_1^+)^2 & f_1^+f_1^-\end{pmatrix}\right) (a_+ \; a_-)^{-1}.
\end{align*}
Using again that $f_+^\bot f_-$ is constant, the claimed formula for $\partial_q \grave{M}$ follows.
\end{proof}

Lemma \ref{lem:gradMFloquet} leads to the following formula for the $L^2-$gradient of $\Dl(\lm,v)$ with respect to $v$.

\begin{lem} \label{lem:DlGradientFloquet}
Let $v = (q, p) \in\Hp$ and assume that $\lm\in\C^*$ is not a simple periodic eigenvalue of $Q(v)$. 
Then the $L^2-$gradient $\partial_v \Dl = (\partial_q \Dl, \partial_p \Dl)$ of $\Dl = \Dl(\lm)$ is given by
\begin{align*}
 \partial_q\Dl =&  \frac{\xi_+-\xi_-}{4a_+^\bot a_-}\left(
      \lm f_+\star f_-
     +\frac{1}{16\,\lm}\left(e^{-q}f_1^+f_1^-- e^qf_2^+f_2^-\right)\right)
     \\
 \partial_p\Dl =& \frac{\xi_+-\xi_-}{8a_+^\bot  a_-} f_+\cdot Zf_-\Op(\cdot)
  .
\end{align*}
\end{lem}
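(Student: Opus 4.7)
The plan is to derive the formula directly from Lemma \ref{lem:gradMFloquet} by observing that $\Delta(\lambda) = \frac{1}{2}\trace \grave{M}(\lambda)$, so that
\[
\partial_q \Delta = \tfrac{1}{2}\trace(\partial_q \grave{M}), \qquad \partial_p \Delta = \tfrac{1}{2}\trace(\partial_p \grave{M}),
\]
with the $L^2$-gradients viewed as operators acting on test functions. The whole computation then reduces to taking traces of the expressions furnished by Lemma \ref{lem:gradMFloquet}.

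First I would dispose of the boundary contribution $\tfrac{1}{2}\begin{pmatrix}& \grave{m}_2 \\ -\grave{m}_3 &\end{pmatrix} EV_0$ in $\partial_q \grave{M}$: its trace vanishes since the matrix is off-diagonal. Next I would exploit the factored structure of the remaining terms. Setting $P := (a_+\ a_-)$, each of these terms has the form
\[
\frac{1}{c(\lambda)\, a_+^\bot a_-}\, P\, \mathrm{diag}(\xi_+,\xi_-)\, N\, P^{-1},
\]
where $N$ is the explicit $2\times 2$ matrix appearing in Lemma \ref{lem:gradMFloquet} (with entries built from $f_+,f_-$). By cyclic invariance of the trace the conjugation by $P$ drops out, and
\[
\trace\bigl(\mathrm{diag}(\xi_+,\xi_-)\, N\bigr) = \xi_+ N_{11} + \xi_- N_{22}.
\]
For each $N$ appearing in the lemma, the diagonal entries are negatives of one another: indeed $N_{11}+N_{22}=0$ for the matrix $\bigl(\begin{smallmatrix} f_+\star f_- & f_-\star f_- \\ -f_+\star f_+ & -f_+\star f_-\end{smallmatrix}\bigr)$, for $\bigl(\begin{smallmatrix} f_+\cdot Zf_- & f_-\cdot Zf_- \\ -f_+\cdot Zf_+ & -f_+\cdot Zf_-\end{smallmatrix}\bigr)$, and likewise for the two exponential matrices in the $\partial_q \grave{M}$ formula. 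Hence each trace collapses to the factor $(\xi_+-\xi_-)$ times the common diagonal value, producing the pattern one sees in the claimed formulas.

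Carrying this out for $\partial_p\grave{M}$ yields immediately
\[
\trace(\partial_p \grave{M}) = \frac{\xi_+-\xi_-}{4\, a_+^\bot a_-}\, f_+\cdot Zf_-\, \Op(\cdot),
\]
and halving gives the stated expression for $\partial_p \Delta$. For $\partial_q \grave{M}$, the $\lambda Z$ contribution produces $(\xi_+-\xi_-)\, f_+\star f_-$, while the two exponential contributions combine into $(\xi_+-\xi_-)\bigl(e^{-q} f_1^+ f_1^- - e^q f_2^+ f_2^-\bigr)$. Collecting the prefactors $\tfrac12 \cdot \tfrac{\lambda}{2a_+^\bot a_-}$ and $\tfrac12 \cdot \tfrac{1}{32\lambda\, a_+^\bot a_-}$ and factoring out $\tfrac{\xi_+-\xi_-}{4 a_+^\bot a_-}$ gives exactly the claimed formula for $\partial_q \Delta$.

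No serious obstacle is expected: the only mild care is bookkeeping of signs in the exponential matrices and remembering that $f_+^\bot f_- = \det(f_+\ f_-) = a_+^\bot a_-$ is constant (Wronskian identity), which was already used in Lemma \ref{lem:gradMFloquet}. Hence the proof reduces to a short trace calculation invoking cyclicity and the tracelessness of the explicit $2\times 2$ blocks.
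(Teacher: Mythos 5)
Your proof is correct and is exactly the computation the paper has in mind (the paper simply says Lemma~\ref{lem:gradMFloquet} ``leads to'' this formula without writing it out). Since $\Delta = \tfrac12\trace\grave{M}$, writing $\begin{pmatrix}\xi_+a_+&\xi_-a_-\end{pmatrix}=(a_+\;a_-)\,\mathrm{diag}(\xi_+,\xi_-)$, using cyclicity to drop the conjugation by $(a_+\;a_-)$, and observing that each inner $2\times2$ block $N$ satisfies $N_{22}=-N_{11}$ so that $\xi_+N_{11}+\xi_-N_{22}=(\xi_+-\xi_-)N_{11}$, gives precisely the stated formulas after collecting the prefactors.
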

The following normalization of the Floquet solutions $f_+,$ $f_-$ turn out to be useful to simplify the formulas for $\partial_v \Dl$, obtained in Lemma \ref{lem:DlGradientFloquet}.

\begin{lem} \label{lem:Eigenvectors} 
Let $v \in\Hp$ and assume that $\lm\in\C^*$ is not a periodic eigenvalue of $Q(v)$. If $\grave{m}_2(\lm, v)\not=0$ (meaning that $\lm$ is not a Dirichlet eigenvalue of $Q(v)$),  then
\begin{equation}\label{eq:EigenvectorsM2}
 a_\pm :=\begin{pmatrix} \grave{m}_2 \\ \xi_\pm -\grave{m}_1 \end{pmatrix}
\end{equation}
are eigenvectors of $\grave{M}(\lm, v)$, corresponding to the eigenvalues $\xi_\pm$,
and  \eqref{eq:mEigenvalue}  yields the identity $\xi_\pm-\grave{m}_1 = -\delta \pm\sqrt{\Dl^2-1}$, implying together with \eqref{eq:EigenvectorsM2} that
\[
 f_\pm= Ma_\pm = \grave{m}_2M_1 -\delta\cdot M_2 \pm \sqrt{\Dl^2-1}M_2.
\]
If $\grave{m}_3(\lm, v)\not= 0$,
then
\begin{equation}\label{eq:EigenvectorsM3}
 a_\pm :=\begin{pmatrix} \xi_\pm -\grave{m}_4  \\ \grave{m}_3 \end{pmatrix}
\end{equation}
are eigenvectors of $\grave{M}(\lm, v)$, corresponding to the eigenvalues $\xi_\pm$,
and  \eqref{eq:mEigenvalue} yields the identity $\xi_\pm-\grave{m}_4 = \delta \pm\sqrt{\Dl^2-1}$, implying together with \eqref{eq:EigenvectorsM3} that
\[
 f_\pm = Ma_\pm = \grave{m}_3M_2 +\delta \cdot M_1 \pm \sqrt{\Dl^2-1}M_1 .
\]
\end{lem}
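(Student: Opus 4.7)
The proof is a direct linear-algebra verification, so my plan is simply to check the two claims by hand, relying only on the characteristic polynomial of $\grave{M}$ and on the Wronskian identity $\det\grave{M}(\lm)=1$ (a consequence of \eqref{eq:dxM}). This latter identity, together with the definition of $\xi_\pm$ in \eqref{eq:mEigenvalue}, shows that $\xi_\pm$ are precisely the roots of $\xi^2-2\Dl\xi+1=0$, and the hypothesis that $\lm$ is not a periodic eigenvalue ensures $\xi_+\neq\xi_-$, so that any two associated eigenvectors are linearly independent.

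For the first case, under the assumption $\grave{m}_2\neq 0$ the vector $a_\pm=\begin{pmatrix}\grave{m}_2\\\xi_\pm-\grave{m}_1\end{pmatrix}$ is nonzero, and I would verify $\grave{M}a_\pm=\xi_\pm a_\pm$ by computing both components. The first component gives
\[
\grave{m}_1\grave{m}_2+\grave{m}_2(\xi_\pm-\grave{m}_1)=\grave{m}_2\xi_\pm
\]
trivially; the second component reads $\grave{m}_3\grave{m}_2+\grave{m}_4(\xi_\pm-\grave{m}_1)$ and one checks that the difference with $\xi_\pm(\xi_\pm-\grave{m}_1)$ equals $\xi_\pm^2-(\grave{m}_1+\grave{m}_4)\xi_\pm+(\grave{m}_1\grave{m}_4-\grave{m}_2\grave{m}_3)=\xi_\pm^2-2\Dl\xi_\pm+1$, which vanishes by the characteristic equation. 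The identity $\xi_\pm-\grave{m}_1=-\delta\pm\sqrt{\Dl^2-1}$ is immediate from $\xi_\pm=\Dl\pm\sqrt{\Dl^2-1}$ together with $\Dl=(\grave{m}_1+\grave{m}_4)/2$ and $\delta=(\grave{m}_1-\grave{m}_4)/2$. Writing $Ma_\pm=\grave{m}_2\,M_1+(\xi_\pm-\grave{m}_1)M_2$ and substituting yields the claimed formula for $f_\pm$.

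The second case (with $\grave{m}_3\neq 0$) is entirely analogous: substitute $a_\pm=\begin{pmatrix}\xi_\pm-\grave{m}_4\\\grave{m}_3\end{pmatrix}$ into $\grave{M}a_\pm$ and check by the same characteristic-polynomial argument that the resulting first component equals $\xi_\pm(\xi_\pm-\grave{m}_4)$, while the second equals $\grave{m}_3\xi_\pm$. The identity $\xi_\pm-\grave{m}_4=\delta\pm\sqrt{\Dl^2-1}$ follows in the same way from the definitions of $\Dl$ and $\delta$, and $Ma_\pm=(\xi_\pm-\grave{m}_4)M_1+\grave{m}_3 M_2$ then gives the stated expression for $f_\pm$.

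No genuine obstacle is to be expected here: the content is purely formal manipulation of a $2\times 2$ matrix with known determinant. The only minor point worth stressing is the non-degeneracy of $a_\pm$, which is exactly the reason to assume $\grave{m}_2\neq 0$, respectively $\grave{m}_3\neq 0$; the two alternative formulas together cover every $\lm\in\C^*$ that is not a periodic eigenvalue, since $\grave{m}_2(\lm)$ and $\grave{m}_3(\lm)$ cannot vanish simultaneously (otherwise $\grave{M}$ would be diagonal with determinant $\grave{m}_1\grave{m}_4=1$, forcing $\xi_\pm=\grave{m}_1,\grave{m}_4$ with $\grave{m}_1\grave{m}_4=1$, in which case $\lm$ would in fact be a periodic eigenvalue unless $\grave{m}_1\neq \pm 1$, but in that non-periodic situation the standard basis vectors already serve as eigenvectors and the formulas reduce trivially).
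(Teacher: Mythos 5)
Your computation correctly proves the lemma: in both cases you verify $\grave{M}a_\pm = \xi_\pm a_\pm$ using only the characteristic relation $\xi_\pm^2 - 2\Dl\xi_\pm + 1 = 0$ (which follows from $\det\grave{M}=1$ and \eqref{eq:mEigenvalue}), and the identities $\xi_\pm - \grave{m}_1 = -\delta\pm\sqrt{\Dl^2-1}$ and $\xi_\pm - \grave{m}_4 = \delta\pm\sqrt{\Dl^2-1}$ are immediate from $\Dl=(\grave{m}_1+\grave{m}_4)/2$, $\delta=(\grave{m}_1-\grave{m}_4)/2$. The paper states the lemma without proof, so this direct linear-algebra check is precisely what is intended.

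One remark: your closing paragraph claims that $\grave{m}_2(\lm)$ and $\grave{m}_3(\lm)$ cannot vanish simultaneously when $\lm$ is not a periodic eigenvalue. This is not true in general, and the parenthetical argument you give actually disproves itself: if $\grave{m}_2=\grave{m}_3=0$ then $\grave{M}$ is diagonal, $\grave{m}_1\grave{m}_4=1$, and $\lm$ is a periodic eigenvalue only if one of $\grave{m}_1,\grave{m}_4$ equals $\pm 1$, which need not happen. When both off-diagonal entries vanish, the vectors in \eqref{eq:EigenvectorsM2} and \eqref{eq:EigenvectorsM3} genuinely degenerate (one of $a_+$, $a_-$ becomes the zero vector), so the lemma's conclusions do not "reduce trivially" there. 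Fortunately the lemma is conditional (``if $\grave{m}_2\neq 0$'', ``if $\grave{m}_3\neq 0$''), so this closing remark is superfluous; it should simply be dropped.
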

Writing $\Grad{f}{\lm}$, defined in \eqref{eq:Grad}, in the  form
\begin{equation}\label{eq:Graddeux}
\Grad{f}{\lm} = -\begin{pmatrix} \frac\lm2 f\star f+ \frac{1}{32\,\lm} f\cdot \begin{pmatrix}e^{-q} &\\&-e^q\end{pmatrix} f\\ \frac14 f\cdot Zf \Op(\cdot)\end{pmatrix} \, ,
\end{equation}
one deduces from Lemma \ref{lem:DlGradientFloquet} and Lemma \ref{lem:Eigenvectors} the following formulas.

\begin{prop} \label{prop:DlGradientFloquet}
Let $v\in \Hp[1]$ and assume that $\lm\in\C^*$ is so that $\grave{M}(\lm, v) \ne \pm I$ (meaning that $\lm$ is not a periodic eigenvalue of $Q(v)$ of geometric multiplicity two). 
If $\grave{m}_2(\lm)\not=0$, then
\[
 \partial \Dl = \frac1{2\,\grave{m}_2} \Grad{\grave{m}_2M_1-\delta \cdot M_2}{\lm} -\frac{\Dl^2-1}{2\,\grave{m}_2}\Grad{M_2}{\lm} \, .
 \]
Similarly, if  $\grave{m}_3(\lm)\not=0$, then
\[
 \partial \Dl = -\frac1{2\,\grave{m}_3} \Grad{\grave{m}_3 M_2 +\delta \cdot M_1}{\lm} +\frac{\Dl^2-1}{2\,\grave{m}_3}\Grad{M_1}{\lm}.
\]
\end{prop}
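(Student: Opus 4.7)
The plan is to substitute the explicit Floquet eigenvectors from Lemma~\ref{lem:Eigenvectors} into the formulas of Lemma~\ref{lem:DlGradientFloquet} and to exploit the fact that each bilinear quantity appearing there is \emph{symmetric} in $f_+,f_-$: the three expressions $f_+\star f_-$, $e^{-q}f_1^+f_1^--e^qf_2^+f_2^-$ and $f_+\cdot Zf_-$ are all of the form $B(f_+,f_-)$ for a symmetric bilinear form $B$ on $H^1([0,1],\C^2)$.

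I would first treat the case $\grave m_2\neq 0$ under the temporary assumption that $\lambda$ is not a periodic eigenvalue, so $\Delta^2-1\neq 0$. With the choice $a_\pm=(\grave m_2,\xi_\pm-\grave m_1)$ of \eqref{eq:EigenvectorsM2}, a direct computation gives
\[
a_+^\bot a_- \;=\; -2\,\grave m_2\sqrt{\Delta^2-1},\qquad \xi_+-\xi_- \;=\; 2\sqrt{\Delta^2-1},
\]
so the common prefactor $(\xi_+-\xi_-)/a_+^\bot a_-$ appearing in Lemma~\ref{lem:DlGradientFloquet} collapses to $-1/\grave m_2$, independently of the choice of branch of $\sqrt{\Delta^2-1}$.

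Setting $F:=\grave m_2 M_1-\delta M_2$ and $G:=\sqrt{\Delta^2-1}\,M_2$, Lemma~\ref{lem:Eigenvectors} gives $f_\pm=F\pm G$. The symmetry of $B$ then yields $B(f_+,f_-)=B(F,F)-B(G,G)$, and since $G$ is a scalar multiple of $M_2$ one has $B(G,G)=(\Delta^2-1)B(M_2,M_2)$. Substituting this into each of the three summands of Lemma~\ref{lem:DlGradientFloquet} and matching the numerical factors against the alternative form \eqref{eq:Graddeux} of $\Grad{f}{\lambda}$, the $F$-contribution reassembles into $\tfrac{1}{2\grave m_2}\Grad{F}{\lambda}$ and the $M_2$-contribution into $-\tfrac{\Delta^2-1}{2\grave m_2}\Grad{M_2}{\lambda}$, which is the first identity claimed.

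To finish, I would extend the identity by analyticity in $\lambda$ to every $\lambda\in\C^*$ with $\grave m_2(\lambda)\neq 0$ and $\grave M(\lambda)\neq\pm I$, covering in particular simple periodic eigenvalues (at which $G\equiv 0$ and the $M_2$-term drops out). The second formula is obtained by the same argument with the alternative eigenvectors $a_\pm=(\xi_\pm-\grave m_4,\grave m_3)$ of \eqref{eq:EigenvectorsM3}: the analogous computation yields $a_+^\bot a_-=2\,\grave m_3\sqrt{\Delta^2-1}$ (opposite sign), the splitting $f_\pm=(\grave m_3 M_2+\delta M_1)\pm\sqrt{\Delta^2-1}\,M_1$, and hence the overall sign reversal and the exchange of $M_2$ with $M_1$ visible in the statement. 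The only delicate point is the bookkeeping of signs and the matching of numerical prefactors with \eqref{eq:Graddeux}; the conceptual backbone is simply the identity $B(f_+,f_-)=B(F,F)-B(G,G)$ combined with the explicit evaluation of $a_+^\bot a_-$.
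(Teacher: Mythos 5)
Your proposal is correct and follows essentially the same route as the paper: compute $a_+^\bot a_-$ from the explicit eigenvectors of Lemma~\ref{lem:Eigenvectors}, substitute into Lemma~\ref{lem:DlGradientFloquet}, exploit the symmetric bilinearity of the expressions via $f_\pm=F\pm G$ so that the cross-terms cancel and you are left with $\Gradd{F,F}{}-\Gradd{G,G}{}$, match against \eqref{eq:Graddeux}, and then extend by continuity to simple periodic eigenvalues. The paper compresses all of this into one short paragraph (it only records $a_+^\bot a_-=(\xi_--\xi_+)\grave m_2$ and $a_+^\bot a_-=(\xi_+-\xi_-)\grave m_3$ plus the limiting argument), so your write-up simply makes explicit the bookkeeping that the paper leaves to the reader.
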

\begin{proof}
 Assume that $\lm\in\C^*$ is not a periodic eigenvalue. If $\grave{m}_2(\lm)\not=0$, then \eqref{eq:EigenvectorsM2} yields $a_+^\bot  a_- = (\xi_--\xi_+)\grave{m}_2$ 
 and if  $\grave{m}_3(\lm)\not=0$, then \eqref{eq:EigenvectorsM3} yields $a_+^\bot a_- = (\xi_+-\xi_-)\grave{m}_3$. If  $\lm\in\C^*$ is a periodic eigenvalue 
 of geometric multiplicity $1$, then
 $\grave{m}_2(\lm)\not=0$ or $\grave{m}_3(\lm) \not=0$ holds. Then $\grave{m}_2$ or, respectively, $\grave{m}_3$  do not vanish in a  neighborhood $U$ of $\lm$. 
 The corresponding claimed identities then hold on $U\setminus\{\lm\}$ and hence by continuity also at $\lm$.
\end{proof}

\begin{prop} \label{lem:gradientPeriodic}
 Let $v_0\in\Hp[1]$ and assume that $\lm(v_0)\in\C^*$ is  a periodic eigenvalue of $Q(v_0)$ of algebraic multiplicity $1$, meaning that $\dot\Dl(\lm(v_0))\not=0$
 and that $\grave{m}_2(\lm(v_0),v_0)\not= 0$ or $\grave{m}_3(\lm(v_0),v_0)\not = 0$.\\
 Then there exist an open neighborhood $V$ of $v_0$ and 
 an analytic function $v \mapsto \lm(v)$ on $V$, which coincides at $v_0$ with $\lm(v_0)$, so that $\lm(v)$ is a periodic eigenvalue of $Q(v)$,  $\dot\Dl(\lm(v))\not=0$ on $V$,
 and $\grave{m}_2(\lm(v),v)\not= 0$ on $V$ or $\grave{m}_3(\lm(v_0),v_0)\not = 0$ on $V$. The $L^2-$gradient of $\lm(v)$ satisfies
 \[
 \partial \lm = -\frac{1}{\dot\Dl(\lm)}\partial\Dl\big|_{\lm}.
 \]
 Furthermore, if $\grave{m}_2(\lm(v),v)\not=0$, then $\grave{m}_2M_1 -\delta \cdot M_2$ is an eigenfunction for $\lm(v)$  and
 \[
 \partial \lm = -\frac{1}{2\dot \Dl(\lm)\grave{m}_2(\lm)} \Grad{\grave{m}_2M_1 -\delta \cdot M_2}{\lm} 
  \]
and if $\grave{m}_3(\lm(v),v)\not = 0$, then $\grave{m}_3M_2 +\delta \cdot M_1$ is an eigenfunction for $\lm(v)$ and
   \[
 \partial \lm = \frac{1}{2\dot \Dl(\lm)\grave{m}_3(\lm)} \Grad{\grave{m}_3M_2 - \delta \cdot M_1}{\lm}.
  \]
\end{prop}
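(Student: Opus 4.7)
The plan is to combine the implicit function theorem with the formulas already established in Proposition \ref{prop:DlGradientFloquet}. First, I would produce the analytic branch $v\mapsto\lm(v)$. Periodic eigenvalues are the zeroes of the characteristic function $\chi_p(\lm,v)=\Dl^2(\lm,v)-1$, which by \cite[Theorem 2.2]{LOperator2018} is jointly analytic in $(\lm,v)\in\C^*\times\Hp[1]$. Since $\lm(v_0)$ is a periodic eigenvalue, $\Dl(\lm(v_0),v_0)=\pm1$; together with the hypothesis $\dot\Dl(\lm(v_0),v_0)\ne0$ this gives
\[
\dot\chi_p(\lm(v_0),v_0)=2\Dl(\lm(v_0),v_0)\,\dot\Dl(\lm(v_0),v_0)\ne0,
\]
so the complex-analytic implicit function theorem applied to $\chi_p=0$ at $(\lm(v_0),v_0)$ yields an open neighborhood $V$ of $v_0$ in $\Hp[1]$ and a unique analytic function $v\mapsto\lm(v)$ on $V$ with $\chi_p(\lm(v),v)=0$ and $\lm(v_0)$ as prescribed. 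Shrinking $V$ if necessary, continuity of $(\lm,v)\mapsto\dot\Dl(\lm,v)$, $\grave m_2(\lm,v)$, $\grave m_3(\lm,v)$ together with the composition with the analytic $\lm(v)$ propagates the assumed non-vanishing from $v_0$ to all of $V$.

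To obtain $\partial\lm$, I would differentiate the identity $\chi_p(\lm(v),v)\equiv0$ on $V$ with respect to $v$, which gives the chain-rule identity
\[
\dot\chi_p(\lm(v),v)\,\partial\lm\;+\;\partial\chi_p(\lm,v)\big|_{\lm=\lm(v)}\;=\;0,
\]
and substituting $\chi_p=\Dl^2-1$ so that both sides carry a common factor of $2\Dl(\lm(v),v)=\pm2\ne0$, the $\Dl$ factor cancels, leaving $\partial\lm=-\partial\Dl|_{\lm}/\dot\Dl(\lm)$, as claimed. For the explicit Floquet-solution formulas, I would now specialize Proposition \ref{prop:DlGradientFloquet} to $v\in V$: since $\lm(v)$ has algebraic multiplicity one, its geometric multiplicity is also one, so $\grave M(\lm(v),v)\ne\pm I$ and the hypothesis of that proposition is fulfilled. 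On the other hand, at $\lm=\lm(v)$ one has $\Dl^2-1=0$, so the second summand in each of the two formulas of Proposition \ref{prop:DlGradientFloquet} vanishes identically; dividing the remaining term by $-\dot\Dl(\lm)$ produces exactly the claimed expressions. That the vectors $\grave m_2 M_1-\delta M_2$ and, respectively, $\grave m_3 M_2+\delta M_1$ are (the unique up to scalar) eigenfunctions is then the content of Lemma \ref{lem:Eigenvectors} specialized to $\sqrt{\Dl^2-1}=0$, so that $f_+$ and $f_-$ collapse to the same vector.

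There is no substantive obstacle: the two nontrivial inputs, namely the invertibility of $\partial_\lm\chi_p$ at $(\lm(v_0),v_0)$ and the vanishing of the $(\Dl^2-1)$-term in Proposition \ref{prop:DlGradientFloquet} at periodic eigenvalues, are immediate from the algebraic-multiplicity-one hypothesis. The only point requiring a small amount of care is the compatibility of hypotheses, i.e.\ checking that $\grave M(\lm(v),v)\ne\pm I$ on $V$; this follows because algebraic multiplicity one forces geometric multiplicity one, and also because $\grave m_2$ or $\grave m_3$ is non-zero on $V$ by the preceding step, which in either case rules out $\grave M=\pm I$.
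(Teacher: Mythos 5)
Your proof is correct and takes essentially the same route as the paper's: the paper applies the implicit function theorem (implicitly) to $\Dl(\lm,v)=\sigma$ and differentiates that identity, while you apply it to $\chi_p(\lm,v)=0$ and then cancel the common factor $2\Dl=\pm2$; the two are trivially equivalent, and the subsequent specialization of Proposition \ref{prop:DlGradientFloquet} at $\Dl^2-1=0$ is exactly how the paper concludes.
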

\begin{proof}
If $\lm(v_0)$ is a periodic eigenvalue of $Q(v_0)$, then since $\det \grave M = 1$, the two eigenvalues of $\grave{M}(\lm(v_0), v_0)$ are both 
equal to $\sigma \in \{ 1 , -1 \}$. 
If both $\grave{m}_2(\lm(v_0),v_0)$ and $\grave{m}_3(\lm(v_0), v_0)$ were to vanish then $\grave{M}(\lm(v_0))= \sigma I$
and hence $\lm(v_0)$ would be a periodic eigenvalue of geometric multiplicity two which contradicts our assumption. 
Given that $\lm(v_0)$ is a periodic  eigenvalue of $Q(v_0)$ of algebraic multiplicity 1, one infers that there exist a neighborhood $V$ of $v_0$
and an analytic function $v \mapsto \lm(v)$ on $V$, which coincides at $v_0$ with $\lm(v_0)$, so that for any $v \in V$, $\lm(v)$ is a periodic eigenvalue of $Q(v)$ of algebraic multiplicity 1 
with $\Dl(\lm(v),v)=\sigma$.
Applying the chain rule to the left hand side of the identity $\Dl(\lm(v),v)=\sigma$, one gets
\[
\partial \lm =  -\frac{1}{\dot\Dl(\lm)}\left.\partial \Dl\right|_\lm.
\]
Since $\Dl^2(\lm(v),v)= 1$, the claimed formulas for $\partial \lm$ then follow from Proposition \ref{prop:DlGradientFloquet}.
\end{proof}

Finally, based on the formulas for the gradient of the Floquet matrix of Lemma \ref{lem:gradMFloquet} and the formulas of the Floquet solutions 
of Lemma \ref{lem:Eigenvectors} we provide a formula for $\partial \grave{m}_4(\lm)$ at a Dirichlet eigenvalue which turns out to be useful in the sequel.

\begin{lem} \label{lem:GradM4}
For any $v\in \Hp$ in some open subset $V$ of \Hp, let
$\mu(v) \in \C^*$ be a  Dirichlet eigenvalue of $Q(v)$ of algebraic multiplicity 1, depending analytically on $v$. Then
\[
 \partial \grave{m}_4\big|_{\lm=\mu} =- \grave{m}_3(\mu)\Grad{M_2}{\mu}  + \frac{\grave{m}_4(\mu)}{4}\Big(\Grad{M_1 +M_2}{\mu} - \Grad{M_1- M_2}{\mu}\Big).
 \]
\end{lem}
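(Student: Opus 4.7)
The approach is to compute $\partial\grave{m}_4|_{\lambda=\mu}$ via Lemma \ref{lem:gradMFloquet} combined with the explicit Floquet solutions provided by Lemma \ref{lem:Eigenvectors}, as the paper itself hints. At a Dirichlet eigenvalue $\mu$ we have $\grave{m}_2(\mu)=0$, and the Wronskian identity $\det \grave M=1$ yields $\grave{m}_1(\mu)\grave{m}_4(\mu)=1$, hence $\Delta^2-1=\delta^2$ with $\delta=(\grave{m}_1-\grave{m}_4)/2$. I would work under the generic assumption $\grave{m}_3(\mu)\ne 0$; the case $\grave{m}_3(\mu)=0$ then follows by analyticity in $v$ (alternatively one can bypass the choice of eigenvectors and compute directly from Proposition \ref{prop:Mgradient}, noting that the $EV_0$ term in \eqref{eq:prodMgradientThree} drops out of the $(2,2)$ entry because $\grave m_2(\mu)=0$).

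Choosing the $+\delta$-branch of $\sqrt{\Delta^2-1}$, formula \eqref{eq:EigenvectorsM3} gives $\xi_+=\grave{m}_1$, $\xi_-=\grave{m}_4$ with eigenvectors $a_+=(2\delta,\grave{m}_3)^{\top}$, $a_-=(0,\grave{m}_3)^{\top}$, so that $a_+^\bot a_- = 2\delta\,\grave{m}_3$ and the Floquet solutions take the especially simple form
\[
 f_+ \,=\, 2\delta\, M_1 + \grave{m}_3\, M_2, \qquad f_- \,=\, \grave{m}_3\, M_2.
\]
I would substitute these into the expressions of Lemma \ref{lem:gradMFloquet} and read off the $(2,2)$ entry of $\partial\grave{M}$. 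The factors of $2\delta$ coming from $f_+$ and from the matrix product cancel the $2\delta$ in $a_+^\bot a_-$, producing an expression that is manifestly analytic in $v$. After this cancellation the result splits into two parts: one proportional to $\grave{m}_3$, involving only squares of the components of $M_2$, and one proportional to $\grave{m}_4$, involving the bilinear cross products $m_1m_2$, $m_3m_4$, and $m_1m_4+m_2m_3$.

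The first part is recognized directly from \eqref{eq:Grad} as $-\grave{m}_3\,\Grad{M_2}{\mu}$. For the second part I apply a polarization identity: since $\Grad{f}{\mu}$ is a quadratic form in $f=(f_1,f_2)$, the difference $\Grad{M_1+M_2}{\mu}-\Grad{M_1-M_2}{\mu}$ picks out exactly four times the bilinear part. Using $(a+b)^2-(a-b)^2=4ab$ on the first component and $(f+g)_1(f+g)_2-(f-g)_1(f-g)_2=2(f_1g_2+f_2g_1)$ on the second, its components simplify to
\[
 2\mu(m_3m_4-m_1m_2)+\tfrac{1}{8\mu}(m_3m_4\, e^q - m_1m_2\, e^{-q}) \quad\text{and}\quad -(m_1m_4+m_2m_3)\Op(\cdot),
\]
which match the coefficients obtained above with the prefactor $\grave{m}_4/4$, giving the claimed identity. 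The main obstacle is purely algebraic bookkeeping: keeping track of the $(2,2)$ entry through the matrix multiplications and verifying the cancellation of the apparent $\delta$ singularity. The conceptual step is recognizing that the mixed $M_1,M_2$ terms in the Floquet formula repackage cleanly as a polarization of the quadratic form $\Grad{\cdot}{\mu}$.
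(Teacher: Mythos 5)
Your proposal is correct and follows essentially the same route as the paper: restrict to the generic case, use Lemma~\ref{lem:Eigenvectors} to obtain $a_\pm = (\xi_\pm - \grave m_4,\, \grave m_3)^\top$ and the Floquet solutions $f_+ = 2\delta M_1 + \grave m_3 M_2$, $f_- = \grave m_3 M_2$, substitute into Lemma~\ref{lem:gradMFloquet}, read off the $(2,2)$ entry, recognize the $\grave m_3$-part as $-\grave m_3 \Grad{M_2}{\mu}$ and the $\grave m_4$-part via polarization, then extend by analyticity/continuity. One small imprecision: the generic case you work in should exclude not only $\grave m_3(\mu)=0$ but also $\delta(\mu)=0$ (needed for $a_+,a_-$ to be linearly independent; the paper restricts to $\grave m_3(\mu)\neq 0$ \emph{and} $\delta(\mu)\neq 0$), though you do implicitly flag this in your remark on verifying cancellation of the apparent $\delta$ singularity.
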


\begin{proof}
First let us consider the case where $\grave{m}_3(\mu)\not=0$ and $\dl(\mu) = (\grave{m}_1(\mu) - \grave{m}_4(\mu))/2 \not=0$ in some open subset of $V$. 
It then follows that $\mu$ is not a periodic eigenvalue. Since $\grave{m}_2(\mu)=0$, $\grave{M}(\mu)$ is lower triangular 
and the two Floquet multipliers are $\xi_+ = \grave{m}_1(\mu)$ and $\xi_- = \grave{m}_4(\mu)$. 
(Here we have chosen the square root $\sqrt{\Dl^2(\mu)-1}$ to be given by $\dl(\mu)$.) By \eqref{eq:EigenvectorsM3},
eigenvectors corresponding to $\xi_+$ and respectively, $\xi_-$ are given by
\begin{equation}\label{eq:kap4.171}
a_+ =(2\dl(\mu),\grave{m}_3(\mu)), \quad  a_- =(0,\grave{m}_3(\mu)).
\end{equation}
Since $\xi_+-\xi_- = 2\dl(\mu) \not=0$ by assumption, $a_+$ and $a_-$ are linearly independent and with $a_+^\bot = (-a_2^+,a_1^+) = (-\grave{m}_3,2\dl)$ one gets
\begin{equation}\label{eq:kap4.172}
a_+^\bot a_- = (-\grave{m}_3 ,2\dl) \cdot (0,\grave{m}_3) = 2\dl\grave{m}_3
\end{equation}
and the inverse of \begin{small} $\begin{pmatrix}a_+ & a_-\end{pmatrix} = \begin{pmatrix} 2\dl&0 \\ \grave{m}_3 & \grave{m}_3\end{pmatrix}\in \C^{2\times 2} $\end{small} is given by
\begin{equation}\label{eq:kap4.173}
\begin{pmatrix} a_+ & a_-\end{pmatrix}^{-1} = \frac{1}{2\dl\grave{m}_3} \begin{pmatrix} \grave{m}_3 & 0 \\ -\grave{m}_3 & 2\dl\end{pmatrix}.
\end{equation}
Furthermore by Lemma \ref{lem:Eigenvectors}, the corresponding Floquet solutions $f_{\pm} = (f_1^\pm, f_2^\pm)$ of $Q(v)$ for $\lm=\mu$ are given by
\begin{equation}\label{eq:kap4.174}
 f_+(x,\mu) = 2\dl(\mu) M_1(x,\mu) + \grave{m}_3(\mu) M_2(x,\mu), \quad f_-(x,\mu) = \grave{m}_3(\mu) M_2(x,\mu).
\end{equation}
By Lemma \ref{lem:gradMFloquet} and \eqref{eq:kap4.171} - \eqref{eq:kap4.173} one has
\begin{equation}\label{eq:kap4.175}
\partial_p \grave{M} =\frac{1}{8\dl\grave{m}_3 } \left(\grave{m}_1 \begin{pmatrix} 2\dl\\ \grave{m}_3\end{pmatrix} \;\; \grave{m}_4 \begin{pmatrix} 0\\ \grave{m}_3\end{pmatrix}\right) \begin{pmatrix} f_+ \cdot Zf_- & f_- \cdot Zf_-\\ -f_+\cdot Z f_+ & -f_+ \cdot Zf_- \end{pmatrix}
\frac{1}{2\dl\grave{m}_3} \begin{pmatrix} \grave{m}_3 & 0 \\ -\grave{m}_3 & 2\dl\end{pmatrix} \Op (\cdot)
\end{equation}
Since for any $2\times 2$ matrix \begin{small} $\begin{pmatrix} b_1&b_2\\ b_3 & b_4 \end{pmatrix} \in\C^{2\times 2}$\end{small},
\[
\left(\grave{m}_1 \begin{pmatrix} 2\dl\\ \grave{m}_3\end{pmatrix} \;\; \grave{m}_4 \begin{pmatrix} 0\\ \grave{m}_3\end{pmatrix}\right)
\begin{pmatrix} b_1&b_2\\ b_3 & b_4 \end{pmatrix}
\begin{pmatrix} \grave{m}_3 & 0 \\ -\grave{m}_3 & 2\dl\end{pmatrix}
= \begin{pmatrix} * & * \\ * & 2\dl\grave{m}_3 (b_2 \grave{m}_1 + b_4 \grave{m}_4)\end{pmatrix}
\]
it follows from \eqref{eq:kap4.175} that
\[
\partial_p \grave{m}_4(\lm)\big|_{\lm=\mu} = \frac{1}{8\dl\grave{m}_3} (\grave{m}_1 f_- \cdot Zf_- - \grave{m}_4 f_+ \cdot Z f_- ) \Op ( \cdot).
\]
By \eqref{eq:kap4.174}, one has
\[
f_-\cdot Z f_- = \grave{m}_3^2 M_2 \cdot Z M_2, \quad f_+ \cdot Z f_- = \grave{m}_3 ( 2\dl M_1 + \grave{m}_3 M_2) \cdot Z M_2
\]
and hence
\[
\partial_p \grave{m}_4(\lm)\big|_{\lm=\mu} = \frac{1}{8\dl\grave{m}_3 } \big((\grave{m}_1-\grave{m}_4)\grave{m}_3^2 M_2 \cdot Z M_2 - \grave{m}_4 \grave{m}_3 2\dl M_1 \cdot Z M_2 \big) \Op (\cdot)
= \frac14 (\grave{m}_3 M_2 \cdot Z M_2 - \grave{m}_4 M_1 \cdot Z M_2) \Op(\cdot).
\]
Now let us turn to the computation of $\partial_q \grave{m}_4 \big|_{\lm=\mu}$. By Lemma \ref{lem:gradMFloquet} and \eqref{eq:kap4.171}-\eqref{eq:kap4.173}
\begin{align*}
\partial_q \grave{m}_4\big|_{\lm=\mu} =& \frac{\mu}{4\dl \grave{m}_3} \big(\grave{m}_1 f_-\star f_- - \grave{m}_4 f_+ \star f_-\big)
\\ &+ \frac{e^q}{64 \mu\dl\grave{m}_3} \big( -\grave{m}_1 (f_2^-)^2 + \grave{m}_4 f_2^+ f_2^-\big) + \frac{e^{-q}}{64\mu\dl\grave{m}_3} \big(\grave{m}_1 (f_1^-)^2 - \grave{m}_4 f_1^+f_1^-\big)
\\=& \frac{\mu}{2} \big(\grave{m}_3 M_2\star M_2 - \grave{m}_4 M_1\star M_2\big) + \frac{e^q}{32\mu} \big(\grave{m}_4 m_3m_4 - \grave{m}_3 m_4^2\big) - \frac{e^{-q}}{32\mu} \big(\grave{m}_4m_1m_2 - \grave{m}_3 m_2^2\big).
\end{align*}
 Using that by \eqref{eq:Graddeux}
\[
-\grave{m}_3\Grad{M_2}{\mu} =\grave{m}_3 \begin{pmatrix} \frac{\mu}{2} M_2\star M_2 - \frac{e^q}{32\mu} m_4^2 + \frac{e^{-q}}{32\mu} m_2^2\\ \frac14 M_2\cdot ZM_2  \Op (\cdot)\end{pmatrix}
\]
and therefore
\[
 \partial \grave{m}_4(\lm) \big|_{\lm=\mu}  =- \grave{m}_3\Grad{M_2}{\mu} + \frac{\grave{m}_4}{4} \begin{pmatrix}- 2 \mu M_1\star M_2 - \frac{1}{8\,\mu} M_1\cdot \begin{pmatrix}e^{-q} &\\&-e^q\end{pmatrix} M_2\\ - M_1\cdot Z M_2 \Op(\cdot)\end{pmatrix}.
\]
Since by a straightforward computations, one has
\[
 \begin{pmatrix}- 2 \mu M_1\star M_2 - \frac{1}{8\,\mu} M_1\cdot \begin{pmatrix}e^{-q} &\\&-e^q\end{pmatrix} M_2\\ - M_1\cdot Z M_2 \Op(\cdot)\end{pmatrix} = \Grad{M_1+M_2}{\mu} - \Grad{M_1-M_2}{\mu}
\]
the claimed identity follows in the case where $\grave{m}_3(\mu)\not=0$ and $\dl(\mu)\not=0$. By continuity, the identity then holds for any $v\in V$.
\end{proof}

\subsection{Asymptotics of the $L^2-$gradient of the Floquet matrix}\label{asymptotics}
In this section we prove for any given $v \in \Hp$ asymptotics for the gradient $\partial \grave{M}(\zeta_n, v)$ as $n \to \pm \infty$ where  $(\zeta_n)_{n\in \Z}\subset\C^*$ is any bi-infinite sequence 
with $\zeta_n\sim n\pi$ as $n\to\pm \infty$. We introduce
\begin{equation}\label{eq:langlenrangle}
\langle n\rangle := \sqrt{1+n^2\pi^2}
\end{equation}
and recall that by \eqref{eq:Eomega}
\begin{equation*}
 E_{\omega(\lm)}(x) = \begin{pmatrix}
 \cos(\omega(\lm)x) & \sin(\omega(\lm)x)\\ -\sin(\omega(\lm)x) & \cos(\omega(\lm)x)
   \end{pmatrix}, \qquad \omega(\lm) = \lm-\frac1{16\,\lm}.
\end{equation*}
Furthermore, for any sequence $(a_n)_{n \in \Z} \subset \C$ we write $a_n = \ell^2_n$ if $(a_n)_{n \in \Z}  \in \ell^2(\Z, \C)$.
In particular, $\zeta_n = n\pi + \ell^2_n$ means that $(\zeta_n - n \pi)_{n \in \Z}$ is a sequence in $\ell^2(\Z, \C)$.

\begin{lem}\label{lem:NgradientEstimate}
For any $v\in\Hp$ and any bi-infinite sequence $(\zeta_n)_{n\in\Z}$ of complex numbers in $\C^*$ with $|\zeta_n|\geq1/4$ the following
asymptotics of the $L^2-$gradient of $\grave{M}(\zeta_n) \equiv \grave{M}(\zeta_n, v)$ hold:\\
(i) If $\zeta_n =n\pi +O(1)$, then
\begin{align}\begin{split}
\partial_q \grave{M}(\zeta_n)
=& \frac{E_{\omega(\zeta_n)}(1)}{4}\begin{pmatrix}
                   \cos(2\omega(\zeta_n)x)  & \sin(2\omega(\zeta_n)x)\\
                   \sin(2\omega(\zeta_n)x) & -\cos(2\omega(\zeta_n)x)
                   \end{pmatrix}\cdot\partial_x (\cdot) + \begin{pmatrix}\ell_n^2&\ell_n^2\\\ell_n^2 & \ell_n^2\end{pmatrix}\cdot\partial_x(\cdot) + \begin{pmatrix}\ell_n^2&\ell_n^2\\\ell_n^2 & \ell_n^2\end{pmatrix} ,
\end{split}\label{eq:LemmdqM}
 \\
\partial_p \grave{M}(\zeta_n) =& \frac{E_{\omega(\zeta_n)}(1)}{4} \begin{pmatrix}
      \cos(2\omega(\zeta_n)x) &\sin(2\omega(\zeta_n)x)  \\ \sin(2\omega(\zeta_n)x)  & -\cos(2\omega(\zeta_n)x)
      \end{pmatrix} \cdot \Op (\cdot) + \begin{pmatrix}\ell_n^2&\ell_n^2\\\ell_n^2 & \ell_n^2\end{pmatrix}\cdot\Op(\cdot) .\label{eq:LemmdpM} 
\end{align}
Alternatively, the asymptotics of $\partial_q\grave{M}(\zeta_n)$ can be written as
\begin{align}\label{eq:LemmdqMTwo}\begin{split}
\partial_q\grave{M}(\zeta_n) =& \frac{1}{2}\begin{pmatrix} &\sin(\omega(\zeta_n))+\ell_n^2\\ \sin(\omega(\zeta_n)) + \ell_n^2 & \end{pmatrix}EV_0
\\&\indent - \frac{\zeta_nE_{\omega(\zeta_n)}(1)}{2}\begin{pmatrix}
 -\sin(2\omega(\zeta_n)x) & \cos(2\omega(\zeta_n)x) \\ \cos(2\omega(\zeta_n)x) & \sin(2\omega(\zeta_n)x)
 \end{pmatrix} +\langle n\rangle \begin{pmatrix}\ell_n^2&\ell_n^2\\\ell_n^2 & \ell_n^2\end{pmatrix} .
\end{split}\end{align}
These estimates hold uniformly for $x$ in the interval $[0, 1]$, for $v$ in any bounded subset of $\Hp$, and for $(\zeta_n)_{n \in \Z}$ in any subset of bi-infinite sequences $(\zeta_n)_{n\in \Z}$ with $\sup_{n\in\Z}|\zeta_n-n\pi|$ bounded.
In more detail,  e.g.  \begin{small}$\begin{pmatrix} \ell_n^2 & \ell_n^2\\ \ell_n^2 & \ell_n^2 \end{pmatrix}\cdot \partial_x(\cdot)$\end{small} 
is a sequence of the form \begin{small}$\begin{pmatrix} a_{1n}(x) & a_{2n}(x)\\ a_{3n}(x) & a_{4n}(x)\end{pmatrix}\cdot \partial_x(\cdot)$\end{small} 
with the property that
for any $\mathring{q}\in\Hp$,  \begin{small}$\begin{pmatrix} \ell_n^2 & \ell_n^2\\ \ell_n^2 & \ell_n^2 \end{pmatrix}\cdot \partial_x(\mathring{q})$\end{small}  
is given by
  \[
  \begin{pmatrix} \langle a_{1n},\partial_x\mathring{q}\rangle_r
    & \langle a_{2n},\partial_x\mathring{q}\rangle_r
    \\ \langle a_{3n},\partial_x\mathring{q}\rangle_r
    & \langle a_{4n},\partial_x\mathring{q}\rangle_r
  \end{pmatrix}.
  \]
The claimed uniformity statement for  \begin{small}$\begin{pmatrix} \ell_n^2 & \ell_n^2\\ \ell_n^2 & \ell_n^2 \end{pmatrix}\cdot \partial_x(\cdot)$\end{small}  means that
it satisfies the estimate
  \[
 \sup_{1\leq j\leq 4} \, \sup_{0\leq x\leq 1} \sum_{n\in\Z} |a_{jn}(x)|^2 \leq C
  \]
where the constant $C$ can be chosen uniformly for $v$ in any given bounded subset in \Hp 
and uniformly for bi-infinite sequences $(\zeta_n)_{n\in \Z}$ with $\sup_{n\in\Z}|\zeta_n -n\pi|$ bounded. In particular, $\sum_{n\in\Z} \norm{a_{j,n}}^2 \leq C$ 
for any $1\leq j\leq 4$. \\
(ii) If $\zeta_n = n\pi + \ell_n^2$, then
\begin{align}
\partial_q \grave{M}(\zeta_n)
=&\frac{(-1)^n}4 \begin{pmatrix}
                   \cos(2\pi nx)  & \sin(2\pi nx)\\
                   \sin(2\pi nx) & -\cos(2\pi nx)
                   \end{pmatrix}\cdot\partial_x (\cdot) + \begin{pmatrix}\ell_n^2&\ell_n^2\\\ell_n^2 & \ell_n^2\end{pmatrix}\cdot\partial_x(\cdot) + \begin{pmatrix}\ell_n^2&\ell_n^2\\\ell_n^2 & \ell_n^2\end{pmatrix}
 \label{eq:lemmadqMell2}
\\
\partial_p \grave{M}(\zeta_n) =& \frac{(-1)^n}4 \begin{pmatrix}
      \cos(2\pi nx) &\sin(2\pi nx)  \\ \sin(2\pi nx) & -\cos(2\pi n x)
      \end{pmatrix} \cdot \Op (\cdot) + \begin{pmatrix}\ell_n^2&\ell_n^2\\\ell_n^2 & \ell_n^2\end{pmatrix}\cdot\Op(\cdot).\label{eq:lemmadpMell2}
\end{align}
Alternatively, $\partial_q\grave{M}(\zeta_n)$ satisfies the asymptotics
\begin{align}\label{eq:lemmadqMell2Two}
\partial_q \grave{M}(\zeta_n) =& \begin{pmatrix} &\ell_n^2\\  \ell_n^2 & \end{pmatrix}EV_0 + (-1)^{n+1}\frac{n\pi}{2}\begin{pmatrix}
 -\sin(2\pi n x) & \cos(\pi n x) \\ \cos(2\pi nx) & \sin(2\pi nx)
 \end{pmatrix}  + \langle n\rangle\begin{pmatrix}\ell_n^2&\ell_n^2\\\ell_n^2 & \ell_n^2\end{pmatrix}  .
\end{align}
These estimates hold uniformly for $x$ in the interval $[0, 1]$, for $v$ in any bounded subset of $\Hp$, 
and for $(\zeta_n)_{n \in \Z}$ in any subset of bi-infinite sequences $(\zeta_n)_{n\in \Z}$ with $\sup_{n\in\Z}|\zeta_n-n\pi|$ bounded.
\end{lem}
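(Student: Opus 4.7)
My strategy is to start from the explicit formulas for $\partial_q \grave{M}$ and $\partial_p \grave{M}$ established in Proposition \ref{prop:Mgradient}, substitute high-frequency asymptotics for the fundamental solution $M(x, \zeta_n, v)$, and then read off the principal parts from explicit conjugations by the free evolution $E_{\omega(\zeta_n)}$. The error terms will be shown to assemble into the announced $\ell_n^2$-valued remainders by a Bessel-type oscillatory estimate applied to an exponential Riesz family on $[0,1]$.

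The key input is an asymptotic of the form
$$
M(x, \zeta_n, v) = E_{\omega(\zeta_n)}(x) + R_n(x, v), \qquad M^{-1}(x, \zeta_n, v) = E_{-\omega(\zeta_n)}(x) + \tilde R_n(x, v),
$$
with remainders satisfying $\sup_{0 \le x \le 1} \sum_n \|R_n(x, \cdot)\|_{L^2}^2 \le C$ uniformly in $v$ on bounded subsets of $\Hp$ and in sequences $(\zeta_n)$ with $\sup_n |\zeta_n - n\pi|$ bounded; likewise for $\tilde R_n$. Such estimates are available from \cite[Section 2]{LOperator2018}, or follow from a direct Volterra expansion of $M$ about its free version $M(x, \lambda, 0) = E_{\omega(\lambda)}(x)$, cf. \eqref{eq:Eomega}. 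In particular $\grave{M}(\zeta_n) = E_{\omega(\zeta_n)}(1) + \ell_n^2$, and so $\grave{m}_2(\zeta_n) = \sin(\omega(\zeta_n)) + \ell_n^2$ and $-\grave{m}_3(\zeta_n) = \sin(\omega(\zeta_n)) + \ell_n^2$.

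A direct computation yields
$$
E_\nu^{-1}(x)(iR) E_\nu(x) = \begin{pmatrix} -\cos(2\nu x) & -\sin(2\nu x) \\ -\sin(2\nu x) & \cos(2\nu x) \end{pmatrix}, \quad E_\nu^{-1}(x) Z E_\nu(x) = \begin{pmatrix} -\sin(2\nu x) & \cos(2\nu x) \\ \cos(2\nu x) & \sin(2\nu x) \end{pmatrix}.
$$
Substituting these identities, together with $\grave{M}(\zeta_n) \approx E_{\omega(\zeta_n)}(1)$, into \eqref{eq:prodMgradientOne}--\eqref{eq:prodMgradientTwo} produces the leading matrices in \eqref{eq:LemmdqM} and \eqref{eq:LemmdpM}; the lower-order term in \eqref{eq:prodMgradientOne} carrying the $1/\lambda$ prefactor is absorbed into the remainder since $1/\zeta_n = O(1/n)$. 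For the alternative form \eqref{eq:LemmdqMTwo} one argues instead from the representation \eqref{eq:prodMgradientThree}: the $\lambda Z$ factor supplies both the prefactor $\zeta_n$ and, through the conjugation above, the explicit $\sin(2\omega(\zeta_n) x),\cos(2\omega(\zeta_n) x)$ matrix, while the boundary piece $\tfrac{1}{2}\begin{pmatrix} & \grave{m}_2 \\ -\grave{m}_3 & \end{pmatrix} EV_0$ supplies the $EV_0$ term with $\grave{m}_2$ and $-\grave{m}_3$ replaced by $\sin(\omega(\zeta_n)) + \ell_n^2$.

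To handle the remainders, the additional terms produced by the substitution are sums of products in which at least one factor is $R_n$ or $\tilde R_n$. After pairing with a test function $\mathring{q}, \mathring{p} \in H^1(\T, \C)$, their $\ell^2$-summability in $n$ follows from a Bessel-type inequality for the exponential system $\{e^{\pm 2i \omega(\zeta_n) x}\}_{n \in \Z}$, which is Riesz on $[0, 1]$ as a bounded perturbation of $\{e^{\pm 2in\pi x}\}_{n \in \Z}$. Part (ii) is then obtained from part (i) by inserting the sharper identity $\omega(\zeta_n) = n\pi + \ell_n^2$ (a consequence of $\zeta_n = n\pi + \ell_n^2$) into the trigonometric matrices and Taylor-expanding $\sin, \cos$ about $2n\pi x$. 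The main technical obstacle is maintaining the uniformity of all $\ell_n^2$-bounds with respect to $v$ on bounded subsets of $\Hp$ and to the sequence $(\zeta_n)$; this uniformity is inherited from the uniform character of the underlying asymptotics for $M$.
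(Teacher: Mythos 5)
Your plan coincides with the paper's proof in its essential steps: start from Proposition \ref{prop:Mgradient}, substitute the asymptotics $M(x,\zeta_n) = E_{\omega(\zeta_n)}(x) + \ell_n^2$, $M^{-1}(x,\zeta_n) = E_{-\omega(\zeta_n)}(x) + \ell_n^2$ from \cite{LOperator2018}, compute the conjugations $E_{-\omega(\zeta_n)}(x)\ii R E_{\omega(\zeta_n)}(x)$ and $E_{-\omega(\zeta_n)}(x) Z E_{\omega(\zeta_n)}(x)$, and absorb the errors. Your derivation of part (ii) from part (i) by Taylor-expanding $\sin,\cos$ around $2n\pi x$ (using $\omega(\zeta_n) = n\pi + \ell_n^2$ and $E_{\omega(\zeta_n)}(1)=(-1)^n I + \ell_n^2$) is a clean alternative to the paper's direct use of the sharper $\ell_n^2$-asymptotics of $M$ in \cite[Theorem 2.12(iv), Corollary 2.13(iv)]{LOperator2018}.

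The step that does not hold up is the remainder analysis via the ``Bessel-type inequality for the exponential system $\{e^{\pm 2\ii\omega(\zeta_n)x}\}$.'' The lemma requires the uniform-in-$x$ bound $\sup_{1\le j\le 4}\sup_{0\le x\le 1}\sum_{n}|a_{jn}(x)|^2 \le C$ on the remainder coefficients, whereas a Bessel inequality applied after pairing with a test function only yields $\ell^2$-summability of $\langle a_{jn},\cdot\rangle$ for each fixed test function, which is strictly weaker and would not give the claimed statement. In fact the detour is unnecessary: given the uniform $\ell_n^2$ bound on $R_n(x) := M(x,\zeta_n) - E_{\omega(\zeta_n)}(x)$ (and its analogues for $M^{-1}$ and $\grave M$) that you already cite, every cross term in the expansion of $\grave M M^{-1}(\ii R)M$ or of $\grave M M^{-1}\big(\zeta_n Z + \frac{1}{16\zeta_n}\begin{pmatrix}&e^q\\ e^{-q}\end{pmatrix}\big) M$ contains at least one factor that is $\ell_n^2$ uniformly in $x$, and all other factors (rotation matrices $E_{\pm\omega(\zeta_n)}(x)$, constant matrices, $e^{\pm q}$, and $1/\zeta_n = O(1/n)$) have operator norms bounded uniformly in $n$ and $x$ on bounded subsets of $\Hp$; hence each cross term is already an $\ell_n^2$-matrix in the required pointwise-in-$x$ sense. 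This is precisely how the paper closes the argument, with no Riesz-basis input.
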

\begin{proof}
(i) Let $v \in \Hp$ and $\zeta_n = n\pi +O(1)$. By
 \cite[Theorem 2.12(iii), Corollary 2.13(iii)]{LOperator2018}, and $\omega(-\zeta_n) = -\omega(\zeta_n)$ one has
\[
M(x, \zeta_n)=  {E}_{\omega(\zeta_n)}(x) + \begin{pmatrix}\ell_n^2&\ell_n^2\\\ell_n^2 & \ell_n^2\end{pmatrix},
\quad
{M}^{-1}(x, \zeta_n) = {E}_{-\omega(\zeta_n)}(x) +\begin{pmatrix}\ell_n^2&\ell_n^2\\\ell_n^2 & \ell_n^2\end{pmatrix}
\]
uniformly for $0 \le x \le 1$.
Hence  \eqref{eq:prodMgradientOne} yields
\begin{align*}
\partial_q \grave{M} =&-\frac14 \grave{M} M^{-1}\ii RM \cdot\partial_x(\cdot)
  -\frac{1}{16\,\zeta_n}\grave{M}M^{-1} \begin{pmatrix}&e^q \\ e^{-q}\end{pmatrix} M
  \\=&-\frac14 E_{\omega(\zeta_n)}(1) E_{-\omega(\zeta_n)}(x)\ii RE_{\omega(\zeta_n)}(x)\cdot \partial_x(\cdot)+ \begin{pmatrix} \ell_n^2&\ell_n^2\\\ell_n^2 & \ell_n^2\end{pmatrix}\cdot\partial_x(\cdot)
  +\begin{pmatrix} \ell_n^2&\ell_n^2\\\ell_n^2 & \ell_n^2\end{pmatrix}.
\end{align*}
Since by trigonometric identities
\begin{align*}
  E_{-\omega(\zeta_n)}(x)\ii RE_{\omega(\zeta_n)}(x)
  =&\begin{pmatrix}
                   -\cos^2(\omega(\zeta_n)x)+\sin^2(\omega(\zeta_n)x)  & -2\cos(\omega(\zeta_n)x)\sin(\omega(\zeta_n)x)\\
                   -2\cos(\omega(\zeta_n)x)\sin(\omega(\zeta_n)x) & \cos^2(\omega(\zeta_n)x)-\sin2(\omega(\zeta_n)x)
                   \end{pmatrix}
\\=&\begin{pmatrix}
                   -\cos(2\omega(\zeta_n)x)  & -\sin(2\omega(\zeta_n)x)\\
                   -\sin(2\omega(\zeta_n)x) & \cos(2\omega(\zeta_n)x)
                   \end{pmatrix}
\end{align*}
estimate \eqref{eq:LemmdqM} follows. Alternatively by \eqref{eq:prodMgradientThree}
\begin{align*}
& \partial_q \grave{M}(\zeta_n) = \frac{1}{2}\begin{pmatrix} & \grave{m}_2\\- \grave{m}_3 &\end{pmatrix}EV_0 - \frac12 \grave{M}M^{-1}\Big(\zeta_n Z + \frac{1}{16\,\zeta_n}\begin{pmatrix}&e^q \\ e^{-q}\end{pmatrix}\Big) M
\\=&  \frac{1}{2}\begin{pmatrix} &\sin(\omega(\zeta_n))+\ell_n^2\\ \sin(\omega(\zeta_n)) + \ell_n^2 & \end{pmatrix}EV_0 - \frac{\zeta_nE_{\omega(\zeta_n)}(1)}{2}E_{-\omega(\zeta_n)}(x)ZE_{\omega(\zeta_n)}(x)
+\langle n\rangle\begin{pmatrix} \ell_n^2&\ell_n^2\\\ell_n^2 & \ell_n^2\end{pmatrix}.
\end{align*}
Clearly
\begin{align*}
E_{-\omega(\zeta_n)}(x)ZE_{\omega(\zeta_n)}(x)
=&\left(\begin{array}{rr}
-2 \, \cos\left(\omega(\zeta_n)x\right) \sin\left(\omega(\zeta_n)x\right) & \cos^2\left(\omega(\zeta_n)x\right) - \sin^2\left(\omega(\zeta_n)x\right) \\
\cos^2\left(\omega(\zeta_n)x\right) - \sin^2\left(\omega(\zeta_n)x\right) & 2 \, \cos\left(\omega(\zeta_n)x\right) \sin\left(\omega(\zeta_n)x\right)
\end{array}\right)
\\ =&\begin{pmatrix}
 -\sin(2\omega(\zeta_n)x) & \cos(2\omega(\zeta_n)x)\\ \cos(2\omega(\zeta_n)x) & \sin(2\omega(\zeta_n)x)
 \end{pmatrix}
\end{align*}
yielding the estimate \eqref{eq:LemmdqMTwo}.
 For $\partial_p \grave{M}$ we proceed similarly. By Proposition \ref{prop:Mgradient}
\begin{align*}
\partial_p \grave{M} =& -\frac\ii4  \grave{M}  M^{-1} R M \cdot \Op (\cdot)
\\ =& -\frac\ii4 E_{\omega(\zeta_n)}(1) E_{-\omega(\zeta_n)}(x)R E_{\omega(\zeta_n)}(x) \cdot \Op (\cdot) + \begin{pmatrix} \ell_n^2&\ell_n^2\\\ell_n^2 & \ell_n^2\end{pmatrix}\cdot \Op(\cdot),
\end{align*}
leading to the estimate \eqref{eq:LemmdpM}. Going through the arguments of the  proof one verifies that the claimed uniformity statements hold.

\noindent (ii) Using the estimates of \cite[Theorem 2.12(iv)]{LOperator2018}
and \cite[Corollary 2.13(iv)]{LOperator2018} instead of the ones of \cite[Theorem 2.12(iii)]{LOperator2018}
and \cite[Corollary 2.13(iii)]{LOperator2018}  in the proof above, one obtains estimates \eqref{eq:lemmadqMell2}, \eqref{eq:lemmadpMell2}, and \eqref{eq:lemmadqMell2Two}. 
Going through the arguments of the  proof one sees that the claimed uniformity statements hold.
\end{proof}

The asymptotics stated in Lemma \ref{lem:NgradientEstimate} lead to  asymptotic estimates of the $L^2-$gradients of $\Dl$ and $\dl$.
\begin{cor}\label{cor:DldlgradientEstimate}
For any $v\in\Hp$ and any bi-infinite sequence $(\zeta_n)_{n\in\Z}$ of complex numbers with $|\zeta_n|\geq1/4$ the following
asymptotics for $\partial \Delta(\zeta_n)$ and $\partial \delta(\zeta_n)$ hold:\\
(i) If $\zeta_n = n\pi +O(1)$, then
\[
(\partial_q \Dl,\partial_p \Dl)\Big|_{\lm=\zeta_n}
=\big(\ell_n^2\cdot \partial_x(\cdot) +\ell_n^2 ,\;\ell_n^2\cdot\Op(\cdot)\big) \, .
\]
Alternatively, one has $\partial_q\Dl(\zeta_n) = \langle n\rangle \ell_n^2 $. Similarly,
\begin{align*}
 (\partial_q \dl,\partial_p\dl)\Big|_{\lm=\zeta_n} =& \frac14
 \Big( \cos(\omega(\zeta_n)(1-2x))\cdot\partial_x(\cdot) +\ell_n^2\cdot\partial_x(\cdot)+\ell_n^2,
 \,\, \, \,  \cos(\omega(\zeta_n)(1-2x))\cdot\Op(\cdot) + \ell_n^2\cdot \Op(\cdot)\Big).
\end{align*}
Alternatively, the asymptotics of $\partial_q\dl(\zeta_n)$ is given by
\begin{align*}
 \partial_q\dl(\zeta_n)= -\frac{\zeta_n}2 \sin(\omega(\zeta_n)(1-2x)) + \langle n\rangle\ell_n^2.
\end{align*}
These estimates hold uniformly in $0\leq x\leq 1$, on bounded subsets of  $\Hp$, and on subsets of bi-infinite sequences $(\zeta_n)_n$ with $\sup_{n\in\Z}|\zeta_n-n\pi|$ bounded.\\
(ii) If  $\zeta_n = n\pi + \ell_n^2$, then
\begin{align*}
 (\partial_q \dl,\partial_p\dl) \Big|_{\lm=\zeta_n}
 =&\frac{(-1)^n}{4} \Big( \cos(2n\pi x)\cdot \partial_x(\cdot)+\ell_n^2\cdot\partial_x(\cdot)+\ell_n^2,
 \,\,\,\,  \cos(2n\pi x)\cdot\Op(\cdot) + \ell_n^2\cdot\Op(\cdot)\Big).
\end{align*}
Alternatively, the asymptotics of $\partial_q\dl(\zeta_n)$ is given by
\begin{align*}
\partial_q\dl(\zeta_n)=& (-1)^n\frac{n\pi }2 \sin(2\pi nx) + \langle n\rangle\ell_n^2
 \end{align*}
uniformly in $0\leq x\leq 1$, on bounded subsets of  $\Hp$, and on subsets of bi-infinite sequences $(\zeta_n)_n$ with $(\zeta_n-n\pi)_n$ bounded in $\ell^2$.
\end{cor}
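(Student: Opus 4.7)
The plan is to read off the asymptotics for $\partial\Dl$ and $\partial\dl$ from the matrix asymptotics of $\partial\grave{M}(\zeta_n)$ provided by Lemma \ref{lem:NgradientEstimate}, using that
\[
\partial\Dl = \frac{\partial\grave{m}_1+\partial\grave{m}_4}{2}, \qquad \partial\dl = \frac{\partial\grave{m}_1-\partial\grave{m}_4}{2}.
\]
The main computation is therefore to extract the $(1,1)$ and $(2,2)$ entries of the leading matrices appearing in \eqref{eq:LemmdqM}, \eqref{eq:LemmdpM}, \eqref{eq:LemmdqMTwo} and then form their sum and difference. The error terms $\begin{pmatrix}\ell_n^2 & \ell_n^2\\ \ell_n^2 & \ell_n^2\end{pmatrix}\cdot \partial_x(\cdot)+\begin{pmatrix}\ell_n^2 & \ell_n^2\\ \ell_n^2 & \ell_n^2\end{pmatrix}$ (respectively $\langle n\rangle \begin{pmatrix}\ell_n^2 & \ell_n^2\\ \ell_n^2 & \ell_n^2\end{pmatrix}$ for the alternative form) carry over verbatim, and the uniformity statements carry over as well.

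For part (i), writing $\omega=\omega(\zeta_n)$, the leading $\partial_q$-contribution in \eqref{eq:LemmdqM} is
\[
\frac{1}{4}E_{\omega}(1)\begin{pmatrix}\cos(2\omega x) & \sin(2\omega x)\\ \sin(2\omega x)& -\cos(2\omega x)\end{pmatrix}\cdot \partial_x(\cdot),
\]
and a direct matrix multiplication combined with the identities $\cos\alpha\cos\beta+\sin\alpha\sin\beta=\cos(\alpha-\beta)$ gives that the $(1,1)$ entry equals $\cos(\omega(1-2x))$ while the $(2,2)$ entry equals $-\cos(\omega(1-2x))$. Adding yields $0$ (so $\partial_q\Dl(\zeta_n) = \ell_n^2\cdot\partial_x(\cdot)+\ell_n^2$) and subtracting yields $2\cos(\omega(1-2x))$, producing the $\frac{1}{4}\cos(\omega(\zeta_n)(1-2x))\cdot\partial_x(\cdot)$ term in $\partial_q\dl(\zeta_n)$. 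The same matrix trick applied to the $\Op(\cdot)$-term in \eqref{eq:LemmdpM} yields the $\partial_p$-asymptotics. For the alternative form of $\partial_q\dl(\zeta_n)$, the off-diagonal leading term in \eqref{eq:LemmdqMTwo} involving $EV_0$ drops out on the diagonal, and the identities $\sin\alpha\cos\beta-\cos\alpha\sin\beta=\sin(\alpha-\beta)$ give that the $(1,1)$ and $(2,2)$ entries of $E_\omega(1)\begin{pmatrix}-\sin(2\omega x) & \cos(2\omega x)\\ \cos(2\omega x) & \sin(2\omega x)\end{pmatrix}$ are $\sin(\omega(1-2x))$ and $-\sin(\omega(1-2x))$ respectively, which upon subtracting and multiplying by $-\zeta_n/2$ produces the claimed $-\frac{\zeta_n}{2}\sin(\omega(\zeta_n)(1-2x))$ plus an error $\langle n\rangle\ell_n^2$.

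For part (ii), one substitutes $\zeta_n=n\pi+\ell_n^2$, so $\omega(\zeta_n)=n\pi+\ell_n^2$ (since $1/(16\zeta_n)=\ell_n^2$), and Taylor-expands the trigonometric functions: $\cos(\omega(1-2x))=\cos(n\pi(1-2x))+\ell_n^2=(-1)^n\cos(2n\pi x)+\ell_n^2$ and $\sin(\omega(1-2x))=(-1)^{n+1}\sin(2n\pi x)+\ell_n^2$, uniformly in $x\in[0,1]$. Inserting these expansions into the formulas from part (i) yields \eqref{eq:lemmadqMell2}-type expressions for $\partial\dl$, where the extra $\ell_n^2$-remainder from the Taylor expansion is absorbed into the error term (for the alternative form, one uses that $\zeta_n\cdot\ell_n^2=\langle n\rangle\ell_n^2$).

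The only real subtlety is bookkeeping of uniformity in $v$, in $x\in[0,1]$, and in the bi-infinite sequences $(\zeta_n)$ with $\sup_n|\zeta_n-n\pi|$ bounded (or bounded in $\ell^2$ for (ii)). These follow directly from the corresponding uniformity statements in Lemma \ref{lem:NgradientEstimate}, since the trigonometric manipulations and the Taylor expansions are uniform in the relevant parameters. No estimates beyond those already established in Lemma \ref{lem:NgradientEstimate} are required.
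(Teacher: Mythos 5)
Your proof is correct and follows essentially the same approach as the paper: extract the diagonal entries of the leading matrix in Lemma~\ref{lem:NgradientEstimate}, form the sum and difference to get $\partial\Dl$ and $\partial\dl$, and use the trigonometric identities for $E_{\omega}(1)$ times the oscillatory matrix. The one small difference is in part (ii), where you derive the result from part (i) by Taylor-expanding $\omega(\zeta_n)$ about $n\pi$, whereas the paper invokes the sharper $\ell^2$-asymptotics \eqref{eq:lemmadqMell2}--\eqref{eq:lemmadqMell2Two} of Lemma~\ref{lem:NgradientEstimate}(ii) directly; both routes are valid and produce the same error bookkeeping, since $\omega(\zeta_n) - n\pi = \ell_n^2$ and the resulting $\ell_n^2\cdot\cos(2n\pi x)$ (respectively $\zeta_n\cdot\ell_n^2 = \langle n\rangle\ell_n^2$) terms are absorbed into the stated remainders.
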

\begin{proof}
(i) By \eqref{eq:LemmdqM}
\
\begin{align*}
\partial_q \grave{M}(\zeta_n)
=& \frac{E_{\omega(\zeta_n)}(1)}{4}\begin{pmatrix}
                   \cos(2\omega(\zeta_n)x)  & \sin(2\omega(\zeta_n)x)\\
                   \sin(2\omega(\zeta_n)x) & -\cos(2\omega(\zeta_n)x)
                   \end{pmatrix}\cdot\partial_x (\cdot) + \begin{pmatrix}\ell_n^2&\ell_n^2\\\ell_n^2 & \ell_n^2\end{pmatrix}\cdot\partial_x(\cdot) + \begin{pmatrix}\ell_n^2&\ell_n^2\\\ell_n^2 & \ell_n^2\end{pmatrix}
\end{align*}
and by \eqref{eq:LemmdqMTwo} one has
\begin{align*}
\partial_q \grave{M}(\zeta_n)
=& \frac{1}{2}\begin{pmatrix} &\sin(\omega(\zeta_n))+\ell_n^2\\ \sin(\omega(\zeta_n)) + \ell_n^2 & \end{pmatrix}EV_0
\\&\indent - \frac{\zeta_nE_{\omega(\zeta_n)}(1)}{2} \begin{pmatrix}
 -\sin(2\omega(\zeta_n)x) & \cos(2\omega(\zeta_n)x) \\ \cos(2\omega(\zeta_n)x) & \sin(2\omega(\zeta_n)x)
 \end{pmatrix} +\langle n\rangle \begin{pmatrix}\ell_n^2&\ell_n^2\\\ell_n^2 & \ell_n^2\end{pmatrix}.
\end{align*}
Furthermore by \eqref{eq:LemmdpM}
\begin{align*}
\partial_p \grave{M}(\zeta_n) =& \frac{E_{\omega(\zeta_n)}(1)}{4} \begin{pmatrix}
      \cos(2\omega(\zeta_n)x) &\sin(2\omega(\zeta_n)x)  \\ \sin(2\omega(\zeta_n)x)  & -\cos(2\omega(\zeta_n)x)
      \end{pmatrix} \cdot \Op (\cdot) + \begin{pmatrix}\ell_n^2&\ell_n^2\\\ell_n^2 & \ell_n^2\end{pmatrix}\cdot\Op(\cdot).
\end{align*}
Since
\[
E_{\omega(\zeta_n)}(1)\begin{pmatrix}
 -\sin(2\omega(\zeta_n)x) & \cos(2\omega(\zeta_n)x) \\ \cos(2\omega(\zeta_n)x) & \sin(2\omega(\zeta_n)x)
 \end{pmatrix}
  =\begin{pmatrix}
     \sin(\omega(\zeta_n)(1-2x)) & \cos(\omega(\zeta_n)(1-2x))
     \\ \cos(\omega(\zeta_n)(1-2x)) & -\sin(\omega(\zeta_n)(1-2x) )
     \end{pmatrix}
 \]
 and
 \[
 E_{\omega(\zeta_n)}(1) \begin{pmatrix}
      \cos(2\omega(\zeta_n)x) &\sin(2\omega(\zeta_n)x)  \\ \sin(2\omega(\zeta_n)x)  & -\cos(2\omega(\zeta_n)x)
      \end{pmatrix}
   =\begin{pmatrix}
           \cos(\omega(\zeta_n)(1-2x)) & -\sin(\omega(\zeta_n)(1-2x)) \\ -\sin(\omega(\zeta_n)(1-2x)) & -\cos(\omega(\zeta_n)(1-2x))
          \end{pmatrix}
\]
the claimed estimates hold. The uniformity statements follow from Lemma \ref{lem:NgradientEstimate}. Item (ii) can be
proved by similar arguments.
\end{proof}
The last result of this section concerns asymptotics of the $L^2-$gradient of the Dirichlet eigenvalues.
\begin{lem}\label{lem:munGradientAsymptotic}
For any $v_0\in \Hp$ there exist a neighborhood $V$ of $v_0$ in \Hp  and  $N\geq 1$ such that for any $|n| > N$ and $v\in V$, $Q(v)$ has precisely one Dirichlet eigenvalue in $D_n$, denoted by $\mu_n$. For any
$|n| > N$, $\mu_n$ is analytic on $V$ and the following asymptotic estimates hold as $|n| \to\infty$
\begin{align}\label{eq:LemmaDmun}
(\partial_q \mu_n ,\partial_p \mu_n)
=& -\frac14 \Big(\sin(2n\pi x)\cdot\partial_x(\cdot) + \ell_n^2\cdot\partial_x(\cdot) + \ell_n^2 (\cdot),
 \,\,\,\, \sin(2n\pi x)\cdot \Op(\cdot) + \ell_n^2\cdot\Op(\cdot)\Big).
 \end{align}
 Integrating by parts, the asymptotics of $\partial_q\mu_n$ take the form
 \begin{align}\label{eq:LemmaDmunTwo}
\partial_q\mu_n =& \frac{n\pi }{2}\cos(2n\pi x) + \langle n\rangle \ell_n^2.
\end{align}
These estimates hold uniformly on the interval $0\leq x\leq 1$ and on bounded subsets of $V$. 
In more detail, e.g.  $\partial_q \mu_n - \frac{n\pi }{2}\cos(2n\pi x) = \ell_n^2\cdot \partial_x(\cdot)$ 
means that 
there exist functions $a_n(x)$, $|n|>N$, so that  $\ell_n^2\cdot \partial_x(\cdot) = a_n(x) \partial_x(\cdot)$ 
and
\[
\sup_{0\leq x\leq 1} \, \sum_{|n| > N} |a_n(x)|^2 \leq C
\]
where the constant $C > 0$ can be chosen uniformly on bounded subsets of $V$. 
\end{lem}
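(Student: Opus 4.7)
The existence of a neighborhood $V$ of $v_0$ and an integer $N$ such that for $|n|>N$ the disc $D_n$ contains exactly one Dirichlet eigenvalue $\mu_n(v)$ of $Q(v)$ follows from the Counting Lemma (\cite[Lemma 3.11]{LOperator2018}), and the analyticity of $v\mapsto \mu_n(v)$ is obtained from the argument principle applied to $\chi_D(\lm,v)=\grave{m}_2(\lm,v)$, exactly as in the proof of Proposition \ref{lem:gradientDirichlet}.  Moreover, the known large-$n$ asymptotics of the Dirichlet spectrum give $\mu_n = n\pi + \ell_n^2$ uniformly for $v$ in a bounded neighborhood.

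The starting point for the gradient asymptotics is the formula from Proposition \ref{lem:gradientDirichlet},
\[
\partial \mu_n = -\frac{\partial \grave{m}_2|_{\lm=\mu_n}}{\dot{\chi}_D(\mu_n)} = \frac{\grave{m}_1(\mu_n)}{\dot{\chi}_D(\mu_n)}\,\Grad{M_2}{\mu_n}.
\]
Since $\mu_n=n\pi+\ell_n^2$, part (ii) of Lemma \ref{lem:NgradientEstimate} is applicable with $\zeta_n=\mu_n$, and extracting the $(1,2)$-coefficient of \eqref{eq:lemmadqMell2}--\eqref{eq:lemmadpMell2} yields
\[
\partial_q \grave{m}_2(\mu_n) = \tfrac{(-1)^n}{4}\sin(2\pi n x)\cdot\partial_x(\cdot) + \ell_n^2\cdot\partial_x(\cdot)+\ell_n^2,
\qquad
\partial_p \grave{m}_2(\mu_n) = \tfrac{(-1)^n}{4}\sin(2\pi nx)\cdot\Op(\cdot) + \ell_n^2\cdot\Op(\cdot).
\]
From the same asymptotic statement applied to $M$ itself (\cite[Theorem 2.12(iv), Corollary 2.13(iv)]{LOperator2018}) one has $\grave{m}_1(\mu_n)=\cos(\omega(\mu_n))+\ell_n^2=(-1)^n+\ell_n^2$, and a short computation (using $\dot\chi_D(\lm,0) = -\dot{\Dl}(\lm,0)/\sin$ adjusted, or directly the analogue of Corollary \ref{cor:DldlgradientEstimate} for $\partial_\lm \grave{m}_2$) gives $\dot\chi_D(\mu_n) = (-1)^n\langle n\rangle\big(1+o(1)\big)$, in particular it is bounded away from $0$ uniformly for $v\in V$ and $|n|>N$ (enlarging $N$ if necessary).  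Dividing and multiplying by the scalar $\grave{m}_1(\mu_n)/\dot\chi_D(\mu_n)$, whose deviation from the appropriate constant is $\ell_n^2$-small, produces the estimate \eqref{eq:LemmaDmun}.

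For the integrated form \eqref{eq:LemmaDmunTwo} I would instead substitute the asymptotics $m_2(x,\mu_n)=\sin(n\pi x)+\ell_n^2$, $m_4(x,\mu_n)=\cos(n\pi x)+\ell_n^2$ (uniformly in $x\in[0,1]$) directly into the first component of $\Grad{M_2}{\mu_n}$ as displayed in Proposition \ref{lem:gradientDirichlet}:
\[
\tfrac{\mu_n}{2}(m_4^2-m_2^2) + \tfrac{1}{32\mu_n}(m_4^2 e^q - m_2^2 e^{-q}) = \tfrac{n\pi}{2}\cos(2n\pi x) + \langle n\rangle\,\ell_n^2,
\]
and multiply by $\grave{m}_1(\mu_n)/\dot\chi_D(\mu_n)=1+\ell_n^2$.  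The equivalence of the two forms of the asymptotics is nothing but integration by parts, since testing $\tfrac{n\pi}{2}\cos(2n\pi x)$ against $\mathring{q}\in H^1$ produces, modulo vanishing boundary terms, the same pairing as $-\tfrac{1}{4}\sin(2n\pi x)$ tested against $\partial_x\mathring{q}$.  The uniformity statements in $x$ and in $v\in V$ inherit directly from the corresponding uniformity in Lemma \ref{lem:NgradientEstimate} and the uniform lower bound on $|\dot\chi_D(\mu_n)|$.  The main technical point, and the only step requiring care, is this uniform lower bound on $\dot\chi_D(\mu_n)$: once it is established the remainder is a bookkeeping of $\ell_n^2$-remainders that carry through products and quotients because $\grave{m}_1(\mu_n)=O(1)$ and $\dot\chi_D(\mu_n)$ grows at most like $\langle n\rangle$.
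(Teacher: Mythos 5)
Your overall strategy is the paper's: start from Proposition~\ref{lem:gradientDirichlet}, insert the known eigenvalue asymptotics $\mu_n = n\pi + \ell_n^2$ into the asymptotics of $\grave M$ (respectively $M_2$), and divide by $\dot\chi_D(\mu_n)$. The paper obtains \eqref{eq:LemmaDmun} from Lemma~\ref{lem:eigenfunctionGrad} together with the identity \eqref{identity for Dirichlet}, whereas you obtain it by extracting the $(1,2)$ entry of $\partial\grave M(\mu_n)$ via Lemma~\ref{lem:NgradientEstimate}(ii); both routes are of equal weight and equally legitimate, so no complaint on that front.

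However, your stated asymptotics for $\dot\chi_D(\mu_n)$ are wrong and would, if taken at face value, break the argument. You claim $\dot\chi_D(\mu_n) = (-1)^n\langle n\rangle\bigl(1+o(1)\bigr)$, but the correct estimate is $\dot\chi_D(\mu_n) = \dot{\grave m}_2(\mu_n) = (-1)^n + \ell_n^2$, which is \emph{bounded}. This is immediate from the paper's $\dot M(1,\mu_n) = JE_{n\pi}(1) + \ell_n^2$ (or, at $v=0$, from $\grave m_2(\lm,0) = \sin\omega(\lm)$, so $\dot{\grave m}_2(\lm,0) = (1+\tfrac{1}{16\lm^2})\cos\omega(\lm)$, which is $(-1)^n + O(1/n^2)$ at $\lm = n\pi$). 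If $\dot\chi_D(\mu_n)$ genuinely grew like $\langle n\rangle$, dividing your expression $\partial_p\grave m_2(\mu_n) = \tfrac{(-1)^n}{4}\sin(2n\pi x)\cdot\Op(\cdot) + \ell_n^2\cdot\Op(\cdot)$ by it would yield a $p$-gradient of size $O(1/\langle n\rangle)$, in contradiction with the target \eqref{eq:LemmaDmun}; likewise the claim that $\grave m_1(\mu_n)/\dot\chi_D(\mu_n)$ is $\ell_n^2$-close to a constant is incompatible with your own growth assertion. The remedy is simple and leaves everything else unchanged: replace the $\langle n\rangle$-estimate with $\dot\chi_D(\mu_n) = (-1)^n + \ell_n^2$, from which $\grave m_1(\mu_n)/\dot\chi_D(\mu_n) = 1 + \ell_n^2$ and $-1/\dot\chi_D(\mu_n) = -(-1)^n + \ell_n^2$ follow, and the rest of your bookkeeping goes through. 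Also, the displayed identity ``$\dot\chi_D(\lm,0) = -\dot\Delta(\lm,0)/\sin$ adjusted'' is not a sensible formula (the correct relation at $v=0$ is $\dot\Delta(\lm,0) = -(1+\tfrac{1}{16\lm^2})\chi_D(\lm,0)$, not an identity involving $\dot\chi_D$); drop it and compute $\dot{\grave m}_2(\mu_n)$ directly from the $\dot M$-asymptotics.
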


\begin{proof}
Choose $V$ and $N$ as in Counting Lemma  \cite[Lemma 3.4]{LOperator2018} for Dirichlet eigenvalues of $Q(v)$. 
Then $\mu_n$ is simple for $|n|>N$ and hence analytic (cf. Lemma \ref{lem:gradientDirichlet}).
Furthermore, by \cite[Lemma 3.16]{LOperator2018}, $\mu_n = n\pi +\ell_n^2$ as $n \to \infty$, implying that by 
\cite[Theorem 2.12(iv)]{LOperator2018} 
$$
 {M}(x,\mu_n,q,p) = E_{n\pi}(x) +\ell^2_n,
 \qquad \dot{ {M}}(x,\mu_n,q,p) = x  JE_{n\pi}(x)+\ell^2_n,
$$
as $n\to\infty$,
where by \eqref{eq:Eomega}
\begin{equation*}
 E_{n\pi}(x) = \begin{pmatrix}
 \cos(n\pi x) & \sin(n\pi x)\\ -\sin(n \pi x) & \cos(n\pi x)
   \end{pmatrix}.
   \end{equation*}
In particular, $\dot\chi_D(\mu_n) = \dot{\grave{m}}_2(\mu_n) = (-1)^n+\ell_n^2$ and $\grave{m}_1(\mu_n)= (-1)^n+\ell_n^2$. 
Hence by Proposition \ref{lem:gradientDirichlet}, for any $\mathring{v}=(\mathring{q},\mathring{p})\in \Hp$,
 \begin{align*}
\text{d} \mu_n[\mathring{v}] =&\frac{\grave{m}_1(\mu_n)}{\dot\chi_D(\mu_n)}\int_0^1\Grad{M_2}{\mu_n} \begin{pmatrix}
\mathring{q}\\\mathring{p}
\end{pmatrix}
\dx=\int_0^1\Grad{\begin{pmatrix}
\sin(n\pi x)+\ell_n^2 \\ \cos(n\pi x)+\ell_n^2\end{pmatrix}}{\mu_n}\begin{pmatrix}
\mathring{q}\\ \mathring{p}
\end{pmatrix}
\dx.
 \end{align*}
Since by definition \eqref{eq:Grad}
\begin{align*}
\Grad{\begin{pmatrix}
\sin(n\pi x)+\ell_n^2 \\ \cos(n\pi x)+\ell_n^2\end{pmatrix}}{\mu_n}
=&
\begin{pmatrix} \frac{\mu_n}2(\cos^2(n\pi x)-\sin^2(n\pi x) +\ell_n^2) + \frac{1}{32\mu_n}(\cos^2(n\pi x)e^q-\sin^2(n\pi x)e^{-q}) +\ell_n^2
\\ -\frac12  \sin(n\pi x)\cos(n\pi x)\cdot \Op(\cdot) +\ell_n^2\cdot\Op(\cdot)
\end{pmatrix}
\end{align*}
it then follows that
\begin{align*}
\ddh{\mu_n}=&\begin{pmatrix}
    \frac{n\pi}{2} \cos(2n\pi x) + \langle n\rangle\ell_n^2 \\ -\frac14\sin(2n\pi x)\cdot \Op(\cdot) + \ell_n^2\cdot\Op(\cdot)
    \end{pmatrix}
\end{align*}
yielding \eqref{eq:LemmaDmunTwo}. On the other hand, by Lemma \ref{lem:eigenfunctionGrad} and identity \eqref{identity for Dirichlet},
\begin{align*}
\ddh{\mu_n} =& \frac{\grave{m}_1(\mu_n)}{\dot\chi_D(\mu_n)}\int_0^1
\frac{1}{16\mu_n}(m_4^2e^q - m_2^2e^{-q})\mathring{q} -\frac12(\Op (\mathring{p}) +\partial_x \mathring{q})m_2m_4\dx
\\ =& \int_0^1\begin{pmatrix}\ell_n^2 -\frac12 \sin(n\pi x)\cos(n\pi x)\cdot\partial_x(\cdot) + \ell_n^2\cdot\partial_x(\cdot) + \ell_n^2
 \\ -\frac12 \sin(n\pi x)\cos(n\pi x)\cdot \Op(\cdot) + \ell_n^2\cdot\Op(\cdot)\end{pmatrix}\cdot
 \begin{pmatrix}
\mathring{q}\\\mathring{p}
\end{pmatrix}
\dx
 \\ =& -\frac14\int_0^1\begin{pmatrix}  \sin(2n\pi x)\cdot\partial_x(\cdot) + \ell_n^2\cdot\partial_x(\cdot) + \ell_n^2
 \\ \sin(2n\pi x)\cdot \Op(\cdot) + \ell_n^2\cdot\Op(\cdot)\end{pmatrix}\cdot
 \begin{pmatrix}
\mathring{q}\\\mathring{p}
\end{pmatrix}
\dx
\end{align*}
which proves \eqref{eq:LemmaDmun}.
Going through the arguments of the  proof one verifies that the claimed uniformity statements hold.
\end{proof}


 \section{Real and almost real potentials}\label{sec:realAmostReal}

In this section we consider the periodic and the Dirichlet spectrum of $Q(v)$ for potentials $v$ which are real or close to real ones. 
Recall that the subspace of real potentials in \Hp is denoted by \Hr. More generally, for any $m\geq 1$, let
 \[
 \Hr[m] := H^{m}(\T,\R)\times H^{m}(\T,\R)\subset \Hp \, .
 \]
By \eqref{sign Delta},
for any $v\in\Hp$ there exists $N \ge 1$ so that  $\Dl(\lm^\pm_{n}) = (-1)^n$ for $|n| > N$. But this identity does not necessarily hold for $|n| \le N$.
 In the case $v\in \Hr$, 
 the periodic spectrum of $Q(v)$ is real 
 and by \eqref{real ev} the listing \eqref{eq:listingPeriodic} of the periodic eigenvalues in \SpecDom is given by
 \[
 0<\cdots \leq \lm_{-1}^-\leq \lm_{-1}^+\leq \lm_0^-\leq \lm_0^+\leq \lm_1^-\leq \lm_1^+\leq \cdots .
 \]
 Actually, more can be said in this case.
 First we note that the periodic eigenvalues $\lm_n^+(v),\lm_n^-(v)$ $n\in\Z$, are continuous in $v$ on \Hr (cf. Proposition \ref{prop:spectralQuantCompact} below).
 Since for $v=0$, $\Dl(\lm_n^\pm)=(-1)^n$ one infers by deforming $v\in\Hr$ along the straight line $t\mapsto t\cdot{v}$ that for any $v$ in \Hr, 
 $\Dl(\lm_n^\pm(v), v)=(-1)^n$ for any $n\in\Z$. 
 Hence the periodic eigenvalues $\lm_n^\pm \equiv \lm_n^\pm(v)$, $n\in\Z$, of $Q(v)$ in \SpecDom satisfy
 \begin{equation}\label{real ev}
 0< \dots <\lm_{-1}^- \leq \lm_{-1}^+ < \lm_{0}^-\leq \lm_{0}^+ <\lm_1^- \leq \lm_1^+ <\cdots
 \end{equation}
 with strict inequality between $\lm_n^+$ and $\lm_{n+1}^-$  for any $n\in\Z$. Similarly, for any $v\in\Hr$, the Dirichlet spectrum of $Q(v)$ is real. It turns out that the Dirichlet eigenvalues in \SpecDom  
 are confined to the gaps:
 \begin{lem}\label{lem:realSpectrumOrder} For any $v\in\Hr$ and $n\in\Z$, the nth Dirichlet eigenvalue $\mu_n$ is real and satisfies $\lm_n^-\leq \mu_n\leq \lm_n^+$.
 Furthermore, $\mu_n$ is real analytic on \Hr.
 \end{lem}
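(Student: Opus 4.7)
The plan is to combine three ingredients: self-adjointness of $Q(v)$ with Dirichlet boundary conditions when $v \in \Hr$; the algebraic identity
\[
\grave{m}_2 \grave{m}_3 \,=\, \Dl^2 - \dl^2 - 1 \,=\, \chi_p - \dl^2 \, ,
\]
which follows at once from $\det \grave{M} = 1$ and the definitions $2\Dl = \grave{m}_1 + \grave{m}_4$, $2\dl = \grave{m}_1 - \grave{m}_4$; and a homotopy $v_t := tv$, $t \in [0,1]$, connecting $v \in \Hr$ to the zero potential.

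For the reality of $\mu_n$, I would verify that $Q(v)$ acting on $H_{dir}$ is formally symmetric on $L^2([0,1], \C^4)$ when $v \in \Hr$. The principal part $Q_1 \partial_x = \mathrm{diag}(-J\partial_x, 0)$ produces, after integration by parts, only the boundary expression $[F_1 \overline{G_2} - F_2 \overline{G_1}]_0^1$, which vanishes under $F_1(0) = F_1(1) = 0 = G_1(0) = G_1(1)$; and $Q_0(v)$ is a Hermitian symmetric matrix since both $A(v)$ and $B(v)$ are real and symmetric when $v$ is real. Symmetry immediately yields $\mu_n \in \R$.

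For the confinement $\lm_n^- \le \mu_n \le \lm_n^+$, evaluating the identity above at a Dirichlet eigenvalue $\mu$ (where $\grave{m}_2(\mu) = 0$) gives $\chi_p(\mu, v) = \dl(\mu, v)^2$. Since the coefficients of \eqref{eq:dxM} are real for $\lambda \in \R^*$ and $v \in \Hr$, the matrix $M(x, \mu, v)$ is real, so $\dl(\mu) \in \R$ and $\chi_p(\mu) \ge 0$. The real zeros of $\chi_p(\cdot, v)$ are exactly $\{\lm_k^\pm\}_{k \in \Z}$, and I would establish the sign claim $\{\lambda \in \R \cap \SpecDom : \chi_p(\lambda, v) \ge 0\} = \bigcup_k G_k(v)$ by the homotopy: at $t = 0$ one has $\chi_p(\cdot, 0) = -\sin^2 \omega(\cdot) \le 0$, with double zeros at the points $\lm_k^\pm(0)$; the uniform Counting Lemma on $[0,1]$ forbids creation of new real zeros of $\chi_p(\cdot, v_t)$, while local analysis near each splitting double zero shows $\chi_p$ becomes positive on the interior of each newly opened $G_k(v_t)$ and stays negative just outside, a sign structure preserved up to $t = 1$. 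Since \eqref{real ev} keeps the components $G_k(v_t)$ pairwise disjoint throughout, reality of $\mu_n(v_t)$, continuity in $t$, and $\mu_n(0) = \lm_n^\pm(0) \in G_n(0)$ together force $\mu_n(v) \in G_n(v)$.

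For real analyticity, $\det \grave{M} = 1$ with $\grave{m}_2(\mu_n) = 0$ gives $\grave{m}_1(\mu_n) \ne 0$; moreover the denominator in \eqref{identity for Dirichlet} is strictly positive since the integrand $M_2 \cdot (I + B^2 / \mu_n^2) M_2$ is real non-negative (with $I + B^2/\mu_n^2$ positive definite) and $M_2 \not\equiv 0$ because $M_2(0) = (0,1)$. Hence $\dot\chi_D(\mu_n) \ne 0$, so $\mu_n$ is a simple root of $\chi_D$; Proposition \ref{lem:gradientDirichlet} then provides a complex analytic extension of $\mu_n$ to a neighborhood of $v$ in \Hp, and in particular $\mu_n$ is real analytic on \Hr. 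The most delicate point is the sign analysis of $\chi_p$ along the deformation when $G_k(v_t)$ is momentarily degenerate (double zero of $\chi_p$), where the sign cannot be read off a sign change but must be matched by continuity across coincidences; everything else is largely mechanical, especially for $|n| > N$ where the isolating disc $D_n$ contains precisely one gap $G_n$ and one Dirichlet eigenvalue $\mu_n$.
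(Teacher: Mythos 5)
Your strategy mirrors the paper's almost exactly: reality of $\mu_n$ from self-adjointness of the Dirichlet problem for real $v$; the Wronskian identity $\grave{m}_2\grave{m}_3 = \chi_p - \delta^2$ (which is the content of the cited \cite[Lemma 3.8]{LOperator2018}); the sign structure of $\chi_p$ on $\R\cap\SpecDom$ together with the deformation $t\mapsto tv$ to place $\mu_n$ in $G_n$; and simplicity to get real-analyticity. Your local analysis of splitting double zeros is more hands-on than the paper's argument (which simply observes that $\chi_p<0$ between gaps at $t=0$ and never vanishes there along the path), but both deliver the same sign table.

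The one genuine flaw is in the real-analyticity step. Identity \eqref{identity for Dirichlet} in Proposition~\ref{lem:gradientDirichlet} is \emph{derived under the hypothesis} that $\mu$ is a simple root of $\chi_D$ (it comes from comparing two formulas for $\ddh\mu$, one of which uses the chain rule with $\dot\chi_D(\mu)\ne 0$). You then use the strict positivity of $\int_0^1 M_2\cdot(I+B^2/\mu_n^2)M_2\,\dx$ and $\grave{m}_1(\mu_n)\ne 0$ to \emph{conclude} $\dot\chi_D(\mu_n)\ne 0$ from that same identity. As written this is circular. The conclusion is correct and the gap is easily closed, but by a different route: differentiating $\partial_x M = J(\lambda - A - B^2/\lambda)M$ with respect to $\lambda$ (rather than $v$) gives
\[
\dot{\grave{m}}_2(\mu) \;=\; \grave{m}_1(\mu)\int_0^1 M_2\cdot\bigl(I+B^2/\mu^2\bigr)M_2\,\dx
\]
at any root of $\grave m_2$, \emph{without} assuming simplicity, from which $\dot\chi_D(\mu_n)\ne 0$ follows by your positivity argument. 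Alternatively, the paper's own route is to deduce simplicity purely from the confinement just established: the $\mu_n$ lie in pairwise disjoint intervals $G_n$, so they are pairwise distinct, and the Counting Lemma matches the number of gaps with the total multiplicity of Dirichlet eigenvalues, forcing each multiplicity to be $1$.
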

 \begin{proof} Let $v\in\Hr$ and $n \in \Z$.  By \cite[Lemma 3.8]{LOperator2018}, $\mu_n \equiv \mu_n(v)$ satisfies $\Dl^2(\mu_n)-1 =\dl^2(\mu_n)\geq 0$ and therefore $|\Dl(\mu_n)|\geq 1$. 
 Since $\mu_n(0) =\lm^+_n(0)$, one concludes by deforming $v$ to $0$ along the straight line $t\mapsto t v$  that  $\lm_n^- \leq \mu_n \leq  \lm_n^+$. 
 In particular it follows that $\mu_n$ is simple and hence by  Proposition \ref{lem:gradientDirichlet}
 real analytic on \Hr.
 \end{proof}

Next let us consider the roots of $\dot\Dl$ in the case where $v\in\Hr$.
The case $v=0$ was already discussed in  Section \ref{setup}. In this case, all roots of $\dot\Dl(\cdot, 0)$ are simple
and the set of roots  in \SpecDom consists of a bi-infinite sequence
\begin{equation*}
         0 <  \cdots < \dot\lm_{-2}(0) <\dot\lm_{-1}(0) <\dot\lm_0(0) < \dot\lm_1(0) < \cdots
 \end{equation*}
 and the additional root $\dot\lm_*(0)=\frac{\ii}4$.  For arbitrary $v\in\Hr$ the following holds.

 \begin{lem}\label{lem:dotDlSpecOnHr}
  For any  $v\in \Hr$, the roots of $\dot\Dl$ in \SpecDom are all simple. The set of the roots in $\C^+$ consist of  a bi-infinite sequence of real numbers
  \[
   0<\dots <\dot\lm_{-2}(v)<\dot\lm_{-1}(v) < \dot\lm_0(v) < \dot\lm_1(v) < \dot\lm_2(v) <\cdots
  \]
 and one additional purely imaginary number, denoted by $\dot\lm_*(v)$, in $\ii \R_{>0}$. 
 Furthermore for any $n\in\Z$, the following holds: if $\lm_n^-=\lm_n^+$ , then $\dot\lm_n = \lm_n^+$ whereas if $\lm_n^- < \lm_n^+$,
  then $\lm_n^- < \dot\lm_n < \lm_n^+.$ 
 Furthermore, $\dot\lm_n$, $n\in\Z$, and $\dot\lm_*$ are real analytic on \Hr.
 \end{lem}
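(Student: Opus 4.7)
The plan is to combine the uniform Counting Lemma \cite[Lemma 3.12]{LOperator2018} with a Rolle/deformation argument along the straight-line homotopy $v_t := tv$, $t \in [0,1]$. Since $\{v_t : t \in [0,1]\}$ is compact in \Hp, a finite cover by neighborhoods from the Counting Lemma yields $N \geq 1$ such that for every $t \in [0,1]$, $\dot\Dl(\cdot, v_t)$ has exactly one root (with multiplicity) in each of $\pm D_n$, $\pm D_{-n}$ with $n > N$, and exactly $4+4N$ roots in $A_N$. Self-adjointness of $Q(v_t)$ on \Hr ensures the periodic spectrum is real; propagating from $v=0$ via continuity and \eqref{sign Delta}, one gets $\Dl(\lm_n^\pm, v_t) = (-1)^n$ for all $n \in \Z$.

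The next step is to produce a root of $\dot\Dl(\cdot, v)$ inside every gap $G_n$. If $\lm_n^- < \lm_n^+$, Rolle's theorem applied to the real-valued function $\Dl(\cdot, v)$ on $[\lm_n^-, \lm_n^+]$ supplies a zero of $\dot\Dl$ in the open interval. If $\lm_n^- = \lm_n^+$, then self-adjointness bounds the algebraic multiplicity of this periodic eigenvalue by $2$, so $\chi_p = \Dl^2 - 1$ has a double zero at $\lm_n^+$; since $\Dl(\lm_n^+, v) = (-1)^n \neq 0$, this forces $\dot\Dl(\lm_n^+, v) = 0$. For $|n| > N$ the Counting Lemma identifies this with the unique root in $D_n$ and automatically makes it simple, producing $\dot\lm_n(v)$. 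For $|n| \leq N$ the same argument yields $2N+1$ distinct real roots in $\R_{>0} \cap A_N$, one per gap, and by the oddness $\dot\Dl(-\lm) = -\dot\Dl(\lm)$ (consequence of $\Dl$ being even, \cite[Lemma 2.14(i)]{LOperator2018}) another $2N+1$ in $\R_{<0} \cap A_N$; in total at least $4N+2$ real roots with multiplicity in $A_N$.

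Since $\dot\Dl$ is odd and satisfies $\dot\Dl(\bar\lm, v) = \overline{\dot\Dl(\lm, v)}$ for real $v$, any root off $\R \cup \ii\R$ generates a quadruple $\pm \lm$, $\pm \bar\lm$ of distinct roots. With $4N+4$ total roots and $\geq 4N+2$ real ones, no quadruple can occur, so any remaining roots lie on $\ii\R$. To pin these down I would follow them along the deformation: from \eqref{eq:dotDlzero}, a direct computation shows that $\dot\Dl(\cdot, 0)$ has precisely the two simple roots $\pm \ii/4$ on $\ii\R$. Along $t \mapsto v_t$ the imaginary pair $\pm \ii \alpha(t)$ cannot jump off $\ii\R$ (an off-axis non-real root would force 4 non-real roots by the symmetries, contradicting the bound of $2$), cannot cross onto $\R$ (this would require passage through $0$, where $\dot\Dl$ has a pole), cannot merge with each other (only possible at $0$), and cannot leave $A_N$ (uniform Counting Lemma). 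Hence at every $t$ there are exactly two simple imaginary roots, and setting $\dot\lm_*(v) := \ii\alpha(1) \in \ii\R_{>0} \subset \SpecDom$ supplies the claimed additional root.

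Equality $4N+2 + 2 = 4N+4$ in the count now forces every real root in $A_N$ to be simple, so $\ddot\Dl$ is nonzero at every $\dot\lm_n$ and at $\dot\lm_*$. Combined with the joint analyticity of $(\lm, v) \mapsto \dot\Dl(\lm, v)$ (\cite[Theorem 2.2]{LOperator2018}), the implicit function theorem yields real analyticity of $\dot\lm_n$ and $\dot\lm_*$ on \Hr. The location assertion $\lm_n^- \leq \dot\lm_n \leq \lm_n^+$ with the claimed alternative between equality (when $\lm_n^- = \lm_n^+$) and strict interior (when $\lm_n^- < \lm_n^+$) is immediate from the construction. The main technical obstacle I expect is the tracking of the imaginary pair along the deformation: it is the combination of the pole of $\dot\Dl$ at $0$, the oddness/conjugation symmetries, and the uniform Counting Lemma that jointly exclude every possible disappearance, merger, or off-axis escape of the two imaginary roots, and making each of these exclusions rigorous is where the real work lies.
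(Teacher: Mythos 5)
Your proposal follows essentially the same route as the paper's own proof: a root per gap via Rolle (equivalently the paper's interior-maximum argument for $(-1)^n\Dl$), the Counting Lemma together with the oddness/conjugation symmetries to place the two leftover roots on $\R\cup\ii\R$, and tracking them along $t\mapsto tv$ from $\pm\ii/4$ to confine them to $\ii\R$, with simplicity following from the exact count and real analyticity from the implicit function theorem / argument principle. (One small wording slip: ``no quadruple can occur, so any remaining roots lie on $\ii\R$'' should read $\R\cup\ii\R$ at that stage --- the deformation step you then carry out is precisely what rules out the real alternative, so the argument is otherwise unaffected.)
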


 \begin{proof}
 Let $v\in\Hr$.  By \cite[Lemma 2.14(i)]{LOperator2018}, $\Dl(-\lm,v)=\Dl(\lm,v)$ and by \cite[Lemma 2.14 (iii)]{LOperator2018},
 ${\overline{\Dl(\overline\lm, v)} }= {\Dl(\lm,v)}$. This implies that $\dot\Dl(-\lm,v) = -\dot\Dl(\lm,v)$ and $\overline{\dot\Dl(\overline\lm, v)} = {\dot\Dl(\lm,v)}$. 
 Hence in particular $\Dl(\lm,v)$  is real for $\lm\in\R$  and for any root 
 $\dot\lm$ of $\dot\Dl(\cdot, v)$ in $\C^*$, the complex numbers $-\dot\lm$, $\overline{\dot\lm}$, and $-\overline{\dot\lm}$  are also roots of $\dot\Dl(\cdot, v)$.
 Furthermore, since $\Dl^2(\lm,0) -1 < 0 $ for any $\lm_n^{+}(0) < \lm < \lm_{n+1}^{-}(0)$ and $\Dl^2(\lm,v) -1$ does not vanish in $\lm_n^{+}(v) < \lm < \lm_{n+1}^{-}(v)$
  it follows by continuity that  $\Dl^2(\lm,v) -1 < 0$ for any $\lm_n^{+}(v) < \lm < \lm_{n+1}^{-}(v)$. 
  By the same arguments one infers that $\Dl(\lambda^\pm_n) = (-1)^n$.
 If  $\lm_{n}^- = \lm_n^+$, then $\Dl^2-1$ has a root of multiplicity two at $\lm_n^+$. 
 Since $\Dl(\lm_n^+)\not=0$ it then follows that $\lm_n^+$ is also a root of $\dot\Dl$. If  $\lm_n^-\not=\lm_n^+$, both eigenvalues have algebraic multiplicity one. 
 By the considerations above, $-1<\Dl(\lm)<1$ for any $\lm_{n}^+<\lm<\lm_{n}^-$ and any
 $\lm_{n}^+<\lm<\lm_{n+1}^-$ and $\Dl(\lm_n^\pm)=(-1)^n$. Since $\lm_n^{\pm}$ are simple roots of $\Delta^2 -1$, one has $\dot\Dl(\lm_n^\pm) \ne 0$. Hence
 it follows that $(-1)^n\dot\Delta(\lambda_n^-) > 0$, $(-1)^n\dot\Delta(\lambda_n^+) < 0$, and $(-1)^n\Dl(\lm)>1$ for any $\lm_n^- < \lm < \lm_n^+$. 
 We conclude that $(-1)^n\Dl(\lm)$ has a maximum in the open interval $(\lm_n^-,\lm_n^+)$ and
 hence there exists $\dot\lm_n \in (\lm_n^-,\lm_n^+)$ with $\dot\Dl(\dot\lm_n) = 0$. 
 By the Counting Lemma \cite[Lemma 3.12]{LOperator2018},
 there exists $N\geq 1$ such that for any $n>N$, $\dot\Dl$ has exactly one simple root in each  of the domains  $D_n$, $-D_n$, $D_{-n}$, and $-D_{-n}$ 
 and  $4+4N$ roots in the annulus $A_N$. For any $n >N$ the following holds:
 since $G_n\subset D_n$ and $G_{-n}\subset D_{-n}$, the roots in $D_n$, $-D_n$, $D_{-n}$, 
 and $-D_{-n}$ are $\dot\lm_n,-\dot\lm_n,\dot\lm_{-n}$, and $-\dot\lm_{-n}$, respectively. Furthermore, the roots $\dot\lm_0,-\dot\lm_0$ and
the roots $\dot\lm_n,-\dot\lm_n,\dot\lm_{-n},-\dot\lm_{-n}$, $1\leq n\leq N$, are in $A_N$. 
 Hence according to the Counting Lemma, in addition to $\dot\lm_n,-\dot\lm_n$, $|n| \le N$, $\dot\Dl$ has two more roots in  $A_N$,
 which we denote by $\nu_1,\nu_2$. 
 By the previous discussion, $-\nu_1,-\nu_2$ as well as $\overline{\nu_1},\overline{\nu_2}$ are also roots of $\dot\Dl$. 
 The Counting Lemma then implies that $\nu_1=-\nu_2$ and $\nu_1\in (\R\cup \ii\R)\cap A_N \subset (\R\cup\ii\R)\setminus\{0\}$. 
 In particular it follows that $\nu_1\not=\nu_2$ and one of the two roots, denoted by $\dot\lm_*$, is in \SpecDom.  
 Since $\dot\lm_*(0) = \ii/4$, one then concludes by deforming $v\in\Hr$ along the straight line $t\mapsto t\cdot{v}$
 that for any $v \in \Hr$, $\dot\lm_*(v)$ is purely imaginary. 
  Taking into account that the closed intervals $G_n$, $n \in \Z$, are contained in the halfline $\R_{> 0}$ and pairwise disjoint,  
 we have shown in particular that for any $v \in \Hr$, all the roots of $\dot\Dl(\cdot,v)$
 are simple.
Since $\dot\Dl(\lm,v)$ is real analytic in $\C^*\times \Hr$, it then follows by the argument principle 
 that all the roots are real analytic.
 \end{proof}

 \begin{prop}\label{prop:spectralQuantCompact}
  The Dirichlet and the periodic eigenvalues of $Q(v)$ as well as the roots of $\dot\Dl(\cdot, v)$ are compact functions on $\Hr[1]$.
 \end{prop}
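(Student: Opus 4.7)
I will show that each of the indicated functions is weakly sequentially continuous on $\Hr[1]$, which in this Hilbert space setting is equivalent to being a compact function. Fix a weakly convergent sequence $v_n \rightharpoonup v$ in $\Hr[1]$. It suffices to establish that
\[
\grave M(\lambda, v_n) \to \grave M(\lambda, v) \quad \text{uniformly on compact subsets of } \C^*.
\]
Indeed, $\Delta(\lambda, v) = \tfrac12 \trace \grave M(\lambda, v)$, $\chi_p = \Delta^2-1$ and $\chi_D = \grave m_2$ are polynomial expressions in the entries of $\grave M$, so uniform convergence on compacta carries over to them; by Cauchy's integral formula, so does $\dot\Delta$. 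The Counting Lemmas (\cite[Lemmas 3.4, 3.11, 3.12]{LOperator2018}), combined with Hurwitz's theorem applied in each of the pairwise disjoint isolating discs, then imply convergence of each zero $\lambda_n^\pm(v_n)$, $\mu_n(v_n)$, $\dot\lambda_k(v_n)$ and of the exceptional root $\dot\lambda_*(v_n)$ to its counterpart at $v$; the ordering $\preceq$ identifies each zero with its index.

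To prove the convergence of $\grave M$, I would apply the gauge transformation $M(x) = D_{q(x)}\, U(x)$ with $D_q = \mathrm{diag}(e^{q/4}, e^{-q/4})$. A direct conjugation computation (using $D_q^{-1} J D_q = \bigl(\begin{smallmatrix} & e^{-q/2}\\ -e^{q/2} & \end{smallmatrix}\bigr)$, and that $D_q$ commutes with $\mathrm{diag}(1,-1)$) shows that the $q_x$-term in \eqref{eq:dxM} is absorbed into $D_q'$, and $U$ satisfies the gauged ODE
\[
U' = \tilde F\, U, \qquad \tilde F = \lambda\! \begin{pmatrix} & e^{-q/2}\\ -e^{q/2} & \end{pmatrix} + \frac{\Op p}{4}\!\begin{pmatrix}1&\\&-1\end{pmatrix} - \frac{1}{16\lambda}\!\begin{pmatrix} & e^{q/2}\\ -e^{-q/2} & \end{pmatrix},
\]
with $U(0) = D_{q(0)}^{-1}$; by periodicity of $q$ one recovers $\grave M(\lambda,v) = D_{q(0)}\, U(1)$. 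The point of the gauge is that $\tilde F$ depends on $q$ only through the continuous Nemytskii operators $q \mapsto e^{\pm q/2}$ on $C(\T)$, and on $p$ only through the bounded operator $\Op\colon H^1 \to L^2$; no derivative of $q$ appears.

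Under $v_n \rightharpoonup v$ in $\Hr[1]$, Rellich–Kondrachov gives $q_n \to q$ strongly in $C(\T)$, whence $e^{\pm q_n/2} \to e^{\pm q/2}$ uniformly; and $\Op p_n \rightharpoonup \Op p$ weakly in $L^2$. Hence $\tilde F_n \rightharpoonup \tilde F$ weakly in $L^2([0,1], \C^{2\times 2})$, with a uniform $L^2$-bound. By Gronwall, $U_n$ is uniformly bounded in $C([0,1])$, so $U_n' = \tilde F_n U_n$ is uniformly bounded in $L^2$, giving a uniform $H^1$-bound on $U_n$. The compact embedding $H^1([0,1]) \hookrightarrow C([0,1])$ then extracts a subsequence $U_{n_k} \to U_\infty$ uniformly. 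Passing to the limit in the Volterra equation
\[
U_n(x) = U_n(0) + \int_0^x \tilde F_n(s)\, U_n(s) \, \ds
\]
is legitimate, since the integrand is the product of the weakly $L^2$-convergent factor $\tilde F_{n_k}$ and the strongly $L^\infty$-convergent factor $U_{n_k}$. By uniqueness of solutions of the limiting linear ODE, $U_\infty = U$, and as every subsequential limit equals $U$, the whole sequence $U_n$ converges uniformly on $[0,1]$. All bounds are uniform in $\lambda$ on compact subsets of $\C^*$, and $\grave M(\cdot, v)$ is analytic on $\C^*$, so by Vitali–Montel the pointwise convergence upgrades to uniform convergence on compacta of $\C^*$, completing the reduction.

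The main obstacle, as I see it, is precisely the passage to the limit in the gauged ODE: weak $L^2$-convergence of the coefficient does not itself suffice to pass the limit into the product $\tilde F_n U_n$, and the whole purpose of the gauge transformation is to reduce to exactly this situation so that the only non-strongly convergent input is the linear factor $\Op p$. The remedy—the equicontinuity of $U_n$ obtained from the uniform $H^1$-bound, which upgrades weak convergence of $\tilde F_n$ to a usable convergence through the weak–strong pairing in the Volterra equation—is the key technical content of the proof, and the rest is an exercise in Hurwitz-type perturbation theory guided by the Counting Lemmas.
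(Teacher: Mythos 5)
Your proof is correct and takes a genuinely different route from the paper's. The paper disposes of the hard part by citation: Propositions~2.5 and~2.6 of \cite{LOperator2018} assert outright that $M(1,\lambda,v)$ and $\dot M(1,\lambda,v)$ are compact functions of $(\lambda,v)$ on closed bounded sets in $\C^*\times\Hr$, and from there the paper (like you) passes from the characteristic functions to their roots by the argument principle in the isolating discs supplied by the Counting Lemmas. Your proposal replaces the citation with a self-contained proof of the compactness of $\grave M$: the gauge $M=D_qU$, $D_q=\mathrm{diag}(e^{q/4},e^{-q/4})$, a Gronwall bound giving a uniform $H^1$-bound on $U_n$, and a weak--strong pairing argument in the Volterra equation. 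This buys transparency and independence from the reference; the paper's version buys brevity. One remark on emphasis: the gauge, while tidy, is not actually essential for this purpose, since already in the ungauged ODE the coefficient depends on $(q,p)$ only through $e^{\pm q}$ (strongly convergent in $C(\T)$ by Rellich) and through $q_x$ and $\Op p$ (bounded linear images of the weakly convergent datum, hence weakly convergent in $L^2$), so exactly the same Gronwall/$H^1$-compactness/weak--strong argument applies verbatim to $M_n$ itself. A second remark: in the Hurwitz step you should, as the paper does, make explicit the one degenerate case on $\Hr$, namely that zeros of $\chi_p$ may be double. Applying Hurwitz in a shrinking disc around a double root $\lambda_n^-(v_*)=\lambda_n^+(v_*)$ gives convergence of the unordered pair $\{\lambda_n^\pm(v_k)\}$, which forces $\lambda_n^\pm(v_k)\to\lambda_n^\pm(v_*)$ separately since the two limit points coincide; your appeal to the order $\preceq$ covers the simple-root case but this half-sentence is needed for the double one.
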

 \begin{proof}
   By Proposition 2.5 and Proposition 2.6 in \cite{LOperator2018},
   on any closed bounded subset of $ \C^* \times\Hr$, $M(1,\lm,v)$ and $\dot M(1,\lm,v)$ are compact functions and hence so are $\chi_p,\chi_D$, and $\dot\Dl$. 
  By Lemma \ref{lem:realSpectrumOrder}  and Lemma \ref{lem:dotDlSpecOnHr}, for any $v\in\Hr$, 
  the roots of $\chi_D(\cdot, v)$ and the ones of $\dot\Dl(\cdot, v)$ are simple and by \eqref{real ev}, the ones of $\chi_p(\cdot, v)$ are at most double. 
  Let us first consider the case of a potential $v_* \in \Hr$ with a double eigenvalue $ \lambda_n^-(v_*) = \lambda_n^+(v_*)$ for some $n \in \Z$. 
  Assume that $(v_k)_{k\geq 1}\subset \Hr$.
  To simplify notation introduce $f_k(\lm) := \chi_p(\lm, v_k)$, $f_*(\lm) := \chi_p(\lm,v_*)$. 
  Choose a disc $D$, centered at $ \lambda_n^+(v_*)$, of sufficiently small radius $\rho>0$ so that $\min_{\lm\in\partial D} |f_*(\lm)|\geq 2\epsilon$. 
  Since $\partial D$ is compact, $f_k,f_*$ are analytic in $\lm$, and $\lim_{k\to\infty} f_k(\lm) = f_*(\lm)$ for any $\lm\in \overline{D}$, 
  there exists $k_0\geq 1$ so that $\min_{\lm\in\partial D}|f_k(\lm)| \geq \epsilon$ for
 any $k\geq k_0$. It then follows from the argument principle that for any $k\geq k_0$,  $f_k(\lm)$ 
 has precisely two roots (counted with multiplicities) inside $D$, i.e., for $k$ sufficiently large
   \[
   \int_{\partial D} \frac{\dot f_k(\lm)}{f_k(\lm)} \dlm = \int_{\partial D} \frac{\dot f_*(\lm)}{ f_*(\lm)}\dlm = 2.
   \]
 Using the Counting Lemma and again the argument principle one sees that for $k$ sufficiently large these roots of $f_k(\lm)$ coincide with the two eigenvalues $\lm_n^+(v_k)$, $\lm_n^-(v_k)$.
 Since the radius $\rho>0$ of the disc $D$ can be chosen arbitrarily small it then follows that $\lim_{k\to\infty}\lm_n^\pm(v_k) = \lm_n^\pm(v_*)$.
 In a similar way, one shows a corresponding result in the case the eigenvalues $\lambda_n^\pm(v_*)$ of $v_*$ are simple. Altogether, this shows that the periodic
 eigenvalues of $Q(v)$ are compact functions on \Hr. The corresponding result for the Dirichlet eigenvalues of $Q(v)$ and for the roots of $\dot\Dl(\cdot, v)$
 are proved in a similar way.
 \end{proof}


Next we consider potentials in some complex neighborhood of \Hr in \Hp, ideally chosen in such a way that the Dirichlet eigenvalues of $Q(v)$ and the roots of $\dot\Dl(\cdot, v)$ 
remain simple and can be localized sufficiently accurately.

 First we need to discuss how to list the roots of $\dot\Dl$ in a convenient way.
 Since by Lemma \ref{lem:dotDlSpecOnHr} all the roots of $\dot\Dl$ are simple and real analytic  on $\Hr$ it follows from the argument principle  
 that any root of $\dot\Dl$ extends analytically to some complex neighborhood of  \Hr in \Hp as a  simple root of  $\dot\Dl$.  
 Furthermore recall that by the Counting Lemma \cite[Lemma 3.12]{LOperator2018}) (cf. \eqref{eq:dotLmListing}),  any potential $v_0$ in $\Hp$ admits
 a neighborhood $V$ in \Hp and $N\geq 1$ 
 so that for any $v \in V$, the roots of $\dot\Dl(\cdot, v)$ in $\SpecDom\setminus A_N$ can be listed as a bi-infinite sequence
 \[
  0 \preceq \dots\preceq \dot\lm_{-N-2} \preceq \dot\lm_{-N-1} \preceq \dot\lm_{N+1}\preceq \dot\lm_{N+2}\preceq \cdots
 \] with $\dot\lm_n\in D_n$ $\forall |n|>N$ and any root $\dot\lm$ of the $2N+2$ remaining ones in \SpecDom satisfies $\dot\lm_{-N-1} \preceq \dot\lm \preceq \dot\lm_{N+1}$. Hence for any element $v_0\in\Hr$ 
 there exists a complex neighborhood $V$ in $\Hp$ so that for any $v\in V$ the roots of $\dot\Dl(\lm)$ are simple and the root $\dot\lm_*(v)$ denotes the one obtained from $\dot\lm_*(v_0)$ by analytic deformation. Note that $\dot\lm_*(v)$ is not necessarily in $\SpecDom$.
 We list the roots which are different from $\dot\lm_*(v)$ and $-\dot\lm_*(v)$ and are contained in $\SpecDom$ as a bi-infinite sequence,
 \begin{equation}
   0\preceq \cdots \preceq \dot\lm_{-2}\preceq \dot\lm_{-1}\preceq \dot\lm_0 \preceq \dot\lm_1 \preceq \dot\lm_2\preceq \cdots ,
 \end{equation}
 so that
 \[
 \quad \dot\lm_n =n\pi +o(1), \quad \frac{1}{16\dot\lm_{-n}} = n\pi+o(1)\quad n\to\infty.
 \]
 By making the  neighborhood $V$ smaller if needed we can assume that the additional root $\dot\lm_*(v)$ satisfies
 \[
 \inf \set{\Im \dot\lm_*(v)}{v\in V} >0.
 \]
 
It turns out that at no cost, the notion of isolating neighborhoods, introduced in Section \ref{introduction},  can be slightly strengthened.
We say that a sequence $(U_n)_{n\in \Z}$ of pairwise disjoint open neighborhoods in \SpecDom together with an open disc $U_*\subset \set{\lm\in\C}{\Im\lm>0}$, 
centered on $\ii\R_{>0}$, form a set of isolating neighborhoods for a potential $v$ in $V$ if the following properties hold:
 \begin{itemize}
   \item[] (I-1) $\Gap \subset U_n\subset \SpecDom$ and $\mu_n,\dot\lm_n\in U_n$ for any $n\in \Z$;  $\dot\lm_*\in U_*$.
   \item[] (I-2) For any $n\geq 0$, $U_n$ is a disc centered on the real axis so that 
   \[
   c^{-1}|m-n|\leq dist(U_m,U_n)\leq c|m-n| \quad \forall m,n\geq 0, \, m\not=n
   \]
   for some constant $c\geq 1$.
   \item[] (I-3) The sets  $ \set{ \frac{1}{16\lm}}{\lm\in U_{-n}}$, $n\geq 0$, satisfy (I-2) with the same constant c.
   \item[] (I-4) For $|n|$ sufficiently large $U_n=D_n$.
   \item[] (I-5) $c^{-1}\leq \dist(U_*,U_n)$ $\forall n\in\Z$ with $c\geq 1$ given as in (I-2).
 \end{itemize}
 Note that for a potential $v\in V$, admitting a set of isolating neighborhoods, all the Dirichlet eigenvalues of $Q(v)$ and all the roots of $\dot\Dl(\cdot,v)$ are simple.
 \begin{lem} \label{lem:isolatingNeighbourhoodExtension}
   Let $(U_n)_{n\in\Z}$, $U_*$ be a set of isolating neighborhoods of $v_0$ in $\Hp$. Then there exists a neighborhood $V_{v_0}\subset V$ of $v_0$ in \Hp (with $V$ as above) 
   so that it is a set of isolating neighborhoods for any $v$ in $V_{v_0}$.
 \end{lem}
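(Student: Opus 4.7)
The plan is to observe that conditions (I-2)--(I-5) are purely geometric properties of the sets $U_n$ and $U_\ast$ themselves and are independent of the potential; they therefore need no re-verification. Only condition (I-1) has to be re-established for $v$ in a sufficiently small neighborhood of $v_0$, and this will be achieved by a standard application of the argument principle, combined with the uniform Counting Lemmas from \cite{LOperator2018}.

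First I would invoke the Counting Lemmas \cite[Lemma 3.4, Lemma 3.11, Lemma 3.12]{LOperator2018} to obtain an integer $N \geq 1$ and an open neighborhood $V_1 \subset V$ of $v_0$ such that for every $v \in V_1$ and every $|n| > N$, the operator $Q(v)$ has exactly two periodic eigenvalues $\lm_n^\pm(v)$, exactly one Dirichlet eigenvalue $\mu_n(v)$, and $\dot\Dl(\cdot,v)$ has exactly one simple root $\dot\lm_n(v)$ in $D_n$, while in the annulus $A_N$ the totals are respectively $4+8N$, $2+4N$, and $4+4N$ (the latter including the two roots $\pm \dot\lm_\ast$). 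By condition (I-4) we may assume $U_n = D_n$ for $|n| > N$, so (I-1) holds automatically on $V_1$ for those indices.

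For the finitely many indices $|n| \leq N$ and the neighborhood $U_\ast$ the argument runs as follows. At $v=v_0$ the zeros of $\chi_p(\cdot,v_0) = \Dl^2 - 1$, of $\chi_D(\cdot,v_0)$ and of $\dot\Dl(\cdot,v_0)$ lie inside the corresponding $U_n$, respectively inside $U_\ast$; since the $U_n$ and $U_\ast$ are pairwise disjoint and compactly contained in $\C^\ast$, and the three functions are analytic in $\lm$ and continuous in $v$, they remain bounded away from zero on the finite union of compact boundaries $\bigcup_{|n| \leq N} \partial U_n \cup \partial U_\ast$ for $v$ in a smaller neighborhood $V_2 \subset V_1$. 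The argument principle then guarantees that for $v \in V_2$ and each $|n| \leq N$, the number of zeros of $\chi_p(\cdot,v)$, $\chi_D(\cdot,v)$ and $\dot\Dl(\cdot,v)$ inside $U_n$ coincides with the number at $v_0$, and similarly one simple root of $\dot\Dl(\cdot,v)$ persists inside $U_\ast$. Combined with the count from Step~1 and the Counting Lemmas in $A_N$, this exhausts all eigenvalues and critical roots, so the list $\lm_n^\pm(v), \mu_n(v), \dot\lm_n(v)$ distributes with the correct multiplicity inside each $U_n$ and $\dot\lm_\ast(v) \in U_\ast$. Setting $V_{v_0} := V_2$ yields the asserted neighborhood.

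The main obstacle, and the only delicate point, lies in the bookkeeping inside the annulus $A_N$: one must rule out the possibility that as $v$ varies, an eigenvalue or critical root migrates from one $U_n$ to a neighbouring one, or between $U_\ast$ and some $U_n$. This is exactly what the argument principle handles, provided the non-vanishing on $\partial U_n$ and $\partial U_\ast$ is established uniformly over $v \in V_2$; the uniformity is furnished by the joint continuity of $\chi_p$, $\chi_D$, $\dot\Dl$ in $(\lm,v)$ together with compactness of the finite index set $\{\,n : |n|\leq N\,\} \cup \{\ast\}$ and of the contours involved.
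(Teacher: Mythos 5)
Your proof is correct and follows essentially the same strategy as the paper: invoke the Counting Lemmas to guarantee (I-1) for $|n|>N$, note that (I-2)--(I-5) are automatic, and then handle the finitely many remaining regions by a continuity/argument-principle argument on the compact boundaries $\partial U_n$, $\partial U_*$. The paper phrases the finite-index step via Rouché's theorem applied to $\chi_D$, $\chi_p$, and $\dot\Delta$ (i.e., $|\chi_D(\lambda,v)-\chi_D(\lambda,v_0)|<|\chi_D(\lambda,v_0)|$ on $\partial U_n$), which is the same mechanism you express through the argument principle.
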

 \begin{proof}   Let $v_0$ be in $\Hp$. By the Counting Lemmas (Lemma 3.4, Lemma 3.11, and Lemma 3.12 in\cite{LOperator2018}) there exist an integer $N\geq 1$ and 
 a neighborhood ${V_{v_0}}\subset V$ of $v_0$ in \Hp so that for any $v$ in ${V_{v_0}}$,
   \[
   \Gap(v)\subset D_n, \quad \mu_n,\dot\lm_n\in D_n,\quad U_n = D_n, \qquad  |n| >  N.
   \]
   Clearly (I-2)-(I-5) are satisfied and so we only need to verify that (I-1) holds on $V_{v_0}$, possibly after shrinking it.
   It remains to control finitely many spectral quantities. Since $\chi_D$ is  analytic in $\lm$ and $v$ and since
   the $U_n$'s are isolating neighborhoods for $v_0$, $\chi(\cdot, v_0)$ does not vanish on $\partial U_n$, 
   one has, possibly after shrinking $V_{v_0}$, if needed, that for any $v\in V_{v_0}$ and $|n| \leq N$
   \begin{align}
         |\chi_D(\lm,v)-\chi_D(\lm,v_0)| &<|\chi_D(\lm,v_0)| \quad \forall \lm\in\partial U_n.
 \end{align}
 Hence by Rouch\'e's theorem, for any $|n| \leq N$, $\chi_D(\cdot,v)$ has the same number of roots inside $U_n$ as $\chi_D(\cdot,v_0)$. 
 Similarly one argues for $\chi_p$ and $\dot\Dl$, implying altogether that (I-1) holds for $v$ in $V_{v_0}$, possibly after shrinking it once more if needed.
 \end{proof}

 For any $v \in \Hr$, one has for any $n\in\Z$, $\lm_n^+<\lm_{n+1}^-$ (cf. \eqref{real ev}), $\lm_n^-\leq \dot\lm_n\leq\lm_n^+$, $\dot\lm_*\in\ii\R_{>0}$ (Lemma \ref{lem:dotDlSpecOnHr}) 
 and   $\lm_n^-\leq \mu_n\leq\lm_n^+$ (Lemma \ref{lem:realSpectrumOrder}) and hence a set of isolating neighborhoods always exists. 
 By  Lemma \ref{lem:isolatingNeighbourhoodExtension} it follows that for any $v\in\Hr$, there exists  a ball $V_{v} \subset V$ in \Hp, centered at $v$, and pairwise disjoint subsets $(U_n)_{n\in\Z}$, $U_*$ 
 so that they are isolating neighborhoods for any potential in $V_{v}$. Setting
 \begin{equation}\label{eq:DefnOfhatW}
 \hat W := \bigcup_{v\in\Hr}V_{v},
 \end{equation}
 we thus obtain an open connected neighborhood of \Hr in \Hp. Without loss of generality  we can assume that $v\in\hat W$ if and only if $(-q,p)\in\hat W$.

 \begin{lem}\label{lem:DirichletSpecReciprocity}
 On $\hat W$, the Dirichlet eigenvalues $\mu_n$, $n\in\Z$,  are real analytic functions of $q$ and $p$. For $v=(q,p)\in \hat W$ one has
 \begin{equation}\label{eq:DirichletReciprocity}
   \frac{1}{16\mu_n(q,p)} = \mu_{-n}(-q,p) \quad \forall n\in\Z.
 \end{equation}
 \end{lem}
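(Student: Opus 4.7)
The plan splits the lemma into two stages: analyticity of each $\mu_n$ on $\hat W$, and the reciprocity \eqref{eq:DirichletReciprocity}, which I will reduce to the real subspace $\Hr$ by the identity principle.

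For analyticity, I would fix any $v_0 \in \hat W$ and apply Lemma \ref{lem:isolatingNeighbourhoodExtension} to obtain a neighborhood $V_{v_0} \subset \hat W$ on which a common system of isolating neighborhoods $(U_n)_{n\in\Z}$ is valid. Property (I-1) together with the Dirichlet Counting Lemma \cite[Lemma 3.4]{LOperator2018} guarantees that for every $v \in V_{v_0}$ and every $n \in \Z$, $\mu_n(v)$ is the unique, hence simple, zero of $\chi_D(\cdot,v)$ in $U_n$. Proposition \ref{lem:gradientDirichlet} then yields analyticity of $\mu_n$ on $V_{v_0}$, and since such neighborhoods cover $\hat W$, $\mu_n$ is analytic on all of $\hat W$. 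Real analyticity on $\Hr$ is immediate from Lemma \ref{lem:realSpectrumOrder}.

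For the reciprocity, $\hat W$ is invariant under $\mathcal I_{rec}:(q,p)\mapsto(-q,p)$ by the choice made right after \eqref{eq:DefnOfhatW}, so both $v \mapsto 1/(16\mu_n(v))$ and $v \mapsto \mu_{-n}(\mathcal I_{rec}(v))$ are analytic on the connected set $\hat W$. By the identity principle, it suffices to verify \eqref{eq:DirichletReciprocity} for $v=(q,p)\in\Hr$. The key input is the operator-level identity
\[
\grave{m}_2\big(\tfrac{1}{16\lm},\,-q,\,p\big) \,=\, -\,e^{-q(0)}\,\grave{m}_2(\lm,\,q,\,p),
\]
which I would derive from an explicit gauge $M(x,\tfrac{1}{16\lm},-q,p) = T(x)\,M(x,\lm,q,p)\,T(0)^{-1}$ with $T(x) = \mathrm{diag}\big(e^{-q(x)/2},\,-e^{q(x)/2}\big)$: a direct substitution into \eqref{eq:dxM} shows that $Tf$ solves the ODE at parameters $(\tfrac{1}{16\lm},-q,p)$ whenever $f$ solves it at $(\lm,q,p)$, and periodicity $q(1)=q(0)$ gives $T(1)=T(0)$, which yields the displayed relation on the $(1,2)$-entry of $\grave{M}$. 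It follows that $\tfrac{1}{16\mu}$ is a Dirichlet eigenvalue of $Q(-q,p)$ iff $\mu$ is one of $Q(q,p)$. Combined with the periodic reciprocity $\tfrac{1}{16\lm_n^\pm(q,p)} = \lm_{-n}^\mp(-q,p)$ from \cite[Lemma 2.14(ii)]{LOperator2018}, which gives $\tfrac{1}{16}G_n(q,p) = G_{-n}(-q,p)$, and Lemma \ref{lem:realSpectrumOrder}, which pins $\mu_{-n}(-q,p)$ as the unique Dirichlet eigenvalue of $Q(-q,p)$ in that gap, this forces $\tfrac{1}{16\mu_n(q,p)} = \mu_{-n}(-q,p)$ on $\Hr$. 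The identity then extends to all of $\hat W$ by analyticity.

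The hard part will be verifying the gauge: matching the diagonal entries of \eqref{eq:dxM} forces $a(x) = e^{-q(x)/2}$, $b(x) = e^{q(x)/2}$ in $T = \mathrm{diag}(a,b)$ (up to integration constants), and the off-diagonal entries then require a sign flip in one component, which is the reason for the minus sign in $T(x)$. Once this gauge computation is in place, everything else is either a direct consequence of results already established in the paper or a standard application of the identity principle.
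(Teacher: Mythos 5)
Your analyticity argument is essentially the paper's. For the reciprocity, you take a genuinely different route: you reduce to $\Hr$ by the identity principle and then use the gap confinement $\mu_n \in G_n$ of Lemma~\ref{lem:realSpectrumOrder} together with the gap reciprocity to pin down the eigenvalue. The paper instead proves the indexed identity directly on $\hat W$: from the set-level reciprocity it gets $\tfrac{1}{16\mu_n(q,p)} = \mu_{-n}(-q,p)$ for $|n|>N$ via the Counting Lemma and the observation that $\lambda\mapsto 1/(16\lambda)$ carries $D_{-n}$ onto $D_n$, and then handles $|n|\le N$ by noting that $a\preceq b$ iff $1/(16b)\preceq 1/(16a)$ on $\{\Re\lambda>0\}$, which reverses the ordered listing of the finitely many remaining Dirichlet eigenvalues. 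Your route avoids the $|n|>N$ versus $|n|\le N$ case split and the $\preceq$-bookkeeping by exploiting the extra structure available over $\Hr$, at the cost of invoking the identity principle for analytic functions that agree on the totally real submanifold $\Hr\subset\Hp$ — a tool the paper itself uses elsewhere (e.g.\ in the proof of \eqref{eq:kap6.3.37}), so this is in the spirit of the paper.

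One small caution on attribution and on circularity. You quote the \emph{indexed} periodic reciprocity $\tfrac{1}{16\lm_n^\pm(q,p)}=\lm_{-n}^\mp(-q,p)$ as coming from \cite[Lemma~2.14(ii)]{LOperator2018}, but that reference only gives the set-level statement that $\lm$ is a periodic eigenvalue of $Q(q,p)$ iff $1/(16\lm)$ is one of $Q(-q,p)$; the indexed version appears in the paper only as Lemma~\ref{lem:ReciprocityOfdotLmAndPeriodic}(ii), whose proof in turn refers back to the proof of the present lemma. To avoid even the appearance of circularity you should note that, over $\Hr$, the indexed version follows immediately from the set-level statement plus monotonicity: $\lm\mapsto 1/(16\lm)$ is an order-reversing bijection of $\R_{>0}$ carrying the ordered positive periodic spectrum of $Q(q,p)$ onto that of $Q(-q,p)$, and the asymptotics $\lm_n^\pm=n\pi+\ell_n^2$, $1/(16\lm_{-n}^\pm)=n\pi+\ell_n^2$ fix the index correspondence. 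With that insertion, your argument is complete and correct. Your explicit gauge $T(x)=\mathrm{diag}(e^{-q(x)/2},-e^{q(x)/2})$ does verify against \eqref{eq:dxM} (diagonal gives $-q_x/2+\tfrac14(Pp+q_x)=\tfrac14(Pp-q_x)$; off-diagonals pick up the sign from $T_{22}<0$), and it reproduces $\grave{m}_2(\tfrac{1}{16\lm},-q,p)=-e^{-q(0)}\grave{m}_2(\lm,q,p)$; the exponent disagrees by a sign with the form quoted in the paper from \cite[Prop.~2.1]{LOperator2018}, but since only the zero sets are used this discrepancy is immaterial to the argument.
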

 \begin{proof}
   The real analyticity follows by  Proposition \ref{lem:gradientDirichlet} and the fact that all Dirichlet eigenvalues are simple on $\hat W$. 
   Concerning \eqref{eq:DirichletReciprocity} recall that the Dirichlet eigenvalues are the zeros of $\grave{m}_2(\lm,q,p)$ and that by \cite[Proposition 2.1]{LOperator2018}, $\grave{m}_2(\frac{1}{16\lm},q,p) = -e^{-q(0)}\grave{m}_2(\lm,-q,p)$. Hence for any $\mu\in \C^*$
   \begin{equation} \label{eq:DirichletReciprocityProof}
     \mu \in \mathrm{spec}_{dir} Q(q,p) \quad \Leftrightarrow \quad \frac{1}{16\,\mu} \in \mathrm{spec}_{dir}Q(-q,p).
 \end{equation}
   It remains to show that the listing of the Dirichlet eigenvalues of $Q(q,p)$ and $Q(-q,p)$ lead to \eqref{eq:DirichletReciprocity}.
   Choosing $N\geq 1$ as in the Counting Lemma \cite[Lemma 3.4]{LOperator2018} and noting that by the definition of $D_n$, for any $\lm\in D_{-n}$ with $n\geq1$ one has $\frac{1}{16\lm} \in D_{n}$, 
   it follows that
   \begin{equation}\label{eq:DirichletReciprocityProof2}
     \frac{1}{16\mu_n(q,p)} = \mu_{-n}(-q,p) \quad \forall  |n| > N.
   \end{equation}
   For the finitely many Dirichlet eigenvalues $\mu_n$ with $|n|\leq N$, note that by the definition of isolating neighborhoods,  one has $\Re \mu_n(q,p)>0$ for any $n$.
   Furthermore, since by the definition of $a \preceq b$ 
   with $a,b\in\set{\lm}{\Re\lm>0}$,
   \[
      a\preceq b \quad \Leftrightarrow\quad\frac{1}{16b} \preceq \frac{1}{16 a}
   \]
   one has
   \[
   \frac{1}{16\,\mu_{N}(q,p)} \preceq \frac{1}{16\,\mu_{N-1}(q,p)} \preceq \cdots \preceq \frac{1}{16\,\mu_{-(N-1)}(q,p)} \preceq \frac{1}{16\,\mu_{-N}(q,p)}
   \]
 Since
 \[
 \mu_{-N}(-q,p)\preceq \mu_{-(N-1)}(-q,p) \preceq \cdots \preceq \mu_N(-q,p)
 \]
 it then follows from \eqref{eq:DirichletReciprocityProof2}, that also for any $|n|\leq N$, one has $\frac{1}{16\mu_n(q,p)} = \mu_{-n}(-q,p)$.
 \end{proof}

 \begin{lem}\label{lem:ReciprocityOfdotLmAndPeriodic}
   \begin{thmenum}
 \item On $\hat W$, $\dot\lm_n$, $n\in\Z$, and $\ii\dot\lm_*$ are real analytic functions.  For  $(q,p)\in\hat W$ one has
 \[
  \frac{1}{16\,\dot\lm_* (q,p)} = -\dot\lm_{*}(-q,p)
  \quad \text{and} \quad \frac{1}{16\,\dot\lm_n(q,p)} = \dot\lm_{-n}(-q,p).
 \]
 \item
 For any $(q,p)\in\hat W$,
 \[
  \frac{1}{16\,\lm_n^\pm(q,p)} = \lm_{-n}^\mp (-q,p) \quad \forall n\in\Z.
 \]
 \end{thmenum}
 \end{lem}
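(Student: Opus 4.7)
The plan is to follow the template of Lemma \ref{lem:DirichletSpecReciprocity} essentially line by line, replacing the Dirichlet characteristic function $\chi_D$ by $\dot\Dl$ for part (i) and by $\chi_p$ for part (ii), and invoking the corresponding symmetries from \cite[Lemma 2.14]{LOperator2018}. The real analyticity on $\hat W$ of each $\dot\lm_n$, of $\dot\lm_*$, and of each $\lm_n^\pm$ will follow from the implicit function theorem: by condition (I-1) in the definition of isolating neighborhoods (and Lemma \ref{lem:isolatingNeighbourhoodExtension}), these are all simple roots of the relevant analytic function of $(\lm,v)$, hence depend analytically on $v \in \hat W$. Since $\dot\lm_*(v)\in \ii\R_{>0}$ for $v\in\Hr$, the function $\ii\dot\lm_*$ is $\R$-valued on $\Hr$, and hence real analytic there in the usual sense.

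For part (i), the central input is the symmetry \cite[Lemma 2.14(ii)]{LOperator2018},
\begin{equation*}
-\tfrac{1}{16\lm^2}\,\dot\Dl\bigl(\tfrac{1}{16\lm},q,p\bigr) \;=\; \dot\Dl(\lm,-q,p),
\end{equation*}
which implies that $\lm\mapsto \frac{1}{16\lm}$ is a bijection, preserving multiplicities, between the zeros of $\dot\Dl(\cdot,q,p)$ and of $\dot\Dl(\cdot,-q,p)$ in $\C^*$. For $|n|>N$ with $N$ as in the Counting Lemma \cite[Lemma 3.12]{LOperator2018}, the identity $\{\tfrac{1}{16\lm}:\lm\in D_{-n}\}=D_n$ together with uniqueness of the root in each $D_n$ forces $\frac{1}{16\dot\lm_n(q,p)} = \dot\lm_{-n}(-q,p)$. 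For the finitely many indices $|n|\le N$, I would repeat the order argument from the proof of Lemma \ref{lem:DirichletSpecReciprocity}: on $\{\Re\lm>0\}$, one has $a\preceq b \Leftrightarrow \frac{1}{16b}\preceq\frac{1}{16a}$, so the involution inverts the listing, yielding the matching. For $\dot\lm_*$, combining the above symmetry with $\dot\Dl(-\lm,v)=-\dot\Dl(\lm,v)$ shows that both $\frac{1}{16\dot\lm_*(q,p)}$ and $-\dot\lm_*(-q,p)$ are zeros of $\dot\Dl(\cdot,-q,p)$; for $v\in\Hr$, both lie in $\ii\R_{<0}$, where $\dot\Dl(\cdot,-q,p)$ has only one zero (namely $-\dot\lm_*(-q,p)$, by the characterization of $\dot\lm_*$), so they coincide. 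Analytic continuation in $v$ from $\Hr$ to the connected complex neighborhood $\hat W$ extends the identity to all of $\hat W$.

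Part (ii) is handled in the same style. By \cite[Lemma 2.14(i),(ii)]{LOperator2018}, the periodic spectrum of $Q(-q,p)$ is the image of the periodic spectrum of $Q(q,p)$ under $\lm\mapsto \tfrac{1}{16\lm}$, with multiplicities preserved. For $|n|>N$, the pair $\lm_n^-(q,p)\preceq\lm_n^+(q,p)$ in $D_n$ is mapped bijectively onto the pair $\lm_{-n}^-(-q,p)\preceq\lm_{-n}^+(-q,p)$ in $D_{-n}$; the reversal of $\preceq$ under the involution forces $\frac{1}{16\lm_n^+(q,p)}=\lm_{-n}^-(-q,p)$ and $\frac{1}{16\lm_n^-(q,p)}=\lm_{-n}^+(-q,p)$, which accounts for the swap $\pm\mapsto \mp$. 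For the finitely many indices $|n|\le N$, the same order argument as for the Dirichlet eigenvalues in Lemma \ref{lem:DirichletSpecReciprocity} applies verbatim, using the listing of periodic eigenvalues in $\SpecDom$ via $\preceq$.

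The only real subtlety I anticipate is the exceptional root $\dot\lm_*$: unlike the $\dot\lm_n$'s, it is not pinned to one of the discs $D_n$ of the Counting Lemma but to the separate isolating disc $U_*$, so its identification under the involution requires combining the two symmetries $\lm\mapsto\frac{1}{16\lm}$ and $\lm\mapsto -\lm$ of $\dot\Dl$ and then propagating the resulting equality from the real locus $\Hr$ to $\hat W$ by analyticity.
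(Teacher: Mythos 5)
Your proposal follows the same structure as the paper's proof, which simply says ``argue as in the proof of Lemma \ref{lem:DirichletSpecReciprocity}'' and adds one remark about $\dot\lm_*$. The Counting Lemma matching for $|n|>N$, the $\preceq$-reversal argument for the finitely many remaining indices, and the sign/swap bookkeeping in part (ii) are all correctly reproduced. For $\dot\lm_*$ you take a slightly different route (identify the root on $\Hr$ where $\dot\lm_* \in \ii\R_{>0}$, then extend by analyticity), whereas the paper argues directly on $\hat W$ using the normalization $\Im\dot\lm_*>0$ imposed there; both are valid.

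There is one genuine misstatement. You assert that ``the real analyticity on $\hat W$ of \dots each $\lm_n^\pm$ will follow from the implicit function theorem \dots these are all simple roots.'' That is false for the periodic eigenvalues: the isolating neighborhoods only guarantee that $U_n$ contains the two roots $\lm_n^\pm$ of $\chi_p(\cdot,v)$ counted with multiplicity, and these may coincide (collapsed gap, $\gamma_n=0$). The text explicitly records that for $v$ admitting isolating neighborhoods ``all the Dirichlet eigenvalues \dots and all the roots of $\dot\Dl$ are simple'' --- periodic eigenvalues are deliberately omitted, and this is exactly why the paper works with the symmetric combinations $\tau_n$ and $\gamma_n^2$ in Lemma \ref{lem:analyticityandGradientOfTaun} rather than $\lm_n^\pm$ themselves. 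Fortunately, Lemma \ref{lem:ReciprocityOfdotLmAndPeriodic}(ii) does not claim analyticity of $\lm_n^\pm$, and your argument for the identity in (ii) (bijection of the spectra under $\lm\mapsto\frac{1}{16\lm}$ plus the order-reversal) does not actually use it, so the error is contained; but the sentence should be corrected so that the simplicity/analyticity claim is restricted to the roots of $\dot\Dl$ (and the Dirichlet eigenvalues), not to $\lm_n^\pm$.
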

 \begin{proof}
 One argues as in the proof of Lemma \ref{lem:DirichletSpecReciprocity}. We only remark that for $\dot\lm_*(q,p)$ the identity  $\frac{1}{16\dot\lm_* (q,p)} = -\dot\lm_*(-q,p)$ holds since among the two roots $\dot\lm_*(-q,p)$ and $-\dot\lm_*(-q,p)$, the root $\dot\lm_*(-q,p)$ is characterized by $\Im\dot\lm_*(-q,p)>0$.
 \end{proof}

 We now analyze the following quantities in more detail,
 \begin{equation}\label{eq:taungmn}
   \tau_n = \frac{\lm_n^+ + \lm_n^-}{2},\quad \gm = \lm_n^+ - \lm_n^-, \quad n\in\Z.
 \end{equation}
 First we need to establish the following auxiliary result.

 \begin{lem}\label{lem:analyticitySpectralquantities}
   For any $k\geq 1$ and $n\in\Z$ the functions  $(\lm_n^+)^k + (\lm_n^-)^k$ are real analytic on $\hat W$.
 \end{lem}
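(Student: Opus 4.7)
The plan is to express $(\lambda_n^+)^k + (\lambda_n^-)^k$ as a contour integral depending analytically on $v$, using that while the individual branches $\lambda_n^\pm$ can fail to be analytic where they collide, any symmetric function of the pair is.

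Fix an arbitrary $v_0 \in \hat W$. By Lemma \ref{lem:isolatingNeighbourhoodExtension} there is a ball $V_{v_0} \subset \hat W$ centered at $v_0$ and a family of isolating neighborhoods $(U_m)_{m\in\Z}$ common to every $v \in V_{v_0}$. Fix $n \in \Z$ and take the counterclockwise contour $\Gamma_n \subset U_n$ around $G_n = [\lambda_n^-, \lambda_n^+]$ introduced before Theorem \ref{thm:kap8.2.12}. By property (I-1), for every $v \in V_{v_0}$ the zeros of $\chi_p(\cdot, v) = \Delta^2(\cdot,v) - 1$ inside $U_n$ are precisely $\lambda_n^\pm(v)$, counted with multiplicity (total multiplicity two), and none lie on $\Gamma_n$.

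By the argument principle applied to $\chi_p(\cdot,v)$ with the holomorphic test function $\lambda \mapsto \lambda^k$,
\[
\bigl(\lambda_n^+(v)\bigr)^k + \bigl(\lambda_n^-(v)\bigr)^k \, = \, \frac{1}{2\pi\ii}\oint_{\Gamma_n} \lambda^k \, \frac{\partial_\lambda \chi_p(\lambda,v)}{\chi_p(\lambda,v)}\, \dlm .
\]
This identity remains valid when the two eigenvalues coincide as a double root $\lambda_0$, since then the residue at $\lambda_0$ contributes $2\lambda_0^k$, matching the left-hand side. The integrand depends analytically on $v \in V_{v_0}$ for each fixed $\lambda$ on $\Gamma_n$ (by \cite[Theorem 2.2]{LOperator2018}, $\grave M$, and hence $\chi_p$ and $\partial_\lambda \chi_p$, are analytic in $v$), and is uniformly bounded on the compact set $\Gamma_n \times V_{v_0}$ thanks to the fact that $\chi_p$ does not vanish on $\Gamma_n$ for any $v \in V_{v_0}$. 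Differentiating under the integral sign then shows that the right-hand side is analytic on $V_{v_0}$.

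Covering $\hat W$ by such balls, the function $(\lambda_n^+)^k + (\lambda_n^-)^k$ is analytic on $\hat W$. Real-valuedness on $\Hr$ is immediate from \eqref{real ev}: for $v \in \Hr$ the periodic spectrum of $Q(v)$ is contained in $\R$, so the power sum is real. Combined with the analyticity on $\hat W$, this gives the claimed real analyticity. The only conceptual point is the one already emphasized above: individual branches $\lambda_n^\pm$ need not be analytic across the collision locus $\{\lambda_n^- = \lambda_n^+\}$, but the contour integral represents symmetric combinations (power sums) which are analytic, with multiplicities at double roots automatically accounted for by the argument principle.
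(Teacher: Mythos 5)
Your proof is correct and follows essentially the same approach as the paper: you write the power sum as the contour integral $\frac{1}{2\pi\ii}\oint_{\Gamma_n} \lambda^k\,\partial_\lambda\chi_p(\lambda,v)/\chi_p(\lambda,v)\,\dlm$ (which is the paper's integrand $2\Dl\dot\Dl/(\Dl^2-1)$ written differently), over a contour in an isolating neighborhood, and invoke analyticity of $\chi_p$ in $(\lm,v)$ together with its nonvanishing on the contour. The only addition is your explicit remark that the argument-principle representation handles the collision $\lambda_n^-=\lambda_n^+$ via the residue multiplicity, which the paper leaves implicit.
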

 \begin{proof}
   Let $v$ be in $\Hr$ and $(U_n)_{n\in\Z}$, $U_*$  be a set of isolating neighborhoods for the neighborhood $V_{v}$, given as in \eqref{eq:DefnOfhatW}. 
   Then for any $k\geq 1$  and $n\in\Z$,  it follows from the argument principle  that
   \begin{equation}\label{eq:lmnpkPluslmnmk}
   (\lm_n^+)^k + (\lm_n^-)^k = \frac{1}{2\pi\ii}\int_{\partial U_n} \lm^k\frac{2\Dl(\lm)\dot \Dl(\lm)}{\Dl^2(\lm)-1} \dlm.
   \end{equation}
   Since $\Dl$ and $\dot\Dl$ are analytic on $\C^*\times V_{v}$  and $\Dl^2 - 1$ does not vanish on $\partial U_n\times V_{v}$, 
   the right-hand side of the latter identity is analytic on $V_{v}$. Finally, if $v\in\Hr$  then $\lm_n^+,\lm_n^-\in\R$ for any $n\in\Z$.
 \end{proof}

 \begin{lem} \label{lem:analyticityandGradientOfTaun} For each $n\in\Z$,
   $\tau_n = (\lm_n^++\lm_n^-)/2$ and  $\gamma_n^2 = (\lm_n^+ -\lm_n^-)^2$
   define analytic functions  on $\hat W$. Furthermore $(\gm^2)_{n\geq 0}$, $(n^4\gm[-n]^2)_{n\geq 1} \in \ell^1$,
   \[
   \tau_n = n\pi + \ell_n^2,\quad \frac{1}{16\tau_{-n}} =n\pi+\ell_n^2
   \]
 and
   \[
  \partial \tau_n =  \big(\ell_n^2\cdot \partial_x(\cdot) +\ell_n^2,\,\, \ell_n^2\cdot \Op(\cdot)\big),
  \quad \partial (\gamma_n^2) = \big( \ell_n^2\cdot \partial_x(\cdot) +\ell_n^2, \,\, \ell_n^2\cdot \Op(\cdot)\big)
   \]
 uniformly for $0\leq x\leq 1$ and  locally uniformly on $\hat W$. 
 In more detail,  e.g. the term $\ell_n^2\cdot \partial_x(\cdot)$ in the expression for $\partial_q (\gamma_n^2))$ means that there exist functions $a_n(x)$, $n \in \Z$, so that
 the value of $\ell_n^2 \cdot \partial_x$ at $\mathring{q}$ is $\langle a_n,\partial_x \mathring{q} \rangle_r$ and 
  \[
    \sup_{0\leq x\leq 1} \sum_{n \in \Z} |a_n(x)|^2 \leq C
  \]
  for some $C>0$ which can be chosen locally uniformly on $\hat W$.
 \end{lem}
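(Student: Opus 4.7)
The plan is to prove the three parts of the lemma — analyticity, the scalar asymptotics, and the gradient asymptotics — via contour-integral (symmetric-function) representations of $\tau_n$ and $\gamma_n^2$, combining them with the already-established gradient asymptotics of $\Delta$ from Corollary \ref{cor:DldlgradientEstimate}.

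For analyticity, Lemma \ref{lem:analyticitySpectralquantities} already gives that for each $k\geq 1$ the symmetric power sum $s_{n,k} := (\lm_n^+)^k + (\lm_n^-)^k$ is analytic on $\hat W$, via the residue representation
\begin{equation*}
s_{n,k}(v) \,=\, \frac{1}{2\pi\ii}\int_{\partial U_n} \lm^k \,\frac{2\Dl(\lm)\dot\Dl(\lm)}{\Dl^2(\lm)-1}\,\dlm ,
\end{equation*}
valid whenever $(U_n)_{n\in\Z}$ is a set of isolating neighborhoods for $v$. Hence $2\tau_n = s_{n,1}$ is analytic and, using the identity $\gm^2 = 2 s_{n,2} - s_{n,1}^2$, so is $\gm^2$. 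On $\Hr$ both are real-valued by \eqref{real ev}.

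For the scalar asymptotics, by the Counting Lemma and the localization estimates recorded in Section \ref{setup}, one has $\lm_n^\pm = n\pi + \ell_n^2$ for $n\to +\infty$ and, via Lemma \ref{lem:ReciprocityOfdotLmAndPeriodic}(ii), the analogous statement $1/(16\lm_{-n}^\pm) = n\pi + \ell_n^2$ for the eigenvalues near the origin. Averaging yields $\tau_n = n\pi + \ell_n^2$ and $1/(16\tau_{-n}) = n\pi + \ell_n^2$. The bound $(\gm^2)_{n\geq 0}\in\ell^1$ is equivalent to $(\gm)_{n\geq 0}\in\ell^2$, which is the standard $\ell^2$ gap-length estimate for $Q(v)$; applying the same estimate to $Q(-q,p)$ and using the reciprocity of Lemma \ref{lem:ReciprocityOfdotLmAndPeriodic}(ii) one gets $\bigl(1/(16\lm_{-n}^-) - 1/(16\lm_{-n}^+)\bigr)_{n\geq 1} \in \ell^2$, and since $\lm_{-n}^\pm \asymp 1/(16n\pi)$ this quantity is comparable to $n^2\gm[-n]$, yielding $(n^4\gm[-n]^2)_{n\geq 1}\in\ell^1$.

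For the gradient asymptotics — the main technical step — differentiate the residue representation under the integral. Since $(U_n)_{n\in\Z}$ are isolating neighborhoods for $v$ in a ball $V_v \subset \hat W$, the denominator $\Dl^2 - 1$ is bounded away from zero on $\partial U_n \times V_v$, so one obtains
\begin{equation*}
\partial s_{n,k}(v) \,=\, \frac{1}{2\pi\ii}\int_{\partial U_n} \lm^k\, \partial\!\left(\frac{2\Dl\dot\Dl}{\Dl^2-1}\right)\dlm \, ,
\end{equation*}
and hence a corresponding formula for $\partial\tau_n$ and $\partial(\gm^2)$. For $|n|$ large, $U_n = D_n$ by (I-4), so the contour is the circle $|\lm - n\pi|=\pi/3$, on which every point has the form $\zeta = n\pi + O(1)$. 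Corollary \ref{cor:DldlgradientEstimate} then supplies the pointwise asymptotics $\partial\Dl(\zeta) = (\ell_n^2\cdot\partial_x(\cdot)+\ell_n^2,\ \ell_n^2\cdot\Op(\cdot))$ uniformly in $\zeta$ on this circle, and $\partial\dot\Dl(\zeta)$ has the analogous control after interchanging $\partial_\lambda$ and the $L^2$-gradient (using the joint analyticity of $\Dl$). Substituting these uniform bounds, the $1/(\Dl^2-1)$ factor and $\Dl$, $\dot\Dl$ themselves being $O(1)$ uniformly on the contour, the integral inherits the $\ell_n^2$-asymptotics, yielding the claimed estimates for $\partial\tau_n$ and $\partial(\gm^2)$ for $|n|$ large. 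The finitely many remaining $n$ are handled by analyticity on the compact set of interest.

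The main obstacle is the extra factor $\lm = n\pi + O(1)$ in the integrand for $s_{n,1}$: a naive bound would produce a stray $\langle n\rangle$ in the estimate for $\partial\tau_n$. This is controlled by using the alternative, unintegrated form $\partial_q\Dl(\zeta_n) = \langle n\rangle \ell_n^2$ from Corollary \ref{cor:DldlgradientEstimate} together with the odd symmetry of the dominant oscillatory part of $\partial\Dl$ around the centre $n\pi$ of the circle; the $n\pi$-contribution cancels upon contour integration, leaving only the $\ell_n^2$-remainder. The analogous cancellation for $\partial(\gm^2)$ is obtained by writing $\gm^2 = 2s_{n,2} - s_{n,1}^2$, so that the leading $n^2\pi^2$-terms from $\lambda^2$ and $s_{n,1}^2$ cancel before the gradient is taken.
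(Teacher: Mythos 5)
Your overall strategy — residue‐integral representation of the symmetric functions $s_{n,k}=(\lm_n^+)^k+(\lm_n^-)^k$, differentiate under the integral, and feed in the $\partial\Dl$ asymptotics of Corollary \ref{cor:DldlgradientEstimate} — is the same as the paper's, and the analyticity and scalar‐asymptotics parts are fine. The problem lies in how you handle the stray factor $\lm=n\pi+O(1)$ in the integrand. You propose to cancel the $n\pi$-contribution by invoking ``odd symmetry of the dominant oscillatory part of $\partial\Dl$ around $n\pi$''. But $\partial\Dl$ has no dominant oscillatory part: since $\partial\Dl(\cdot,0)\equiv 0$ (Lemma \ref{lem:gradientDldl}(iii)), the whole of $\partial\Dl(\zeta_n)$ is already of size $\ell_n^2$ on $\partial D_n$, so there is nothing to cancel by symmetry; a naive bound of $\int_{\partial U_n}\lm\,\partial\!\bigl(\tfrac{2\Dl\dot\Dl}{\Dl^2-1}\bigr)\dlm$ still carries a stray $\langle n\rangle$.

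The actual reason the $n\pi$-contribution drops is that
$\int_{\partial U_n}\frac{2\Dl\dot\Dl}{\Dl^2-1}\,\dlm = 2\pi\ii\cdot\#\{\text{periodic eigenvalues in }U_n\}$
is a locally constant function of $v$ (argument principle), hence has vanishing $L^2$-gradient; the paper realizes this algebraically by writing $\frac{2\Dl\dot\Dl}{\Dl^2-1}=\partial_\lm\log(\Dl^2-1)$, commuting $\partial_v$ with $\partial_\lambda$, and then integrating by parts in $\lambda$. That single integration by parts simultaneously removes the $\lambda$ factor and replaces $\dot\Dl$ by $\Dl$ in the integrand, so one never needs asymptotics of $\partial\dot\Dl$ at all (which your route would require and which Corollary \ref{cor:DldlgradientEstimate} does not directly supply). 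Similarly, for $\partial(\gm^2)$ the phrase ``the leading $n^2\pi^2$-terms cancel before the gradient is taken'' is not a legitimate step — cancellation at the level of the scalars does not automatically persist after differentiation. What actually works is to compute $\partial(\gm^2)=2\partial s_{n,2}-8\tau_n\partial\tau_n$, integrate by parts in $\lambda$ in both terms, and combine them so that $\lambda$ is replaced by $\lambda-\tau_n=O(1)$ on $\partial U_n$; this is precisely the manipulation in \eqref{eq:partialgm2}. You should replace the symmetry/cancellation heuristics by these two explicit integrations by parts.
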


 \begin{proof}
  By Lemma \ref{lem:analyticitySpectralquantities}, $\tau_n$ is analytic on $\hat W$ as is $\gm^2$ since
   \[
   \gamma_n^2 = 2(\lm_n^+)^2+2(\lm_n^-)^2 -(\lm_n^++\lm_n^-)^2.
   \]
   Furthermore, by  \cite[Lemma 3.17]{LOperator2018}, $(\gm)_{n\geq 0}\in \ell^2$ and hence $(\gm^2)_{n\geq 0}\in \ell^1$ and for any $n \ge 1$, 
   $\gm[-n](q,p) =\left( \frac{1}{16\lm_n^-} - \frac{1}{16\lm_n^+}\right)\Big|_{(-q,p)}$, implying that $(16\gm[-n](q,p))^2 = \pfrac{\gm}{\lm_n^-\lm_n^+}^2\Big|_{(-q,p)}$ and hence
   \begin{equation} \label{eq:n4gm2inell1}
     (n^4\gm[-n]^2)_{n\geq1}\in\ell^1.
 \end{equation}
 Similarly, by  \cite[Lemma 3.17]{LOperator2018}, one has $\tau_n = n\pi+\ell_n^2$ and hence for $n\geq 1$ sufficiently large
   \[
   \frac{1}{16\tau_{-n}} = \frac{1}{16(\lm_{-n}^- + \gm[-n]/2)} = \frac{1}{16\,\lm_{-n}^-} \big(1+\gm[-n]/2\lm_{-n}^- \big)^{-1} = \big(n\pi + \ell_n^2\big)\big(1+O\pfrac{\gm[-n]}{n}\big).
   \]
 By \eqref{eq:n4gm2inell1} one then concludes that $\frac{1}{16\,\tau_{-n}} = n\pi +\ell_n^2$.
   To obtain the claimed estimates for the gradients note that for any given $v\in\hat W$  one has by \eqref{eq:lmnpkPluslmnmk}
   \[
   2 \tau_n = \frac{1}{2\pi \ii}\int_{\partial U_n} \lm\frac{2 \Dl(\lm)\dot \Dl(\lm)}{\Dl^2(\lm)-1} \dlm
   \]
   where $(U_n)_{n\in\Z}$, $U_*$ is a set of isolating neighborhoods for $v$ and 
   \[
   \frac{2 \Dl(\lm)\dot \Dl(\lm)}{\Dl^2(\lm)-1} = \partial_\lm \log(\Dl^2(\lm)-1)
   \]
   for some appropriate branch of the logarithm.  Note that in view of the regularity of $\Dl(\lm,v)$ in $\lm$ and $v$, $\partial$ and $\partial_\lm$ commute,
   implying that
   \begin{equation}\label{eq:taunGradient}
   2\partial\tau_n = \frac{1}{2\pi \ii}\int_{\partial U_n} \lm \partial_\lm\left( \frac{2\Dl(\lm)\partial \Dl(\lm)}{\Dl^2(\lm)-1}\right)  \dlm.
 \end{equation}
   Integrating by parts then yields
   \[
  2\partial\tau_n = -\frac{1}{2\pi \ii}\int_{\partial U_n} \frac{2\Dl(\lm)\partial \Dl(\lm)}{\Dl^2(\lm)-1} \dlm.
   \]
   On $\partial U_n$, $|\Dl^2(\lm)-1|$ is bounded away from $0$ (cf. \cite[Lemma 2.17]{LOperator2018}), 
   $|\Dl(\lm)|$ is bounded (\cite[Lemma 3.14]{LOperator2018}) and by Corollary \ref{cor:DldlgradientEstimate} one has
   \[
   \partial \Dl(\lm) =\big(\ell_n^2\cdot \partial_x(\cdot) +\ell_n^2,\,\, \ell_n^2\cdot \Op(\cdot) \big) 
   \]
   uniformly in $0\leq x\leq 1$, $\lm\in\partial U_n$, and  locally uniformly on $\hat W$. Altogether this proves the claimed asymptotics for $\partial \tau_n$. For $\partial (\gamma_n^2)$ we proceed similarly. Since $\gm^2 = 2(\lm_n^+)^2+2(\lm_n^-)^2 - (2\tau_n)^2$ one has in view of \eqref{eq:lmnpkPluslmnmk}
   \begin{align}\begin{split}\label{eq:partialgm2}
     \partial\gamma_n^2 =& 4\frac{1}{2\pi\ii}\int_{\partial U_n} \lm^2\partial_\lm\frac{\Dl(\lm)\partial\Dl(\lm)}{\Dl^2(\lm)-1}\dlm - 8\tau_n\partial\tau_n
     \\ =&-8\frac{1}{2\pi\ii}\int_{\partial U_n} (\lm-\tau_n)\frac{\Dl(\lm)\partial\Dl(\lm)}{\Dl^2(\lm)-1} \dlm .
   \end{split}\end{align}
 Arguing as above and using that $\lm-\tau_n = O(1)$ on $\partial U_n$ the claimed asymptotics follow. Going through the arguments of the proof one verifies the uniformity statements.
 \end{proof}

 To finish this section we show that on $\Hr$, the nonvanishing of $\gm = \lm_n^+- \lm_n^-$ is generic for any $n\in\Z$.
First we need to establish the following auxiliary result. Recall that  $\grave{M} = \begin{pmatrix} \grave{m}_1&\grave{m}_2 \\ \grave{m}_3 & \grave{m}_4\end{pmatrix}$.

\begin{lem}\label{lem:collapsedGapCharacterization}
For any potential $v\in\Hr$ the following statements are equivalent:
\begin{thmenum}
\item $\gm =0$.
\item $\grave{M}(\mu_n) = (-1)^n I$.
\item $\grave{m}_3(\mu_n) = 0$ and $\grave{m}_1(\mu_n) = (-1)^n$.
\end{thmenum}
\end{lem}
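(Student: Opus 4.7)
The plan is to use the cyclic strategy (i)$\Rightarrow$(ii)$\Rightarrow$(iii)$\Rightarrow$(i), exploiting three facts about real potentials: $Q(v)$ is self-adjoint (so algebraic and geometric multiplicities of periodic eigenvalues coincide); the Wronskian identity $\det\grave{M}(\lambda)\equiv 1$; and $\lambda_n^-\le \mu_n\le \lambda_n^+$ with $\Delta(\lambda_n^\pm)=(-1)^n$ (Lemma \ref{lem:realSpectrumOrder} together with the discussion preceding \eqref{real ev}). As a preliminary observation, since $\mu_n$ is a Dirichlet eigenvalue one has $\grave{m}_2(\mu_n)=0$, so $\det\grave{M}(\mu_n)=\grave{m}_1(\mu_n)\grave{m}_4(\mu_n)=1$.

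For (i)$\Rightarrow$(ii), suppose $\gamma_n=0$. Then $\lambda_n^-=\lambda_n^+$ is a periodic eigenvalue of algebraic multiplicity two, and self-adjointness of $Q(v)$ forces its geometric multiplicity to be two as well. Equivalently, the Floquet matrix $\grave{M}(\lambda_n^\pm)$ has the eigenvalue $\Delta(\lambda_n^\pm)=(-1)^n$ with a two-dimensional eigenspace, so $\grave{M}(\lambda_n^\pm)=(-1)^n I$. The sandwich $\lambda_n^-\le \mu_n\le \lambda_n^+$ collapses to $\mu_n=\lambda_n^\pm$, giving (ii). The implication (ii)$\Rightarrow$(iii) is immediate by reading off the diagonal and off-diagonal entries.

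For (iii)$\Rightarrow$(i), combine $\grave{m}_2(\mu_n)=0$ with the assumption $\grave{m}_1(\mu_n)=(-1)^n$ and the Wronskian identity to deduce $\grave{m}_4(\mu_n)=(-1)^n$; together with the assumption $\grave{m}_3(\mu_n)=0$ this yields $\grave{M}(\mu_n)=(-1)^n I$. In particular $\mu_n$ is a periodic eigenvalue with two linearly independent Floquet eigenfunctions, hence of geometric and (by self-adjointness) algebraic multiplicity two. Now invoke the set of isolating neighborhoods $(U_k)_{k\in\Z}$ from Lemma \ref{lem:isolatingNeighbourhoodExtension} (applied in $\hat W$): by property (I-1) both $\mu_n$ and the pair $\lambda_n^\pm$ lie in $U_n$, and inside $U_n$ the operator $Q(v)$ has exactly two periodic eigenvalues counted with algebraic multiplicity. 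These two slots must therefore be filled by $\mu_n$, forcing $\lambda_n^-=\lambda_n^+=\mu_n$ and hence $\gamma_n=0$.

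The only delicate point is the counting step in (iii)$\Rightarrow$(i); the argument must conclude that the double periodic eigenvalue $\mu_n$ exhausts the two eigenvalues in $U_n$, rather than coexisting with an extra simple pair. This is handled precisely by the isolating-neighborhood hypothesis guaranteed for potentials in $\hat W\supset\Hr$, which confines exactly $\lambda_n^-$ and $\lambda_n^+$ (with multiplicities) to $U_n$, together with the fact that self-adjointness rules out Jordan blocks and thus equates algebraic and geometric multiplicity throughout the argument.
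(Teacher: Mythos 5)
Your proposal is correct and follows essentially the same cyclic route (i)$\Rightarrow$(ii)$\Rightarrow$(iii)$\Rightarrow$(i) as the paper, with the same use of $\grave{m}_2(\mu_n)=0$, the Wronskian identity, and the structure of $\grave{M}$. The only cosmetic difference is in the last step of (iii)$\Rightarrow$(i): having established $\grave{M}(\mu_n)=(-1)^nI$ (so $\mu_n$ is a double periodic eigenvalue), the paper simply invokes Lemma~\ref{lem:realSpectrumOrder}, i.e.\ $\lambda_n^-\le\mu_n\le\lambda_n^+$, to conclude $\mu_n=\lambda_n^\pm$, while you spell out the implicit eigenvalue count by appealing to the isolating neighborhoods $(U_k)$. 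Both versions rest on the same fact — that $U_n$ (or the gap) contains exactly two periodic eigenvalues counted with multiplicity — so yours is a slightly more explicit rendering of the same argument rather than a genuinely different one.
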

\begin{proof}
  If the $\gamma_n = 0$ then $\lm^-_n = \mu_n = \lm_n^+$, and $\grave{m}_2(\lm_n^+)=0$. Since $\lm_n^+$ has geometric multiplicity two, 
  $\trace\grave{M}(\lm_n^+) = (-1)^n 2$, and $\det \grave{M}(\lm_n^+) = 1$ it follows that $\grave{M}(\mu_n) = (-1)^n I$.
Hence (i) implies  (ii). Item (ii) clearly implies (iii). Finally given (iii) we have that $\grave{m}_2(\mu_n) =0$, $\grave{m}_3(\mu_n) = 0$, and $\grave{m}_1(\mu_n) = (-1)^n$. 
Therefore the Wronskian identity reads
  \[
  1 = \det \grave{M}(\mu_n) = (-1)^n\grave{m}_4(\mu_n)
  \]
and in turn $\grave{M}(\mu_n) = (-1)^n I$. This means that $\mu_n$ is a double periodic eigenvalue and hence by Lemma \ref{lem:realSpectrumOrder} one has $\mu_n = \lm_n^+ = \lm_n^-$.
\end{proof}

For any $n\in\Z$, introduce the set
\[
Z_n := \set{v\in\hat W}{\lm_n^-(v) = \lm_n^+(v)}.
\]

\begin{prop} \label{prop:ZnAnalytic}
For any $n\in\Z$, the following holds:
\begin{equivenum} 
\item $Z_n\cap \Hr$ is contained in a codimension 1 submanifold, implying that $\Hr \setminus Z_n$  is dense in \Hr.
\item $Z_n$ is a real analytic subvariety of $\hat W$.
\end{equivenum}
\end{prop}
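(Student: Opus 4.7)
Part (ii) follows at once from Lemma \ref{lem:analyticityandGradientOfTaun}: $\gamma_n^2 = (\lm_n^+ - \lm_n^-)^2$ is real analytic on $\hat W$, so $Z_n = \{ v \in \hat W : \gamma_n^2(v) = 0\}$ is a real analytic subvariety of $\hat W$.

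For part (i), one cannot work directly with $\gamma_n^2\big|_{\Hr}$: being non-negative and vanishing on $Z_n \cap \Hr$, its gradient vanishes identically on this set. The plan is instead to exploit Lemma \ref{lem:collapsedGapCharacterization}(iii): for $v \in \Hr$, membership in $Z_n$ forces both $\grave{m}_3(\mu_n(v), v) = 0$ and $\grave{m}_1(\mu_n(v), v) = (-1)^n$. Hence $Z_n \cap \Hr \subseteq F_n^{-1}(0)$, where
$$F_n(v) := \grave{m}_3(\mu_n(v), v);$$
by Lemma \ref{lem:DirichletSpecReciprocity} and the joint analyticity of $\grave{m}_3$ in $(\lm, v)$, $F_n$ is real analytic on $\hat W$ and real-valued on $\Hr$.

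The heart of the proof is to show $F_n \not\equiv 0$ on $\Hr$ by computing $\partial F_n$ at the base point $v = 0$, which itself lies in every $Z_n$ since $\chi_p(\lm, 0) = -\sin^2(\omega(\lm))$ vanishes to second order at each $\mu_n(0)$. By the chain rule,
$$\partial F_n(0) = \dot{\grave{m}}_3(\mu_n(0), 0)\,\partial \mu_n(0) + \partial_v \grave{m}_3\big|_{(\lm, v) = (\mu_n(0), 0)},$$
with both ingredients given explicitly by Proposition \ref{prop:Mgradient} and Proposition \ref{lem:gradientDirichlet}, applied with the unperturbed fundamental solution $M(x, \mu_n(0), 0) = E_{n\pi}(x)$. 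Using the relation $\omega(\mu_n(0)) = n\pi$, equivalently $16\mu_n(0)^2 - 16 n\pi\,\mu_n(0) - 1 = 0$, to collect the $\cos(2\pi n x)$ contributions, one expects
$$\partial_q F_n(0)[\mathring{q}] = -(-1)^n \bigl(n\pi + \tfrac{1}{8\mu_n(0)}\bigr) \int_0^1 \cos(2\pi n x)\,\mathring{q}(x)\,\upd x,$$
whose coefficient is non-zero for every $n \in \Z$ (again via the defining quadratic for $\mu_n(0)$, which rules out the coincidence $\mu_n(0) = -1/(8n\pi)$). Thus $\partial F_n(0) \ne 0$.

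Once this is in place, $F_n$ is a non-constant real analytic function on the connected real analytic manifold $\Hr$, so $F_n^{-1}(0)$ is a proper real analytic subvariety; by the implicit function theorem it is a codimension 1 submanifold near any point where $\partial F_n \ne 0$ and globally is contained in a countable union of such. Hence $Z_n \cap \Hr \subseteq F_n^{-1}(0)$ is contained in a codimension 1 submanifold, and $\Hr \setminus Z_n \supseteq \Hr \setminus F_n^{-1}(0)$ is open and dense in $\Hr$. The main obstacle is the chain-rule cancellation at $v = 0$: both terms in the chain rule contribute competing multiples of $\cos(2\pi n x)$, and verifying that the residual coefficient $n\pi + \tfrac{1}{8\mu_n(0)}$ is non-zero — rather than zero via some unexpected identity — is precisely where the defining quadratic for $\mu_n(0)$ is used in an essential way.
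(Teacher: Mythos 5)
Your part (ii) matches the paper's: $Z_n$ is the zero set of the real analytic function $\gamma_n^2$ on $\hat W$.

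For part (i), however, there is a genuine gap. You establish $\partial F_n(0) \ne 0$ at the single point $v = 0$ and then argue that, since $F_n$ is a non-constant real analytic function on the connected manifold $\Hr$, its zero set $F_n^{-1}(0)$ ``is a codimension 1 submanifold near any point where $\partial F_n \ne 0$ and globally is contained in a countable union of such.'' This last step does not deliver the stated conclusion. Non-vanishing of $\partial F_n$ at one point is not enough: $Z_n \cap \Hr$ could in principle contain critical points of $F_n$, and near those the level set $F_n^{-1}(0)$ need not be contained in any codimension 1 submanifold. Moreover ``contained in a countable union of codimension 1 submanifolds'' is weaker than ``contained in a codimension 1 submanifold,'' which is what the proposition asserts. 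And in an infinite-dimensional Hilbert manifold you cannot invoke a measure-zero argument to pass from $F_n \not\equiv 0$ to the structure of the zero set. What is actually needed — and what the paper proves — is that the gradient is non-zero at \emph{every} point $v \in Z_n \cap \Hr$; only then does the implicit function theorem yield a neighborhood $U$ of all of $Z_n \cap \Hr$ in $\Hr$ in which the level set $Y_n \cap U$ is a codimension 1 submanifold containing $Z_n \cap \Hr$.

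The paper works with $Y_n = \{v : \grave{m}_1(\mu_n(v),v) = (-1)^n\}$ (the other equation in Lemma~\ref{lem:collapsedGapCharacterization}(iii)) and computes the \emph{$p$-gradient} there. The key point is that on $Z_n \cap \Hr$ the chain-rule contribution $\dot m_1|_{x=1}\,\partial_p\mu_n$ has $\Op^{-1}(\partial_p\mu_n)|_{x=0}=0$ (because $m_2(0,\mu_n,v)=0$), so after applying $\Op^{-1}$ and evaluating at $x=0$ only the direct term $\partial_p\grave{m}_1|_{\mu_n}$ survives and one finds $\Op^{-1}(\partial_p\grave{m}_1(\mu_n))|_{x=0} = \tfrac14(-1)^n \ne 0$ for \emph{every} $v \in Z_n\cap\Hr$, not merely at $v=0$. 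Your choice of $\grave{m}_3$ and the $q$-gradient is not wrong in principle, but to close the argument you would have to carry out an analogous computation at an arbitrary $v \in Z_n\cap\Hr$ (where $M(x,\mu_n(v),v)$ is no longer the explicit rotation $E_{n\pi}(x)$), and verify the non-vanishing there; the explicit coefficient $n\pi + \tfrac{1}{8\mu_n(0)}$ you extract is specific to $v=0$ and does not generalize by itself. So the idea of detouring through Lemma~\ref{lem:collapsedGapCharacterization}(iii) is the right one, but the gradient computation has to be done on the whole set $Z_n\cap\Hr$, and the $p$-gradient of $\grave{m}_1$ is, as it turns out, the ingredient that makes this uniform computation clean.
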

\begin{proof}
\noindent (i) By Lemma \ref{lem:collapsedGapCharacterization}, $Z_n \cap \Hr \subset Y_n := \{ v \in \Hr \, : \,  \grave{m}_1(\mu_n(v), v) =(-1)^n \}$.
Since $\grave m_1( \lambda, v)$ (cf. \cite[Corollary 2.3]{LOperator2018}) and $\mu_n(v)$ (cf. Lemma \ref{lem:realSpectrumOrder}) 
are real analytic on  $ (\R \setminus \{0\}) \times \Hr$, 
it follows that $\grave{m}_1(\mu_n(v), v)$ is real analytic on $\Hr$.
We claim that for an open neighborhood $U$ of $Z_n \cap \Hr$ in \Hr, $Y_n\cap U$ is a codimension 1 submanifold of \Hr. Indeed, by the chain rule one has
for any $v \in Z_n \cap \Hr$
\[
\partial_p( \grave{m}_1(\mu_n(v)), v) = \dot{m}_1\Big|_{x=1}\partial_p \mu_n  + \partial_p \grave{m}_1 \Big|_{\mu_n} \, .
\]
Since $m_2(0, \mu_n(v), v) = 0$, it then follows by \eqref{eq:pfLemkappa} that
$\Op^{-1}(\partial_p\mu_n)\Big|_{x=0} =0$. (Here we use that $P$ is self-adjoint and that any $f \in H^1([0,1], \R)$ with $f(0)=0$ and $f(1)=0$ 
is an element in $H^1(\T, \R)$.)
By \eqref{eq:prodMgradientTwo} we then conclude that for $v\in Z_n\cap \Hr$,
\[
\Op^{-1}(\partial_p \grave{m}_1(\mu_n) )\Big|_{x=0} = \Op^{-1}(\partial_p \grave{m}_1) \Big|_{x=0,\lm =\mu_n} = \frac{1}{4}\grave{m}_1\Big|_{\lm =\mu_n}  = \frac14(-1)^n.
\]
Hence there is an open neighborhood $U$ of  $Z_n \cap \Hr$  in \Hr such that $Y_n\cap U$ is a codimension 1 submanifold of \Hr.
(ii) By Lemma \ref{lem:analyticityandGradientOfTaun}, $\gm^2$ is a real analytic function on $\hat W$, which by item (i) does not vanish identically.
\end{proof}


\section{Product representations}\label{sec:prod}
 In this section we establish product representations of the characteristic functions $\chi_p(\lm)$ and $\chi_D(\lm)$ as well as
 of the function $\dot\Dl(\lm)$, needed in the sequel.
 These product formulas require to record the roots of these functions in the way described in \eqref{eq:kap6.3.1} 
 and \eqref{eq:kap6.3.2} in Section \ref{introduction}. Slightly reformulating these latter formulas, one has for any $v \in \hat W$
 \begin{equation}\label{eq:kap6.3.3}
 \lm_{j,-k}^+ = -\lm_{j,k}^- , \quad \lm_{j,-k}^- = -\lm_{j,k}^+ \, , \qquad  \forall k\geq 1\, , j=1,2
 \end{equation}
 and
 \begin{equation} \label{eq:kap6.3.4}
 \lm_{1,0}^+ = \frac{1}{16\,\lm_{2,0}^-} , \quad \lm_{1,0}^- = \frac{1}{16\,\lm_{2,0}^+}.
 \end{equation}
 According to \cite[Lemma 3.17]{LOperator2018}, one has for $j=1,2$ the asymptotics
 \begin{equation}\label{eq:kap6.3.5}
 \lm_{j,k}^+,\lm_{j,k}^- = k\pi + \ell_k^2 \, ,
 \end{equation}
 and by Lemma \ref{lem:ReciprocityOfdotLmAndPeriodic}, for any $v = (q, p) \in \hat W,$
 \begin{equation}\label{eq:kap6.3.5bis}
 \lm_{2,k}^+(q,p) = \lm_{1,k}^+(-q,p), \quad \lm_{2,k}^-(q,p) = \lm_{1,k}^-(-q,p), \qquad \forall  k\in\Z.
 \end{equation}
 Furthermore if $v$ is in \Hr
 \begin{equation}\label{eq:kap6.3.6}
 \cdots < \lm_{j,k}^- \leq \lm_{j,k}^+ < \lm_{j,k+1}^- \leq \lm_{j,k+1}^+<\cdots \, .
 \end{equation}
 Similarly we define
 \begin{equation}\label{eq:kap6.3.7}
 \mu_{1,k} := \left\{ \begin{array}{ll}
                        \mu_k & k\geq 0 \\ -\mu_{-k} & k\leq -1
                        \end{array}\right.
 \quad
 \mu_{2,k} := \left\{ \begin{array}{ll}
                        \frac{1}{16\mu_{-k}} & k\geq 0 \\ -\frac{1}{16 \mu_{k}} & k\leq -1
                        \end{array}\right.
 \end{equation}
 and
 \begin{equation}\label{eq:kap6.3.8}
 \dot{\lm}_{1,k} := \left\{ \begin{array}{ll}
                        \dot{\lm}_k & k\geq 0 \\ -\dot{\lm}_{-k} & k\leq -1
                        \end{array}\right.
 \quad
 \dot{\lm}_{2,k} := \left\{ \begin{array}{ll}
                        \frac{1}{16\dot{\lm}_{-k}} & k\geq 0 \\ -\frac{1}{16 \dot{\lm}_{k}} & k\leq -1
                        \end{array}.\right.
 \end{equation}
 Before stating the product representation of $\chi_p(\lm),$ $\chi_D(\lm)$, and $\dot \Dl(\lm)$, let us discuss the envisioned type of representations in general terms. 
 According to \cite[Lemma C.1]{nlsbook}, for any given sequences $(\sigma_{1,n})_n$, $(\sigma_{2,n})_n$ in the space
 \[
 \ell^* : = \{ \nu_n = n\pi +\ell_n^2 \, : \, \nu_n \in\C^*,\; n\in\Z \}
 \]
 the infinite products
 \begin{equation}\label{eq:kap6.3.9}
 f_1(\lm) := \prod_{n\in\Z} \frac{\sigma_{1,n}-\lm}{\pi_n} ,\quad f_2(\lm) := \prod_{n\in\Z} \frac{\sigma_{2,n} + \frac{1}{16\lm}}{\pi_n}
 \end{equation}
 define analytic functions on $\C^*$ with roots $\sigma_{1,n}$, $n\in\Z$, and respectively, $(-16\sigma_{2,n})^{-1}$, $n\in\Z$. Note that in addition, $f_1$ is analytic at $0$, $f_2$ is analytic at $\infty$ and
 \begin{equation}\label{eq:kap6.3.10}
 f_1(0) = \prod_{n\in\Z} \frac{\sigma_{1,n}}{\pi_n} , \quad f_2(\infty) = \prod_{n\in\Z} \frac{\sigma_{2,n}}{\pi_n}
 \end{equation}
 are well defined numbers in $\C^*$. Furthermore, by \cite[Lemma C.5]{nlsbook}, one sees that uniformly for $\lm$ in $\partial B_N = \{\lm\in\C \, : \, |\lm| = N\pi + \pi/2 \}$
 \begin{equation}\label{eq:kap6.3.11}
 f_1(\lm) = -(1+o(1))\sin(\lm), \quad f_2(\lm) = f_2(\infty) + O(\frac1N),\quad \text{as}\; N\to\infty
 \end{equation}
 and uniformly for $\lm$ in
 $ \partial B_{-N} = \{\lm\in\C \, : \, |16\lm|= \frac{1}{N\pi+\pi/2} \}$
 \begin{equation}\label{eq:kap6.3.12}
 f_1(\lm) = f_1(0) + O(\frac1N), \quad f_2(\lm) = (1+o(1))\sin(\frac{1}{16\lm}),\quad \text{as}\; N\to\infty.
 \end{equation}

 Let us first consider $\dot\Dl(\lm) \equiv \dot\Dl(\lm,v)$ for $v\in\hat W$. Since by \cite[Lemma 3.15]{LOperator2018}, $\dot\lm_{j,n} = n\pi + \ell_n^2$ for $j=1,2$, the infinite products
 \begin{equation}\label{eq:kap6.3.13}
 \dot\Dl_1(\lm) := \prod_{n\in\Z} \frac{\dot\lm_{1,n} - \lm}{\pi_n}, \qquad \dot\Dl_2(\lm) : = \prod_{n\in\Z} \frac{\dot\lm_{2,n} + \frac{1}{16\lm}}{\pi_n}
 \end{equation}
 are well defined analytic functions on $\C^*$ by the considerations above.

 \begin{lem}\label{lem:dotDlProd}
 On $\hat W$, $\dot\Dl(\lm)$ admits the product representation for $\lm\in\C^*$
 \begin{equation}\label{eq:kap6.3.13bis}
 \dot\Dl(\lm) = c_{\dot\Dl} (1-\frac{\dot\lm_*}{\lm})(1+\frac{\dot\lm_*}{\lm}) \dot\Dl_1(\lm)\dot\Dl_2(\lm),\quad c_{\dot\Dl}:= \frac{1}{\dot\Dl_2(\infty)}.
 \end{equation}
 Furthermore, $\dot\Dl_2(\infty) = -16\dot\lm_*^2\dot\Dl_1(0)$ or in more detail
         \begin{equation}\label{eq:6.12ter}
               -\dot\lm_*^2(16\dot\lm_0)^2\prod_{n\geq 1}(\dot\lm_n16\dot\lm_{-n})^2 = 1.
               \end{equation}
 \end{lem}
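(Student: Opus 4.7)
\noindent\emph{Proof plan.} The natural strategy is to verify that
\[
g(\lm) := \Big(1-\frac{\dot\lm_*}{\lm}\Big)\Big(1+\frac{\dot\lm_*}{\lm}\Big)\dot\Dl_1(\lm)\dot\Dl_2(\lm)
\]
and $\dot\Dl(\lm)$ share the same zero set in $\C^*$ with the same multiplicities, so that the ratio $h(\lm):=\dot\Dl(\lm)/g(\lm)$ extends to a nowhere-vanishing analytic function on $\C^*$, and then to argue that $h$ is constant. The matching of zeros comes from Lemma \ref{lem:dotDlSpecOnHr}, extended from \Hr to $\hat W$ by means of the isolating neighborhoods of Lemma \ref{lem:isolatingNeighbourhoodExtension}: the zeros of $\dot\Dl(\cdot,v)$ in $\C^*$ are precisely the simple points $\pm\dot\lm_n$ $(n\in\Z)$ together with $\pm\dot\lm_*$. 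Unwinding the definitions \eqref{eq:kap6.3.8}, $\dot\Dl_1\cdot\dot\Dl_2$ produces each zero $\pm\dot\lm_n$ exactly once, with $\dot\Dl_1$ supplying $\dot\lm_n$ for $n\geq 0$ and $-\dot\lm_n$ for $n\geq 1$, and $\dot\Dl_2$ supplying the remaining pairs; the quadratic factor $(1-\dot\lm_*/\lm)(1+\dot\lm_*/\lm)$ then contributes $\pm\dot\lm_*$.

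To show that $h$ is constant I would prove that $h$ has finite nonzero limits both as $|\lm|\to\infty$ along $\partial B_N$ and as $|\lm|\to 0$ along $\partial B_{-N}$, for $N\to\infty$; combined with the analyticity of $h$ on $\C^*$ this forces the singularities at $0$ and $\infty$ to be removable, so that $h$ extends to an analytic function on $\widehat{\C}$ and is a constant. On $\partial B_N$ I would combine the uniform asymptotic $\dot\Dl(\lm)=-\sin(\omega(\lm))+o(1)$ available from \cite[Section~3]{LOperator2018} with the product asymptotics \eqref{eq:kap6.3.11}, namely $\dot\Dl_1(\lm)=-(1+o(1))\sin(\lm)$, $\dot\Dl_2(\lm)\to\dot\Dl_2(\infty)$, $(1-\dot\lm_*^2/\lm^2)\to 1$, together with the fact that $|\sin(\omega(\lm))|$ is bounded away from zero on $\partial B_N$ (the contour separates the periodic eigenvalues by construction), to deduce $h(\lm)\to 1/\dot\Dl_2(\infty)$. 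On $\partial B_{-N}$ the symmetric analysis uses \eqref{eq:kap6.3.12} together with the leading behavior $\dot\Dl(\lm)\sim\frac{1}{16\lm^2}\sin(\frac{1}{16\lm})$ near $\lm=0$, which is propagated from $v=0$ to $v\in\hat W$ via the identity $-\frac{1}{16\lm^2}\dot\Dl(\frac{1}{16\lm},q,p)=\dot\Dl(\lm,-q,p)$ of \cite[Lemma 2.14(ii)]{LOperator2018} and the large-argument asymptotic, giving $h(\lm)\to -1/(16\dot\lm_*^2\,\dot\Dl_1(0))$. Equivalently, one can apply the maximum principle to the annuli $B_N\setminus B_{-N}$ and invoke the Liouville-type result in Appendix \ref{Liouville}.

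The two limits computed above provide two expressions for the constant $c_{\dot\Dl}$: the first is $c_{\dot\Dl}=1/\dot\Dl_2(\infty)$, which is the product representation \eqref{eq:kap6.3.13bis}; equating the first with the second yields $\dot\Dl_2(\infty)=-16\,\dot\lm_*^2\,\dot\Dl_1(0)$. Expanding each side using the definitions of $\dot\Dl_1(0)$ and $\dot\Dl_2(\infty)$ as infinite products in $\dot\lm_n$ (cf. \eqref{eq:kap6.3.13}) and pairing up the factors indexed by $n$ and $-n$ then produces the normalization \eqref{eq:6.12ter}. The main technical obstacle I anticipate is establishing the uniform comparability of $\dot\Dl(\lm)$ with $-\sin(\omega(\lm))$ on $\partial B_{\pm N}$ for all $v\in\hat W$ and uniform lower bounds on $|g(\lm)|$ there; both have to be assembled from the Counting Lemma together with the asymptotic machinery of \cite{LOperator2018}.
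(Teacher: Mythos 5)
Your proposal follows essentially the same route as the paper: identify the zero sets, form the ratio of $\dot\Dl$ with the candidate product (the paper works with $G=1/h$, a cosmetic difference), obtain asymptotics on $\partial B_{\pm N}$ from \eqref{eq:kap6.3.11}--\eqref{eq:kap6.3.12} together with \cite[Lemmas 2.14(ii), 2.17]{LOperator2018}, invoke Lemma~\ref{ap:lem:liouvilleCn}, and then equate the two limiting values of the constant to get \eqref{eq:6.12ter}. Just note that "finite nonzero limits along $\partial B_{\pm N}$" does not by itself make the singularities at $0$ and $\infty$ removable (one only controls $h$ on a sequence of circles, not on full punctured neighborhoods), so the step must indeed be carried out via the Liouville-type Lemma~\ref{ap:lem:liouvilleCn} — the alternative you yourself mention and the one the paper uses.
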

 \begin{proof}
 Let $F(\lm) := \dot\Dl_1(\lm)\dot\Dl_2(\lm)$. Then $F(\lm)$ is analytic on $\C^*$. By \eqref{eq:kap6.3.11}, uniformly for $\lm\in\partial B_N$
 \begin{equation}\label{eq:kap6.3.14}
  F(\lm) = -(1+o(1)) \dot\Dl_2(\infty) \sin(\lm) \quad \text{as} \; N\to\infty
 \end{equation}
 and by \eqref{eq:kap6.3.12}, uniformly for $\lm\in\partial B_{-N}$
 \begin{equation}\label{eq:kap6.3.15}
 F(\lm) = (1+o(1)) \dot\Dl_1(0) \sin(\frac{1}{16\lm}) \quad\text{as}\; N\to\infty.
 \end{equation}
 Hence $(1-\frac{\dot\lm_*}{\lm})(1+\frac{\dot\lm_*}{\lm}) F(\lm)$ is a holomorphic function on $\C^*$ 
 with the same roots as $\dot\Dl(\lm)$ and with the property that the quotient $G(\lm) = (1-\frac{\dot\lm_*}{\lm})(1+\frac{\dot\lm_*}{\lm}) F(\lm)/\dot\Dl(\lm)$ 
 defines a holomorphic function on $\C^*$. By the asymptotics of $\dot\Dl$ of \cite[Lemma 2.17]{LOperator2018} and of \eqref{eq:kap6.3.14}, one has uniformly in $\lm\in\partial B_N$
 \begin{equation}\label{eq:kap6.3.16}
 G(\lm) = \dot\Dl_2(\infty) (1+o(1)) \quad \text{as}\; N\to\infty.
 \end{equation}
 To obtain the asymptotics of $G$ for $\lm\in\partial B_{-N}$ as $N\to\infty$ note that by \cite[Lemma 2.14(ii)]{LOperator2018}, 
 $\dot\Dl(\lm,q,p) = \dot\Dl(-\frac1{16\lm} ,-q,p)\cdot\frac{1}{16\lm^2}$ which we rewrite with $\lm=-\frac{1}{16\mu}$ as
 \[
 \dot\Dl(-\frac{1}{16\mu},q,p) = \dot\Dl(\mu,-q,p) 16\mu^2.
 \]
 Since $\lm\in\partial B_{-N}$ iff $\mu=-\frac{1}{16\lm} \in\partial B_N$ it then follow from \eqref{eq:kap6.3.15} that uniformly for $\lm\in\partial B_{-N}$
 \[
 G(\lm) = G(-\frac{1}{16\mu}) = \frac{1-(16\mu\dot\lm_*)^2}{16\mu^2} \dot\Dl_1(0) (1+o(1))\quad\text{as}\; N\to\infty.
 \]
 Hence $G$ is bounded on $\partial B_{-N}$ as $N\to\infty$. The latter estimate together with the one of \eqref{eq:kap6.3.16} 
then allow to apply Lemma \ref{ap:lem:liouvilleCn} (Liouville's theorem) in Appendix \ref{Liouville}, yielding that $G$ is constant. 
One concludes that \eqref{eq:kap6.3.13bis} 
 holds and $\dot\Dl_2(\infty) = -16(\dot\lm_*)^2 \dot\Dl_1(0)$ or, with 
 $\dot\Dl_2(\infty) = \prod_{n\in\Z}\frac{\dot\lm_{2,n}}{\pi_n},$ $\dot\Dl_1(0) = \prod_{n\in\Z} \frac{\dot\lm_{1,n}}{\pi_n},$
 \[
 1 = -16\dot\lm_*^2 \prod_{n\in\Z} \frac{\dot\lm_{1,n}}{\dot\lm_{2,n}}.
 \]
 Taking into account identities of the type \eqref{eq:kap6.3.3}-\eqref{eq:kap6.3.4}, it then follows that
 \[
   -\dot\lm_*^2(16\dot\lm_0)^2\prod_{n\geq 1}(\dot\lm_n16\dot\lm_{-n})^2 = 1
 \]
 as claimed.
 \end{proof}
 \begin{rem}
 Note that for $v=0$, $\dot\lm_* = \ii/4$ and by Lemma \ref{lem:ReciprocityOfdotLmAndPeriodic}(i) $\frac{1}{16\dot\lm_{-n}} =\dot\lm_n$ for any $n\in\Z.$
 \end{rem}
 In the same way one can prove a corresponding product representation of $\chi_D(\lm) \equiv \chi_D(\lm,v)$ for $v\in\hat W$. 
 Since by \cite[Lemma 3.16]{LOperator2018}, $\mu_{j,n} = n\pi + \ell_n^2$ for $j=1,2$, the infinite products
 \[
 \chi_{D,1}(\lm) := \prod_{n\in\Z} \frac{\mu_{1,n}-\lm}{\pi_n},\qquad \chi_{D,2}(\lm) := \prod_{n\in\Z} \frac{\mu_{2,n} +\frac{1}{16\lm}}{\pi_n}
 \]
 are well defined analytic functions on $\C^*$.

 \begin{lem}\label{lem:ProdRepresentationChiD}
 On $\hat W$, $\chi_D$ admits the product representation for $\lm\in\C^*$
 \begin{equation}\label{eq:kap6.3.17}
 \chi_D(\lm,v)= -c_D \chi_{D,1}(\lm)\chi_{D,2}(\lm), \quad c_D= \frac{1}{\chi_{D,2}(\infty)}.
 \end{equation}
  Furthermore $\chi_{D,2}(\infty) =-e^{q(0)} \chi_{D,1}(0)$ or in more detail,
  \[
  e^{q(0)}16\mu_0^2 \prod_{n\geq1} (\mu_n16\mu_{-n})^2 =1
  \]
  and
  \[
  \chi_{D,2}(-\frac{1}{16\lm} ,q,p) = \chi_{D,1}(\lm,-q,p).
  \]
 \end{lem}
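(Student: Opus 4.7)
The plan is to mirror the proof of Lemma \ref{lem:dotDlProd} very closely. Since $\chi_D$ has no ``extra'' roots in $\C^*$ beyond the Dirichlet eigenvalues already encoded by $\mu_{1,k}$ and $-(16\mu_{2,k})^{-1}$, no analogue of the exceptional factor $(1-\dot\lm_*^2/\lm^2)$ is needed. First I would set $F(\lm):=\chi_{D,1}(\lm)\chi_{D,2}(\lm)$ and, using the general asymptotics \eqref{eq:kap6.3.11}--\eqref{eq:kap6.3.12} applicable to the sequences $(\mu_{1,n})_n,(\mu_{2,n})_n \in \adSeq$, record
\[
F(\lm) = -(1+o(1))\,\chi_{D,2}(\infty)\sin(\lm) \ \text{on }\partial B_N, \qquad F(\lm) = (1+o(1))\,\chi_{D,1}(0)\sin\!\tfrac{1}{16\lm}\ \text{on }\partial B_{-N}
\]
as $N\to\infty$.

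Next I would combine the large-$\lm$ asymptotics of the Floquet matrix from \cite[Theorem 2.12(iii), Corollary 2.13(iii)]{LOperator2018}, which give $\chi_D(\lm,v)=\grave{m}_2(\lm,v)=(1+o(1))\sin(\omega(\lm))=(1+o(1))\sin(\lm)$ uniformly on $\partial B_N$, with the symmetry $\grave{m}_2(\tfrac{1}{16\lm},q,p) = -e^{-q(0)}\grave{m}_2(\lm,-q,p)$ from \cite[Proposition 2.1]{LOperator2018} (already invoked in the proof of Lemma \ref{lem:DirichletSpecReciprocity}). Substituting $\mu=\tfrac{1}{16\lm}\in\partial B_N$ and applying the first asymptotic to $\grave{m}_2(\mu,-q,p)$ yields $\chi_D(\lm,v) = -e^{-q(0)}(1+o(1))\sin\!\tfrac{1}{16\lm}$ uniformly on $\partial B_{-N}$. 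Since $F$ and $\chi_D$ share the same zero set in $\C^*$ with identical multiplicities, the quotient $G(\lm):=\chi_D(\lm)/F(\lm)$ extends to a holomorphic function on $\C^*$, and the two families of asymptotics above show it is bounded on both $\partial B_N$ and $\partial B_{-N}$ as $N\to\infty$. Lemma \ref{ap:lem:liouvilleCn} then forces $G$ to be constant. Evaluating this constant via the $\partial B_N$ limit gives $G \equiv -c_D$ with $c_D = 1/\chi_{D,2}(\infty)$, which is the product formula \eqref{eq:kap6.3.17}; equating the $\partial B_N$ and $\partial B_{-N}$ limits then delivers the identity linking $\chi_{D,2}(\infty)$, $\chi_{D,1}(0)$, and $e^{q(0)}$, and expanding the two infinite products via the definitions \eqref{eq:kap6.3.7} produces the stated closed form in $\mu_0$ and $\mu_{\pm n}$.

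Finally, the reciprocity $\chi_{D,2}(-\tfrac{1}{16\lm},q,p) = \chi_{D,1}(\lm,-q,p)$ will reduce to the term-by-term identity $\mu_{2,k}(q,p) = \mu_{1,k}(-q,p)$ for every $k\in\Z$, which follows immediately from the definitions \eqref{eq:kap6.3.7} together with the Dirichlet reciprocity $\tfrac{1}{16\mu_n(q,p)} = \mu_{-n}(-q,p)$ of Lemma \ref{lem:DirichletSpecReciprocity}, handled separately for $k\geq 0$ and $k\leq -1$. The main obstacle is the uniform asymptotic of $\chi_D$ on $\partial B_{-N}$: it rests on the $\grave{m}_2$ symmetry combined with the uniformity of the Floquet-matrix asymptotics in $v$, together with verifying that $F$ stays bounded away from zero on $\partial B_{\pm N}$ so that $G$ is genuinely bounded there. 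Once those boundedness estimates are in hand, Liouville and the matching of constants are routine.
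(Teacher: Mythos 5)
Your proposal is correct and follows essentially the same path as the paper's proof: identical decomposition $F=\chi_{D,1}\chi_{D,2}$, the same generic asymptotics \eqref{eq:kap6.3.11}--\eqref{eq:kap6.3.12} for $F$, the same $\grave{m}_2$ reciprocity to transport asymptotics from $\partial B_N$ to $\partial B_{-N}$, Liouville on $\C^*$, and the term-by-term derivation of the $\chi_{D,1}$/$\chi_{D,2}$ reciprocity from Lemma~\ref{lem:DirichletSpecReciprocity}. The only cosmetic divergence is in how you justify $\chi_D(\lm,v)=(1+o(1))\sin(\lm)$ on $\partial B_N$: you argue directly that $\sin(\omega(\lm))=(1+o(1))\sin(\lm)$ there (true, since $|\omega(\lm)-\lm|=O(1/N)$ and $\cot$ is bounded on $\partial B_N$), whereas the paper cites \cite[Lemma 3.2(iii)]{LOperator2018} to reduce to $v=0$ and then bootstraps from the already-established product representation of $\dot\Dl$ at $v=0$; both routes are valid and land at the same point.
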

 \begin{proof}
 Let $F(\lm) =  \chi_{D,1}(\lm)\chi_{D,2}(\lm)$. Then $F$ is analytic on $\C^*$. Since $F$ and $\chi_D(\lm)$ have the same roots, $G(\lm) := F(\lm)/\chi_D(\lm)$ is well defined and analytic on $\C^*$. Arguing as in the proof of Lemma \ref{lem:dotDlProd} one sees that uniformly for $\lm\in\partial B_N$
 \begin{equation}\label{eq:kap6.3.18}
 F(\lm) = -(1+o(1)) \chi_{D,2}(\infty)\sin(\lm) \quad\text{as}\; N\to\infty
 \end{equation}
 and uniformly for $\lm\in\partial B_{-N}$
 \begin{equation}\label{eq:kap6.3.19}
 F(\lm) = (1+o(1)) \chi_{D,1}(0) \sin(\frac{1}{16\lm}).
 \end{equation}
 By \cite[Lemma 3.2(iii)]{LOperator2018}, uniformly for $\lm\in\partial B_N$,
 \[
 \chi_D(\lm,v) = (1+o(1)) \chi_D(\lm,0)\quad\text{as} \;N\to\infty.
 \]
 Since for $v=0$, $\chi_D(\lm,0)= \sin(\omega(\lm))$ (cf. \cite[Theorem 3.1]{LOperator2018}),  $\dot\Dl(\lm,0)= -(1+\frac{1}{!6\lm^2})\sin(\omega(\lm))$ (cf. \eqref{eq:dotDlzero}), 
 and $\dot\lm_* = \ii/4$, one has
 \[
 \chi_D(\lm,0) = -c_{\dot\Dl} \dot\Dl_1(\lm,0) \dot\Dl_2(\lm,0).
 \]
 Hence by \eqref{eq:kap6.3.14} uniformly for $\lm\in\partial B_N$,
 \begin{equation}\label{eq:kap6.3.20}
 \chi_D(\lm,v) = (1+o(1))\chi_D(\lm,0) =  \sin(\lm)(1+o(1)) \quad\text{as} \; N\to\infty.
 \end{equation}
 This combined with \eqref{eq:kap6.3.18} then shows that uniformly for $\lm\in\partial B_N$
 \[
 G(\lm) = -\chi_{D,2}(\infty) (1+o(1)) \quad \text{as} \; N\to\infty.
 \]
 To obtain the asymptotics of $G$ for $\lm\in\partial B_{-N}$ as $N\to\infty$, note that by \cite[Lemma 3.2]{LOperator2018},
 \[
 \chi_D(\lm,q,p) = e^{-q(0)} \chi_D(-\frac{1}{16\lm} ,-q,p).
 \]
 Arguing as in the proof of Lemma \ref{lem:dotDlProd} one then concludes from \eqref{eq:kap6.3.19} that uniformly for $\lm\in\partial B_{-N}$
 \[
 G(\lm) = -e^{q(0)} \chi_{D,1}(0) (1+o(1)).
 \]
 Hence by Lemma \ref{ap:lem:liouvilleCn} in Appendix \ref{Liouville}, $G(\lm)$ is constant, implying that
 \[
 \chi_D(\lm) = -\frac{1}{\chi_{D,2}(\infty)} \chi_{D,1}(\lm) \chi_{D,2}(\lm)
 \]
 as well as $\chi_{D,2}(\infty) = e^{q(0)} \chi_{D,1}(0)$ or in more detail,
 \[
 1 = e^{q(0)} \frac{\chi_{D,1}(0)}{\chi_{D,2}(\infty)} = e^{q(0)} \prod_{n\in\Z} \frac{\mu_{1,n}}{\mu_{2,n}}.
 \]
 Taking into account the identities of the type \eqref{eq:kap6.3.3}-\eqref{eq:kap6.3.4} one obtains the claimed identity
 \[
 1 = e^{q(0)} 16\mu_0^2 \prod_{n\geq 1} (\mu_n 16\mu_{-n})^2.
 \]
 Since by Lemma \ref{lem:DirichletSpecReciprocity}
 \[
 \mu_{2,n}(q,p) = \mu_{1,n}(-q,p)\quad \forall n\in\Z,
 \]
 one has
 $$
 \chi_{D,2}(-\frac{1}{16\lm} ,q,p) = \prod_{n\in\Z}\frac{\mu_{2,n}(q,p)-\lm}{\pi_n} = \chi_{D,1}(\lm,-q,p).
 $$
 \end{proof}

 Finally, we discuss the product representation of $\chi_p(\lm,v) = \Dl^2(\lm,v)-1$ for $v\in\hat W$. 
 Since by \cite[Lemma 3.17]{LOperator2018}, $\lm_{j,n}^\pm = n\pi + \ell_n^2$ for $j=1,2$, the infinite products
 \begin{equation}\label{definition chi_pj}
 \chi_{p,1}(\lm) : = \prod_{n\in\Z} \frac{(\lm_{1,n}^+ - \lm)(\lm_{1,n}^--\lm)}{\pi_n^2},
 \qquad \chi_{p,2} (\lm) : = \prod_{n\in\Z} \frac{(\lm_{2, n}^+ + \frac{1}{16\,\lm})(\lm_{2,n}^-+\frac{1}{16\,\lm})}{\pi_n^2},
 \end{equation}
 are well defined analytic functions on $\C^*$. Note that
 \begin{equation}\label{eq:kap6.3.22bis}
 \chi_{p,1}(0) = \prod_{n\in\Z} \frac{\lm_{1,n}^+\lm_{1,n}^-}{\pi_n^2} = \lm_0^+\lm_0^- \Big(\prod_{n \geq1} \frac{\lm_n^+\lm_n^-}{\pi_n^2}\Big)^2
 \end{equation}
 and
 \[
 \chi_{p,2}(\infty) = \prod_{n\in\Z} \frac{\lm_{2,n}^+\lm_{2,n}^-}{\pi_n^2} = (16\lm_0^+16\lm_0^-)^{-1} \Big(\prod_{n \geq1} \frac{(16\lm_{-n}^+16\lm_{-n}^-)^{-1}}{\pi_n^2}\Big)^2
 \]

 \begin{lem} \label{lem:ProdRepresentationDl2-1}
 On  $\hat W,$ $\chi_p$ admits  the product representation for $\lm\in\C^*$
 \begin{equation}\label{eq:kap6.3.23}
 \chi_p(\lm) = -c_p\chi_{p,1}(\lm)\chi_{p,2}(\lm),\quad c_p = \frac{1}{\chi_{p,1}(0)}.
 \end{equation}
 Furthermore,
 \begin{equation}\label{eq:kap6.3.24}
 \chi_{p,2}(\infty) = \chi_{p,1}(0)
 \end{equation}
 or in more detail,
 \begin{equation}\label{eq:preiodicEigenvaluesRestriction}
  (16\lm_0^+\lm_0^-)^2\prod_{n\geq 1} (\lm_n^+16\lm_{-n}^+)^2(\lm_n^-16\lm_{-n}^-)^2 = 1,
 \end{equation}
 and
   \begin{align}\begin{split} \label{eq:Freciproc}
   \chi_{p,2}\big(-\frac{1}{16\lm},q,p\big)= \chi_{p,1}(\lm,-q,p).
 \end{split}  \end{align}
 As a consequence, $\chi_{p,1}(0,-q,p) = \chi_{p,1}(0,q,p)$.
 \end{lem}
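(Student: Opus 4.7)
The plan is to follow the pattern already used for Lemmas \ref{lem:dotDlProd} and \ref{lem:ProdRepresentationChiD}: build the candidate product, verify it has the same zeros with the same multiplicities as $\chi_p$, and then apply Liouville's theorem (Lemma \ref{ap:lem:liouvilleCn}) to the quotient. Concretely, I would set
\[
F(\lm) := \chi_{p,1}(\lm)\chi_{p,2}(\lm),\qquad G(\lm) := F(\lm)/\chi_p(\lm).
\]
Using \eqref{eq:kap6.3.3}--\eqref{eq:kap6.3.4}, the zero set of $F$ counted with multiplicities is $\{\lm_k^\pm : k\in\Z\}\cup\{-\lm_k^\pm : k\in\Z\}$, which coincides with the periodic spectrum of $Q(v)$ in $\C^*$ with its algebraic multiplicities (via \eqref{sign Delta} together with the symmetry $\lm\mapsto -\lm$ from \cite[Lemma 2.14]{LOperator2018}). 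Hence $G$ extends to an analytic function on $\C^*$.

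Next I would establish uniform asymptotics of $F$ and $\chi_p$ on the contours $\partial B_N$ and $\partial B_{-N}$. Applying \eqref{eq:kap6.3.11} to each of the two infinite products making up $\chi_{p,1}$, and applying \eqref{eq:kap6.3.11} again to $\chi_{p,2}$, one gets $F(\lm)=\chi_{p,2}(\infty)\sin^2(\lm)\,(1+o(1))$ uniformly on $\partial B_N$ as $N\to\infty$; symmetrically, \eqref{eq:kap6.3.12} yields $F(\lm)=\chi_{p,1}(0)\sin^2(1/(16\lm))\,(1+o(1))$ on $\partial B_{-N}$. For $\chi_p$, the asymptotics of $\Dl$ on $\partial B_N$ (from \cite[Lemma 2.17, Lemma 3.14]{LOperator2018}) combined with $\omega(\lm)=\lm+O(1/\lm)$ give $\chi_p(\lm)=-\sin^2(\lm)\,(1+o(1))$; for $\partial B_{-N}$ I would use the reciprocity $\chi_p(\lm,q,p)=\chi_p(-1/(16\lm),-q,p)$, coming from \cite[Lemma 2.14(ii)]{LOperator2018}, to reduce to the previous case and obtain $\chi_p(\lm)=-\sin^2(1/(16\lm))\,(1+o(1))$. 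Dividing yields
\[
G(\lm)=-\chi_{p,2}(\infty)(1+o(1))\text{ on }\partial B_N,\qquad G(\lm)=-\chi_{p,1}(0)(1+o(1))\text{ on }\partial B_{-N}.
\]
By Liouville's theorem (Lemma \ref{ap:lem:liouvilleCn}) $G$ is a constant, and comparing both limits forces $\chi_{p,2}(\infty)=\chi_{p,1}(0)$ and $\chi_p=-\chi_{p,1}\chi_{p,2}/\chi_{p,1}(0)$, giving \eqref{eq:kap6.3.23}--\eqref{eq:kap6.3.24}.

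Given $\chi_{p,2}(\infty)=\chi_{p,1}(0)$, the explicit formula \eqref{eq:preiodicEigenvaluesRestriction} is obtained by writing out the two infinite products using \eqref{eq:kap6.3.22bis} and the analogous expression for $\chi_{p,2}(\infty)$, collecting factors of $16$ after applying the conversions $\lm_{2,0}^\pm=1/(16\lm_0^\mp)$ and $\lm_{2,\pm k}^\pm=\pm 1/(16\lm_{\mp k}^\mp)$ from \eqref{eq:kap6.3.2}. The reciprocity \eqref{eq:Freciproc} is immediate from \eqref{eq:kap6.3.5bis}: substituting $\lm\mapsto -1/(16\lm)$ inside $\chi_{p,2}(\cdot,q,p)$ turns each factor $\lm_{2,n}^\pm(q,p)+1/(16\lm)\big|_{\lm\mapsto -1/(16\lm)}=\lm_{2,n}^\pm(q,p)-\lm=\lm_{1,n}^\pm(-q,p)-\lm$, which is the corresponding factor of $\chi_{p,1}(\lm,-q,p)$. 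Finally, specializing \eqref{eq:Freciproc} to $\lm=\infty$ gives $\chi_{p,2}(\infty,q,p)=\chi_{p,1}(0,-q,p)$, and combining with \eqref{eq:kap6.3.24} yields $\chi_{p,1}(0,q,p)=\chi_{p,1}(0,-q,p)$.

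The main obstacle I expect is the leading-order asymptotic $\chi_p(\lm)=-\sin^2(\lm)(1+o(1))$ on $\partial B_N$: going from $\Dl(\lm,v)=\cos(\omega(\lm))+o(1)$ to $\Dl^2(\lm,v)-1=-\sin^2(\lm)(1+o(1))$ requires that the error remains controlled after squaring and subtracting $1$, which forces one to keep track of both the multiplicative error in $\Dl-\cos(\omega(\lm))$ and the transition $\sin(\omega(\lm))\to\sin(\lm)$ using $\omega(\lm)-\lm=O(1/|\lm|)$ on $\partial B_N$. Once this is handled, the rest is bookkeeping.
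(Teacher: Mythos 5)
Your proof follows the paper's argument essentially step for step: form $F=\chi_{p,1}\chi_{p,2}$, observe $G=F/\chi_p$ is analytic on $\C^*$, establish matching asymptotics on $\partial B_{\pm N}$ via \eqref{eq:kap6.3.11}--\eqref{eq:kap6.3.12} and the reciprocity $\chi_p(-1/(16\mu),q,p)=\chi_p(\mu,-q,p)$, apply Lemma~\ref{ap:lem:liouvilleCn}, and finally derive \eqref{eq:Freciproc} by matching factors using $\lm_{2,n}^\pm(q,p)=\lm_{1,n}^\pm(-q,p)$. Two minor remarks: the specialization of \eqref{eq:Freciproc} at the end should be at $\lm=0$ (so that $-1/(16\lm)\to\infty$ and one obtains $\chi_{p,2}(\infty,q,p)=\chi_{p,1}(0,-q,p)$), not at $\lm=\infty$; and the obstacle you anticipate concerning $\chi_p(\lm)=-(1+o(1))\sin^2(\lm)$ on $\partial B_N$ does not actually arise, since the paper cites \cite[Lemma 2.17]{LOperator2018} for this asymptotic of $\chi_p$ directly, making a derivation from the asymptotics of $\Dl$ unnecessary.
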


 \begin{proof}
 Let $F(\lm)  =\chi_{p,1}(\lm)\chi_{p,2}(\lm)$. Then $F(\lm)$ is analytic on $\C^*$. Since $F(\lm)$ and $\chi_p(\lm)$ have the same roots, $G(\lm):= F(\lm) /\chi_p(\lm)$ is well defined and analytic on $\C^*$. Arguing as in the proof of Lemma
 \ref{lem:dotDlProd}, one sees that uniformly for $\lm\in\partial B_N$
 \begin{equation}\label{eq:kap6.3.21}
 F(\lm) = (1+o(1)) \chi_{p,2}(\infty) \sin^2(\lm)\quad \text{as} \; N\to\infty
 \end{equation}
 and uniformly for $\lm\in\partial B_{-N}$
 \begin{equation}\label{eq:kap6.3.22}
 F(\lm) = (1+o(1)) \chi_{p,1}(0) \sin^2(\frac{1}{16\lm}) \quad\text{as} \; N\to\infty.
 \end{equation}
 Since by \cite[Lemma 2.17]{LOperator2018}, uniformly for $\lm\in\partial B_N$
 \[
 \chi_p(\lm) = -(1+o(1))\sin^2(\lm)
 \]
 it then follows from \eqref{eq:kap6.3.21} that uniformly for $\lm\in\partial B_N$
 \begin{equation}\label{eq:kap6.3.23bis}
 G(\lm) = -(1+o(1)) \chi_{p,2}(\infty) \quad\text{as}\; N\to\infty.
 \end{equation}
 To obtain the asymptotics of $G$ on $\partial B_{-N}$ note that by \cite[Lemma 2.14(i),(ii)]{LOperator2018}
 \[
 \chi_p(-\frac{1}{16\mu},q,p) = \chi_p(\mu,-q,p).
 \]
 When combined with \eqref{eq:kap6.3.22} and \eqref{eq:kap6.3.23bis} one then concludes that uniformly for $\lm\in\partial B_{-N}$
 \begin{equation*}
 G(\lm) = -(1+o(1)) \chi_{p,1}(0)\quad\text{as} \; N\to\infty.
 \end{equation*}
 Hence by Lemma \ref{ap:lem:liouvilleCn} in Appendix \ref{Liouville}, $G$ is constant and therefore
 \[
 \chi_p(\lm) = -\frac{1}{\chi_{p,2}(\infty)} \chi_{p,1}(\lm) \chi_{p,2}(\lm)
 \]
 and $\chi_{p,2}(\infty) = \chi_{p,1}(0)$ which can be  expressed as
 \[
 1 = (16\lm_0^+\lm_0^-)^2 \prod_{n\geq1} (\lm_n^+ 16\lm_{-n}^+)^2 (\lm_n^-16\lm_{-n}^-)^2.
 \]
 Furthermore, since by Lemma \ref{lem:ReciprocityOfdotLmAndPeriodic},
 $(16\lm_{-n}^\pm(q,p))^{-1} = \lm_n^\mp (-q,p)$ for any $n\in\Z$, implying that $\lm_{2,n}^\pm(q,p) = \lm_{1,n}^\pm(-q,p)$ for any $n\in\Z$, and since $\lm_{j,n}^+ = -\lm_{j,-n}^-$ for any $n\not=0$, $j=1,2$ one sees that
 \[
 \chi_{p,2}(-\frac{1}{16\mu},q,p) = \prod_{n\in\Z} \frac{(\lm_{2,n}^+(q,p) -\mu)(\lm_{2,n}^-(q,p) -\mu)}{\pi_n^2} = \chi_{p,1}(\mu,-q,p),
 \]
 proving \eqref{eq:Freciproc}. For $\mu=0$, one then gets $\chi_{p,1}(0,-q,p) = \chi_{p,2}(\infty,q,p)$ which equals $\chi_{p,1}(0,q,p)$ by \eqref{eq:kap6.3.24}.
 \end{proof}

 To finish this section we prove asymptotics for the sequences $(\tau_n-\dot\lm_n)_n$ and $(\dot\lm_n)_n$ as $n\to\infty$, 
 refining the ones of \cite[Lemma 3.15]{LOperator2018}. Recall that by \eqref{eq:taungmn}, $\tau_n = (\lm_n^++\lm_n^-)/2$. For $n\geq0$, let $\check \Dl_n(\lm)$ be defined by
 \[
 \chi_p(\lm) = \Dl^2(\lm) -1 = \check \Dl_n(\lm) (\lm_n^+-\lm) (\lm_n^--\lm).
 \]
 By Lemma \ref{lem:ProdRepresentationChiD}, $\check \Dl_n(\lm)$ admits the product representation
 \begin{equation}\label{eq:kap6.3.29bis}
\check  \Dl_n(\lm) = -c_p \chi_{p,2}(\lm) \frac{\chi_{p,1}(\lm)}{(\lm_n^+-\lm)(\lm_n^--\lm)}.
 \end{equation}
 \begin{lem}\label{lem:betterAsymptoteDotlm} On $\hat W$, one has for any $n\geq 0$,
  \begin{equation}\label{eq:tauminusdotlmn}
   2(\tau_n-\dot\lm_n) \check \Dl_n(\dot\lm_n)= \left((\tau_n-\dot\lm_n)^2-\gamma_n^2/4\right) \partial_\lambda \check \Dl_n(\dot\lm_n).
 \end{equation}
 Furthermore, locally uniformly on $\hat W$,
 \begin{equation}\label{eq:refinedDotlmAsymptotic}
         \dot\lm_n = \tau_n +\gm^2\ell_n^2 \quad\text{as}\; n\to\infty.
 \end{equation}
 \end{lem}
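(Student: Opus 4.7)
For the identity \eqref{eq:tauminusdotlmn}, I would differentiate the factorization $\chi_p(\lm) = \check\Dl_n(\lm)(\lm_n^+-\lm)(\lm_n^--\lm)$ in $\lm$ to obtain
\[
\dot\chi_p(\lm) = \partial_\lm\check\Dl_n(\lm)\,(\lm_n^+-\lm)(\lm_n^--\lm) + 2\check\Dl_n(\lm)(\lm-\tau_n)\, .
\]
Evaluating at $\dot\lm_n$ and using $\dot\chi_p = 2\Dl\,\dot\Dl$ together with $\dot\Dl(\dot\lm_n) = 0$ makes the left side vanish. Rearranging and invoking the elementary identity $(\lm_n^+-\dot\lm_n)(\lm_n^--\dot\lm_n) = (\tau_n-\dot\lm_n)^2 - \gm^2/4$, which is immediate from $\lm_n^\pm = \tau_n \pm \gm/2$, yields exactly \eqref{eq:tauminusdotlmn}.

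For the asymptotic \eqref{eq:refinedDotlmAsymptotic}, I would read \eqref{eq:tauminusdotlmn} as a quadratic equation for $x_n := \tau_n - \dot\lm_n$,
\[
\beta_n\,x_n^2 - \alpha_n\,x_n - \beta_n\,\gm^2/4 = 0\, ,
\]
with $\alpha_n := 2\check\Dl_n(\dot\lm_n)$ and $\beta_n := \partial_\lm\check\Dl_n(\dot\lm_n)$. Of the two roots, the physical one vanishes at $\gm = 0$; it is uniquely selected because at $v = 0$ we have $\gm(0) = 0$ and $\dot\lm_n(0) = \tau_n(0)$, and continuity along the deformation $t \mapsto tv$, $t\in[0,1]$, preserves this selection (the competing root is $\alpha_n/\beta_n$, which stays bounded away for $|n|$ large). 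Rationalizing gives
\[
\tau_n - \dot\lm_n = \frac{-\beta_n\,\gm^2}{2\alpha_n + 2\sqrt{\alpha_n^2+\beta_n^2\gm^2}}\, ,
\]
so the claim reduces to showing that $(\beta_n/\alpha_n)_n \in \ell^2$, locally uniformly on $\hat W$.

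To control $\alpha_n$ and $\beta_n$, I would combine the product representation \eqref{eq:kap6.3.29bis} with the asymptotic $\chi_p(\lm) = -(1+o(1))\sin^2(\omega(\lm))$ on $\partial B_N$ from \cite[Lemma 2.17]{LOperator2018} and the Counting Lemma, which places $\dot\lm_n$ and $\lm_n^\pm$ all inside $D_n$. Since the only zeros of $\chi_p$ in $D_n$ are $\lm_n^\pm$, the function $\check\Dl_n$ is analytic and nonvanishing on $\overline{D_n}$ and admits uniform upper and lower bounds on $\partial D_n$, locally uniformly on $\hat W$. The maximum principle applied to $1/\check\Dl_n$ yields $|\alpha_n| \ge c > 0$ for $|n|$ large, while Cauchy's estimate on $\partial D_n$ bounds $|\beta_n|$ uniformly; this already suffices to conclude $\dot\lm_n - \tau_n = O(\gm^2)$, which is $\ell^1$ in $n$.

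The hard part is refining $|\beta_n| \le C$ to $\beta_n \in \ell^2$, as the stated form $\gm^2\ell_n^2$ requires. For this I would exploit the logarithmic derivative
\[
\frac{\partial_\lm\check\Dl_n(\lm)}{\check\Dl_n(\lm)} = \frac{\partial_\lm\chi_{p,2}(\lm)}{\chi_{p,2}(\lm)} + \sum_{k\ne n}\left(\frac{1}{\lm-\lm_{1,k}^+} + \frac{1}{\lm-\lm_{1,k}^-}\right)
\]
at $\lm = \dot\lm_n$. The first summand is $O(1/n)$ as $\dot\lm_n \to \infty$ since $\chi_{p,2}$ is analytic at infinity. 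In the sum, the leading $2/((n-k)\pi)$ contributions cancel when paired symmetrically around $k = n$, and the residual perturbations $\lm_{1,k}^\pm - k\pi = \ell_k^2$ from \eqref{eq:kap6.3.5} combine, via the interpolation machinery of Appendix \ref{interpolation}, to produce an $\ell^2$-summable remainder in $n$. Once established, this gives $\dot\lm_n = \tau_n + \gm^2\ell_n^2$ through the rationalized formula above, with the $O(\beta_n^2\gm^2/\alpha_n^2)$ correction in the denominator being subdominant.
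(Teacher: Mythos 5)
Your proof of the identity \eqref{eq:tauminusdotlmn} coincides with the paper's: one differentiates the factorization $\chi_p(\lm) = \check\Dl_n(\lm)(\lm_n^+-\lm)(\lm_n^--\lm)$, evaluates at the critical point $\dot\lm_n$ of $\Dl$ (where the left side vanishes), and substitutes $(\lm_n^+-\dot\lm_n)(\lm_n^--\dot\lm_n)=(\tau_n-\dot\lm_n)^2-\gm^2/4$. That part is fine and essentially verbatim the paper's argument.

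For the asymptotic \eqref{eq:refinedDotlmAsymptotic} your route diverges from the paper's, and the gap is in the last step. The quadratic-formula set-up is technically admissible but adds friction (the coefficients $\alpha_n,\beta_n$ are themselves evaluated at $\dot\lm_n$, so the resulting expression is still implicit), whereas the paper uses the \emph{a priori} estimate $\tau_n-\dot\lm_n=\ell_n^2$ from \cite[Lemma 3.15, Lemma 3.17]{LOperator2018} to rewrite \eqref{eq:tauminusdotlmn} linearly as $(\tau_n-\dot\lm_n)(1+\ell_n^2+(\tau_n-\dot\lm_n)\ell_n^2)=\gm^2\ell_n^2$ and then simply divides by the parenthesis for $n$ large. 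More importantly, the crux of the lemma is proving $\partial_\lm\check\Dl_n(\dot\lm_n)=\ell_n^2$ (and $\check\Dl_n(\dot\lm_n)=-1+\ell_n^2$), and your route to it is both vague and misdirected. You propose to compute the logarithmic derivative $\partial_\lm\check\Dl_n/\check\Dl_n$ and assert that ``the residual perturbations $\lm_{1,k}^\pm - k\pi = \ell_k^2$ combine, via the interpolation machinery of Appendix \ref{interpolation}, to produce an $\ell^2$-summable remainder.'' Appendix \ref{interpolation} is a Lagrange-type interpolation formula on $\C^*$ used to show that a function vanishing at all $\sigma_{j,k}$ with suitable growth is identically zero; it is not designed to produce, and does not produce, the $\ell^2$-in-$n$ summability of a log-derivative remainder. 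The relevant tool is the product-asymptotics result \cite[Lemma C.4]{nlsbook}, and the paper applies it at the level of $\check\Dl_n$ itself rather than its log-derivative: writing $\check\Dl_n = -c_p\chi_{p,2}(\lm)\chi_{p,1}(\lm)/[(\lm_n^+-\lm)(\lm_n^--\lm)]$ (formula \eqref{eq:kap6.3.29bis}), one obtains $\check\Dl_n(\lm) = -(\sin(\lm-n\pi)/(\lm-n\pi))^2 + \ell_n^2$ uniformly on $D_n$; Cauchy's estimate then transfers the $\ell_n^2$ bound to $\partial_\lm\check\Dl_n$, and evaluating the explicit model term $\partial_\lm(\sin(\lm-n\pi)/(\lm-n\pi))^2$ at $\dot\lm_n$ (where $\dot\lm_n-n\pi=\ell_n^2$) shows it is itself $\ell_n^2$. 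That is the step your argument lacks. If you insist on the log-derivative route you would still end up invoking an estimate of the type \cite[Lemma C.4]{nlsbook}, and the pairing/cancellation argument you sketch for $\sum_{k\ne n}\bigl(1/(\lm-\lm_{1,k}^+)+1/(\lm-\lm_{1,k}^-)\bigr)$ needs to be carried out with care (a naive term-wise bound yields $\ell^1$-type control, and the $\ell^2$ claim requires a precise comparison with the constant-gap model), so the paper's avenue is cleaner.
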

 \begin{proof}
 Since $\dot\Dl(\dot\lm_n)$, one has
 \begin{align*}
 0=& \left.\partial_\lm(\Dl^2(\lm)-1)\right|_{\lm= \dot\lm_n}
  =-(\lm_n^+ -2\dot\lm_n + \lm_n^-)\check \Dl_n(\dot\lm_n) + (\lm_n^+-\dot\lm_n)(\lm_n^--\dot\lm_n)\partial_\lambda \check \Dl_n(\dot\lm_n).
 \end{align*}
 The identity
 $(\lm_n^+-\dot\lm_n)(\lm_n^- - \dot\lm_n) = (\tau_n-\dot\lm_n)^2 -\gm^2/4$ then yields \eqref{eq:tauminusdotlmn}.
 To prove the asymptotics \eqref{eq:refinedDotlmAsymptotic}, note that by 
 \cite[Lemma C.4]{nlsbook},
 \[
 \frac{\chi_{p,1}(\lm)}{(\lm_n^+-\lm)(\lm_n^--\lm)} = \pfrac{\sin(\lm-n\pi)}{\lm-n\pi}^2(1+\ell_n^2)
 \]
 uniformly for $\lm\in D_n$. On the other hand,
 \[
 \chi_{p,2}(\lm) = \chi_{p,2}(\infty) + O(\frac1n) = c_p^{-1} + O(\frac1n)
 \]
 uniformly for $\lm\in D_n$. Altogether it then follows that
 \[
 \check \Dl_n(\lm) = -\pfrac{\sin(\lm-n\pi)}{\lm-n\pi}^2 + \ell_n^2
 \]
 uniformly for $\lm\in D_n$ as $n\to\infty$.
 Hence by Cauchy's estimate
 \[
 \partial_\lm\left(\check \Dl_n(\lm) + \left(\frac{\sin(\lm-n\pi)}{\lm-n\pi}\right)^2\right) =\ell_n^2
 \]
 uniformly for $\lm\in D_n$. Using that $\frac{\sin(\lm-n\pi)}{\lm-n\pi} = \int_0^1 \cos(t(\lm-n\pi))\dt$ one sees that
 \[
 \partial_\lm\left(\frac{\sin(\lm-n\pi)}{\lm-n\pi}\right) = -\int_0^1 t\sin(t(\lm-n\pi))\dt.
 \]
 Since $\dot\lm_n = n\pi + \ell_n^2$, this implies that
 \[
 \partial_\lm \left(\frac{\sin(\lm-n\pi)}{\lm-n\pi}\right)^2\Big|_{\lm=\dot\lm_n} =\ell_n^2.
 \]
 Altogether we have shown that
 \begin{equation}\label{eq:proofLemmDotDlAsymptotic6.255}
 \partial_\lambda \check \Dl_n(\dot\lm_n)=\ell_n^2, \qquad \check \Dl_n(\dot\lm_n)= -1+\ell_n^2.
 \end{equation}
 Hence  \eqref{eq:tauminusdotlmn} implies that
 \[
  (\tau_n - \dot\lm_n)(1+\ell_n^2 +(\tau_n-\dot\lm_n)\ell_n^2) = \gm^2 \ell_n^2.
 \]
 Since by \cite[Lemma 3.15, Lemma 3.17]{LOperator2018}, $\tau_n-\dot\lm_n = \ell_n^2$ there exists $N\geq1$ so that $(1+\ell_n^2+(\tau_n-\dot\lm_n)\ell_n^2)\geq\frac12$ for any $n\geq N$ and \eqref{eq:refinedDotlmAsymptotic} follows.
 Going through the arguments of the proof one verifies that \eqref{eq:refinedDotlmAsymptotic} holds locally uniformly on $\hat W$.
 \end{proof}


\section{Standard root and canoncial root}\label{sec:roots}

 In this section we introduce branches of square roots of various complex valued functions, needed in the sequel.
 For $v\in\hat W$, $n\in\Z$, and $j\in\{1,2\}$ set
 \[
 \gm[j,n] : = \lm_{j,n}^+ -\lm_{j,n}^- , \qquad \tau_{j,n} : = (\lm_{j,n}^+ + \lm_{j,n}^-)/2.
 \]
 Here $\hat W$ is the complex neighborhood of \Hr in \Hp, defined by \eqref{eq:DefnOfhatW}.
 Note that since by \eqref{eq:kap6.3.1} -- \eqref{eq:kap6.3.2}, $\lm_{j,-n}^+ = -\lm_{j,n}^-$ and $\lm_{j,-n}^- = -\lm_{j,n}^+$ for any $n\geq1$, one has
 \begin{equation}\label{gamma_j-n and tau_j-n}
 \gm[j,-n] = \gm[j,n], \qquad \tau_{j,-n} = -\tau_{j,n}, \qquad  \forall n\geq 1, \; j=1,2.
 \end{equation}
 Furthermore, recall that by \eqref{G_1m} -- \eqref{G_2m},
 \[
 G_{1,n} = [\lm_{n}^-,\lm_{n}^+],  \quad n\geq0, \quad \qquad \qquad G_{1,-n} = -G_{1,n},  \quad n\geq1,
 \]
 \[
 G_{2,n} = [-\lm_{-n}^+,-\lm_{-n}^-], \quad  n\geq0 , \quad \qquad G_{2,-n} = -G_{2,n},  \quad n\geq1.
 \]
 Denoting by $\refl$ the map
 \[
 \refl :\C^* \to \C^*, \lm\mapsto -\frac{1}{16\lm}
 \]
 one then has for any $v=(q,p)\in\hat W$ and $n\in\Z$
 \[
 \refl(G_{1,n}(q,p) ) = G_{2,n}(-q,p) , \qquad \refl(G_{2,n}(q,p)) = G_{1,n}(-q,p).
 \]
 Similarly, recall that by \eqref{U_1m} -- \eqref{U_2m}
 \[
 U_{1,n} = U_n , \quad  n\geq0, \quad \qquad U_{1,-n} = -U_n , \quad n\geq1,
 \]
 \[
 U_{2,n} = -U_{-n},  \quad n\geq0, \qquad U_{2,-n} = U_{-n} ,  \quad n\geq1,
 \]
 where $U_n$, $n\in\Z$, are isolating neighborhoods for $v\in\hat W$. Without loss of generality we assume that for any $v=(q,p)\in\hat W$, $n\in\Z$
 \[
 \refl(U_{1,n}(q,p)) = U_{2,n}(-q,p), \quad \refl(U_{2,n}(q,p)) = U_{1,n}(-q,p).
 \]
 Finally, we recall that by \eqref{Gamma_1m} -- \eqref{Gamma_2m}
 $$
  \Gamma_{1,m} = \Gamma_m \ , \quad \forall m\geq0\ , \qquad \quad \,\,\,\, \Gamma_{1,-m} = (\Gamma_m)^- \; \, \forall m\geq1, 
 $$
 $$
 \Gamma_{2,m} = (\Gamma_{-m})^-  , \,\,  \forall m \geq0 \ , \qquad \quad \Gamma_{2,-m} = \Gamma_{-m} \; \, \forall m\geq1 \, . \,\,\,
$$
   Customarily,  we denote by $\sqrt[+]{\lm}$ the principal branch of the square root defined for $\lm$ in $\C\setminus (-\infty,0]$ and determined by $\sqrt[+]{1}=1$.
 \begin{defn} \label{def:standardRoots}
 For any $n\in\Z$, $v\in\hat W$, the standard root $w_{1,n}(\lm) \equiv w_{1,n}(\lm,v)$, also referred to as s-root, is defined by
   \begin{align*}
 w_{1,n}(\lm) \equiv   w_{1,n}(\lm,v) :=& \sqrt[s]{(\lm_{1,n}^+-\lm)(\lm_{1,n}^--\lm)}, \quad \lm \not\in \Gap[1,n]\, ,
   \end{align*}
  determined by setting for $\lm\in\C$ with $\left|\gamma_{1,n}^2/4(\tau_{1,n}-\lm)^2\right|<1$,
   \begin{equation}\label{eq:wnsqrtp}
   w_{1,n}(\lm) = (\tau_{1,n}-\lm)\sqrt[+]{1-\gamma_{1,n}^2/4(\tau_{1,n}-\lm)^2}.
   \end{equation}
 \end{defn}
  Note that since by \eqref{gamma_j-n and tau_j-n}, $\gm[1,n]^2/4(\tau_{1,n}+\lm)^2 = \gm[1,-n]^2/4(\tau_{1,-n}-\lm)^2$ for any $n\geq1$, 
  \eqref{eq:wnsqrtp} implies that
   \begin{equation}\label{eq:wnsqrtpbis}
   w_{1,-n}(\lm) = -(\tau_{1,n}+\lm)\sqrt[+]{1-\gamma_{1,n}^2/4(\tau_{1,n}+\lm)^2} = -w_{1,n}(-\lm).
   \end{equation}
  for any $\lm\in\C$ with $|\gm[1,n]^2 /4(\tau_{1,n} + \lm)^2|<1.$

 \begin{lem}\label{lem:standardRootAnalyticSizeinverseintegral}
   For any $v\in \hat W$ and $n \in\Z$, the standard root $w_{1,n}(\lm)$ is analytic on $\C\setminus\Gap[1,n]$. In case $\gm[1,n] =0$, $w_{1,n}(\lm) = \tau_{1,n}-\lm$. 
   Furthermore, for any $v_0\in\hat W$,  $w_{1,n}(\lm,v)$ is analytic on $(\C\setminus U_{1,n})\times V_{v_0}$ where $V_{v_0}$ is 
   the open neighborhood of Lemma \ref{lem:isolatingNeighbourhoodExtension} and $U_n$, $n\in\Z$, are isolating neighborhoods for $V_{v_0}$. 
   Moreover, for any $n,m\in\Z$ with $m\not=n$,
   \[
   c^{-1}|m-n| \leq |w_{1,n}(\lm,v)|\leq c|m-n|, \quad (\lm,v)\in U_{1,m}\times V_{v_0}
   \]
   for some constant $c\geq 1$.
   Finally, for $n, m \in\Z$
   \[
   \frac{1}{2\pi\ii}\int_{\Gm[1,m]} \frac{\dlm}{w_{1,n}(\lm,v)} =-\delta_{mn} \, .
   \]
 \end{lem}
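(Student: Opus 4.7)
\emph{Analyticity in $\lm$ and the degenerate case.}
The first step is to introduce $p_n(\lm) := (\lm^+_{1,n} - \lm)(\lm^-_{1,n} - \lm) = (\tau_{1,n} - \lm)^2 - \gm[1,n]^2/4$, which vanishes precisely at the two endpoints of the segment $G_{1,n}$. Since $G_{1,n}$ is compact and connected, the set $(\C\cup\{\infty\})\setminus G_{1,n}$ is simply connected, so $p_n$ admits an analytic square root on $\C\setminus G_{1,n}$. I would pin down the branch by requiring it to coincide, on the exterior $\{|\tau_{1,n} - \lm| > \gm[1,n]/2\}$, with the expression of formula \eqref{eq:wnsqrtp}: on this region $1 - \gm[1,n]^2/4(\tau_{1,n} - \lm)^2$ lies in the open right half-plane, so the principal root $\sqrt[+]{\,\cdot\,}$ applies and produces a bona fide analytic function. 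The degenerate case $\gm[1,n]=0$ falls out of \eqref{eq:wnsqrtp} at once, giving $w_{1,n}(\lm) = \tau_{1,n}-\lm$.

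\emph{Joint analyticity and size estimates.}
Next, by Lemma \ref{lem:analyticityandGradientOfTaun} (together with the symmetries \eqref{eq:kap6.3.3} and \eqref{eq:kap6.3.5bis}), the functions $\tau_{1,n}$ and $\gm[1,n]^2$ are analytic on $V_{v_0}$, so $p_n(\lm,v)$ is jointly analytic on $\C\times V_{v_0}$. Choosing $R$ so large that $|\tau_{1,n}(v)|+\gm[1,n](v)/2 < R/2$ uniformly in $v\in V_{v_0}$, formula \eqref{eq:wnsqrtp} directly produces a jointly analytic germ on $\{|\lm|>R\}\times V_{v_0}$. Since $(\C\setminus U_{1,n})\cup\{\infty\}$ is simply connected and $p_n(\cdot,v)$ has no zeros outside $U_{1,n}$ for any $v\in V_{v_0}$ (by property (I-1), both roots $\lm^\pm_{1,n}(v)$ lie in $U_{1,n}$), this germ extends by analytic continuation in $\lm$ to a jointly analytic function on $(\C\setminus U_{1,n})\times V_{v_0}$, with joint analyticity preserved under the continuation. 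For the size estimate I would write $|w_{1,n}(\lm,v)|^2 = |\lm^+_{1,n}-\lm|\,|\lm^-_{1,n}-\lm|$; for $\lm\in U_{1,m}$ with $m\neq n$, each factor is bounded below by $\dist(U_{1,m}, U_{1,n})\geq c^{-1}|m-n|$ by (I-2) and (I-3), yielding the lower bound. The upper bound follows by bounding $|\lm^\pm_{1,n}-\lm|$ in terms of the diameters of $U_{1,m}$, $U_{1,n}$ and the distance between their centers, combined with the asymptotics $\lm^\pm_{1,n} = n\pi+\ell_n^2$ of \eqref{eq:kap6.3.5} for $|n|$ large and boundedness in any finite range.

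\emph{The integral identity.}
For $m\neq n$, the isolating property gives $G_{1,n}\cap\overline{U_{1,m}}=\emptyset$, so $1/w_{1,n}(\cdot,v)$ is analytic in an open neighborhood of the closed region bounded by $\Gm[1,m]$, and Cauchy's theorem yields $\int_{\Gm[1,m]} \dlm/w_{1,n}(\lm) = 0$. For $m=n\geq 0$, I would deform $\Gm[1,n]$ within $\C\setminus G_{1,n}$ to a circle of radius $R$ centered at $\tau_{1,n}$ and let $R\to\infty$. From \eqref{eq:wnsqrtp} one has $1/w_{1,n}(\lm) = 1/(\tau_{1,n}-\lm) + O(|\lm|^{-3})$ for large $|\lm|$, and the parametrization $\lm = \tau_{1,n}+Re^{\ii\theta}$ yields $\frac{1}{2\pi\ii}\oint \dlm/(\tau_{1,n}-\lm) = -1$, while the remainder integrates to $O(R^{-2})\to 0$. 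For $n<0$, the symmetry \eqref{eq:wnsqrtpbis}, $w_{1,n}(\lm) = -w_{1,-n}(-\lm)$, combined with the definition $\Gm[1,n] = (\Gm[-n])^-$, reduces the computation to the positive-index case via the substitution $\lm = -\mu$. The main technical point of the argument is maintaining a consistent branch choice jointly in $v$ during the analytic continuation in the second step; once this is handled via the uniformly convergent germ near $\infty$, the size estimates and the integral identity reduce to standard contour manipulations.
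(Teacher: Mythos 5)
The paper's own proof is a single line -- ``can be proved in a straightforward way, using the asymptotics of the periodic eigenvalues of [LOperator2018, Lemma 3.17]'' -- so there is no detailed argument to compare against; your plan is the natural expansion of that remark and is correct. You identify the right square-root structure (an analytic branch of $1-\gm[1,n]^2/4(\tau_{1,n}-\lm)^2$ on the simply connected domain $(\C\cup\{\infty\})\setminus G_{1,n}$, multiplied by $(\tau_{1,n}-\lm)$), you handle joint analyticity by continuing a germ that is manifestly jointly analytic near $\infty$ over the $v$-independent product $(\C\setminus U_{1,n})\times V_{v_0}$ on which $p_n$ never vanishes, you use the factorization $|w_{1,n}(\lm)|^2=|\lm^+_{1,n}-\lm|\,|\lm^-_{1,n}-\lm|$ together with the spacing of the isolating neighborhoods for the two-sided size bound, and you evaluate the integral by the expansion $1/w_{1,n}(\lm)=1/(\tau_{1,n}-\lm)+O(|\lm|^{-3})$ at $\infty$, with the $n<0$ case reduced to $n\geq0$ via \eqref{eq:wnsqrtpbis} and the definition of $(\Gamma_m)^-$.

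Two small slips are worth fixing if you write this out in full. For $v\in\hat W\setminus\Hr$ the quantity $\gm[1,n](v)$ is in general complex, so the exterior region on which \eqref{eq:wnsqrtp} applies and where your branch is pinned down must be $\{|\tau_{1,n}-\lm|>|\gm[1,n]|/2\}$; the same absolute value belongs in your choice of $R$. Also, (I-2) and (I-3) only give the bound $c^{-1}|m-n|\leq\dist(U_{1,m},U_{1,n})\leq c|m-n|$ directly when $m$ and $n$ have the same sign; for $m\geq0$, $n\leq-1$ (and vice versa) one needs the additional observation that (I-2) forces the centre of $U_k$, $k\geq0$, to grow linearly in $k$, and since $U_{1,-k}=-U_k$ the discs near $m\pi$ and $n\pi$ are then separated by a quantity comparable with $m+|n|=|m-n|$. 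Neither issue affects the structure of your argument.
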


 \begin{proof}
 The claimed result can be proved in a straight forward way, using the asymptotics of the periodic eigenvalues of $Q(v)$ of \cite[Lemma 3.17]{LOperator2018}.
 \end{proof}

 Next we define  $\sqrt[c]{\chi_{1,p}(\lm)}$, referred to as canonical root of $\chi_{1,p}(\lm)$, in terms of the s-roots $w_{1,n}(\lm)$.
 Here $\chi_{1,p}(\lambda)$ is the infinite product, introduced in \eqref{definition chi_pj}. To this end we need the following 

 \begin{lem}\label{lem:fn}
 (i) Let $v_0\in \hat W$ be given. For any $v\in V_{v_0}$ and $n\geq 0$,
   \begin{equation}\label{eq:kap6.32ter}
   f_{1,n}(\lm,v) :=  \frac{1}{\pi_n}\prod_{m\not=n } \frac{w_{1,m}(\lm,v)}{\pi_m}
   \end{equation}
   defines a function which is analytic in $\lm$ on $\C\setminus \bigcup_{m\not=n}\Gap[1,m]$ and 
   analytic in $(\lambda, v)$ on $\C \setminus\left(\bigcup_{m\not=n}U_{1,m}\right) \times V_{v_0}$. Moreover, $f_{1,n}$ does not vanish on these domains and in case $\gm[1,m]=0$, it extends analytically in $\lm$ to $\lm=\tau_{1,m}$. \\
 (ii)
   For any $v\in \Hr$ and $n\in\Z$,
   \[
    (-1)^n f_{1,n}(\lm,v) > 0 \, , \quad \forall \lm_{1,n-1}^+ < \lm < \lm_{1,n+1}^-.
   \]
 \end{lem}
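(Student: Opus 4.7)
The plan is to establish (i) by proving uniform convergence of the defining infinite product on compact subsets of the stated domains, and then to deduce (ii) by a direct sign count along the real axis.

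For (i), I would write each factor in the form
\begin{equation*}
\frac{w_{1,m}(\lm,v)}{\pi_m} = \frac{\tau_{1,m}-\lm}{\pi_m}\,\sqrt[+]{1-\frac{\gm[1,m]^2}{4(\tau_{1,m}-\lm)^2}}\,,
\end{equation*}
which makes sense outside $\Gap[1,m]$. The unperturbed product $\prod_{m\neq n}\frac{\tau_{1,m}-\lm}{\pi_m}$ converges locally uniformly on $\C$ because $\tau_{1,m}=m\pi+\ell^2_m$ (Lemma~\ref{lem:analyticityandGradientOfTaun}), being a variant of the standard Weierstrass-type product representation of a $\sin$-function (cf.\ Lemma C.1 and Lemma C.5 of \cite{nlsbook}). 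For the correction factor, use $\gm[1,m]^2=\ell^2_m$ (Lemma~\ref{lem:analyticityandGradientOfTaun}) together with $|\tau_{1,m}-\lm|\gtrsim |m|\pi$ for $\lm$ in a given compact subset of $\C\setminus\bigcup_{m\neq n}U_{1,m}$ and $|m|$ large, to conclude that
\begin{equation*}
\log\sqrt[+]{1-\gm[1,m]^2/4(\tau_{1,m}-\lm)^2}=O(\gm[1,m]^2/m^2)=\ell^1_m\,,
\end{equation*}
so the series of logarithms is absolutely and uniformly summable. This gives convergence and non-vanishing of $f_{1,n}$. The joint analyticity in $(\lm,v)\in(\C\setminus\bigcup_{m\neq n}U_{1,m})\times V_{v_0}$ then follows, since each factor is analytic by Lemma~\ref{lem:standardRootAnalyticSizeinverseintegral}, and the estimates above are uniform in $v\in V_{v_0}$ thanks to the locally uniform asymptotics of $\tau_{1,m}$ and $\gm[1,m]^2$.

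The extension across $\lm=\tau_{1,m}$ when $\gm[1,m]=0$ is immediate: in that case Lemma~\ref{lem:standardRootAnalyticSizeinverseintegral} gives $w_{1,m}(\lm)=\tau_{1,m}-\lm$, which is entire, so the factor contributes a holomorphic function and the collapsed gap disappears from the singular set.

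For (ii), assume $v\in\Hr$, so all $\lm_{1,m}^{\pm}$ are real and $\tau_{1,m}\in\R$. Fix $n\in\Z$ and $\lm\in(\lm_{1,n-1}^+,\lm_{1,n+1}^-)$. For $m\geq n+1$ one has $\lm<\lm_{1,m}^-\leq\tau_{1,m}$, so by \eqref{eq:wnsqrtp} the s-root $w_{1,m}(\lm)>0$; for $m\leq n-1$ one has $\lm>\lm_{1,m}^+\geq\tau_{1,m}$, so $w_{1,m}(\lm)<0$. Combining this with $\pi_m=m\pi$ ($m\neq 0$) and $\pi_0=1$ and using \eqref{eq:wnsqrtpbis} $w_{1,-m}(\lm)=-w_{1,m}(-\lm)$, one checks that $w_{1,m}(\lm)/\pi_m>0$ whenever $m>n$ or $m<0$, while it is negative for $0\leq m<n$ (with the obvious mirrored statement for $n\leq -1$). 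Hence the partial product collects exactly $|n|$ negative factors, and since $1/\pi_n>0$ for $n\geq 1$ (and trivially for $n=0$) the total sign is $(-1)^n$. Absolute convergence of the product (established in part (i)) then yields $(-1)^n f_{1,n}(\lm,v)>0$, as claimed.

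The main obstacle is the convergence step: one must simultaneously handle the Weierstrass-type cancellations in the unperturbed factor $(\tau_{1,m}-\lm)/\pi_m$ and the perturbation from the square-root correction, with estimates that are uniform in $v\in V_{v_0}$. Everything else reduces to routine bookkeeping once Lemma~\ref{lem:analyticityandGradientOfTaun} and Lemma~\ref{lem:standardRootAnalyticSizeinverseintegral} are invoked.
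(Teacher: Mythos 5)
Your proof is correct and follows essentially the same route as the paper's (which is terse there: item (i) is reduced to Lemma~\ref{lem:standardRootAnalyticSizeinverseintegral} and \cite[Lemma~C.1]{nlsbook}, and item (ii) is a direct sign count using \eqref{eq:wnsqrtp}). In (i) you make explicit the splitting $w_{1,m}/\pi_m = \big((\tau_{1,m}-\lm)/\pi_m\big)\cdot\sqrt[+]{1-\gm[1,m]^2/4(\tau_{1,m}-\lm)^2}$ that the paper leaves implicit behind the citation to C.1, and in (ii) your sign bookkeeping is an unfolded version of the paper's grouping of factors into the pairs $m,-m$ and the finite blocks $0\le m<n$ and $-n\le m<0$; both organizations give the same count of $|n|$ sign changes.
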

 \begin{proof}
 Item (i) follows from Lemma \ref{lem:standardRootAnalyticSizeinverseintegral} and \cite[Lemma C.1]{nlsbook}. 
 Concerning (ii) note that for $v\in\Hr$, $\lm_{1,n}^\pm \in\R\setminus\{ 0 \}$ and that by  \eqref{eq:wnsqrtp}, $w_{1,m}(\lm)$ 
 takes values in $\R_{>0}$ for  $\lm<\lm_{1,m}^-$
 and in $\R_{<0}$ for $\lm>\lm_{1,m}^+$. Hence for $\lm_{1,n-1}^+ < \lm < \lm_{1,n+1}^-$, with $n\geq0$
 \[
 \prod_{\substack{m>n} } \frac{w_{1,m}(\lm,v)w_{1,-m}(\lm,v)}{\pi_m \pi_{-m}}>0\, ,
 \qquad \prod_{0\leq m<n } \frac{w_{1,m}(\lm,v)}{\pi_{m}} >0\, , 
 \qquad  (-1)^n\prod_{\substack{-n \leq m<0} } \frac{w_{1,m}(\lm,v)}{\pi_m} >0.
 \]
 In the case where $n\leq -1$, one argues similarly.
 \end{proof}

 By the definition of $\chi_1 \equiv \chi_{p,1}$ and $f_{1,n}$, $n\in\Z$, one has on $\C$
 \[
 \chi_{1}(\lm) = w_n^2(\lm) f_{1,n}^2(\lm).
 \]
 The canonical root $\sqrt[c]{\chi_{1}(\lm)}$ of $\chi_1(\lm)$ is then defined on $\C\setminus \bigcup_{m\in\Z} G_{1,m}(v)$ by
 \begin{equation}\label{eq:kap6.3.36}
 \sqrt[c]{\Fc(\lm)} \equiv \sqrt[c]{\Fc(\lm, v)}  : = w_{1,n}(\lm, v) f_{1,n}(\lm, v).
 \end{equation}
 \begin{lem} \label{lem:rootOfF}
  For any $v \in \hat W$,  $\sqrt[c]{\Fc(\lm,v)}$ is well defined by \eqref{eq:kap6.3.36} on $\C\setminus \bigcup_{m\in\Z} G_{1,m}(v)$ 
   and for any $v_0\in\hat W$ analytic in $(\lm,v)$ on $(\C \setminus\bigcup_{m\in\Z} U_{1,m})\times V_{v_0}$.  Moreover, $\sqrt[c]{\Fc(\lm,v)}$ does not vanish on these domains,
 \begin{align}\label{eq:kap6.3.37}
  \sqrt[c]{\Fc(0,q,p)} = \sqrt[c]{\Fc(0,-q,p)},
 \end{align}
 and for $\lm\in\C\setminus \bigcup_{m\in\Z} U_{1,m}$ (and hence $-\lm\in\C\setminus \bigcup_{m\in\Z} U_{1,m}$)
 \begin{equation}\label{eq:kap6.3.37bis}
 \sqrt[c]{\Fc(-\lm,v)} = \sqrt[c]{\Fc(\lm,v)}\frac{w_{1,0}(-\lm,v)}{w_{1,0}(\lm,v)}.
 \end{equation}
 In case $\gm[1,m](v)=0$ for some $m\in\Z$, $\sqrt[c]{\Fc(\lm,v)}$ extends analytically to $\lm = \tau_{1,m}(v)$.
 \end{lem}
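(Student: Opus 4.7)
The plan is to establish the six claims of Lemma \ref{lem:rootOfF} in sequence, with well-definedness of the formula as the key preliminary step. The definition $\sqrt[c]{\Fc(\lm,v)} := w_{1,n}(\lm,v) f_{1,n}(\lm,v)$ depends on $n$ a priori, so I first show it is independent of the choice of $n\in\Z$. Comparing two consecutive indices $n$ and $n+1$ via the finite truncations
\[
f_{1,n}^{(N)}(\lm) := \frac{1}{\pi_n}\prod_{|m|\leq N,\, m\not=n}\frac{w_{1,m}(\lm)}{\pi_m},
\]
all factors with $|m|\leq N$ and $m\not\in\{n,n+1\}$ cancel in the ratio $f_{1,n}^{(N)}/f_{1,n+1}^{(N)}$, yielding $f_{1,n}^{(N)}/f_{1,n+1}^{(N)} = w_{1,n+1}/w_{1,n}$ for every $N$ and hence $w_{1,n}f_{1,n} = w_{1,n+1}f_{1,n+1}$ after passing to the limit (using the convergence statement in Lemma \ref{lem:fn}(i)). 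Iterating shows that $w_{1,n}f_{1,n}$ is independent of $n$, so $\sqrt[c]{\Fc(\lm,v)}$ is well defined on $\C\setminus\bigcup_{m\in\Z}G_{1,m}(v)$. Analyticity on $(\C\setminus\bigcup_m U_{1,m})\times V_{v_0}$ and non-vanishing on this domain then follow immediately from the corresponding statements of Lemma \ref{lem:standardRootAnalyticSizeinverseintegral} and Lemma \ref{lem:fn}(i): given $\lm$ in this domain, choose any $n$ so that $\lm\not\in U_{1,n}$ and both factors $w_{1,n}(\lm,v)$ and $f_{1,n}(\lm,v)$ are analytic and non-vanishing in a neighborhood.

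For the symmetry \eqref{eq:kap6.3.37bis}, I use $n=0$ in the defining formula to write $\sqrt[c]{\Fc(-\lm,v)} = w_{1,0}(-\lm,v) f_{1,0}(-\lm,v)$ and group the product defining $f_{1,0}(-\lm)$ into pairs $(m,-m)$ with $m\geq 1$. By \eqref{eq:wnsqrtpbis}, i.e.\ $w_{1,-m}(\lm) = -w_{1,m}(-\lm)$, each such pair satisfies
\[
\frac{w_{1,m}(-\lm)w_{1,-m}(-\lm)}{\pi_m\pi_{-m}} = \frac{(-w_{1,-m}(\lm))(-w_{1,m}(\lm))}{\pi_m\pi_{-m}} = \frac{w_{1,m}(\lm)w_{1,-m}(\lm)}{\pi_m\pi_{-m}},
\]
so $f_{1,0}(-\lm) = f_{1,0}(\lm)$ and \eqref{eq:kap6.3.37bis} follows. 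For \eqref{eq:kap6.3.37}, Lemma \ref{lem:ProdRepresentationDl2-1} already delivers the squared identity $\Fc(0,q,p) = \Fc(0,-q,p)$, so $\sqrt[c]{\Fc(0,q,p)}$ and $\sqrt[c]{\Fc(0,-q,p)}$ agree up to a sign. To pin down this sign I deform $v=(q,p)$ to the zero potential along the real-analytic path $t\mapsto t\cdot v$, $t\in[0,1]$: for $v\in\Hr$ the whole path lies in $\Hr\subset\hat W$, so both sides are continuous functions of $t$, the ratio $\sqrt[c]{\Fc(0,tq,tp)}/\sqrt[c]{\Fc(0,-tq,tp)}$ takes values in $\{\pm1\}$ and must be constant, and at $t=0$ both sides equal $\sqrt[c]{\Fc(0,0)}$; for general $v\in\hat W$ the identity then extends by analyticity of both sides in $v$ and the fact that $\hat W$ is left invariant by $\mathcal I_{rec}$.

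Finally, the extension claim for $\gm[1,m](v)=0$ is obtained by choosing $n=m$ in the defining formula. In this degenerate case $w_{1,m}(\lm) = \tau_{1,m}-\lm$ is entire, and because the gaps $G_{1,k}$ are pairwise disjoint, $\tau_{1,m}\notin G_{1,k}$ for $k\not=m$; thus Lemma \ref{lem:fn}(i) ensures that $f_{1,m}(\cdot,v)$ is analytic in a neighborhood of $\tau_{1,m}$, so $\sqrt[c]{\Fc(\lm,v)} = (\tau_{1,m}-\lm)f_{1,m}(\lm,v)$ extends analytically across $\lm=\tau_{1,m}$. The hardest step is expected to be \eqref{eq:kap6.3.37}: the combinatorics by which the involution $q\mapsto -q$ permutes the listing of periodic eigenvalues (Lemma \ref{lem:ReciprocityOfdotLmAndPeriodic}) obstructs a direct term-by-term identification of the infinite products, and forces us to use the deformation/continuity argument above rather than a purely algebraic manipulation of the product representation.
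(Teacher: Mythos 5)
Your proof is correct and follows essentially the same route as the paper: reduce well-definedness, analyticity, non-vanishing, and the degenerate-gap extension to Lemmas \ref{lem:standardRootAnalyticSizeinverseintegral} and \ref{lem:fn}; derive \eqref{eq:kap6.3.37bis} from the relation $w_{1,-n}(\lm)/\pi_{-n}=w_{1,n}(-\lm)/\pi_n$; and obtain \eqref{eq:kap6.3.37} from the squared identity in Lemma \ref{lem:ProdRepresentationDl2-1} together with a continuity (sign-constancy) argument anchored at $v=0$. You have merely spelled out a couple of steps the paper leaves implicit, namely the $n$-independence of $w_{1,n}f_{1,n}$ via telescoping truncations and the precise deformation path used to fix the sign.
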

 \begin{proof}
 In view of Lemma \ref{lem:fn}, it remains to prove identity \eqref{eq:kap6.3.37} and \eqref{eq:kap6.3.37bis}.
 Since by Lemma \ref{lem:ProdRepresentationDl2-1}, $\Fc(0,q,p) = \Fc(0,-q,p)$,  the claimed identity \eqref{eq:kap6.3.37}  is true up to a sign. 
 Since $\Fc(0,v) \not=0$ for any $v\in \hat W$ and since \eqref{eq:kap6.3.37} clearly holds for $v=0$, the identity \eqref{eq:kap6.3.37} follows by continuity. 
 Concerning \eqref{eq:kap6.3.37bis} note that by \eqref{eq:wnsqrtpbis}, for any $n\not=0$, $w_{1,-n}(\lm)/\pi_{-n} = w_{1,n}(-\lm)/\pi_n$. 
 This then implies \eqref{eq:kap6.3.37bis}.
 \end{proof}

 \begin{lem} \label{lem:FcSign}
 On $\Hr$, the following holds for any $n\in\Z$:
 \begin{equivenum}
 \item For any $\lm_{1,n-1}^+<\lm <\lm_{1,n}^-$, $(-1)^n\sqrt[c]{\Fc(\lm)} >0 $.
 \item For any $\lm_{1,n}^-\leq \lm \leq \lm_{1,n}^+$, the limits of $\sqrt[c]{\chi_1(\lm+\ii \epsilon)}$ and $\sqrt[c]{\chi_1(\lm-\ii\epsilon)}$ as $\epsilon \to 0^+ $  exist and
 \[
 \pm\lim_{\substack{\epsilon\to0\\ \epsilon>0}} (-1)^n \Im \sqrt[c]{\chi_1(\lm\mp\ii \epsilon)} \geq0.
 \]
 Extending $\sqrt[c]{\chi_1(\lm)}$ to $G_{1,n}$ from below one has $(-1)^{n+1}\ii \sqrt[c]{\chi_1(\lm)}>0$ for any $\lm_{1,n-1}^-<\lm <\lm_{1,n}^+$.
 \item For any $\lm\in\R$ with $\lm<-\lm_0^+(q,p)$ or $\lm>\lm_{-1}^+(q,p)$
 \begin{equation}\label{eq:kap6.3.39}
  \sqrt[c]{\Fc(-\frac1{16\lm},-q,p)}>0.
 \end{equation}
 \item $\sqrt[c]{\Fc(0)} = \sqrt[+]{\lm_0^+\lm_0^-} \prod_{m\geq 1} \frac{\lm_m^+ \lm_m^-}{\pi_m^2}.$
 \end{equivenum}
 \end{lem}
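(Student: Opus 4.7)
All four parts rest on the factorization $\sqrt[c]{\Fc(\lambda)}=w_{1,n}(\lambda)\,f_{1,n}(\lambda)$ from \eqref{eq:kap6.3.36}, combined with the positivity $(-1)^{n}f_{1,n}(\lambda)>0$ on $(\lambda_{1,n-1}^{+},\lambda_{1,n+1}^{-})$ supplied by Lemma \ref{lem:fn}(ii) and the explicit representation \eqref{eq:wnsqrtp} of the standard root. The main obstacle is the boundary computation of $w_{1,n}$ along $G_{1,n}$ needed in (ii); the other parts then reduce to bookkeeping.

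For (i), if $\lambda\in(\lambda_{1,n-1}^{+},\lambda_{1,n}^{-})$ then $\lambda<\lambda_{1,n}^{-}\le\tau_{1,n}$, so \eqref{eq:wnsqrtp} yields $w_{1,n}(\lambda)>0$; multiplying by Lemma \ref{lem:fn}(ii) gives $(-1)^{n}\sqrt[c]{\Fc(\lambda)}>0$. For (ii), I work in a complex neighborhood of $G_{1,n}$ disjoint from all other gaps, on which $f_{1,n}$ is analytic and along $G_{1,n}$ takes real values with $(-1)^{n}f_{1,n}>0$, so only the boundary values of $w_{1,n}$ matter. Evaluating \eqref{eq:wnsqrtp} along the vertical ray $\lambda=\tau_{1,n}+\ii\eta$, $\eta>0$ (along which the argument of the principal square root stays a positive real number), gives $w_{1,n}(\tau_{1,n}+\ii\eta)=-\ii\eta\,\sqrt[+]{1+\gamma_{1,n}^{2}/(4\eta^{2})}$; together with the asymptotics $w_{1,n}\sim\tau_{1,n}-\lambda$ at infinity, analyticity on $\{\Im\lambda>0\}$, and the identity $w_{1,n}^{2}=(\lambda_{1,n}^{+}-\lambda)(\lambda_{1,n}^{-}-\lambda)$, this forces
\[
w_{1,n}(\lambda+\ii 0^{+})=-\ii\sqrt{(\lambda-\lambda_{1,n}^{-})(\lambda_{1,n}^{+}-\lambda)},\qquad \lambda\in(\lambda_{1,n}^{-},\lambda_{1,n}^{+}).
\]
A Schwarz-type reflection $\sqrt[c]{\Fc(\bar\lambda,v)}=\overline{\sqrt[c]{\Fc(\lambda,v)}}$ on \Hr, valid because $\tau_{1,n},\gamma_{1,n}\in\R$ and both $w_{1,n}$ and $f_{1,n}$ are built from real-valued quantities, yields the corresponding limit from below; substituting into the factorization produces both inequality statements, and the lower extension $\sqrt[c]{\Fc(\lambda)}=\ii\sqrt{(\lambda-\lambda_{1,n}^{-})(\lambda_{1,n}^{+}-\lambda)}\,f_{1,n}(\lambda)$ then yields $(-1)^{n+1}\ii\sqrt[c]{\Fc(\lambda)}=\sqrt{(\lambda-\lambda_{1,n}^{-})(\lambda_{1,n}^{+}-\lambda)}\,(-1)^{n}f_{1,n}(\lambda)>0$.

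For (iii), Lemma \ref{lem:ReciprocityOfdotLmAndPeriodic}(ii) gives $1/(16\lambda_{0}^{+}(q,p))=\lambda_{0}^{-}(-q,p)$ and $1/(16\lambda_{-1}^{+}(q,p))=\lambda_{1}^{-}(-q,p)=-\lambda_{1,-1}^{+}(-q,p)$, so the conditions on $\lambda$ translate into $-1/(16\lambda)\in(\lambda_{1,-1}^{+}(-q,p),\lambda_{1,0}^{-}(-q,p))$, and (iii) is (i) applied to the potential $(-q,p)$ with $n=0$. For (iv), evaluate the factorization at $\lambda=0$ with $n=0$: \eqref{eq:wnsqrtp} gives $w_{1,0}(0)=\sqrt[+]{\lambda_{0}^{+}\lambda_{0}^{-}}$ and, for $m\neq 0$, $w_{1,m}(0)=\sign(\tau_{1,m})\sqrt[+]{\lambda_{1,m}^{+}\lambda_{1,m}^{-}}$. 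Pairing $m\geq 1$ with $-m$ via $\tau_{1,-m}=-\tau_{1,m}$, $\pi_{-m}=-\pi_{m}$, and $\lambda_{1,-m}^{+}\lambda_{1,-m}^{-}=\lambda_{m}^{+}\lambda_{m}^{-}$ collapses $f_{1,0}(0)=\prod_{m\neq 0}w_{1,m}(0)/\pi_{m}$ to $\prod_{m\geq 1}\lambda_{m}^{+}\lambda_{m}^{-}/\pi_{m}^{2}$, yielding the stated formula.
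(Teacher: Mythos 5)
Your proposal is correct and follows essentially the same route as the paper's proof: the factorization $\sqrt[c]{\Fc(\lm)} = w_{1,n}(\lm)\,f_{1,n}(\lm)$ of \eqref{eq:kap6.3.36}, the positivity $(-1)^n f_{1,n}(\lm)>0$ from Lemma~\ref{lem:fn}(ii), the explicit representation \eqref{eq:wnsqrtp} of the standard root for items (i)--(ii), and the reciprocity from Lemma~\ref{lem:ReciprocityOfdotLmAndPeriodic}(ii) to reduce (iii) to (i). The only (minor) divergence is in (iv), where the paper obtains the magnitude from the already-established identity \eqref{eq:kap6.3.22bis} and the sign from item (i), whereas you evaluate the factors $w_{1,m}(0)$ directly and pair $m$ with $-m$; both variants are valid and give the same answer.
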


 \begin{proof}
 Since by Lemma \ref{lem:fn}(ii), $(-1)^n f_n(\lm,q,p) > 0$ for any $\lm_{1,n-1}^+<\lm <\lm_{1,n+1}^-$ item (i) and (ii) follow from \eqref{eq:wnsqrtp}. Concerning item (iii) note that by item(i) with $n = 0$, 
 $\sqrt[c]{\Fc(-\frac1{16\lm},-q,p)}>0$ for $\lm_{1,-1}^+(-q,p) < -\frac1{16\lm} < \lm_{1,0}^-(-q,p)$.  Since $\lm_{1,-1}^+(-q,p) =-\lm_1^-(-q,p)=-(16\lm_{-1}^+(q,p))^{-1}$ and $\lm_{1,0}^-(-q,p) = \lm_0^-(-q,p) = (16\lm_0^+(q,p))^{-1}$ one has
 \[
 \lm_{1,-1}^+(-q,p) < -\frac1{16\lm} < \lm_{1,0}^- (-q,p) \quad \text{ iff }\quad [\lm>\lm_{-1}^+(q,p) \; \text{ or } \lm< - \lm_0^+(q,p)].
 \]
 Item(iv) follows from (i), (iii), and \eqref{eq:kap6.3.22bis}.
 \end{proof}
 For any $v=(q,p)\in\hat W$ and $-\frac1{16\lm} \in\C^*\setminus G_{1,n}(-q,p)$, $n\in\Z$,
 \[
 w_{1,n}(-\frac{1}{16\lm},-q,p) = \sqrt[s]{(\lm_{1,n}^+(-q,p) + \frac1{16\lm})(\lm_{1,n}^-(-q,p) + \frac{1}{16\lm})}.
 \]
 Since $\lm_n^\pm(-q,p) = \frac{1}{16\lm_{-n}^\mp(q,p)}$ for any $n\in\Z$ it follows from the definition of $\lm_{2,n}^\pm$ that
 \[
 w_{1,n}(-\frac1{16\lm},-q,p) = \sqrt[s]{(\lm_{2,n}^+(q,p) +\frac{1}{16\lm})(\lm_{2.n}^-(q,p) + \frac1{16\lm})}.
 \]
 We define
 \[
 w_{2,n}(\lambda) \equiv w_{2,n}(\lm,v) := \sqrt[s]{(\lm_{2,n}^+(v)+\frac{1}{16\lm})(\lm_{2,n}^-(v)+\frac{1}{16\lm})} \quad
 \]
 and the canonical root $\sqrt[c]{\chi_2(\lm)}$ of $\chi_2(\lm) \equiv \chi_{p,2}(\lm)$ by
 \[
 \sqrt[c]{\chi_2(\lm)} \equiv \sqrt[c]{\chi_2(\lm,v)} := \prod_{n\in\Z} \frac{w_{2,n}(\lm,v)}{\pi_n}.
 \]
 For any $n\in\Z$, we have
 \begin{equation}\label{eq:kap6.3.55bis}
 w_{2,n}(\lm,q,p) = w_{1,n}(-\frac{1}{16\lm},-q,p), \qquad \sqrt[c]{\chi_2(\lm,q,p)} = \sqrt[c]{\chi_{1}(-\frac{1}{16\lm},-q,p)}.
 \end{equation}
 The canonical root $\sqrt[c]{\chi_p(\lm)} \equiv \sqrt[c]{\chi_p(\lm,v)}$ is then defined on $\hat W$ by
 \begin{equation}\label{eq:kap6.3.40}
 \sqrt[c]{\chi_p(\lm)} : = \ii \frac{1}{\sqrt[c]{\chi_1(0)}} \sqrt[c]{\chi_1(\lm)} \sqrt[c]{\chi_2(\lm)}.
 \end{equation}
 where $\lm\in\C^*\setminus \bigcup_{m\in\Z} \big( G_{1,m}(v)\cup G_{2,m}(v)\big) = \C^* \setminus \bigcup_{m\in\Z}(G_m(v)\cup-G_m(v)\big)$. Note that
 \begin{equation}\label{eq:kap6.3.40bis}
 G_{2,m}(q,p)  = \set{-\frac1{16\mu}}{\mu\in G_{1,m}(-q,p)}
 \end{equation}
 and that by \eqref{eq:kap6.3.37}
 \begin{equation}\label{eq:kap6.3.401}
 \sqrt[c]{\chi_1(0)} = \sqrt[c]{\chi_2(\infty)}.
 \end{equation}

 \begin{lem}\label{lem:standardRoot}
 \begin{equivenum}
 \item   For any $v_0\in\hat W$, the canonical root $\sqrt[c]{\chi_p(\lm)}$ is an analytic function in $\lm\in\C^*\setminus\bigcup_{m\in\Z} (G_m\cup -G_m)$ and analytic in $(\lm,v)$ on $(\C^*\setminus \bigcup_{m\in\Z} (U_m\cup-U_m))\times V_{v_0}$. In case $\gm[m]=0$ for some $m\in\Z$, $\sqrt[c]{\chi_p(\lm)}$ extends analytically to $\lm = \tau_m$ and $\lm=-\tau_m$.
 \item  The canonical root at the zero potential is
 \begin{equation}\label{eq:kap6.43}
  \sqrt[c]{\chi_p(\lm,0)} = -\ii\sin(\omega(\lm)).
 \end{equation}
 \item For any $v=(q,p)\in\hat W$ and $\lm\in\C^*\setminus\big(\bigcup_{m\in\Z} G_m\cup -G_m\big)$ the following identites hold:
 \begin{equation} \label{eq:kap6.44}
 \sqrt[c]{\chi_p(-\lm,v)} = -\sqrt[c]{\chi_p(\lm,v)} \, , \qquad
 \quad \sqrt[c]{\chi_p(-(16\,\lm)^{-1},-q,p)} = \sqrt[c]{\chi_p(\lm,q,p)}.
 \end{equation}
 \end{equivenum}
 \end{lem}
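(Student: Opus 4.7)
The plan is to deduce the three items from the defining formula \eqref{eq:kap6.3.40} by combining Lemma \ref{lem:rootOfF} for $\sqrt[c]{\chi_1}$ with the identity \eqref{eq:kap6.3.55bis},
$\sqrt[c]{\chi_2(\lm,q,p)} = \sqrt[c]{\chi_1(-(16\lm)^{-1},-q,p)}$,
which transfers those properties to $\sqrt[c]{\chi_2}$.

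For item (i), the first step is to observe that since the involution $\refl \colon \lm \mapsto -(16\lm)^{-1}$ sends $\bigcup_m U_{1,m}(-q,p)$ onto $\bigcup_m U_{2,m}(q,p)$ (by the compatibility of the isolating neighborhoods under $\mathcal I_{rec}$), Lemma \ref{lem:rootOfF} applied at the potential $(-q,p)$ yields that $\sqrt[c]{\chi_2(\lm,v)}$ is analytic and nonvanishing on $(\C^* \setminus \bigcup_m U_{2,m}) \times V_{v_0}$. Multiplying by $\sqrt[c]{\chi_1(\lm,v)}$ and dividing by the nonzero constant $\sqrt[c]{\chi_1(0,v)}$ then gives the claimed analyticity of $\sqrt[c]{\chi_p(\lm,v)}$. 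For the extension statement, when $\gm[m]=0$ the branch points in $\lm$ of the standard root factor whose radicand vanishes at $\lm_m^\pm$ collapse to $\tau_m$, and similarly the factor with radicand vanishing at $-\lm_m^\pm$ extends through $-\tau_m$; in each case the factor remains analytic, and hence so does $\sqrt[c]{\chi_p}$.

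For item (ii), I would exploit that at $v=0$ all gaps collapse, so by item (i), $\sqrt[c]{\chi_p(\cdot,0)}$ is analytic on all of $\C^*$. Squaring \eqref{eq:kap6.3.40}, using that at $v=0$ one has $\chi_{p,j}(\lm,0) = \bigl(\sqrt[c]{\chi_j(\lm,0)}\bigr)^2$ (every factor in \eqref{definition chi_pj} becomes a square), together with the product representation of Lemma \ref{lem:ProdRepresentationDl2-1} and the identity \eqref{eq:kap6.3.401}, one gets
\[
\bigl(\sqrt[c]{\chi_p(\lm,0)}\bigr)^2 \,=\, \chi_p(\lm,0) \,=\, -\sin^2(\omega(\lm)).
\]
Hence $\sqrt[c]{\chi_p(\lm,0)} = \varepsilon\, \ii \sin(\omega(\lm))$ for a sign $\varepsilon \in \{\pm 1\}$ that is constant in $\lm$ (since both sides are analytic on $\C^*$ and the zeros of $\sin(\omega(\lm))$ are simple). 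I would pin down $\varepsilon = -1$ by a direct evaluation as $\lm \to +\infty$ along the positive real axis: each $w_{1,n}(\lm,0) = \tau_{1,n}(0) - \lm$ and $w_{2,n}(\lm,0) = \tau_{2,n}(0) + (16\lm)^{-1}$ is manifestly real with a known sign, and combining these signs with the positive value of $\sqrt[c]{\chi_1(0,0)}$ from Lemma \ref{lem:FcSign}(iv) and the explicit prefactor $\ii$ in \eqref{eq:kap6.3.40} yields $\varepsilon = -1$.

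For item (iii), both identities say that a ratio of analytic functions with square equal to $1$ takes the stated value. The first identity follows by noting that $\Dl(-\lm,v) = \Dl(\lm,v)$ implies $\chi_p(-\lm,v)=\chi_p(\lm,v)$, so $\sqrt[c]{\chi_p(-\lm,v)}/\sqrt[c]{\chi_p(\lm,v)} \in \{\pm 1\}$. Since this ratio is analytic in $(\lm,v)$ on a connected open domain — using that $\hat W$ is connected as a union of balls centered on the path-connected set $\Hr$ — it is globally constant; evaluation at $v=0$ via item (ii) and the odd symmetry $\sin(\omega(-\lm)) = -\sin(\omega(\lm))$ fixes the value at $-1$. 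The second identity is algebraic: \eqref{eq:kap6.3.55bis} gives both $\sqrt[c]{\chi_2(-(16\lm)^{-1},-q,p)} = \sqrt[c]{\chi_1(\lm,q,p)}$ and $\sqrt[c]{\chi_1(-(16\lm)^{-1},-q,p)} = \sqrt[c]{\chi_2(\lm,q,p)}$, and Lemma \ref{lem:rootOfF} gives $\sqrt[c]{\chi_1(0,q,p)} = \sqrt[c]{\chi_1(0,-q,p)}$; substituting into \eqref{eq:kap6.3.40} yields the identity immediately.

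The main obstacle is the sign determination in item (ii), which cascades into the sign in the first identity of item (iii). The cleanest way to handle it is the single reference-point computation described above, where every standard root at $v=0$ is explicitly a linear or Möbius function of $\lm$; the rest of the argument reduces to bookkeeping with the properties already recorded in Lemmas \ref{lem:rootOfF}--\ref{lem:FcSign}.
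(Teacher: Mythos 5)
Your overall approach is sound and in fact very close to the paper's, with two genuine deviations in the details. Here is the comparison.

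For item (i) you argue exactly as the paper does: the analyticity of $\sqrt[c]{\chi_1}$ from Lemma~\ref{lem:rootOfF} is transferred to $\sqrt[c]{\chi_2}$ via \eqref{eq:kap6.3.55bis}, and \eqref{eq:kap6.3.40} then gives the claim; the removability of the collapsed-gap singularities reduces to the individual factors $w_{j,m}$ becoming linear/M\"obius. For item (iii), your treatment of the second identity (the reciprocity $\sqrt[c]{\chi_p(-(16\lambda)^{-1},-q,p)} = \sqrt[c]{\chi_p(\lambda,q,p)}$) is \emph{purely algebraic}: you substitute \eqref{eq:kap6.3.55bis} and \eqref{eq:kap6.3.401} directly into \eqref{eq:kap6.3.40} and obtain the identity without any sign ambiguity. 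The paper instead establishes both symmetries of (iii) ``up to sign,'' verifies them at $v=0$ via $\omega(-\lambda)=-\omega(\lambda)$, $\omega(-(16\lambda)^{-1})=\omega(\lambda)$, and concludes by continuity. Your route for the second identity is cleaner (no deformation argument needed); for the first identity both arguments are essentially the same continuity-to-$v=0$ trick.

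For item (ii) the two arguments genuinely diverge in how the overall sign is fixed. Both establish $\bigl(\sqrt[c]{\chi_p(\lambda,0)}\bigr)^2 = -\sin^2(\omega(\lambda))$ and reduce to determining a global sign. The paper obtains the sign by computing the asymptotics of $\sqrt[c]{\chi_p(\lambda,0)}$ on $\partial B_{\pm N}$ (via \cite[Lemma~C.5]{nlsbook}) and invoking the Liouville argument in Appendix~\ref{Liouville}. You instead observe that the quotient $\sqrt[c]{\chi_p(\lambda,0)}/(\ii\sin(\omega(\lambda)))$ is a constant sign on the connected domain $\C^*$ and fix the sign at a real reference point using the sign table of Lemma~\ref{lem:FcSign}. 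This is a valid alternative, more elementary in that it avoids Liouville, but it does require a careful index bookkeeping that you have only sketched: on the interval $(\lambda_n^+(0),\lambda_{n+1}^-(0))$ one applies Lemma~\ref{lem:FcSign}(i) with shifted index $n+1$ to get $\operatorname{sgn}\sqrt[c]{\chi_1(\lambda,0)}=(-1)^{n+1}$, while $\sqrt[c]{\chi_2(\lambda,0)}=\sqrt[c]{\chi_1(-(16\lambda)^{-1},0)}>0$ for $\lambda$ large positive (since the argument lies in $(\lambda_{1,-1}^+,\lambda_{1,0}^-)$), and $\sin(\omega(\lambda))$ has sign $(-1)^n$ there; together with $\sqrt[c]{\chi_1(0,0)}>0$ and the prefactor $\ii$ this yields $(-1)^{n+1} = \operatorname{sgn}\Im\sqrt[c]{\chi_p(\lambda,0)} = -\operatorname{sgn}\sin(\omega(\lambda))$, hence $\varepsilon=-1$. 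With that made explicit, your version is complete and correct.
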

 \begin{proof}
 Item (i) follows from Lemma  \ref{lem:ProdRepresentationDl2-1} and Lemma \ref{lem:rootOfF}. To prove (ii) note that
 by \cite[Lemma 2.16]{LOperator2018},  $\chi_p(\lm,0)= -\sin^2(\omega(\lm))$. Hence $\sqrt[c]{\chi_p(\lm,0)} = \pm \ii \sin(\omega(\lm))$ and it remains to determine the sign. 
 To this end note that for $v=0$, $\lm_n^+ = \lm_n^-$ and hence $w_{1,n}(\lm) = \tau_{1,n}-\lm$ for any $n\in\Z$. 
 One then concludes from 
 the definition \eqref{eq:kap6.3.36} of the $c$-root of $\chi_1(\lm, 0)$ that 
 $\sqrt[c]{\chi_1(\lm, 0)} = \prod_{n\in\Z}\frac{\tau_{1,n}-\lm}{\pi_n}$, $\lm\in\C$. It implies that $\sqrt[c]{\chi_1(0)} = \prod_{n\in\Z}\frac{\tau_{1,n}}{\pi_n}$. 
 As $\sqrt[c]{\chi_1(-\frac{1}{16\lm}, 0)} = \sqrt[c]{\chi_2(\lm, 0)}$ one has $\sqrt[c]{\chi_2(\infty)} = \sqrt[c]{\chi_1(0)}$ and since $\tau_{1,n} =n\pi +\ell_n^2$ 
 and $\omega(\lm) = \lm-\frac{1}{16\lm}$ it follows from the definition \eqref{eq:kap6.3.40} of $\sqrt[c]{\chi_p(\lm)}$ and \cite[Lemma C.5]{nlsbook} that uniformly for $\lm\in\partial B_N$
 \[
 \sqrt[c]{\chi_p(\lm,0) } = -\ii (1+o(1))\sin(\omega( \lm)) \quad\text{as}\; N\to\infty
 \]
 and uniformly for $\lm\in\partial B_{-N}$
 \[
 \sqrt[c]{\chi_p(\lm,0) } = -\ii(1+o(1))\sin(\omega( \lm)) \quad\text{as}\; N\to\infty.
 \]
 Since $\sqrt[c]{\chi_p(\lm,0)}$ and $\sin(\omega(\lm))$ have the same roots it then follows from Lemma \ref{ap:lem:liouvilleCn} in Appendix \ref{Liouville} that
 \[
 \sqrt[c]{\chi_p(\lm,0)} = -\ii \sin(\omega(\lm)).
 \]

 \noindent (iii) By the identities of $\Dl$ stated in \cite[Lemma2.14]{LOperator2018}, the claimed symmetries hold up to a sign. Furthermore, since $\omega(-\lm) = -\omega(\lm)$ and $\omega(-\frac{1}{16\,\lm}) = \omega(\lm)$, they hold for $v=0$.
 Hence by continuity they hold on $\hat W$.
 \end{proof}

 On \Hr, the sign table for $\sqrt[c]{\chi_p(\lm)}$ can be computed by using Lemma \ref{lem:FcSign}.
 \begin{lem}\label{lem:DlSign}
 On $\Hr$, for any  $n\in\Z$, the following holds:\\
 (i) For any $\lm_{1,n-1}^+< \lm <\lm_{1,n}^-$,
 \begin{equation}\label{eq:kap6.3.42bis}
  (-1)^{n}\Im\sqrt[c]{\chi_p(\lm)} >0.
 \end{equation}
 Similarly, for any $\lm_{2,n-1}^+<-\frac{1}{16\lm} <\lm_{2,n}^-$,
 \begin{equation}\label{eq:kap6.3.42ter}
  (-1)^{n}\Im\sqrt[c]{\chi_p(\lm)} >0.
 \end{equation}
 (ii)
 For any $\lm_{1,n}^-\leq \lm  \leq \lm_{1,n}^+$, the limits of $\sqrt[c]{\chi_p(\lm+\ii \epsilon)}$ and $\sqrt[c]{\chi_p(\lm-\ii \epsilon)}$ as $\epsilon \to 0^+$ exist and
 \[
 \pm \lim_{\substack{\epsilon\to0 \\\epsilon>0}} (-1)^n \Re \sqrt[c]{\chi_p(\lm\pm \ii \epsilon)} \geq 0.
 \]
 Extending $\sqrt[c]{\chi_p(\lm)}$ continuously to $G_{1,n}$ from below one has
 \begin{equation}\label{eq:kap6.3.34}
 (-1)^{n+1} \sqrt[c]{\chi_p(\lm)}>0,\quad  \forall \;\lm_{1,n}^- < \lm <\lm_{1,n}^+.
 \end{equation}
 Similarly, for any $\lm_{2,n}^-<-\frac{1}{16\lm} <\lm_{2,n}^+$, the limit $\lim_{\substack{\epsilon\to0\\\epsilon>0}} \sqrt[c]{\chi_p(\lm-\ii\epsilon})$ 
 exists and if one extends $\sqrt[c]{\chi_p(\lm)}$ to $G_{2,n}$ from below, then
 \begin{equation}\label{eq:kap6.34bis}
 (-1)^{n+1} \sqrt[c]{\chi_p(\lm)} >0, \quad \forall \lm_{2,n}^- < -\frac{1}{16\lm} < \lm_{2,n}^+.
 \end{equation}
 \end{lem}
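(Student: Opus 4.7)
The plan is to decompose $\sqrt[c]{\chi_p(\lm)}$ using the factorization \eqref{eq:kap6.3.40},
\[
\sqrt[c]{\chi_p(\lm)} \;=\; \frac{\ii}{\sqrt[c]{\chi_1(0)}}\,\sqrt[c]{\chi_1(\lm)}\,\sqrt[c]{\chi_2(\lm)},
\]
and to read off the sign of each factor using Lemma \ref{lem:FcSign}, together with its counterpart under the involution $(q,p)\mapsto(-q,p)$. The three inputs I will need are: the identity $\sqrt[c]{\chi_2(\lm,q,p)} = \sqrt[c]{\chi_1(-(16\lm)^{-1},-q,p)}$ from \eqref{eq:kap6.3.55bis}; the symmetry $\sqrt[c]{\chi_p(\lm,q,p)} = \sqrt[c]{\chi_p(-(16\lm)^{-1},-q,p)}$ from Lemma \ref{lem:standardRoot}(iii); and the positivity $\sqrt[c]{\chi_1(0)}>0$ from Lemma \ref{lem:FcSign}(iv).

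For the first assertion of (i), I fix $\lm\in(\lm_{1,n-1}^+,\lm_{1,n}^-)$ in the domain of $\sqrt[c]{\chi_p}$. Lemma \ref{lem:FcSign}(i) immediately gives $(-1)^n\sqrt[c]{\chi_1(\lm,v)}>0$. To pin down the sign of $\sqrt[c]{\chi_2(\lm,v)} = \sqrt[c]{\chi_1(\mu,-q,p)}$ with $\mu=-(16\lm)^{-1}$, I will identify the $G_{1,k}(-v)$-interval containing $\mu$ and apply Lemma \ref{lem:FcSign}(i) to the potential $(-q,p)$; crucially, $\sqrt[c]{\chi_1}$ has branch cuts only on $\{G_{1,m}\}$, so it is continuous across every $G_{2,m}(-v)$, which eliminates a great deal of bookkeeping. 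Multiplication then produces $\sqrt[c]{\chi_p(\lm)}=\ii(-1)^n\cdot(\text{positive real})$, whence $(-1)^n\Im\sqrt[c]{\chi_p(\lm)}>0$. The second assertion of (i) follows by transport under the symmetry $\lm\leftrightarrow -(16\lm)^{-1}$, $(q,p)\leftrightarrow(-q,p)$: the identity $\lm_{2,k}^\pm(q,p)=\lm_{1,k}^\pm(-q,p)$, which is an immediate consequence of Lemma \ref{lem:ReciprocityOfdotLmAndPeriodic}(ii), converts the hypothesis $\lm_{2,n-1}^+<\mu<\lm_{2,n}^-$ into the hypothesis of the first assertion for $(-q,p)$.

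For (ii), I fix $\lm\in G_{1,n}$ and extend $\sqrt[c]{\chi_1(\lm)}$ from below: Lemma \ref{lem:FcSign}(ii) asserts that it is purely imaginary, $\sqrt[c]{\chi_1(\lm)}=\ii b$ with $(-1)^nb>0$. The strict ordering \eqref{real ev} forces $G_{1,n}\subset\{\lm>\lm_{-1}^+(v)\}\cup\{\lm<-\lm_0^+(v)\}$, so Lemma \ref{lem:FcSign}(iii) yields $\sqrt[c]{\chi_2(\lm,v)}>0$. Consequently
\[
\sqrt[c]{\chi_p(\lm)} \;=\; \frac{\ii}{\sqrt[c]{\chi_1(0)}}\cdot \ii b\cdot \sqrt[c]{\chi_2(\lm)} \;=\; -\frac{b\,\sqrt[c]{\chi_2(\lm)}}{\sqrt[c]{\chi_1(0)}}\;\in\;\R,
\]
whose sign is $(-1)^{n+1}$ precisely because $(-1)^nb>0$. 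The asymmetry of the approach from above versus below then comes for free from the branch-cut jump of $\sqrt[c]{\chi_1}$ across $G_{1,n}$, noting that $\sqrt[c]{\chi_2}$ is continuous across $G_{1,n}$. The statement for $\lm\in G_{2,n}$ reduces to the $G_{1,n}$ case at $(-q,p)$ via Lemma \ref{lem:standardRoot}(iii), once one checks that approaching $G_{2,n}(v)$ from below in $\lm$ corresponds to approaching $G_{1,n}(-v)$ from below in $\mu=-(16\lm)^{-1}$, a short computation of $\Im\mu$.

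The main obstacle is the sign determination of $\sqrt[c]{\chi_2(\lm,v)}$: since its definition references the involuted variable $\mu=-(16\lm)^{-1}$ and the potential $(-q,p)$, identifying the correct $G_1(-v)$-interval containing $\mu$ requires some case analysis. The critical simplification that makes this manageable is that $\sqrt[c]{\chi_1}$ carries no branch cut on any $G_{2,m}$, so its sign changes only when $\mu$ crosses a $G_{1,k}(-v)$; this is what converts a potentially delicate bookkeeping problem into a one-step application of Lemma \ref{lem:FcSign}.
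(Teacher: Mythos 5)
Your proof is correct and follows essentially the same route as the paper's: decompose via \eqref{eq:kap6.3.40}, read off the sign of each factor from Lemma \ref{lem:FcSign}, and transport the $G_2$ assertions back to the $G_1$ case through the reciprocity $\sqrt[c]{\chi_p(-(16\lm)^{-1},-q,p)}=\sqrt[c]{\chi_p(\lm,q,p)}$ together with $\lm_{2,k}^\pm(q,p)=\lm_{1,k}^\pm(-q,p)$. The only place where you are less tight than the paper is in the first assertion of (i): you announce that you will ``identify the $G_{1,k}(-v)$-interval containing $\mu$ and apply Lemma \ref{lem:FcSign}(i)'' but do not carry out the identification, yet your concluding display $\sqrt[c]{\chi_p(\lm)}=\ii(-1)^n\cdot(\text{positive real})$ tacitly requires that this identification lands $\mu$ in the $k=0$ interval (equivalently, $\sqrt[c]{\chi_1(\mu,-q,p)}>0$). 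That verification is exactly the content of Lemma \ref{lem:FcSign}(iii), which the paper cites directly here and which you yourself cite for part (ii); citing it also in part (i) closes the small gap and makes the argument uniform. With that adjustment your proof coincides with the paper's.
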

 \begin{figure} \label{fig:signOfSqrtFc}
  \centering
    \includegraphics[width=\textwidth]{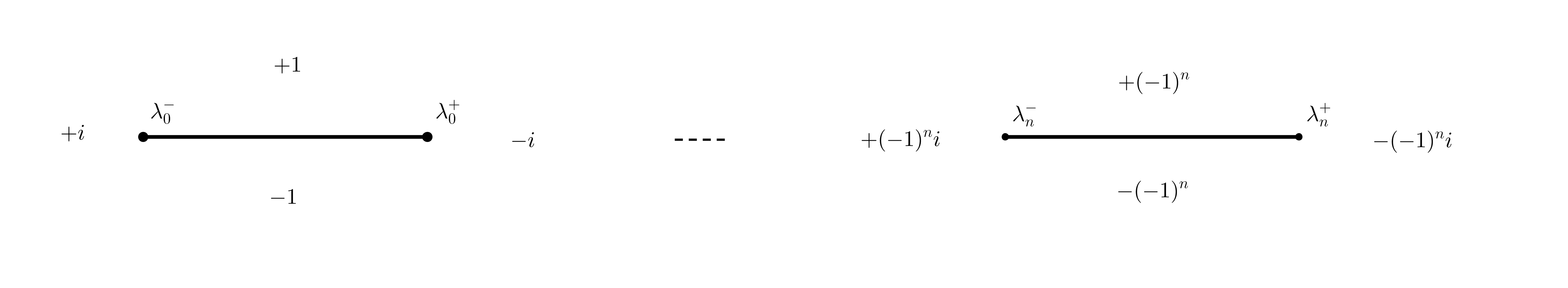}
         \caption{Illustration of the sign of $\sqrt[c]{\Dl^2-1}$}
 \end{figure}

 \begin{proof}
Let $v = (q, p) \in \Hr$ and $n \in \Z$.
(i)  For any $\lm_{1,n-1}^+<\lm <\lm_{1,n}^-$ one has according to Lemma \ref{lem:FcSign},
 \[
  (-1)^n \sqrt[c]{\chi_1(\lm, v)} >0 \, , \qquad \sqrt[c]{\chi_1(0, v)}>0 \, , \qquad \sqrt[c]{\chi_1(-(16\lm)^{-1} ,-q,p)}>0.
 \]
 By the definition of $\sqrt[c]{\chi_p(\lm)}$ it then follows that $(-1)^n\Im \sqrt[c]{\chi_p(\lm)}>0$. Next consider $\lm_{2,n-1}^+<-\frac{1}{16\lm} <\lm_{2,n}^-$. 
 Since $\lm_{2,n-1}^+(q,p) = \lm_{1,n-1}^+(-q,p)$ and $\lm_{2,n}^-(q,p) = \lm_{1,n}^-(-q,p)$ one has $\lm_{1,n-1}^+(-q,p) < -\frac{1}{16\lm} < \lm_{1,n}^-(-q,p)$ 
 and thus by \eqref{eq:kap6.3.42bis}, $(-1)^n \Im \sqrt[c]{\chi_p(-(16\lm)^{-1},-q,p)}>0$. 
 The claimed inequality then follows 
 since by Lemma \ref{lem:standardRoot}(iii), $\sqrt[c]{\chi_p(-(16\lm)^{-1},-q,p)} = \sqrt[c]{\chi_p(\lm,q,p)}$,

 \noindent (ii) The claimed results follow from Lemma \ref{lem:FcSign}. 
 Indeed, for any $\lm_{1,n}^-<\lm <\lm_{1,n}^+$, one has by Lemma~\ref{lem:FcSign} that
 $$
(-1)^{n+1}\ii \sqrt[c]{\chi_1(\lm, v)} >0  \ ,  \qquad \sqrt[c]{\chi_1(-(16\lm)^{-1},-q,p)}>0 \ , \qquad  \sqrt[c]{\chi_1(0, v)}>0 \, . 
 $$
 By the definition of $\sqrt[c]{\chi_p(\lm)}$ it then follows that when $\sqrt[c]{\chi_p(\lm)}$ is extended continuously to $G_{1,n}$ from below 
 then $(-1)^{n+1}\sqrt[c]{\chi_p(\lm)}>0$ for any $\lm_{1,n}^-<\lm <\lm_{1,n}^+$. \\
 It remains to consider the claimed results for $\lm_{2,n}^- <-\frac{1}{16\lm} <\lm_{2,n}^+$, i.e., for
 $-\lm_{-n}^+ < \lm<-\lm_{-n}^-$  in the case $n\geq0$ and for $\lm_{-n}^- <\lm<\lm_{-n}^+$ in the case $n \leq -1$.
 Use again that $\lm_{2,n}^\pm(q,p) = \lm_{1,n}^\pm(-q,p)$ to conclude by the above that 
 $(-1)^{n+1}\sqrt[c]{\chi_p(-(16\lm)^{-1},-q,p)}>0$. 
 Since $\sqrt[c]{\chi_p(-(16\lm)^{-1},-q,p)} = \sqrt[c]{\chi_p(\lm,q,p)}$ it then follows 
 that $(-1)^n\sqrt[c]{\chi_p(\lm,q,p)}>0$ for any $\lm_{2,n}^- < -\frac{1}{16\lm} < \lm_{2,n}^+.$
 \end{proof}
 We finish this section with various asymptotic estimates.
For any $a=(a_n)_n$ define 
$$
\norm{a}_{\ell^\infty} := \sup_{n\in\Z}|a_n| \, .
$$

  \begin{lem} \label{lem:sindurchsqrtcEstimate}
  \begin{equivenum}
  \item
   Locally uniformly in $v\in \hat W$ and uniformly in $\lm\in U_{1,m}$, 
  \begin{equation}\label{eq:kap6.3.44}
    \frac{w_{1,m}(\lm)}{\sqrt[c]{\chi_1(\lm)}} \frac{- \sin(\omega(\lm))}{\pi_m - \omega(\lm)} ,\,
   \frac{w_{1,m}(\lm)}{\sqrt[c]{\chi_p(\lm)}} \frac{-\ii \sin(\omega(\lm))}{\pi_m - \omega(\lm)} \, = 1+\ell_m^2 \quad \text{ as }\; |m|\to\infty \, .
  \end{equation}
  \item
  Locally uniformly in $v\in \hat W$ and uniformly in  $\lm \in \partial B_N$,
  \begin{equation}\label{eq:kap6.3.45}
  \frac{- \sin(\omega(\lm))}{\sqrt[c]{\chi_1(\lm)}} , \,
  \frac{-\ii \sin(\omega(\lm))}{\sqrt[c]{\chi_p(\lm)}} \, = 1+ o(1) \quad \text{ as }\; N\to\infty \, .
  \end{equation}
 \item Locally uniformly in $v\in \hat W$ and uniformly in $\lm\in U_{2,m}$,
 \begin{equation}\label{eq:kap6.3.46}
 \frac{w_{2,m}(\lm)}{\sqrt[c]{\chi_2(\lm)}} \frac{-\sin(\omega(\lm))}{\pi_m - \omega(\lm)} ,\,
 \frac{w_{2,m}(\lm)}{\sqrt[c]{\chi_p(\lm)}} \frac{-\ii \sin(\omega(\lm))}{\pi_m - \omega(\lm)} \, = 1+\ell_m^2 \quad \text{ as } \; |m|\to\infty \, .
 \end{equation}
 \item Locally uniformly in $v\in \hat W$ and uniformly in $\lm\in \partial B_{-N}$,
 \begin{equation}\label{eq:kap6.3.47}
 \frac{- \sin(\omega(\lm))}{\sqrt[c]{\chi_2(\lm)}} ,\,
 \frac{-\ii \sin(\omega(\lm))}{\sqrt[c]{\chi_p(\lm)}} \, = 1+ o(1) \quad \text{ as } \; N\to\infty \, .
 \end{equation}
 \end{equivenum}
  \end{lem}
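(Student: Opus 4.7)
The plan is to compare each canonical root against the Weierstrass product
\[
-\sin(z) = \prod_{n\in\Z}\frac{\pi_n - z}{\pi_n},
\]
applied with $z = \omega(\lambda)$. This substitution is natural because the roots $\lm_{j,n}^\pm$ defining $\chi_j$ are $\ell^2$-perturbations of $\pi_n$ and $\omega(\lambda) - \lambda = -1/(16\lambda)$ is small on $U_{1,m}$ and on $\partial B_{\pm N}$ for large $|m|$ and $N$. All four items split into a $\chi_1$-part and a $\chi_2$-part; the $\chi_p$ versions then follow from the definition \eqref{eq:kap6.3.40} together with $\sqrt[c]{\chi_2(\lambda)} = \sqrt[c]{\chi_1(0)}(1+o(1))$, and items (iii), (iv) reduce to (i), (ii) via the symmetry \eqref{eq:kap6.3.55bis}.

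For (i), I would substitute $\sqrt[c]{\chi_1(\lambda)} = w_{1,m}(\lambda) f_{1,m}(\lambda)$ from \eqref{eq:kap6.3.36}, which cancels $w_{1,m}$ and, together with the sine product above, reduces the first claim to showing that
\[
\prod_{n\neq m}\frac{\pi_n - \omega(\lambda)}{w_{1,n}(\lambda)} = 1 + \ell^2_m
\]
uniformly for $\lambda \in U_{1,m}$. To prove this, I would use \eqref{eq:wnsqrtp} to write each factor as
\[
\frac{\pi_n - \omega(\lambda)}{\tau_{1,n} - \lambda}\Bigl(1 - \gamma_{1,n}^2/4(\tau_{1,n}-\lambda)^2\Bigr)^{-1/2},
\]
and combine $\tau_{1,n} = n\pi + \ell^2_n$ with $(\gamma_{1,n}^2)_n \in \ell^1$ (both from Lemma \ref{lem:analyticityandGradientOfTaun}), $\omega(\lambda) - \lambda = O(1/|m|)$ on $U_{1,m}$, and the separation estimate $|w_{1,n}(\lambda)| \geq c^{-1}|n-m|$ on $U_{1,m}$ from Lemma \ref{lem:standardRootAnalyticSizeinverseintegral}. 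Summing the logarithms, the cross-terms and tails are each $\ell^2_m$ by the interpolation estimates of Appendix \ref{interpolation}, locally uniformly in $v\in\hat W$. The $\chi_p$ version then follows from \eqref{eq:kap6.3.40}: one computes
\[
\frac{w_{1,m}(\lambda)}{\sqrt[c]{\chi_p(\lambda)}} \cdot \frac{-\ii\sin(\omega(\lambda))}{\pi_m - \omega(\lambda)} = \frac{\sqrt[c]{\chi_1(0)}}{\sqrt[c]{\chi_2(\lambda)}}\bigl(1 + \ell^2_m\bigr),
\]
and the prefactor is $1 + O(1/|m|)$ on $U_{1,m}$ by the product formula for $\chi_2$, the identity \eqref{eq:kap6.3.401}, and $1/(16\lambda) = O(1/|m|)$.

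Item (ii) uses the global asymptotic $\prod_n(\sigma_n - \lambda)/\pi_n = -(1+o(1))\sin(\lambda)$ on $\partial B_N$ from \eqref{eq:kap6.3.11} applied to each of the two sequences $(\lm_{1,n}^\pm)$, giving $\sqrt[c]{\chi_1(\lambda)} = -(1+o(1))\sin(\lambda)$; one then compares $\sin(\lambda)$ and $\sin(\omega(\lambda))$ via $\omega(\lambda) - \lambda = O(1/N)$ and the standard lower bound $|\sin(\lambda)| \gtrsim e^{|\Im\lambda|}$, valid on $\partial B_N$. Combined with $\sqrt[c]{\chi_2(\lambda)} = \sqrt[c]{\chi_1(0)} + o(1)$ on $\partial B_N$ (which comes from \eqref{eq:kap6.3.11} for $\chi_2$ and \eqref{eq:kap6.3.401}), the $\chi_p$ estimate follows from \eqref{eq:kap6.3.40}. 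For (iii) and (iv), the substitution $\mu = -1/(16\lambda)$ maps $U_{2,m}(q,p)$ to $U_{1,m}(-q,p)$ and $\partial B_{-N}$ to $\partial B_N$, and it transforms the ratios in (iii), (iv) into those of (i), (ii) at $(-q,p)$ by \eqref{eq:kap6.3.55bis} and \eqref{eq:kap6.44}, yielding those statements directly.

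The main obstacle is the uniform control of the tail product in (i): the bound $|w_{1,n}(\lambda)| \gtrsim |n-m|$ on $U_{1,m}$ must be combined with the $\ell^2$ and $\ell^1$ asymptotics of $\tau_{1,n}-n\pi$ and $\gamma_{1,n}^2$ respectively so that, after taking logarithms, the sum of small terms is genuinely $\ell^2_m$ and not merely $o(1)$. This square-summable (as opposed to summable) gain over the trivial estimate is the technical content of Appendix \ref{interpolation}, and is where local uniformity in $v\in\hat W$ enters.
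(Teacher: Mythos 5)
Your outline is correct and takes essentially the same route as the paper's proof: factor out $w_{1,m}$ from $\sqrt[c]{\chi_1(\lm)}$, bound the residual infinite product on $U_{1,m}$, peel off the factor $\sqrt[c]{\chi_1(0)}/\sqrt[c]{\chi_2(\lm)} = 1 + O(1/|m|)$ by expanding near $\lm=\infty$, and obtain (ii)--(iv) from the global product asymptotics and the reciprocity \eqref{eq:kap6.44} -- the only cosmetic difference is that you introduce $\sin(\omega(\lm))$ through its Weierstrass product, whereas the paper sets $v=0$ and uses $\sqrt[c]{\chi_p(\lm,0)}=-\ii\sin(\omega(\lm))$ from Lemma~\ref{lem:standardRoot}(ii), which amounts to the same computation once unwound. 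One repair: the $\ell^2_m$ product estimate you defer to Appendix~\ref{interpolation} is not supplied there (that appendix contains only the residue-theorem interpolation identity of Lemma~\ref{ap:lem:interpolation}); the correct sources are Corollary~\ref{cor:kap6.4.1} and \cite[Lemmas~C.3--C.5]{nlsbook}, applied with $\sigma_n = n\pi$ to give $\prod_{n\neq m}\frac{n\pi-\lm}{w_{1,n}(\lm)} = 1+\ell^2_m$ on $U_{1,m}$, together with the elementary factor $\prod_{n\neq m}\frac{\pi_n - \omega(\lm)}{\pi_n-\lm} = 1 + O(1/|m|)$ there arising from $\omega(\lm)-\lm = -1/(16\lm) = O(1/|m|)$.
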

  \begin{proof}
  (i) We begin by verifying the claimed asymptotics for 
  $\frac{w_{1,m}(\lm)}{\sqrt[c]{\chi_p(\lm)}} \frac{-\ii\sin(\omega(\lm))}{\pi_m-\omega(\lm)}$. For $v = (q,p) \in\hat W$ and $\lm\in U_{1,m}$, 
  it follows from the definitions \eqref{eq:kap6.3.36} and \eqref{eq:kap6.3.40} of the canonical  roots  
  ${\sqrt[c]{\chi_1(\lm)}}$ and respectively, ${\sqrt[c]{\chi_p(\lm)}}$ as well as \eqref{eq:kap6.3.55bis} that
  \[
  \frac{\sqrt[c]{\chi_p(\lm)}}{w_{1,m}(\lm)} = \ii \frac{\sqrt[c]{\chi_1(-(16\lm)^{-1},-q,p)}}{\sqrt[c]{\chi_1(0,v)}}f_{1,m}(\lm,v)
  \]
 where by \eqref{eq:kap6.32ter}, $f_{1,m}(\lm)\equiv f_{1,m}(\lm,v)$ is given by
 \[
 f_{1,m}(\lm) = \frac{1}{\pi_m} \prod_{k\not=m} \frac{w_{1,k}(\lm)}{\pi_k}.
 \]
 Let us first consider the quotient $\sqrt[c]{\chi_1(-(16\lm)^{-1},-q,p)} / \sqrt[c]{\chi_1(0,q,p)}$. Since $\sqrt[c]{\chi_1(\lm,-q,p)}$ does not vanish at $\lm=0$ 
 and is differentiable (cf. definition \eqref{eq:DefnOfhatW} of $\hat W$) and since $\sqrt[c]{\chi_1(0,q,p)} = \sqrt[c]{\chi_1(0,-q,p)}$ (cf. \eqref{eq:kap6.3.36}) 
 one sees by expanding $\sqrt[c]{\chi_1(\mu,-q,p)}$ at $\mu=0$ that for $\lm\in U_{1,m}$ and the definition of $U_{1,m}$ 
 (cf. (I-2) in Section \ref{sec:realAmostReal}) that
 \[
 \frac{\sqrt[c]{\chi_1(-(16\lm)^{-1},-q,p)}}{\sqrt[c]{\chi_1(0,q,p)}} = 1+\ell_m^2\quad\text{as}\quad |m|\to\infty
 \]
 and in turn
 \[
 \frac{\sqrt[c]{\chi_p(\lm)}}{w_{1,m}(\lm)} = \ii f_{1,m}(\lm) (1+\ell_m^2)\quad \text{as }\; |m|\to\infty.
 \]
 In particular, for $v=0$ one obtains, taking into account Lemma \ref{lem:standardRoot}(ii),
 \[
 \frac{-\ii \sin(\omega(\lm))}{\pi_m - \omega(\lm)} = \frac{\sqrt[c]{\chi_p(\lm,0)}}{\pi_m-\omega(\lm)} = \ii f_{1,m} (\lm,0)(1+\ell_m^2)
 \]
 implying that
 \[
 \frac{w_{1,m}(\lm)}{\sqrt[c]{\chi_p(\lm)}} \frac{-\ii \sin(\omega(\lm))}{\pi_m-\omega(\lm)} = \frac{f_{1,m}(\lm,0)}{f_{1,m}(\lm)} (1+\ell_m^2)\quad\text{as}\quad |m|\to\infty.
 \]
 It remains to analyze the asymptotics of $f_{1,m}(\lm)$, $\lm\in U_{1,m}$, as $|m|\to\infty$. By property (I-3), defined in in Section \ref{sec:realAmostReal}, 
 there exists $M\geq1$ so that for any $|m|\geq M$ and $\lm\in U_{1,m}$,
 \[
 w_{1,k}(\lm) = (\tau_{1,k}-\lm) \sqrt[+]{1-\gm[1,k]^2/4(\tau_{1,k}-\lm)^2}, \quad k\not=m.
 \]
  Hence
 \[
 \frac{f_{1,m}(\lm,0)}{f_{1,m}(\lm, v)} = \prod_{k\not=m} \frac{\pi_k-\lm}{\tau_{1,k}-\lm} \frac{1}{\sqrt[+]{1-\gm[1,k]^2/4(\tau_{1,k}-\lm)^2}}.
 \]
 By \cite[Lemma C.3]{nlsbook}, it then follows that uniformly for $\lm\in U_{1,m}$
 \[
 \frac{f_{1,m}(\lm,0)}{f_{1,m}(\lm)} = 1+\ell_m^2\quad\text{as}\quad |m|\to\infty
 \]
 and hence \eqref{eq:kap6.3.44} is proved. Going trough the arguments of the proof one sees that \eqref{eq:kap6.3.44} holds locally uniformly on $\hat W$.
 Since $\sqrt[c]{\chi_p(\lm)} = \ii \sqrt[c]{\chi_1(\lm)}\sqrt[c]{\chi_2(\lm)} /\sqrt[c]{\chi_2(\infty)}$,
 the claimed asymtptotics for  $\frac{w_{1,m}(\lm)}{\sqrt[c]{\chi_1(\lm)}} \frac{-\ii\sin(\omega(\lm))}{\pi_m-\omega(\lm)}$
 have also been established by the above proof.

 \noindent (ii) Arguing in a similar way as in the proof of item(i) one shows that the claimed asymptotics follow from 
 \cite[Lemma C.5]{nlsbook}. The claimed asymptotics of item (iii) and (iv) follow by reciprocity (cf \eqref{eq:kap6.44}),
 \[
 \sqrt[c]{\chi_p(-(16\lm)^{-1},-q,p)} = \sqrt[c]{\chi_p(\lm,q,p)}.
 \]
 \end{proof}
 
 Lemma \ref{lem:sindurchsqrtcEstimate} has the following application. Recall that
 $\ell^* := \set{\sigma = (\sigma_n)_n \subset \C^*}{ (\sigma_m - m\pi )_m \in\ell^2}$.
 \begin{cor} \label{cor:kap6.4.1}
 \begin{equivenum}
 \item Locally uniformly in $v\in\hat W$ and in $\sigma\in\ell^*$,
 \[
 \sup_{\lm\in U_{1,n}}\Big| \prod_{m\not=n} \frac{\sigma_m - \lm}{w_{1,m}(\lm) } -1 \Big| = \ell_n^2.
 \]
 \item Locally uniformly in $v\in\hat W$ and in $\sigma\in\ell^*$,
 \[
 \sup_{\lm\in \partial B_N}\Big| \prod_{m\in\Z} \frac{\sigma_m - \lm}{w_{1,m}(\lm) } -1 \Big| = o(1)\quad \text{ as } \; N\to\infty.
 \]
 \end{equivenum}
 \end{cor}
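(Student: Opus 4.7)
The plan is to reduce Corollary \ref{cor:kap6.4.1} to the product asymptotics already established for the characteristic functions. The key identity, valid whenever $\lm$ lies outside $U_{1,m}$, is
\[
w_{1,m}(\lm) = (\tau_{1,m} - \lm)\sqrt[+]{1 - \gamma_{1,m}^2/(4(\tau_{1,m}-\lm)^2)} ,
\]
obtained from Definition \ref{def:standardRoots}. Using this, I would split the product at hand as
\[
P_1(\lm) := \prod_{m \not= n} \frac{\sigma_m - \lm}{\tau_{1,m} - \lm}, \qquad P_2(\lm) := \prod_{m \not= n} \frac{\tau_{1,m} - \lm}{w_{1,m}(\lm)},
\]
and treat the two factors separately.

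For $P_1$: both $(\sigma_m - m\pi)_m$ and $(\tau_{1,m} - m\pi)_m$ lie in $\ell^2$ (the latter by Lemma \ref{lem:analyticityandGradientOfTaun}), hence so does $(\sigma_m - \tau_{1,m})_m$. Applying the interpolation lemma in Appendix \ref{interpolation} (the analog of \cite[Lemma C.3]{nlsbook}) with these two sequences yields $P_1(\lm) = 1 + \ell_n^2$, uniformly for $\lm \in U_{1,n}$ and locally uniformly in $v \in \hat W$ and $\sigma \in \ell^*$. For $P_2$: for all but finitely many $n$, and all $m \not= n$, the factors admit the square-root representation above, and
\[
\log P_2(\lm) = -\frac{1}{2} \sum_{m \not= n} \log\Bigl(1 - \frac{\gamma_{1,m}^2}{4(\tau_{1,m}-\lm)^2}\Bigr) .
\]
The isolating-neighbourhood properties (I-2), (I-3) give $|\tau_{1,m} - \lm| \geq c|m - n|$ on $U_{1,n}$, so each summand is $O(\gamma_{1,m}^2/|m-n|^2)$. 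Since $(\gamma_{1,m}^2)_m \in \ell^1$ (Lemma \ref{lem:analyticityandGradientOfTaun}) and $(1/k^2)_{k \not= 0} \in \ell^1$, Young's inequality gives $\log P_2(\lm) \in \ell^1_n \subset \ell^2_n$. Any finitely many exceptional factors (where the square-root form fails because $\lm \in U_{1,m}$ for some $m \not= n$, which happens only for $n$ in a bounded index set) are handled individually using the two-sided size estimate of Lemma \ref{lem:standardRootAnalyticSizeinverseintegral}. Combining the two factors proves item (i).

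Item (ii) follows from the boundary version of the same estimates. On $\partial B_N$, Lemma \ref{lem:sindurchsqrtcEstimate}(ii) gives $\sqrt[c]{\chi_1(\lm)} = \prod_m w_{1,m}(\lm)/\pi_m = -(1+o(1))\sin(\omega(\lm))$, while the Appendix \ref{interpolation} analog of \cite[Lemma C.5]{nlsbook}, applied to $(\sigma_m)_m \in \ell^*$, yields $\prod_m (\sigma_m - \lm)/\pi_m = -(1+o(1))\sin(\omega(\lm))$. Taking the quotient gives the desired $\prod_m (\sigma_m - \lm)/w_{1,m}(\lm) = 1 + o(1)$ on $\partial B_N$. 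The main obstacle is the uniform treatment of $P_1$: the deviation from $1$ must be controlled not just pointwise but as an $\ell^2_n$-sequence, and the constants must be shown to depend only locally on $v$ and $\sigma$. This is precisely the content of the interpolation lemma, so the proof amounts to a careful bookkeeping exercise once that machinery is invoked.
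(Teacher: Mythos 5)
Your decomposition $\prod_{m\neq n}\frac{\sigma_m-\lm}{w_{1,m}(\lm)} = P_1\cdot P_2$ is valid and gives a correct proof, but it is organized differently from the paper's. The paper's (very terse) argument writes
\[
\prod_{m\neq n}\frac{\sigma_m-\lm}{w_{1,m}(\lm)}
= \frac{\prod_{m\neq n}(\sigma_m-\lm)/\pi_m}{\sqrt[c]{\chi_1(\lm)}/w_{1,n}(\lm)}\, ,
\]
then estimates the numerator via \cite[Lemma C.4]{nlsbook} as $\frac{-\sin\lm}{\pi_n-\lm}(1+\ell_n^2)$, the denominator via Lemma~\ref{lem:sindurchsqrtcEstimate}(i) as $\frac{-\sin\omega(\lm)}{\pi_n-\omega(\lm)}(1+\ell_n^2)$, and absorbs the ratio of sincs ($=1+O(1/n)$) into the $\ell_n^2$ error. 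Your route instead inlines the $\frac{\tau_{1,m}-\lm}{w_{1,m}(\lm)}$ trick that already appears inside the \emph{proof} of Lemma~\ref{lem:sindurchsqrtcEstimate}(i), getting an explicit $\ell^1_n$ bound on $\log P_2$ by Young's inequality and handling $P_1$ by comparing two $\ell^*$-sequences. What you gain is an estimate that avoids the sinc-ratio step and does not pass through $\sin\omega(\lm)$; what the paper gains is brevity by treating Lemma~\ref{lem:sindurchsqrtcEstimate} as a black box. Both are legitimate.

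There are, however, several clerical defects in your write-up that should be fixed. First, Lemma~\ref{ap:lem:interpolation} in Appendix~\ref{interpolation} is a Lagrange-interpolation formula for analytic functions on $\C^*$ and is \emph{not} an analog of \cite[Lemma C.3]{nlsbook} or \cite[Lemma C.5]{nlsbook}; you should cite the nlsbook lemmas directly, since the product-comparison results you invoke are not reproduced in this paper's appendices. Second, your description of the exceptional factors in $P_2$ is off: for $\lm\in U_{1,n}$ and $m\neq n$, one never has $\lm\in U_{1,m}$ (the $U_{1,m}$ are pairwise disjoint); the representation \eqref{eq:wnsqrtp} can fail only because $|\gamma_{1,m}^2/4(\tau_{1,m}-\lm)^2|\geq 1$, which happens for at most finitely many pairs $(m,n)$, and for those one uses the two-sided bound of Lemma~\ref{lem:standardRootAnalyticSizeinverseintegral} directly, as you say. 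Third, in part (ii) the conclusion of \cite[Lemma C.5]{nlsbook} applied to $(\sigma_m)_m$ is $\prod_m(\sigma_m-\lm)/\pi_m = -(1+o(1))\sin\lm$, not $-(1+o(1))\sin\omega(\lm)$; on $\partial B_N$ one has $\sin\omega(\lm)=(1+o(1))\sin\lm$ because $\omega(\lm)-\lm = -1/(16\lm) = O(1/N)$ and $\partial B_N$ stays away from the zeros of $\sin$, so your computation survives, but the intermediate statement should be corrected.
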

 \begin{proof} (i) The claimed asymptotics follow from the asymptotics of infinite products of \cite[Lemma C.4]{nlsbook}
 and the asymptotics of $ \frac{- \sin(\omega(\lm))}{\sqrt[c]{\chi_1(\lm)}}$, stated in Lemma \ref{lem:sindurchsqrtcEstimate}.\\
 (ii) The claimed asymptotics follow from the asymptotics and their proof in \cite[Lemma C.5]{nlsbook}
 and the asymptotics of $ \frac{- \sin(\omega(\lm))}{\sqrt[c]{\chi_1(\lm)}}$, stated in Lemma \ref{lem:sindurchsqrtcEstimate}.
 \end{proof}


\section{Proofs of Theorem \ref{thm:kap8.2.12} and Corollary \ref{cor:kap8.2.12}}\label{sec:psi}

In this section we prove Theorem  \ref{thm:kap8.2.12} and, at the end, Corollary \ref{cor:kap8.2.12}.

Let us begin by summarizing the setup for Theorem  \ref{thm:kap8.2.12}, described in Section \ref{introduction} and Section \ref{sec:realAmostReal} --
Section \ref{sec:roots}.
For any given $v_0\in\hat W$ (cf. \eqref{eq:DefnOfhatW}), let $V_{v_0}\subset\hat W$ be a neighborhood of $v_0$ in \Hp and $(U_m)_{m\in\Z}$, $U_*$ 
be a family of isolating neighborhoods satisfying properties (I-1) - (I-5)
(cf. Section \ref{sec:realAmostReal}), which work for any $v\in V_{v_0}$ (cf. Lemma \ref{lem:isolatingNeighbourhoodExtension}). 
These neighborhoods are conveniently listed in form of two sequences $(U_{j, m})_{m \in \Z}$, $j = 1, 2$ (cf. \eqref{U_1m} - \eqref{U_2m}).
Correspondingly, the gaps $G_{j, m}$ and the contours  $\Gamma_{j, m} \subset U_{j, m}$ around $G_{j,m}$ 
are listed in this way (cf. \eqref{G_1m} - \eqref{G_2m} and \eqref{Gamma_1m} - \eqref{Gamma_2m}).
Theorem \ref{thm:kap8.2.12} states that for any $v$ in a complex neighborhood $W \subset \hat W$ of \Hr,   there exist normalized differentials 
on the spectral curve $\Sigma_v$ (cf. \eqref{spectral curve}), given by
$$
 \Sigma_v = \{(\lambda , y) \in  \C^*\times \C  :  y^2 =   \chi_p(\lambda, v)  \}\, , \qquad \chi_p(\lambda, v) = \Delta^2(\lambda, v) -1 \, .
 $$
These differentials are required to be of the form  $\psi_n(\lambda)/ \sqrt{\chi_p(\lambda)}$, $n \ge 0,$
where $\psi_n(\lm)\equiv \psi_n(\lm, v)$ are analytic functions on $\C^*$, given by 
$\psi_n(\lm) = - \frac{1}{\pi_n} C_n \psi_{n,1}(\lm) \psi_{n,2}(\lm)$ (cf. \eqref{eq:kap8.2.9})
with $\psi_{n,1}$, $\psi_{n,2}$ being infinite products (cf. \eqref{eq:kap8.2.11}),
$$
 \psi_{n,1}(\lm) = \prod_{\substack{k\in\Z \\ k\not=n}} \frac{\sigma_{1,k}^n-\lm}{\pi_k} , \qquad 
 \psi_{n,2}(\lm) = \prod_{k\in\Z} \frac{\sigma_{2,k}^n + \frac{1}{16\lm}}{\pi_k} \, 
 $$
and $\sigma_{j,k}^n$  satisfying $\sigma_{j,k}^n = k\pi + \ell_k^2$, $j = 1,2$, (cf. \eqref{eq:kap8.2.15}) so that (cf. \eqref{eq:kap8.2.12})
$$
\sigma_{1,k}^n\in U_{1,k} , \quad  k\not=n\, ,  \qquad (-16\sigma_{2,k}^n)^{-1} \in U_{2,k}, \quad \forall k \in \Z  \, .
$$
By \cite[Lemma C.1]{nlsbook},  $\psi_{n,1}(\lm)$, $ \psi_{n,2}(\lm) $ 
define analytic functions on $\C^*$ with roots $\sigma^n_{1,k}$ ($k \ne n$), and respectively, $(-16\sigma^n_{2,k})^{-1}$ ($k\in\Z$). 
The normalization of the differentials $\psi_n(\lambda)/ \sqrt{\chi_p(\lambda)}$, $n \ge 0,$ is defined by
\[
\int_{\Gm[1,m]} \frac{\psi_n(\lm)}{\sqrt[c]{\chi_p(\lm)}} \dlm = 2 \pi \delta_{m, n}\ , \qquad 
\int_{\Gm[2,m]} \frac{\psi_n(\lm)}{\sqrt[c]{\chi_p(\lm)}} \dlm = 0\ , \qquad \forall m \in \Z \  ,
\]
where ${\sqrt[c]{\chi_p(\lm)}}$ denotes the canoncial root of $\chi_p(\lambda)$ (cf. \eqref{eq:kap6.3.40}).
Note that under the assumptions made, these contour integrals are well defined and 
do not depend on the choice of the contours $\Gm[j,m]$ as long as $\Gm[j,m]\subset U_{j,m}$ and $G_{j,m}$ is inside the contour $\Gm[j,m]$. 
Our approach for proving Theorem \ref{thm:kap8.2.12} is to reformulate it as a system of  functional equations for the (unknown) complex numbers
$\sigma^n_{j, k}$, 
which we then solve by the means of the implicit function theorem. 

To this end we first introduce some more notation. 
For any given $v_0 \in \hat W$, let $V_{v_0}\subset\hat W$ be a neighborhood of $v_0$ in \Hp with $U_{j,m}$, $G_{j, m},$
and $\Gamma_{j, m}$ ($j = 1,2$, $m \in \Z$) as at the beginning of this section.
For any $j = 1,2$ and $m \in \Z$, we choose annular neighborhoods $U_{j,m}'$ of $\Gm[j,m]$ such that for any potential in $V_{v_0}$
(maybe after shrinking $V_{v_0}$) one has $\overline{U_{j,m}'}\subset U_{j,m}\setminus (G_{j,m}\cup\{\mu_{j,m}\})$ and
\begin{equation}\label{eq:yan1}
\inf \{ |\chi_p(\lm, v)| \, ,  |\grave{m}_2(\lm, v)| \,  : \,v \in V_{v_0}, \lm \in \bigcup_{\substack{m\in\Z, j=1,2}}U_{j,m}' \}  >0 \, .
\end{equation}
Here $(\mu_{j,m})_{m \in \Z, j= 1,2}$ is a listing of the Dirichlet eigenvalues of $Q(v)$, defined by \eqref{eq:kap6.3.7}.
Furthermore, introduce
\[
\ell_{\hat n}^2 := \ell^2(\Z\setminus\{n\}, \;\C)
\]
and denote by $\Omega$ the open subset of $\hat W \times \ell^2\times\ell^2$ of elements $(v, s_1, s_2)$ so that
\begin{equation} \label{eq:kap8.2.30}
\sigma_{1,k}:= k\pi + s_{1,k}\in U_{1,k} \, , \quad \forall k\in\Z\ ,
\end{equation}
\begin{equation}\label{eq:kap8.2.31}
\sigma_{2,k}:= k\pi + s_{2,k}\in\C^* \quad\text{with}\quad (-16\sigma_{2,k})^{-1}\in U_{2,k} \ , \ \forall k\in\Z.
\end{equation}
Here  $U_{j,k}$ are a set of isolating neighborhoods which work for a complex neighborhood of $v$ in $\hat W$ (cf. \eqref{U_1m} - \eqref{U_2m}).
Since by Lemma \ref{lem:kap8.2.18.5} below, the functions $\psi_n$, $n \ge 0$, we are going to construct for $v\in\Hr$ 
must have roots in the gap intervals $G_{1,m}$ ($m\not=n$) and $G_{2,m}$ ($m\in\Z$), it suffices to consider $\Omega$. In addition, define for any $n \ge 0$
\[
\Omega_n := \set{(v,s_1,s_2)\in\Omega}{s_{1,n} =\tau_{1,n} -n\pi}.
\]
By a slight abuse of notation, for any $(v,s_1,s_2)\in\Omega_n$ we also denote $(s_{1,k})_{k\not=n}$ by $s_1$.
Given $v\in\hat W$ and $n \ge 0$, we are looking for a solution $s_1 =  (s_{1,k})_{k\not=n}\in\ell_{\hat n}^2$ and $s_2 = (s_{2,k})_{k\in\Z}\in\ell^2$ 
with $(v,s_1,s_2)\in\Omega_n$ to the equation
\begin{equation}\label{eq:kap8.2.17}
 F^n(v,s_1,s_2) = (F_1^n(v,s_1, s_2), F_2^n(v,s_1,s_2)) = 0
\end{equation}
where $F_1^n = (F_{1,m}^n)_{m\not=n}$ and $F_2^n = (F_{2,m}^n)_{m\in\Z}$ are defined as follows
\begin{equation}\label{eq:kap8.2.20}
 F_{1,m}^n := A_{1,m}^n (v) f_n(s_1,s_2) := (n-m)\int_{\Gm[1,m]} \frac{f_n(s_1,s_2,\lm)}{\sqrt[c]{\chi_p(\lm)}} \dlm
\end{equation}
\begin{equation} \label{eq:kap8.2.21}
F_{2,m}^n : = A_{2,m}^n (v) f_n(s_1,s_2) : = 16\pi_m^2\pi_n \int_{\Gm[2,m]} \frac{f_n(s_1,s_2,\lm)}{\sqrt[c]{\chi_p(\lm)}} \dlm.
\end{equation}
Here
\[
f_n(s_1,s_2,\lm) : = -\frac{1}{\pi_n} \frac{1}{f_{n,2}(s_2,\infty)} f_{n,1}(s_1,\lm) f_{n,2}(s_2,\lm), \qquad \qquad
\]
\begin{equation}\label{eq:kap8.2.22}
f_{n,1}(s_1,\lm) : = \prod_{k\not=n} \frac{\sigma_{1,k}-\lm}{\pi_k} , \qquad f_{n,2}(s_2,\lm) : = \prod_{k\in\Z} \frac{\sigma_{2,k} + \frac{1}{16\lm}}{\pi_k} \ ,
\end{equation}
and
\[
\sigma_{1,k}: = k\pi + s_{1,k} \; (k\not=n) , \qquad \sigma_{2,k} := k\pi + s_{2,k}\; (k\in\Z).
\]
Note that since $(v,s_1,s_2)\in \Omega_n,$ 
$$
f_{n,2}(s_2,\infty) = \prod_{k\in\Z} \frac{\sigma_{2,k}}{\pi_k} \in \C^*
$$ 
and hence $f_n(s_1,s_2,\lm)$ is well defined.
By a slight abuse of notation, for any $n\geq 0$ and $m\in\Z$, we view $F_{1,m}^n$ ($m\not=n$) and $F_{2,m}^n$ either as a function on $\Omega_n$ or $\Omega$.
The reason we make the ansatz \eqref{eq:kap8.2.9} - \eqref{eq:kap8.2.12} for $\psi_n(\lm)$ stems from the following observation for real valued potentials.

\begin{lem}\label{lem:kap8.2.18.5}
Let $j\in\{1,2\}$, $m\in\Z$ (with $m\not=n$ in the case $j=1$), and $v\in\Hr$. Furthermore assume that $f:U_{j,m} \to\C$ is real analytic. 
If $A_{j,m}(v) f = 0$, then $f$ has a root in $G_{j,m}$.
\end{lem}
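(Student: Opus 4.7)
The plan is to shrink the contour $\Gm[j,m]$ onto the gap $G_{j,m}$ and exploit the sign information for $\sqrt[c]{\chi_p(\lm)}$ on the real axis that was established in Lemma \ref{lem:DlSign}. First I would observe that, by \eqref{eq:yan1} together with the fact that $\chi_p$ vanishes only on $G_{j,m}$ inside $U_{j,m}$, the integrand $f(\lm)/\sqrt[c]{\chi_p(\lm)}$ is holomorphic on $U_{j,m}\setminus G_{j,m}$; thus by Cauchy's theorem $\Gm[j,m]$ may be deformed to a loop hugging $G_{j,m}\subset\R$ arbitrarily closely.

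Next I would split into the two cases. In the open-gap case $\gm[j,m]>0$, the two sides of the real cut contribute with opposite signs, since by the very definitions \eqref{eq:wnsqrtp} and \eqref{eq:kap6.3.40} of the standard and canonical roots, $\sqrt[c]{\chi_p}$ flips sign across $G_{j,m}$. Hence the contour integral reduces to
\[
\int_{\Gm[j,m]} \frac{f(\lm)}{\sqrt[c]{\chi_p(\lm)}}\dlm \;=\; 2\int_{G_{j,m}} \frac{f(\lm)}{\sqrt[c]{\chi_p(\lm)}_-}\dlm,
\]
where $\sqrt[c]{\chi_p(\lm)}_-$ denotes the limit from below. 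By Lemma \ref{lem:DlSign}(ii) (and its companion statement on $G_{2,m}$), this limit is real-valued and of constant nonzero sign on the interior of $G_{j,m}$, with at most integrable inverse-square-root singularities at the endpoints $\lm_{j,m}^\pm$. Since $v\in\Hr$ and $f$ is real analytic on $U_{j,m}$, $f$ is real-valued on $G_{j,m}\subset\R$. If $f$ had no zero in $G_{j,m}$, then $f$ would be of constant sign there, making the integrand $f/\sqrt[c]{\chi_p}_-$ of constant nonzero sign throughout the open interval; this forces the integral to be nonzero, contradicting $A_{j,m}(v)f=0$.

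In the collapsed-gap case $\gm[j,m]=0$, the gap reduces to the single point $\tau_{j,m}$, and since for $v\in\Hr$ we have $\Dl^2\leq 1$ in a real neighborhood of $\tau_{j,m}$, the function $\chi_p$ factors as $-(\lm-\tau_{j,m})^2 h(\lm)$ with $h$ analytic and nonvanishing near $\tau_{j,m}$. Consequently $\sqrt[c]{\chi_p}$ extends analytically across $\tau_{j,m}$ with a simple zero, and the residue theorem gives
\[
\int_{\Gm[j,m]} \frac{f(\lm)}{\sqrt[c]{\chi_p(\lm)}}\dlm \;=\; 2\pi\ii\,\frac{f(\tau_{j,m})}{\partial_\lm\sqrt[c]{\chi_p}(\tau_{j,m})},
\]
and the assumed vanishing of $A_{j,m}(v)f$ forces $f(\tau_{j,m})=0$, i.e., $f$ has the required root at the single point of $G_{j,m}$.

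The main technical point (a mild one) will be the first step of Case~1: checking that $\sqrt[c]{\chi_p}$ really changes sign across the real gap so that the two branch-cut contributions add rather than cancel, and verifying that the sign table of Lemma \ref{lem:DlSign} produces a consistent real nonzero boundary value with integrable endpoint singularities. Both of these points follow directly from the construction of the canonical root in Section \ref{sec:roots}, but they must be invoked carefully in order to turn the vanishing hypothesis on $A_{j,m}(v)f$ into a genuine sign obstruction.
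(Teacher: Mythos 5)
Your argument is correct and coincides with the paper's own proof: both shrink the contour $\Gm[j,m]$ onto the gap $G_{j,m}$, invoke the sign table of Lemma \ref{lem:DlSign} (equations \eqref{eq:kap6.3.34} and \eqref{eq:kap6.34bis}) to conclude that the integrand has a fixed sign on the cut unless $f$ changes sign or vanishes, and handle the collapsed gap $\gm[j,m]=0$ by observing that $\sqrt[c]{\chi_p}$ extends analytically with a simple zero at $\tau_{j,m}$, so Cauchy's formula forces $f(\tau_{j,m})=0$.
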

\begin{proof}
Let us first treat the case $j=1$. If $G_{1,m}$ is not a single point, then we may shrink the contour $\Gm[1,m]$ to the interval $G_{1,m}$. 
By \eqref{eq:kap6.3.34} of Lemma \ref{lem:DlSign}, one has $(-1)^{m+1} \sqrt[c]{\chi_p(\lm)} >0$ for $\lm_{1,m}^- <\lm< \lm_{1,m}^+$. Hence
\[
0 = \int_{\Gm[1,m]} \frac{f(\lm)}{\sqrt[c]{\chi_p(\lm)} } \dlm = 2(-1)^{m+1} \int_{\lm_{1,m}^- }^{\lm_{1,m}^+} \frac{f(\lm)}{\sqrt[+]{\chi_p(\lm)}}\dlm .
\]
Since by assumption $f(\lm)\in\R$ for $\lm\in G_{1,m}$, the latter integral can only vanish if $f(\lm) \equiv 0$ on $G_{1,m}$ 
(and hence on $U_{1,m}$) or $f$ changes sign on the open interval $(\lm_{1,m}^- , \lm_{1,m}^+)$ at least once. 
If on the other hand $G_{1,m}$ is a single point, i.e., $G_{1,m} = \{\tau_{1,m}\}$, then we may extract the factor $\tau_{1,m}-\lm$ from $\sqrt[c]{\chi_p(\lm)}$ 
to obtain a Cauchy integral around $\tau_{1,m}$ which gives $f(\tau_{1,m}) = 0$. 
The case $j=2$ is treated in the same fashion, by using now \eqref{eq:kap6.34bis} of Lemma \ref{lem:DlSign}.
\end{proof}
In a first step, we study the maps $F^n$ in more detail.
\begin{lem} \label{lem:kap8.2.14}
For any $n\geq0$, the formulas \eqref{eq:kap8.2.17} - \eqref{eq:kap8.2.22} define a real analytic map
\[
F^n : \Omega_n \to \ell_{\hat n}^2 \times \ell^2.
\]
The maps $F^n$ are locally uniformly bounded. More precisely,
\[
\sum_{m\not=n} |F_{1,m}^n (v,s_1,s_2)|^2 + \sum_{m\in\Z} |F_{2,m}^n (v,s_1,s_2)|^2 \leq C
\]
where $C>0$ can be chosen uniformly in $n\geq 0$ and locally uniformly on $\Omega_n$.
\end{lem}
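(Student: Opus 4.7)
\textbf{Proof proposal for Lemma \ref{lem:kap8.2.14}.}

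The plan is to establish the two assertions separately: (a) real analyticity of each component, and (b) the locally uniform $\ell^2$-bound, which is the main point.

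For (a), the contours $\Gm[j,m]$ are chosen once and for all inside the annular neighborhoods $U'_{j,m}$ on which, by \eqref{eq:yan1} and Lemma \ref{lem:standardRoot}(i), both $\chi_p(\lm,v)$ and the canonical root $\sqrt[c]{\chi_p(\lm,v)}$ are analytic and bounded away from $0$ locally uniformly on $V_{v_0}$. The integrand $f_n(s_1,s_2,\lm)/\sqrt[c]{\chi_p(\lm,v)}$ is jointly real analytic in $(v,s_1,s_2,\lm)$ on a neighborhood of $V_{v_0}\times\{(s_1,s_2)\}\times\Gm[j,m]$ (the products $f_{n,1}$, $f_{n,2}$ are absolutely convergent in $\ell^2$ by \cite[Lemma C.1]{nlsbook}). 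Since the contour does not depend on $(v,s_1,s_2)$, analyticity is preserved under integration, so each $F_{j,m}^n$ is real analytic on $\Omega_n$.

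For (b), the key is to extract the leading behavior of the integrand. Using the factorization $\sqrt[c]{\chi_p} = i\,\sqrt[c]{\chi_1}\sqrt[c]{\chi_2}/\sqrt[c]{\chi_1(0)}$ (cf.\ \eqref{eq:kap6.3.40}, \eqref{eq:kap6.3.401}) rewrite
\[
F_{1,m}^n \;=\; -\,\frac{(n-m)\sqrt[c]{\chi_1(0)}}{i\,\pi_n f_{n,2}(s_2,\infty)}\int_{\Gm[1,m]}\frac{f_{n,1}(s_1,\lm)}{\sqrt[c]{\chi_1(\lm)}}\cdot\frac{f_{n,2}(s_2,\lm)}{\sqrt[c]{\chi_2(\lm)}}\,\dlm.
\]
Introduce $\tilde\sigma_k:=\sigma_{1,k}$ for $k\ne n$ and $\tilde\sigma_n:=\tau_{1,n}$, so that $(\tilde\sigma_k-k\pi)_{k\in\Z}\in\ell^2$ (Lemma \ref{lem:analyticityandGradientOfTaun}). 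Using $\sqrt[c]{\chi_1(\lm)}=\prod_k w_{1,k}(\lm)/\pi_k$, for $\lm\in U_{1,m}$ with $m\ne n$,
\[
\frac{f_{n,1}(s_1,\lm)}{\sqrt[c]{\chi_1(\lm)}} \;=\; \frac{\pi_n}{\tilde\sigma_n-\lm}\cdot\frac{\sigma_{1,m}-\lm}{w_{1,m}(\lm)}\cdot P^n_m(\lm),\qquad P^n_m(\lm):=\prod_{k\ne m}\frac{\tilde\sigma_k-\lm}{w_{1,k}(\lm)},
\]
and by Corollary \ref{cor:kap6.4.1}(i), $\sup_{\lm\in U_{1,m}}|P^n_m(\lm)-1|=\ell_m^2$ locally uniformly. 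The factor $f_{n,2}(s_2,\cdot)/\sqrt[c]{\chi_2(\cdot)}$ is locally uniformly bounded on $U_{1,m}$ (the singular locus of $\chi_2$ is disjoint from $U_{1,m}$ for $|m|$ large) and converges to $f_{n,2}(s_2,\infty)/\sqrt[c]{\chi_1(0)}$ as $|m|\to\infty$.

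The factor $(n-m)/(\tilde\sigma_n-\lm)$ is uniformly bounded for $\lm\in\Gm[1,m]$. It remains to evaluate (or estimate) the integral $\int_{\Gm[1,m]}(\sigma_{1,m}-\lm)/w_{1,m}(\lm)\,\dlm$ multiplied by slowly varying factors. Writing $\sigma_{1,m}-\lm=(\sigma_{1,m}-\tau_{1,m})+(\tau_{1,m}-\lm)$ and using Lemma \ref{lem:standardRootAnalyticSizeinverseintegral}, together with $\int_{\Gm[1,m]}(\tau_{1,m}-\lm)/w_{1,m}(\lm)\,\dlm=0$ (by a Chebyshev substitution after collapsing the contour onto $G_{1,m}$), this integral equals $-2\pi i\,(\sigma_{1,m}-\tau_{1,m})+(\text{lower order})$. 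Since $\sigma_{1,m}-\tau_{1,m}=s_{1,m}-(\tau_{1,m}-m\pi)\in\ell^2_m$ locally uniformly, we conclude $F_{1,m}^n=\ell^2_m$ with bound independent of $n\ge 0$.

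For $F_{2,m}^n$ the strategy is completely parallel, but at the contour $\Gm[2,m]$ around $G_{2,m}$, where $|\lm|\sim 1/|m|$. One extracts the dominant factor $(\sigma_{2,m}+(16\lm)^{-1})/w_{2,m}(\lm)$ from $f_{n,2}/\sqrt[c]{\chi_2}$ via the reciprocal variant of Corollary \ref{cor:kap6.4.1} (obtained from $w_{2,k}(\lm,q,p)=w_{1,k}(-(16\lm)^{-1},-q,p)$ and \eqref{eq:kap6.3.55bis}); the factor $f_{n,1}/\sqrt[c]{\chi_1}$ is bounded on $U_{2,m}$; and the prefactor $16\pi_m^2\pi_n$ precisely absorbs the Jacobian $|\upd \lm/\upd \mu|=1/(16\mu^2)\sim 1/(16m^2\pi^2)$ arising under the substitution $\mu=-1/(16\lm)$ that turns the tiny contour $\Gm[2,m]$ into one of bounded size. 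The analog of the computation above then yields $F_{2,m}^n\in\ell^2_m$ locally uniformly.

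The main obstacle is keeping all error terms locally uniform in $(v,s_1,s_2)\in\Omega_n$ \emph{and} in $n\ge 0$ simultaneously: the $\ell^2_m$-remainders produced by Corollary \ref{cor:kap6.4.1} and Lemma \ref{lem:sindurchsqrtcEstimate} must be shown to be bounded independently of the pole location $\tilde\sigma_n$, and the prefactors $\sqrt[c]{\chi_1(0,v)}$ and $1/f_{n,2}(s_2,\infty)$ must be controlled. The first is handled because the $\ell^2_m$ estimates depend only on the $\ell^*$-norm of $(\tilde\sigma_k)_k$, which is locally bounded on $\Omega_n$; the second uses continuity of $\sqrt[c]{\chi_1(0,\cdot)}$ on $\hat W$ (Lemma \ref{lem:rootOfF}) together with the fact that on $\Omega$ the product defining $f_{n,2}(s_2,\infty)$ is bounded away from $0$.
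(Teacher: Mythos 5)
Your proposal follows essentially the same route as the paper's proof: factor the integrand as $(\sigma_{1,m}-\lm)/w_{1,m}(\lm)$ times a slowly varying factor $\zeta^n_{1,m}$, control the latter via Corollary \ref{cor:kap6.4.1}, use the contour estimate for $\int (\cdot)/w_{1,m}\,\dlm$, and treat $F^n_2$ via the substitution $\mu=-1/(16\lm)$ with the prefactor $16\pi_m^2\pi_n$ absorbing the Jacobian. The conclusion $F^n_{j,m}=\ell^2_m$ locally uniformly and uniformly in $n$ is reached by the same mechanism.

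Two small remarks. First, your decomposition $\sigma_{1,m}-\lm=(\sigma_{1,m}-\tau_{1,m})+(\tau_{1,m}-\lm)$ together with $\int_{\Gm[1,m]}(\tau_{1,m}-\lm)/w_{1,m}\,\dlm=0$ is a sharper splitting than what the lemma requires; the paper simply bounds the integral by $\max_{\lm\in G_{1,m}}|\sigma_{1,m}-\lm|$ via Lemma \ref{lem:kap8.2.15}, which already yields $\ell^2_m$. Your refinement is harmless here (and is in fact the device used later in Lemma \ref{lem:kap8.2.23} to extract the $\gamma^2\ell^2$ bound), but the ``lower order'' remainder it produces would still need to be quantified to close the argument, so at this level it is neither shorter nor tighter than the direct bound.

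Second, the assertion that $F^n$ is \emph{real} analytic requires verifying that $F^n_{j,m}$ is real-valued on $\Omega_r=\Omega\cap(\Hr\times\ell^2_{\R,\hat n}\times\ell^2_\R)$; holomorphy on the complex domain alone does not give this. The paper checks it by collapsing $\Gm[j,m]$ onto the real gap $G_{j,m}$ (or applying Cauchy's formula at $\tau_{j,m}$ when the gap is closed) and using that $w_{j,k}$, $\sqrt[c]{\chi_p}$, and the $\sigma$'s are real or purely imaginary on the appropriate real sets. This step is short but should not be omitted, since it is precisely what makes the implicit-function argument produce real solutions over $\Hr$.
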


Before proving Lemma \ref{lem:kap8.2.14}, we make some preliminary considerations.

\begin{lem}\label{lem:kap8.2.15}
Let $m\in\Z$ and $v\in\hat W$.
\begin{equivenum}
\item If $f:U_{1,m} \to\C$ is analytic, then
\[
\frac{1}{2\pi} \big|\int_{\Gm[1,m]} \frac{f(\lm)}{w_{1,m}(\lm)} \dlm \big| \leq \max_{\lm\in G_{1,m}} |f(\lm)|.
\]
Moreover, if $v\in\Hr$ and $f$ is real analytic then there exists $\nu\in G_{1,m}$ so that
\[
\frac{1}{2\pi\ii} \int_{\Gm[1,m]} \frac{f(\lm)}{w_{1,m}(\lm)} \dlm  = - f(\nu).
\]
\item If $f:U_{2,m}\to\C$ is analytic, then
\[
\frac{1}{2\pi} \big|\int_{\Gm[2,m]} \frac{f(\lm)}{w_{2,m}(\lm)} \dlm \big| \leq \max_{\lm\in G_{2,m}} 16|\lm|^2|f(\lm)|.
\]
Moreover, if $v\in\Hr$ and $f$ is real analytic then there exists $\nu\in G_{2,m}$ so that
\[
\frac{1}{2\pi\ii} \int_{\Gm[2,m]} \frac{f(\lm)}{w_{2,m}(\lm)} \dlm  = - 16\nu^2 f(\nu).
\]
\end{equivenum}
\end{lem}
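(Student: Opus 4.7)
The plan is to collapse the contour $\Gm[j,m]$ onto the gap segment $G_{j,m}$ and exploit the sign relation between the two boundary values of the square root $w_{j,m}$ on opposite sides of the cut. Both claims then reduce to estimating an elementary one-dimensional integral along the segment. For part (i), the degenerate case $\gm[1,m] = 0$ is immediate: by Lemma \ref{lem:standardRootAnalyticSizeinverseintegral} one has $w_{1,m}(\lm) = \tau_{1,m} - \lm$, so Cauchy's formula yields $\frac{1}{2\pi\ii}\int_{\Gm[1,m]} f/w_{1,m}\, d\lm = -f(\tau_{1,m})$, which covers both statements with $\nu = \tau_{1,m}$.

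In the non-degenerate case I would parametrize $G_{1,m}$ by $\lm_t := (1-t)\lm_{1,m}^- + t\lm_{1,m}^+$, $t\in[0,1]$, and compute $(\lm_{1,m}^+ - \lm_t)(\lm_{1,m}^- - \lm_t) = -t(1-t)\gm[1,m]^2$, so that $|w_{1,m}(\lm_t)| = \sqrt{t(1-t)}\,|\gm[1,m]|$ on either side of the cut. Since $w_{1,m}^2$ has simple zeros at the two endpoints and $w_{1,m}$ is analytic on $\C\setminus G_{1,m}$, the two boundary values of $w_{1,m}$ across the segment differ by a sign. Deforming $\Gm[1,m]$ to collapse around $G_{1,m}$ then gives the representation
\[
\int_{\Gm[1,m]} \frac{f(\lm)}{w_{1,m}(\lm)}\,d\lm \;=\; 2\gm[1,m] \int_0^1 \frac{f(\lm_t)}{w_{1,m}^-(\lm_t)}\,dt,
\]
and the modulus bound follows from $\int_0^1 dt/\sqrt{t(1-t)} = \pi$.

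For the mean value statement, when $v\in\Hr$ and $\gm[1,m] > 0$ the identity $\frac{1}{2\pi\ii}\int_{\Gm[1,m]} d\lm/w_{1,m} = -1$ from Lemma \ref{lem:standardRootAnalyticSizeinverseintegral} (i.e.\ the case $f\equiv 1$ of the previous display) pins down the sign and forces $w_{1,m}^-(\lm_t) = \ii\gm[1,m]\sqrt{t(1-t)}$. Hence
\[
\frac{1}{2\pi\ii} \int_{\Gm[1,m]} \frac{f(\lm)}{w_{1,m}(\lm)}\,d\lm \;=\; -\frac{1}{\pi}\int_0^1 \frac{f(\lm_t)}{\sqrt{t(1-t)}}\,dt,
\]
and since $t\mapsto f(\lm_t)$ is a real continuous function on $[0,1]$ while $1/\sqrt{t(1-t)}$ is a positive integrable weight with total mass $\pi$, the classical mean value theorem for integrals produces $t_0\in(0,1)$ with the integral equal to $\pi f(\lm_{t_0})$, yielding the claimed formula with $\nu := \lm_{t_0} \in G_{1,m}$.

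Part (ii) reduces to part (i) by the substitution $\mu := -1/(16\lm)$, under which $w_{2,m}(\lm)^2 = (\lm_{2,m}^+ - \mu)(\lm_{2,m}^- - \mu)$ has exactly the form of $w_{1,m}^2$, and the parametrization $\mu_t := (1-t)\lm_{2,m}^- + t\lm_{2,m}^+$ corresponds to $\lm_t = -1/(16\mu_t) \in G_{2,m}$ with Jacobian $d\lm_t = 16\lm_t^2\,\gm[2,m]\,dt$. Running the same collapse argument, the integrand now carries the additional factor $16\lm_t^2$, which converts the bound $\max|f|$ of part (i) into $\max_{\lm\in G_{2,m}} 16|\lm|^2|f(\lm)|$, and likewise produces the mean value identity with $\nu \in G_{2,m}$ in the real case. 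The only delicate point throughout is verifying the sign relation $w^+ = -w^-$ together with the specific sign of $w^-$ in the real case; once these are in hand, everything reduces to the elementary integral $\int_0^1 dt/\sqrt{t(1-t)} = \pi$.
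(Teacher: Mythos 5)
Your proof is correct and matches the paper's approach: for item (i) the paper simply cites \cite[Lemma 14.3]{nlsbook}, and your contour-collapse argument (splitting $\Gm[1,m]$ into the two sides of the cut, using $w_{1,m}^+ = -w_{1,m}^-$, reducing to $\int_0^1 dt/\sqrt{t(1-t)} = \pi$, and invoking the mean value theorem for the real case) is precisely the standard proof of that cited lemma. For item (ii) you use the same change of variables $\lm = -\tfrac{1}{16\mu}$ that the paper uses to reduce to item (i), so there is no substantive divergence.
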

\begin{proof}
Item (i) is proved in \cite[Lemma 14.3]{nlsbook}. Considering (ii), set $\tilde\Gamma_{2,m} := \set{-\frac{1}{16\lm}}{\lm\in \Gm[2,m]}$ and $\tilde {G}_{2,m} :=\set{-\frac{1}{16\lm}}{\lm\in {G}_{2,m}}$. With the change of variable $\lm := -\frac{1}{16\mu}$ one obtains
\[
\frac{1}{2\pi\ii} \int_{\Gm[2,m]} \frac{f(\lm)}{w_{2,m}(\lm)} d\lm = \frac{1}{2\pi\ii} \int_{\tilde{\Gamma}_{2,m}} \frac{f(-\frac{1}{16\mu})}{\sqrt[s]{(\lm_{2,m}^+ - \mu)(\lm_{2,m}^- - \mu)}} \frac{\dmu}{16\mu^2}.
\]
By replacing $f$ by $f(-\frac{1}{16\mu})/16\mu^2$ in (i) one gets
\[
\frac{1}{2\pi\ii} \big|\int_{\tilde{\Gamma}_{2,m}} \frac{f(-\frac{1}{16\mu})}{\sqrt[s]{(\lm_{2,m}^+ - \mu)(\lm_{2,m}^- - \mu)}} \frac{\dmu}{16\mu^2} \big|\leq \max_{\mu\in \tilde{G}_{2,m}} \frac{|f(-\frac{1}{16\mu})|}{16|\mu|^2}
= \max_{\lm\in G_{2,m}} 16|\lm|^2|f(\lm)|
\]
and, in case $v\in\Hr$,
\[
\frac{1}{2\pi\ii} \int_{\tilde{\Gamma}_{2,m}} \frac{f(-\frac{1}{16\mu})}{\sqrt[s]{(\lm_{2,m}^+ - \mu)(\lm_{2,m}^- - \mu)}} \frac{\dmu}{16\mu^2} = - \frac{f(-\frac{1}{16\rho})} {16\rho^2}.
\]
for some $\rho \in \tilde{G}_{2,m}$. Hence $\nu = -\frac{1}{16\rho}\in G_{2,m}$ and
\[
- \frac{f(-\frac{1}{16\rho})} {16\rho^2} = -16\nu^2 f(\nu).
\]
\end{proof}
Next let us introduce some more notation. Recall that for any $n\geq0$, $f_n(\lm) \equiv f_n(s_1,s_2,\lm)$ is assumed to be of the form
\[
f_n(\lm) = -\frac{1}{\pi_n} \frac{1}{f_{n,2}(s_2,\infty)} f_{n,1}(s_1,\lm) f_{n,2}(s_2,\lm).
\]
For $j=1$ and $m\in\Z\setminus\{n\}$ define
\begin{equation}\label{eq:kap8.2.24bis}
\zeta_{1,m}^n (\lm) = \frac{\ii } {w_{1,n}(\lm)} \Big(\prod_{k\not=n,m} \frac{\sigma_{1,k}-\lm}{w_{1,k}(\lm)}\Big)
\frac{f_{n,2}(\lm) /f_{n,2}(\infty)}{\sqrt[c]{\chi_2(\lm)} / \sqrt[c]{\chi_2(\infty)}} .
\end{equation}
Then one has
\begin{equation}\label{eq:kap8.2.24}
\frac{f_n(\lm)}{\sqrt[c]{\chi_p(\lm)}} = \frac{\sigma_{1,m}-\lm}{w_{1,m}(\lm)} \zeta_{1,m}^n(\lm)\, .
\end{equation}
Note that locally uniformly on $\Omega_n$
\[
\frac{f_{n,2}(\lm)}{f_{n,2}(\infty)} = 1 + O(\frac{1}{\lm}), \qquad 
\frac{\sqrt[c]{\chi_2(\lm)}}{\sqrt[c]{\chi_2(\infty)}} = 1 + O(\frac{1}{\lm}) \qquad \text{ as } \; |\lm|\to\infty.
\]
Furthermore, setting $\sigma_{1,n} = \tau_n \in U_{1,n}$ one has $\sigma_{1,n}-\lm\not=0$ $\forall \lm\in U_{1,m}$, $m\not=n$, and
\[
\frac{n-m}{\sigma_{1,n}-\lm}\Big|_{U_{1,m}} = \frac{1}{\pi} + \ell_m^2 \quad \text{as } \; |m|\to\infty.
\]
By Corolary \ref{cor:kap6.4.1}(i) one then concludes that for $|m|\to\infty$
\[
\ii \frac{n-m}{w_{1,n}(\lm)} \prod_{k\not=n,m} \frac{\sigma_{1,k}-\lm}{w_{1,k}(\lm)}\Big|_{U_{1,m}} = \ii \frac{n-m}{\sigma_{1,n}-\lm} \prod_{k\not=m} \frac{\sigma_{1,k}-\lm}{ w_{1,k}(\lm)} \Big|_{U_{1,m}} = \frac{\ii}{\pi} + \ell_m^2
\]
locally uniformly on $\Omega_n$. Altogether, we thus have proved that
\begin{equation}\label{eq:kap8.2.25}
(n-m)\zeta_{1,m}^n(\lm) \Big|_{U_{1,m}} = \frac{\ii}{\pi} + \ell_m^2 \quad \text{as } \; |m|\to\infty
\end{equation}
locally uniformly on $\Omega_n$. In the case $j=2$, one obtains a similar estimate. For $m\in\Z$ we write
\begin{equation}\label{eq:kap8.2.26}
\pi_n \frac{f_n(\lm)}{\sqrt[c]{\chi_p(\lm)}} = \frac{\sigma_{2,m} + \frac{1}{16\lm}}{w_{2,m}(\lm)} \zeta_{2,m}^n(\lm)
\end{equation}
where
\begin{equation}\label{eq:kap8.2.26bis}
\zeta_{2,m}^n (\lm) = \ii \frac{f_{n,1}(\lm)}{\sqrt[c]{\chi_1(\lm)} /\sqrt[c]{\chi_1(0)}} \frac{1}{f_{n,2}(\infty)} \prod_{k\not=m} \frac{\sigma_{2,k} + \frac{1}{16\lm}}{w_{2,k}(\lm)}
\end{equation}
Here we used that by \eqref{eq:kap6.3.401}, $\sqrt[c]{\chi_1(0)} = \sqrt[c]{\chi_2(\infty)}$. Note that for $\lm$ near $0$,
\[
f_{n,1}(\lm) = f_{n,1}(0) + O(\lm), \quad \sqrt[c]{\chi_1(\lm)}/ \sqrt[c]{\chi_1(0)} = 1+ O(\lm).
\]
One then again concludes from Corollary \ref{cor:kap6.4.1}(i) that
\begin{equation}\label{eq:kap8.2.27}
\zeta_{2,m}^n \Big|_{U_{2,m}} = \ii \frac{f_{n,1}(0)}{f_{n,2}(\infty)} + \ell_m^2 \quad \text{as}\; m\to\infty
\end{equation}
locally uniformly on $\Omega_n$.

\begin{proof}[Proof of Lemma \ref{lem:kap8.2.14}] Let $n\geq0$. We first consider the case $F_1^n$. For $(v,s_1,s_2)\in \Omega_n$ we have by \eqref{eq:kap8.2.20}, \eqref{eq:kap8.2.24} for $m\not=n$
\begin{equation}\label{eq:kap8.2.28}
F_{1,m}^n = (n-m) \int_{\Gm[1,m]} \frac{\sigma_{1,m}-\lm}{w_{1,m}(\lm)} \zeta_{1,m}^n (\lm)\dlm.
\end{equation}
By Lemma \ref{lem:standardRootAnalyticSizeinverseintegral}, $w_{1,m}(\lm)$ is analytic on $U_{1,m}'\times V_{v_0}$ with $U_{1,m}',$ $V_{v_0}$ 
given as at the beginning of this section (maybe after shrinking $V_{v_0}$). 
By Lemma \ref{lem:fn} and \cite[Lemma C.1]{nlsbook}, $\zeta_{1,m}^n$ is analytic on $U_{1,m}'\times (\Omega_n\cap (V_{v_0}\times \ell_{\hat n}^2 \times \ell^2))$. 
Hence the integrand in \eqref{eq:kap8.2.28} is analytic on $U_{1,m}'\times (\Omega_n\cap (V_{v_0}\times \ell_{\hat n}^2 \times \ell^2))$ for any $m\not=n$ and $F_{1,m}^n$ is analytic on $\Omega_n$. 
Moreover, by \eqref{eq:kap8.2.25} and Lemma \ref{lem:kap8.2.15},  the right hand side of \eqref{eq:kap8.2.28} is of the order of  $\max_{\lm\in G_{1,m}} |\sigma_{1,m}-\lm|$. 
Since $\sigma_{1,m} = m\pi + \ell_m^2$, $\max_{\lm\in G_{1,m}} |\tau_{1,m} - \lm| = |\gm[1,m]|/2 = \ell_m^2$ and $\tau_{1,m} = m\pi + \ell_m^2$ (cf. \cite[Lemma 3.17]{LOperator2018}) 
one has locally uniformly $\max_{\lm\in G_{1,m}} |\sigma_{1,m} - \lm| = \ell_m^2$. 
Altogether we thus have proved that $F_1^n : \Omega_n \to \ell_{\hat n}^2$ is locally bounded and, by \cite[Theorem A.5]{nlsbook}, analytic on  $\Omega_n$. 
Now let us turn to $F_2^n$. Making the change of variable $\lm = -\frac{1}{16\mu}$, it follows by \eqref{eq:kap8.2.21},\eqref{eq:kap8.2.26} that on $\Omega_n$ one has for any $m\in\Z$, 
\begin{equation}\label{eq:kap8.2.29}
 F_{2,m}^n = 16\pi_m^2 \int_{\Gm[2,m]} \frac{\sigma_{2,m}  + \frac{1}{16\lm}}{w_{2,m}(\lm)} \zeta_{2,m}^n (\lm) \dlm = 
 16\pi_m^2 \int_{\tilde{\Gamma}_{2,m}} \frac{\sigma_{2,m}^n - \mu}{w_{2,m}(-\frac{1}{16\mu})} \zeta_{2,m}^n (-\frac{1}{16\mu}) \frac{\dmu}{16\mu^2}.
\end{equation}
Arguing as in the case of $F_{1,m}^n$ one sees that $F_{2,m}^n$ is analytic on $\Omega_n$. 
Moreover by \eqref{eq:kap8.2.27} and Lemma \ref{lem:kap8.2.15} one concludes 
that the right hand side of \eqref{eq:kap8.2.29} is of the order $\max_{\lm\in G_{2,m}} \big( \pi_m^2 |\lm|^2 |\sigma_{2,m} + \frac{1}{16\lm}|\big)$. 
Since $\sigma_{2,m} =m\pi + \ell_m^2$, $\tau_{2,m} = (\lm_{2,m}^+ + \lm_{2,m}^-)/2 = m\pi + \ell_m^2$, $\lm_{2,m}^+ - \lm_{2,m}^- = \ell_m^2$ 
one sees that $-\frac{1}{16\lm} = m\pi + \ell_m^2 $ for $\lm\in G_{2,m}$ and $\max_{\lm\in G_{2,m}} \pi_m^2 |\lm|^2 |\sigma_{2,m} + \frac{1}{16\lm}| = \ell_m^2$. 
Altogether we thus have proved that $F_2^n:\Omega_n\to \ell^2$ is locally bounded and, by \cite[Theorem A.5]{nlsbook}, analytic on $\Omega_n$.  
Next we prove that $F_{1,m}^n,$ $F_{2,m}^n$ are real valued on $\Hr\times \ell_{\R,\hat n}^2\times \ell_{\R}^2$ where
\[
\ell_{\R,\hat n}^2 : = \ell^2(\Z\setminus\{n\}, \;\R).
\]
Recall that for any $v\in\Hr$, $k\in\Z$, and $j=1,2$, $\lm_{j,k}^\pm \in\R$
and $w_{j,k}(\lm)\in\R$ for any $\lm\in \R\setminus G_{j,k}$, whereas for $\lm\in G_{j,k}$ one has $\ii w_{j,k}(\lm)\in \R.$
It then follows that $F_{j,m}^n$ are real valued on $\Hr\times \ell_{\R,\hat n}^2 \times \ell_{\R}^2$.
Indeed, in the case $\lm_{j,m}^+ = \lm_{j,m}^-$, this follows by Cauchy's integral, 
whereas in the case $\lm_{j,m}^- < \lm_{j,m}^+$, this can be seen by deforming the contour $\Gm[j,m]$ to the interval $G_{j,m} \subset \R$. 
To establish the statement on the uniform bounds for $F^n$ with respect to $n$, it  remains to consider $n$ large. 
Given $v\in\hat W$ and $(s_1,s_2)\in \ell^2\times \ell^2$, let $k_0\geq 1$ be such that
\[
U_{1,k} = D_k,\qquad U_{2,k} = -D_{-k},  \qquad \forall |k|\geq k_0\, ,
\]
and
\[
 |s_{j,k} | < \pi/4 \, , \qquad \forall |k|\geq k_0 , \,\, j=1,2.
\]
Then $\sigma_{j,k} = k\pi + s_{j,k}$, $j=1,2$, satisfy for any $|k|\geq k_0$,
\[
\sigma_{1,k} \in U_{1,k}, \quad -\frac{1}{16\sigma_{2,k}} \in U_{2,k}.
\]
According to (I-2) and (I-3) (cf. Section \ref{sec:realAmostReal}), for any $|k|\geq k_0$
\begin{equation} \label{eq:kap8.2.29.1}
|\sigma_{1,k}-\lm| \geq |k-m|/c \quad \forall \lm\in U_{1,m}, \; \forall m\not=k
\end{equation}
\begin{equation} \label{eq:kap8.2.29.2}
|\sigma_{2,k}+\frac{1}{16\lm}| \geq |k-m|/c \quad \forall \lm\in U_{2,m}, \; \forall m\not=k.
\end{equation}
To estimate $F_{1,m}^n$, write the integrand in \eqref{eq:kap8.2.28} in the form
\begin{equation}\label{eq:kap8.2.29.3}
(n-m) \frac{\sigma_{1,m}-\lm}{w_{1,m}(\lm)} \zeta_{1,m}^n (\lm) = \frac{\sigma_{1,m}-\lm}{w_{1,m}(\lm)} \frac{n-m}{\sigma_{1,n} - \lm} \ii \zeta_{1,m}(\lm)
\end{equation}
where
\begin{equation}\label{eq:kap8.2.29.3bis}
\zeta_{1,m}(\lm) : = \big(\prod_{k\not=m} \frac{\sigma_{1,k} - \lm}{w_{1,k}(\lm)} \big) \frac{f_{n,2}(\lm) / f_{n,2}(\infty)}{\sqrt[c]{\chi_2(\lm)} / \sqrt[c]{\chi_2(\infty)}}.
\end{equation}
Since we only need to consider the case where $n$ is  large we can assume that $n\geq k_0$.
In this case $\big|\frac{n-m}{\sigma_{1,n}-\lm}\big|\leq C$ for any $\lm\in U_{1,m}$ with $m\not=n$ and, by Corollary \ref{cor:kap6.4.1},
\begin{equation}\label{eq:kap8.2.29.4}
\sup_{\lm\in U_{1,m}} |\zeta_{1,m}(\lm) -1 | = \ell_m^2
\end{equation}
locally uniformly on $\Omega\cap (V_{v_0} \times \ell^2 \times \ell^2)$. Arguing as in the first part of the proof, 
one then concludes that
\[
\sup_{n\in\Z\setminus\{m\}} |F_{1,m}^n| = \ell_m^2
\]
locally uniformly on $\Omega \cap (V_{v_0} \times \ell^2\times \ell^2)$. The corresponding estimate for $F_{2,m}^n$ follows from \eqref{eq:kap8.2.27} and Corollary \ref{cor:kap6.4.1}. In this way one obtains that $\sup_{n} |F_{2,m}^n| = \ell_m^2$ locally uniformly on $\Omega \cap(V_{v_0}\times \ell^2 \times \ell^2).$
\end{proof}

With the application of the implicit function theorem in mind we now investigate the differential of the maps $F^n$ at a point $(v,s_1,s_2)$. For our purposes it suffices to restrict ourselves to the open subset $\Omega_r: = \Omega\cap (\Hr \times \ell_{\R}^2\times \ell_{\R}^2)$. We now compute the differential of $F^n$ with respect to $s_1,$ $s_2$ at any point of $\Omega_r$. By the analyticity of $F^n$ this is a bounded linear operator
\[
Q^n : \ell_{\hat n}^2 \times \ell^2 \to \ell_{\hat n}^2 \times \ell^2
\]
which is represented by an infinite matrix $\begin{pmatrix} Q_{11}^n & Q_{12}^n \\ Q_{21}^n & Q_{22}^n\end{pmatrix}$ where
\begin{align*}
Q_{11,mr}^n : =& \frac{\partial F_{1,m}^n}{\partial s_{1,r}} \quad (m,r\not=n),
& Q_{12,mr}^n : =& \frac{\partial F_{1,m}^n}{\partial s_{2,r}} \quad (m\not=n),
\\ Q_{21,mr}^n :=& \frac{\partial F_{2,m}^n }{\partial s_{1,r}} \quad (r\not=n),
& Q_{22,mr}^n :=& \frac{\partial F_{2,m}^n}{\partial s_{2,r}} \quad (m,r\in\Z).
\end{align*}

\begin{lem} \label{lem:kap8.2.16}
Let $n \ge 0$. On $\Omega_r$, the matrix elements of $Q^n$ are real and satisfy:
\[
(i)\quad
0\not= Q_{11,mm}^n = 2 + \ell_m^2 \; (m\not=n),
\quad Q_{11,mr}^n =\frac{\ell^2_m}{|m-r|} \; (m \not= r, \  m, r\not=n),
\quad Q_{12,mr}^n= \frac{\ell_m^2}{\langle r\rangle^2}\; (m\not=n).
\]
\[
(ii)\quad
0\not= Q_{22,mm}^n = 2\pi \frac{f_{n,1}(0)}{f_{n,2}(\infty)} + \ell_m^2,
\quad Q_{22,mr}^n = \frac{\ell^2_m}{|m-r|}+\frac{\ell_m^2}{\langle r\rangle}\;  (m \not= r),
\quad  Q_{21,mr}^n= \frac{\ell_m^2}{\langle r\rangle} \; (m,r\in\Z).
\]
The estimates of $Q_{11,mr}^n$  ($m \ne r, \ m, r \ne n$),  $Q_{22,mr}^n$  ($m \ne r$),   $Q_{12,mr}^n$ ($m \ne n$),  and 
$Q_{21,mr}^n$  are uniform with respect to $r \in \Z$.
\end{lem}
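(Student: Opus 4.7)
The plan is to compute the entries of $Q^n$ by differentiating the contour integrals defining $F^n$ under the integral sign. Logarithmic differentiation of the infinite products $f_{n,1}$ and $f_{n,2}$, taking into account the normalizing factor $1/f_{n,2}(s_2,\infty)$, gives the identities
\begin{align*}
\frac{\partial f_n}{\partial s_{1,r}}(\lambda) &= \frac{f_n(\lambda)}{\sigma_{1,r}-\lambda}, &
\frac{\partial f_n}{\partial s_{2,r}}(\lambda) &= \frac{-\frac{1}{16\lambda}}{\sigma_{2,r}\bigl(\sigma_{2,r}+\frac{1}{16\lambda}\bigr)}\, f_n(\lambda).
\end{align*}
Inserting these into \eqref{eq:kap8.2.20}--\eqref{eq:kap8.2.21} and combining with the representations \eqref{eq:kap8.2.24} and \eqref{eq:kap8.2.26} recasts every matrix element as the integral of an analytic factor divided by $w_{1,m}(\lambda)$ or $w_{2,m}(\lambda)$. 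For $v\in\Hr$, Lemma \ref{lem:kap8.2.15} then evaluates each such integral as $-2\pi\ii$ times the value of the analytic numerator at some $\nu\in G_{j,m}$, with an extra factor of $16\nu^2$ in the $w_{2,m}$ case; when $\gamma_{j,m}=0$ this collapses to an ordinary Cauchy integral at $\nu=\tau_{j,m}$.

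Concretely this yields, for instance,
$$
Q_{11,mm}^n = -2\pi\ii\,(n-m)\,\zeta_{1,m}^n(\nu),\qquad
Q_{11,mr}^n = -2\pi\ii\,(n-m)\,\frac{\sigma_{1,m}-\nu}{\sigma_{1,r}-\nu}\,\zeta_{1,m}^n(\nu)\ (r\neq m,n),
$$
and entirely analogous identities for $Q_{12,mr}^n$, $Q_{21,mr}^n$, $Q_{22,mr}^n$ via Lemma \ref{lem:kap8.2.15}(ii). The claimed asymptotics then follow from four ingredients: (i) the asymptotics \eqref{eq:kap8.2.25}, \eqref{eq:kap8.2.27} of $\zeta_{1,m}^n$ and $\zeta_{2,m}^n$, which produce the constants $2$ and $2\pi f_{n,1}(0)/f_{n,2}(\infty)$ on the diagonals; (ii) the bounds $\sigma_{1,m}-\nu=\ell_m^2$ for $\nu\in G_{1,m}$ and $\sigma_{2,m}+\frac{1}{16\nu}=\ell_m^2$ for $\nu\in G_{2,m}$, consequences of $\tau_{j,m}=m\pi+\ell_m^2$ and $|G_{j,m}|=\ell_m^2$; (iii) the lower bounds $|\sigma_{1,r}-\nu|\geq c|r-m|$ and $|\sigma_{2,r}+\frac{1}{16\nu}|\geq c|r-m|$ provided by properties (I-2)--(I-3) of the isolating neighborhoods; and (iv) for the $Q_{22}$ block the elementary inequality $|m|/(|r-m|\langle r\rangle)\leq 1/|r-m|+1/\langle r\rangle$, which produces the two-term remainder. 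Real-valuedness on $\Omega_r$ is automatic since $F^n$ is real-analytic and maps $\Omega_r$ into $\ell^2_{\R,\hat n}\times\ell^2_\R$, as established in the proof of Lemma \ref{lem:kap8.2.14}.

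The nonvanishing of the diagonal entries $Q_{11,mm}^n$ and $Q_{22,mm}^n$ reduces, via the formulas above, to $\zeta_{j,m}^n(\nu)\neq 0$ for $\nu\in G_{j,m}$; this holds on $\Omega_r$ because $\zeta_{j,m}^n$ is an explicit product of nonzero factors $\sigma_{j,k}-\nu$ (or $\sigma_{2,k}+\frac{1}{16\nu}$), $w_{j,k}(\nu)^{-1}$ ($k\neq m$), and nonvanishing ratios of canonical roots, thanks to the disjointness of the isolating neighborhoods. The main obstacle I anticipate is propagating the $\ell_m^2$ remainder estimates uniformly in the second index $r$; this will be handled exactly as in the proof of Lemma \ref{lem:kap8.2.14}, by choosing a single large $k_0\geq 1$ so that the uniform bounds \eqref{eq:kap8.2.29.1}--\eqref{eq:kap8.2.29.2} together with Corollary \ref{cor:kap6.4.1} control all tail contributions independently of $r$, leaving only finitely many indices that can be bounded by compactness on the locally bounded domain $\Omega_r$.
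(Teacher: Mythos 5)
Your proposal follows essentially the same route as the paper: differentiating the contour integrals \eqref{eq:kap8.2.20}--\eqref{eq:kap8.2.21} under the integral sign via logarithmic differentiation of the products, invoking Lemma \ref{lem:kap8.2.15} to reduce each entry to a value (or bound) on $G_{j,m}$, feeding in the asymptotics \eqref{eq:kap8.2.25} and \eqref{eq:kap8.2.27} for the $\zeta$-factors together with the gap estimates and (I-2)--(I-3) to get the $\ell^2_m$ remainders, and reading off non-vanishing of the diagonal entries from the explicit product form of $\zeta_{j,m}^n$. The only cosmetic difference is how you split the $Q_{22}$ remainder (you bound the combined fraction via $|m|/(|r-m|\langle r\rangle)\le 1/|r-m|+1/\langle r\rangle$, the paper bounds the two terms of $\frac{1}{\sigma_{2,r}+\frac{1}{16\lambda}}-\frac{1}{\sigma_{2,r}}$ separately), which yields the same two-term estimate.
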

\begin{proof}
By Lemma \ref{lem:kap8.2.14}, all coefficients of $Q^n$ are real valued on $\Omega_r$. Towards (i) let us first consider $(Q_{11}^n)_{mr}$ with $m,r\not=n$. By \eqref{eq:kap8.2.24bis} the term $\prod_{k\not=n,m} \frac{\sigma_{1,k}-\lm}{w_{1,k}(\lm)}$ of $\zeta_{1,m}^n$ is the only one which depends on $s_1$. Since for $r\not=n,m$
\[
\partial_{s_{1,r}} (\frac{\sigma_{1,r}-\lm}{w_{1,r}(\lm)}) =  \frac{\sigma_{1,r}-\lm}{w_{1,r}(\lm)} \frac{1}{\sigma_{1,r}-\lm}
\]
one has
\[
\partial_{s_{1,r}} \zeta_{1,m}^n (\lm) = \frac{1}{\sigma_{1,r}-\lm} \zeta_{1,m}^n(\lm).
\]
By \eqref{eq:kap8.2.28}, one then concludes that for $r\not=m,n$
\begin{equation}\label{eq:kap8.2.301}
Q_{11,mr}^n = (n-m) \int_{\Gm[1,m]} \frac{\sigma_{1,m}-\lm}{w_{1,m}(\lm)} \frac{1}{\sigma_{1,r}-\lm} \zeta_{1,m}^n(\lm) \dlm.
\end{equation}
By Lemma \ref{lem:kap8.2.15} and \eqref{eq:kap8.2.29.1}, \eqref{eq:kap8.2.29.3}, \eqref{eq:kap8.2.29.4},
$\sup_{n\in\Z\setminus \{m,r\}} |Q_{11,mr}^n|$ is of the order
\[
\max_{\lm\in G_{1,m}} | (\sigma_{1,m}-\lm) \frac{1}{\sigma_{1,r}-\lm}| = \frac{\ell_m^2}{|m-r|}
\]
locally uniformly on $\Omega_r$. For $m=r$ with $m\not=n$ the same arguments lead to
\[
Q_{11,mm}^n = (n-m) \int_{\Gm[1,m]} \frac{\zeta_{1,m}^n(\lm)}{w_{1,m}(\lm)} \dlm.
\]
Again using Lemma \ref{lem:kap8.2.15} one sees that there exists $\nu\in G_{1,m}$ so that 
$$
Q_{11,mm}^n = -2\pi\ii (n-m) \zeta_{1,m}^n(\nu) .
$$ 
By the estimate \eqref{eq:kap8.2.25} it then follows that $Q_{11,mm}^n = 2 + \ell_m^2$. 
By inspection of \eqref{eq:kap8.2.24bis} one concludes that $Q_{11,mm}^n\not=0$ for any $m\not=n$. 
Next let us estimate $Q_{12,mr}^n$. By \eqref{eq:kap8.2.24bis}, $f_{n,2}(\lm) = \prod_{k\in\Z} \frac{\sigma_{2,k} + \frac{1}{16\lm} }{\pi_k}$ and $f_{n,2}(\infty)^{-1}$ 
are the only terms in $\zeta_{1,m}^n(\lm)$ which depend on $s_2$. Since $f_{n,2}(\infty) = \prod_{k\not=n} \frac{\sigma_{2,k}}{w_{2,k}(0)}$, one has for $r\not=n$
\[
\partial_{s_{2,r}} (f_{n,2}(\infty))^{-1} = - (f_{n,2}(\infty))^{-1}\frac{1}{\sigma_{2,r}}, \qquad 
\partial_{s_{2,r}} f_{n,2}(\lm) = f_{n,2}(\lm) \frac{1}{\sigma_{2,r} + \frac{1}{16\lm}} ,
\]
implying that
\begin{equation}\label{eq:kap8.2.302}
Q_{12,mr}^n = (n-m) \int_{\Gm[1,m]} \frac{\sigma_{1,m}-\lm}{w_{1,m}(\lm)} \big(\frac{1}{\sigma_{2,r}+\frac{1}{16\lm}} - \frac{1}{\sigma_{2,r}}\big) \zeta_{1,m}^n(\lm) \dlm.
\end{equation}
For $|m|$ sufficiently large, $U_m = D_m$ implying that $1/16\lm \in U_{-|m|}$ for any $\lm\in U_{|m|}$. 
Hence for such $m$,  $|\sigma_{2,r}+ \frac{1}{16\lm}| \geq \frac{1}{C}\frac{1}{\langle r\rangle}$ for any $\lm\in U_{1,m}$ and $r\in\Z$. 
A similar estimate holds for $|r|$ sufficiently large and any $m\in\Z$. Hence for any $\lm\in G_{1,m}$,
\[
\big| \frac{1}{\sigma_{2,r} + \frac{1}{16\lm}} - \frac{1}{\sigma_{2,r}} \big| \leq C\frac{1}{\langle r\rangle^2}.
\]
Again using Lemma \ref{lem:kap8.2.15} and estimate \eqref{eq:kap8.2.25} one sees that $\sup_{n\in\Z\setminus\{m\}} |Q_{12,mr}^n| = \frac{1}{\langle r\rangle^2} \ell_m^2$.

\noindent (ii) First let us consider $Q_{22,mr}^n$ ($m,r\in\Z$). Arguing as in item (i) one sees that for $m=r$ one has by \eqref{eq:kap8.2.26bis} and \eqref{eq:kap8.2.29}
\begin{equation}\label{eq:kap8.2.303}
Q_{22,mm}^n = 16\pi_m^2 \int_{\Gm[2,m]} (1-\frac{\sigma_{2,m}+ \frac{1}{16\lm} }{\sigma_{2,m}} ) \frac{\zeta_{2,m}^n(\lm)}{w_{2,m}(\lm)} \dlm = 16\pi_m^2 \int_{\tilde{\Gamma}_{2,m}} \frac{\mu}{\sigma_{2,m}} \frac{\zeta_{2,m}^n (-\frac{1}{16\mu})}{ w_{2,m}(-\frac{1}{16\mu})} \frac{\dmu}{16\mu^2}.
\end{equation}
Again using \eqref{eq:kap8.2.29} and Lemma \ref{lem:kap8.2.15} one concludes that there exists $\nu\in G_{2,m}$ so that
\[
Q_{22,mm}^n = -2\pi \ii \frac{-\frac{1}{16\nu}}{\sigma_{2,m}} \zeta_{2,m}^n(\nu) 16\nu^2 \cdot 16\pi_m^2
\]
which by \eqref{eq:kap8.2.26bis} does not vanish. Note that since $\nu\in G_{2,m}$
\[
\frac{-\frac{1}{16\nu}}{\sigma_{2,m}} = 1+ \ell_m^2 , \qquad 16\nu^2 \cdot 16\pi_m^2 = 1+ \frac{1}{m} \ell_m^2.
\]
Hence by \eqref{eq:kap8.2.27} one has $Q_{22,mm}^n = 2\pi \frac{f_{n,1}(0)}{f_{n,2}(\infty)} + \ell_m^2$ as claimed. Next let us estimate $Q_{22,mr}^n$ for $m\not=r$. Arguing as in the proof of item (i) one sees that
\[
Q_{22,mr}^n = 16\pi_m^2 \int_{\Gm[2,m]} \frac{\sigma_{2,m} + \frac{1}{16\lm}}{w_{2,m}(\lm)} \big(\frac{1}{\sigma_{2,r} + \frac{1}{16\lm}} - \frac{1}{\sigma_{2,r}}\big) \zeta_{2,m}^n(\lm)\dlm.
\]
Since for $\lm\in G_{2,m}$, $-\frac{1}{16\lm} = m\pi + \ell_m^2,$ one has
\[
|\frac{1}{\sigma_{2,r} + \frac{1}{16\lm}}| \leq C\frac{1}{|r-m|}, \qquad \quad
|\frac{1}{\sigma_{2,r}}| \leq \frac{1}{\langle r\rangle}.
\]
By the change of variable $\lm= - \frac{1}{16\mu}$ one deduces from \eqref{eq:kap8.2.27} and Lemma \ref{lem:kap8.2.15} that
\[
\sup_n |Q_{22,mr}^n| = \frac{1}{|m-r|} \ell_m^2 + \frac{1}{\langle r\rangle} \ell_m^2
\]
as claimed. Finally we estimate $Q_{21,mr}^n$. In this case one has
\[
Q_{21,mr}^n = 16\pi_m^2 \int_{\Gm[2,m]} \frac{\sigma_{2,m} + \frac{1}{16\lm}}{w_{2,m}(\lm)} \frac{1}{\sigma_{1,r} + \lm} \zeta_{2,m}^n (\lm) \dlm.
\]
Since $|\sigma_{1,r} + \lm | \geq \frac1C\langle r\rangle$ for $\lm\in G_{2,m}$ one gets
\[
|\frac{1}{\sigma_{1,r} + \lm} | \leq C\frac{1}{\langle r\rangle}.
\]
Again using  \eqref{eq:kap8.2.27}  and Lemma \ref{lem:kap8.2.15} one sees that $\sup_n | Q_{21,mr}^n| = \frac{1}{\langle r\rangle }\ell_m^2$ as claimed.
\end{proof}

\begin{lem}\label{lem:kap8.2.17}
Let $n \ge 0$.
For any $(v,s_1,s_2)\in \Omega_r$, the differential $Q^n$ of $F^n$ with respect to $(s_1,s_2)$ is of the form
\[
Q^n = D^n + K^n: \ell_{\hat n}\times \ell^2 \to \ell_{\hat n}^2 \times \ell^2
\]
where $D^n$ is a linear isomorphism in diagonal form and $K^n$ a compact operator.
\end{lem}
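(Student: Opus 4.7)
My plan is to read off the decomposition directly from the matrix structure provided by Lemma \ref{lem:kap8.2.16}. Define the diagonal part
\[
D^n := \begin{pmatrix} D_{11}^n & 0 \\ 0 & D_{22}^n \end{pmatrix}, \qquad (D_{11}^n)_{mm} := Q_{11,mm}^n \; (m\neq n), \quad (D_{22}^n)_{mm} := Q_{22,mm}^n \; (m\in\Z),
\]
and set $K^n := Q^n - D^n$, so that the matrix of $K^n$ agrees with $Q^n$ off the diagonal of each of the two diagonal blocks and agrees with the full blocks $Q_{12}^n$, $Q_{21}^n$ elsewhere.

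The first step is to verify that $D^n$ is a linear isomorphism on $\ell^2_{\hat n}\times\ell^2$. By Lemma \ref{lem:kap8.2.16} the diagonal entries $Q_{11,mm}^n$ and $Q_{22,mm}^n$ are nonzero for every $m$, and they converge to the nonzero constants $2$ and $2\pi f_{n,1}(0)/f_{n,2}(\infty)$ respectively as $|m|\to\infty$ (nonvanishing of the limit of $Q_{22,mm}^n$ follows from the fact that $f_{n,1}(0)$ and $f_{n,2}(\infty)$ are convergent infinite products of nonzero factors, which on $\Omega_r$ are bounded away from zero). Hence the sequences of reciprocals $1/Q_{11,mm}^n$ and $1/Q_{22,mm}^n$ are bounded, and $(D^n)^{-1}$ is a bounded diagonal operator.

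The second step is to show $K^n$ is compact, which I will do by proving it is Hilbert–Schmidt. Using the estimates from Lemma \ref{lem:kap8.2.16}, the four blocks of $K^n$ contribute
\[
\sum_{\substack{m,r\neq n \\ m\neq r}} |Q_{11,mr}^n|^2 \;\lesssim\; \sum_{m\neq n} a_m^2 \sum_{r\neq m} \frac{1}{|m-r|^2}, \qquad \sum_{\substack{m,r\in\Z\\ m\neq r}} |Q_{22,mr}^n|^2 \;\lesssim\; \sum_{m\in\Z} b_m^2 \Big(\sum_{r\neq m}\frac{1}{|m-r|^2} + \sum_{r\in\Z}\frac{1}{\langle r\rangle^2}\Big),
\]
\[
\sum_{m\neq n,\, r\in\Z} |Q_{12,mr}^n|^2 \;\lesssim\; \sum_{m\neq n} c_m^2 \sum_{r\in\Z} \frac{1}{\langle r\rangle^4}, \qquad \sum_{m\in\Z,\, r\neq n} |Q_{21,mr}^n|^2 \;\lesssim\; \sum_{m\in\Z} d_m^2 \sum_{r\neq n} \frac{1}{\langle r\rangle^2},
\]
where $(a_m), (b_m), (c_m), (d_m)$ are $\ell^2$-sequences coming from the $\ell_m^2$-factors in Lemma \ref{lem:kap8.2.16}. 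Each inner sum over $r$ is absolutely convergent (uniformly in $m$), hence each double sum is finite, so $\|K^n\|_{\mathrm{HS}}^2 <\infty$ and $K^n$ is compact on $\ell^2_{\hat n}\times\ell^2$.

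Since the statement is purely existential (no quantitative bound is claimed), I expect no serious obstacle: the diagonal isomorphism follows immediately from the asymptotics and nonvanishing statements in Lemma \ref{lem:kap8.2.16}, and the Hilbert–Schmidt compactness is a routine consequence of the uniform-in-$r$ decay of the off-diagonal entries. The only subtlety worth checking carefully is that the uniformity statements in Lemma \ref{lem:kap8.2.16} allow one to bring the summation over $r$ inside (i.e., that the constants in ``$\ell^2_m$'' do not depend on $r$), which is precisely what was asserted there.
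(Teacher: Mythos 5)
Your proof is correct and follows essentially the same route as the paper: define $D^n$ as the diagonal of $Q^n$, deduce that it is a linear isomorphism from the nonvanishing and asymptotic statements of Lemma \ref{lem:kap8.2.16}, and conclude that $K^n = Q^n - D^n$ is Hilbert--Schmidt (hence compact) from the off-diagonal decay estimates, crucially relying on the uniformity in $r$. You supply a bit more detail than the paper on the Hilbert--Schmidt computation, but the underlying argument is the same.
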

\begin{proof} Let $D^n$ be the diagonal of $Q^n$,  
$$
D^n = \text{diag}((Q_{11,mm}^n)_{m\not=n}, (Q_{22,mm}^n)_{m\in\Z}).
$$ 
By the preceding lemma, any diagonal entry is nonzero and 
$$
\lim_{|m|\to\infty} Q_{11,mm}^n =2 \ ,  \qquad
\lim_{|m|\to\infty} Q_{22,mm}^n =2\pi  \frac{f_{n,1}(0)}{f_{n,2}(\infty)}.
$$ 
Since $\sigma_{j,k}^n \in \C^*$ and $\sigma_{j,k}^n = \pi k + \ell_k^2$ one has
\[
f_{n,2} (\infty) = \prod_{k\in\Z} \frac{\sigma_{2,k}^n}{\pi_k} \in \C^*, \qquad 
f_{n,1}(0) = \prod_{k\not=n} \frac{\sigma_{1,k}^n}{\pi_k} \in\C^*.
\]
Hence $D^n : \ell_{\hat n}^2 \times \ell^2 \to \ell_{\hat n}^2 \times \ell^2 $ is a linear isomorphism. 
Moreover, $K^n:= Q^n - D^n$ is a bounded linear operator on $\ell_{\hat n}^2 \times \ell^2 $ with vanishing diagonal elements. 
By the estimates of Lemma \ref{lem:kap8.2.16}, $K^n$ is Hilbert-Schmidt and hence compact.
\end{proof}
 \begin{lem} \label{lem:kap8.2.18}
 Let $n \ge 0$.
 At any point $(v,s_1,s_2)\in \Omega_r$, $Q^n$ is one-to-one on $\ell_{\hat n}^2 \times \ell^2 $.
 \end{lem}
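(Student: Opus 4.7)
The plan is to show $\ker Q^n = \{0\}$ in $\ell_{\hat n}^2 \times \ell^2$. Since $Q^n = D^n + K^n$ is Fredholm of index zero by Lemma \ref{lem:kap8.2.17}, and since by Lemma \ref{lem:kap8.2.16} all its matrix entries at a point of $\Omega_r$ are real, the complex kernel is the complexification of the real kernel; hence it suffices to treat a real pair $(h_1, h_2) \in \ell_{\R,\hat n}^2 \times \ell_{\R}^2$ with $Q^n(h_1, h_2) = 0$. I view $Q^n(h_1, h_2)$ as the directional derivative of $F^n$ at $(v, s_1, s_2)$ along the one-parameter family $\sigma_{j,k}(t) := \sigma_{j,k} + t\, h_{j,k}$. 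Logarithmic differentiation of the product formulas \eqref{eq:kap8.2.22} then yields
\[
g(\lambda) := \partial_t f_n(s_1(t), s_2(t), \lambda)\big|_{t=0} = f_n(\lambda)\, R(\lambda),
\]
where
\[
R(\lambda) := \sum_{k \ne n} \frac{h_{1,k}}{\sigma_{1,k} - \lambda} + \sum_{k \in \Z} h_{2,k}\!\left(\frac{1}{\sigma_{2,k} + \frac{1}{16\lambda}} - \frac{1}{\sigma_{2,k}}\right).
\]
The simple poles of $R$ at $\sigma_{1,k}$ ($k \ne n$) and at $-(16\sigma_{2,k})^{-1}$ ($k \in \Z$) are cancelled by the corresponding simple zeros of $f_n$, so $g$ is in fact analytic on all of $\C^*$. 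By the definitions \eqref{eq:kap8.2.20}--\eqref{eq:kap8.2.21}, the hypothesis $Q^n(h_1, h_2) = 0$ translates into the vanishing of the contour integrals of $g/\sqrt[c]{\chi_p}$ around every $\Gamma_{1,m}$ ($m \ne n$) and every $\Gamma_{2,m}$ ($m \in \Z$).

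Because $v \in \Hr$ and all parameters are real, $g$ is real analytic on each isolating neighborhood $U_{j,m}$. Applying Lemma \ref{lem:kap8.2.18.5} on every $U_{j,m}$ with $(j,m) \ne (1,n)$ produces a zero $\rho_{j,m} \in G_{j,m}$ of $g$. Moreover, the defining relation $s_{1,n} = \tau_{1,n} - n\pi$ of $\Omega_n$ forces $h_{1,n} = 0$, so $R$ has no pole at $\sigma_{1,n} = \tau_{1,n}$, and therefore $g(\tau_{1,n}) = f_n(\tau_{1,n}) R(\tau_{1,n}) = 0$. Setting $\rho_{1,n} := \tau_{1,n}$, we thus obtain a forced zero of $g$ in every single gap $G_{j,m}$. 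I then introduce the auxiliary analytic function on $\C^*$
\[
\tilde g(\lambda) := \prod_{k \in \Z} \frac{\rho_{1,k} - \lambda}{\pi_k}\cdot\prod_{k \in \Z}\frac{\rho_{2,k} + \frac{1}{16\lambda}}{\pi_k},
\]
which is well-defined by \cite[Lemma C.1]{nlsbook} since $\rho_{j,k} = k\pi + \ell_k^2$, and observe that $g/\tilde g$ extends to an analytic function on $\C^*$.

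Using the asymptotic machinery of Lemma \ref{lem:sindurchsqrtcEstimate} and Corollary \ref{cor:kap6.4.1}, both $g$ and $\tilde g$ behave like a multiple of $\sin(\omega(\lambda))$ uniformly on the circles $\partial B_N$ and $\partial B_{-N}$, so $g/\tilde g$ is uniformly bounded there; the Liouville theorem on $\C^*$ (Appendix \ref{Liouville}) then gives $g = C_0 \tilde g$ for some constant $C_0$. A quantitative refinement of this asymptotic comparison, using that every summand of $R$ vanishes pointwise at $\lambda = \infty$ while $\tilde g/\sin(\omega(\lambda))$ tends to a nonzero limit there, shows that in fact $C_0 = 0$, and hence $g \equiv 0$. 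Since $f_n \not\equiv 0$, the identity $g = f_n R$ then forces $R \equiv 0$ on its full domain of analyticity; reading off the residue of $R$ at the simple pole $\sigma_{1,k}$ yields $h_{1,k} = 0$ for every $k \ne n$, and the residue at $-(16\sigma_{2,k})^{-1}$ yields $h_{2,k} = 0$ for every $k \in \Z$. Hence $\ker Q^n = \{0\}$, as required.

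The main obstacle is the quantitative step pinning $C_0 = 0$: the raw zero-counting alone gives only $g = C_0 \tilde g$, and one must carefully track next-to-leading terms in the asymptotic expansions of $f_n R$ and $\tilde g$ on $\partial B_{\pm N}$. This uses in an essential way the uniformity in $\lambda$ of the product estimates established in Sections \ref{sec:prod}--\ref{sec:roots} together with the specific structure of $R$ arising from logarithmic differentiation, whose individual summands decay at infinity. Some attention is also needed to handle gaps $G_{j,m}$ that collapse to a point, where the forced zero $\rho_{j,m}$ is unambiguous but must be absorbed into the product representation of $\tilde g$ without spurious factors.
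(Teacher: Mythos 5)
Your overall plan mirrors the paper's in its first half: define $g = \partial_t f_n|_{t=0} = f_n\cdot R$, translate $Q^n h = 0$ into vanishing contour integrals of $g/\sqrt[c]{\chi_p}$, and invoke Lemma \ref{lem:kap8.2.18.5} to force a real zero $\rho_{j,m} \in G_{j,m}$ for each $(j,m)\neq(1,n)$. But there is a genuine gap in the step where you claim a forced zero at $\tau_{1,n}$. You write that since $s_{1,n}$ is held fixed on $\Omega_n$, $R$ has no pole at $\sigma_{1,n}=\tau_{1,n}$, ``and therefore $g(\tau_{1,n}) = f_n(\tau_{1,n})\,R(\tau_{1,n}) = 0$.'' That is a non-sequitur: absence of a pole at a point is not the same as vanishing there. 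Indeed $f_{n,1}$ deliberately omits the $k=n$ factor (cf.\ \eqref{eq:kap8.2.11}), so $f_n(\tau_{1,n})\neq 0$ generically, and $R(\tau_{1,n})$ is just a finite number. Nothing forces $g(\tau_{1,n})=0$. Consequently your auxiliary product $\tilde g$, which includes a factor $\frac{\rho_{1,n}-\lambda}{\pi_n}$ with $\rho_{1,n}:=\tau_{1,n}$, has one more zero than $g$, so $g/\tilde g$ has a simple pole at $\tau_{1,n}$ and is not analytic on $\C^*$. The Liouville argument therefore does not apply, and the conclusion $g=C_0\tilde g$ is unjustified.

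The fix, which is precisely what the paper does, is to replace $g$ by $\phi(\lambda) := g(\lambda)\,(\sigma_{1,n}-\lambda)$ before comparing products: the extra factor supplies the missing zero at $\sigma_{1,n}=\tau_{1,n}$, and the zero count then matches. The paper then applies the interpolation lemma (Lemma \ref{ap:lem:interpolation}) directly, which both encodes the Liouville/annulus-limit argument and bypasses the separate step of pinning down a constant; after verifying $\sup_{\partial B_N}|\phi/\sin\lambda|=o(1)$ and $\sup_{\partial B_{-N}}|\phi/\sin(-\tfrac{1}{16\lambda})|=o(N)$, the vanishing of $\phi$ at the interpolation nodes immediately gives $\phi\equiv 0$. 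Your Liouville-plus-$C_0=0$ route can be made to work along the same lines once the factor $(\sigma_{1,n}-\lambda)$ is inserted and the quotient is formed against a product with the correct zero set, and your observation that $R\to 0$ on $\partial B_N$ is the right mechanism for killing the constant; but as written the proposal fails at the cancellation-of-poles step.
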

\begin{proof}
Suppose that $Q^nh =0$ for some $h=(h_1,h_2)\in \ell_{\hat n}^2 \times \ell^2 $. Recall that $f_n(s_1, s_2,\lm)$ is analytic on $\ell_{\hat n}^2 \times \ell^2 \times\C^*$. 
In particular, $\phi_n(\lm) : = \partial_\epsilon |_{\epsilon=0} f_n((s_1,s_2) + \epsilon (h_1,h_2),\lm)$ is analytic on $\C^*$. By assumption we have for any $m\not=n$
\[
0 = \sum_{r\not=n} Q_{11,mr}^n h_{1,r} + \sum_{r\in\Z} Q_{12,mr}^n h_{2,r} = (n-m) \int_{\Gm[1,m]} \frac{\phi_n(\lm)}{\sqrt[c]{\chi_p(\lm)}} \dlm
\]
and for any $m\in\Z$
\[
0 = \sum_{r\in\Z} Q_{21,mr}^n h_{1,r} + \sum_{r\in\Z} Q_{22,mr}^n h_{2,r} = 16\pi_m^2\pi_n  \int_{\Gm[2,m]} \frac{\phi_n(\lm)}{\sqrt[c]{\chi_p(\lm)}} \dlm.
\]
We will use Lemma \ref{ap:lem:interpolation} (interpolation lemma) to show that $\phi_n\equiv 0$ which will imply that $h=0$. 
Since $\phi_n(\lm)$ is real for $\lm\in\R^*$, it follows from Lemma \ref{lem:kap8.2.18.5} that $\phi_n(\lm)$ has roots $\rho_{1,m}\in G_{1,m}$ $(m\not=n)$, 
and $-\frac{1}{16\rho_{2,m}}\in G_{2,m}$ ($m\in\Z$). On the other hand, for $\lm\in\partial B_N$ or $\lm\in\partial B_{-N}$ with $N$ sufficiently large, 
$\sigma_{1,r} - \lm \not=0$, $\sigma_{2,r} + \frac{1}{16\lm} \not=0$ for any $r\not=0$. Therefore $\phi_n(\lm)$ for $\lm\in\partial B_N$ or $\lm\in\partial B_{-N}$ can be written as
\begin{align*}
\phi_n(\lm) =& \sum_{r\not=n} \partial_{s_{1,r}} (f_n(\lm)) h_{1,r} + \sum_{r\in\Z} \partial_{s_{2,r}}(f_n(\lm))h_{2,r}
\\=& f_n(\lm) \sum_{r\not=n} \frac{1}{\sigma_{1,r}-\lm} h_{1,r} + f_n(\lm) \sum_{r\in\Z} \big(\frac{1}{\sigma_{2,r} + \frac{1}{16\lm} } - \frac{1}{\sigma_{2,r}}\big) h_{2,r}.
\end{align*}
We want to apply Lemma \ref{ap:lem:interpolation} to
\[
\phi(\lm) : = \phi_n(\lm) (\sigma_{1,n}-\lm).
\]
We now verify that its assumptions are satisfied. First we show that $\sup_{\lm\in\partial B_N} |\frac{\phi(\lm)}{\sin(\lm)}| \to0$ as $N\to\infty$. By the product representation of $\sin(\lm)$,
$\sin(\lm) = -\prod_{m\in\Z} \frac{m\pi-\lm}{\pi_m},$ one gets for $\lm\in\partial B_N$
\[
f_n(\lm) \cdot \frac{\sigma_{1,n}-\lm}{\sin(\lm)} = -\frac{f_{n,2}(\lm)}{f_{n,2}(\infty)} \frac{f_{n,1}(\lm)}{\sin(\lm)} \frac{\sigma_{1,n}-\lm}{\pi_n} = \frac{ f_{n,2}(\lm)}{f_{n,2}(\infty)} \prod_{k\in\Z} \frac{\sigma_{1,k}-\lm}{k\pi -\lm}.
\]
It implies that the quotient of $\phi(\lm) = \phi_n(\lm)(\sigma_{1,n}-\lm)$ with $\sin(\lm)$ can be written as
\[
\frac{\phi(\lm)}{\sin(\lm)} = \phi_n(\lm) \frac{\sigma_{1,n}-\lm}{\sin(\lm)} = \frac{ f_{n,2}(\lm)}{f_{n,2}(\infty)} \big(\prod_{k\in\Z} \frac{\sigma_{1,k} - \lm}{k\pi - \lm}\big) \cdot \sum_{r\not=n} \big(\frac{1}{\sigma_{1,r}-\lm} h_{1,r} + \sum_{r\in\Z} \big(\frac{1}{\sigma_{2,r} + \frac{1}{16\lm}} -\frac{1}{\sigma_{2,r}}\big) h_{2,r}.
\]
By \cite[Lemma C.5]{nlsbook}
\[
\sup_{\lm\in\partial B_N} \big|\prod_{k\in\Z} \frac{\sigma_{1,k}-\lm}{k\pi - \lm} \big| = O(1)  \quad \text{as} \; N\to\infty.
\]
Since $f_{n,2}$ is analytic near $\lm=\infty$ and $f_{n,2}(\infty)\not=0$ one has
\[  
\sup_{\lm \in \partial B_N} | \frac{ f_{n,2}(\lm)}{f_{n,2}(\infty)} | = O(1) \qquad  \text{as} \quad N \to \infty\, .
\]
Furthermore,
\[
\sup_{\lm\in\partial B_N} \big| \sum_{r\not=n} \frac{1}{\sigma_{1,r}-\lm} h_{1,r}\big| \leq 
C\big(\sum_{|r|\leq N/2} \frac{1}{(r-N)^2}\big)^{1/2} + C\big( \sum_{|r|>N/2} |h_{1,r}|^2\big)^{1/2} = o(1) \quad \text{as }\; N\to\infty
\]
and since 
$$
\frac{1}{\sigma_{2,r}+\frac{1}{16\lm}} - \frac{1}{\sigma_{2,r}} = \frac{\frac{1}{16\lm}}{(\sigma_{2,r} + \frac{1}{16\lm})\sigma_{2,r}}
$$ 
one has
\[
\sup_{\lm\in\partial B_N} \big| \sum_{r\in\Z} \big(\frac{1}{\sigma_{2,r} + \frac{1}{16\lm}} -\frac{1}{\sigma_{2,r}}\big) h_{2,r}\big| \leq 
C\sup_{\lm\in\partial B_N} \frac{1}{16|\lm|} \big(\sum_{r\in\Z} \frac{1}{\langle r\rangle^2} |h_{2,r}| \big) \to 0\quad \text{as} \;  N\to \infty.
\]
Altogether it thus follows that
\[
\sup_{\lm\in\partial B_N}  \big|\frac{\phi(\lm)}{\sin(\lm)}\big| = o(1) \quad \text{as}\; N\to\infty.
\]
Similarly we estimate
$$
 \sup_{\lm\in\partial B_{-N}} \Big|\frac{\phi(\lm)}{\sin(-\frac{1}{16\lm})}\Big| =
 \sup_{\mu\in\partial B_N} \Big| \frac{\phi(-\frac{1}{16\mu})}{\sin(\mu)} \Big| \, .
 $$
 First note that the latter expression equals
$$
\sup_{\mu\in\partial B_N} \Big( \Big| \frac{\sigma_{1,n} + \frac{1}{16\mu}}{f_{n,2}(\infty)} \Big|  \Big| \frac{f_{n,2}(-\frac{1}{16\mu})}{\sin(\mu)} \Big|  \Big| f_{n,1} (-\frac{1}{16\mu}) \Big|\Big) 
 \cdot \Big( \big| \sum_{r\not=n} \frac{1}{\sigma_{1,r} + \frac{1}{16\mu}} h_{1,r}\big| + \big| \sum_{r\in\Z} \big(\frac{1}{\sigma_{2,r}-\mu} - \frac{1}{\sigma_{2,r}}\big) h_{2,r}\big| \Big).
$$
Since $|\frac{1}{\sigma_{1,r} + \frac{1}{16\mu}}| \leq C\frac{1}{\langle r\rangle}$ we get
\[
\sum_{r\not=n}\big|  \frac{1}{\sigma_{1,r} + \frac{1}{16\mu}} h_{1,r}\big|  \leq \Big(\sum_{r\not=n} \frac{1}{|\sigma_{1,r} + \frac{1}{16\mu}|^2} \Big)^{1/2} \Big(\sum_{r\in\Z} |h_{1,r}|^2\Big)^{1/2} = O(1)
\]
and since
\[
|\frac{1}{\sigma_{2,r}-\mu} - \frac{1}{\sigma_{2,r}} | \leq \frac{1}{|\sigma_{2,r}-\mu|} + \frac{1}{|\sigma_{2,r}|} \leq C\Big( \frac{1}{|r-(N+1/2)|} + \frac{1}{|r+(N+1/2)|} + \frac{1}{\langle r\rangle}\Big)
\]
it follows that
\[
 \sum_{r\in\Z} \big|\frac{1}{\sigma_{2,r}-\mu} - \frac{1}{\sigma_{2,r}}\big| |h_{2,r}|
 \leq C \big(\sum_{r} |h_{2,r}|^2\big)^{1/2}.
\]
Altogether this shows
\[
\sup_{\lm\in\partial B_{-N}} |\frac{\phi(\lm)}{\sin(\lm)}| = O(1) \text{ (and hence } o(N) )\quad \text{as} \; N\to\infty.
\]
We thus can apply Lemma \ref{ap:lem:interpolation} to $\phi(\lm)$. Since $\phi(\rho_{1,m}) = 0$ ($m\not=n$), $\phi(-(16\rho_{2,m})^{-1}) = 0$ ($m\in\Z$), 
and $\rho_{j,m} = m\pi + \ell_r^2$ it then follows that $\phi\equiv 0$ and hence $\phi_n \equiv 0$, or for any $\lm\in\C^*$
\begin{equation}\label{eq:kap8.2.40}
 0= \sum_{r\not=n} \frac{f_n(\lm) }{\sigma_{1,r}-\lm} h_{1,r} + \sum_{r\in\Z} \frac{-\frac{1}{16\lm}}{(\sigma_{2,r} + \frac{1}{16\lm}) \sigma_{2,r}} f_n(\lm) h_{2,r}.
\end{equation}
Evaluating the right hand side of \eqref{eq:kap8.2.40} at $\lm=\sigma_{1,m}$ for $m\not=n$ one obtains $0 = \dot f_n(\sigma_{1,m}) h_{1,m}$. 
Since $\sigma_{1,m}$ is a simple root of $f_n(\lm)$, $\dot f_n(\sigma_{1,m})\not=0$ and hence $h_{1,m} = 0$. 
Similarly, evaluating the right hand side of \eqref{eq:kap8.2.40} at $\kappa_{2,m} = -(16\sigma_{2,m})^{-1}$ for any given $m\in\Z$ 
one obtains $0 = \dot f_n(\kappa_{2,m}) h_{2,m},$ yielding $h_{2,m}=0$. Altogether we have shown that $h=0$ and hence $Q^n$ is one-to-one.
\end{proof}

The latter two lemmas together with the Fredholm alternative yield
\begin{cor}\label{cor:kap8.2.19} Let $n \ge 0$.
At any point in $\Omega_r$, $Q^n$ is a linear isomorphism of $\ell_{\hat n}^2 \times \ell^2.$
\end{cor}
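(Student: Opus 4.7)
The plan is to combine Lemma \ref{lem:kap8.2.17} and Lemma \ref{lem:kap8.2.18} via the Fredholm alternative, as signaled by the text preceding the corollary. By Lemma \ref{lem:kap8.2.17}, at any $(v,s_1,s_2) \in \Omega_r$ one has the decomposition $Q^n = D^n + K^n$ on $\ell^2_{\hat n} \times \ell^2$, where $D^n$ is a diagonal linear isomorphism (all diagonal entries are nonzero and bounded away from $0$, since $Q^n_{11,mm} \to 2$ and $Q^n_{22,mm} \to 2\pi f_{n,1}(0)/f_{n,2}(\infty) \in \C^*$ as $|m|\to\infty$) and $K^n$ is Hilbert--Schmidt, hence compact.

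The first step is to factor $Q^n = D^n \big(I + (D^n)^{-1} K^n \big)$. Since $D^n$ is a linear isomorphism of $\ell^2_{\hat n} \times \ell^2$, it suffices to show that $I + (D^n)^{-1} K^n$ is an isomorphism. The operator $(D^n)^{-1} K^n$ is compact as the composition of a bounded operator with a compact one. Therefore, by the classical Fredholm alternative for compact perturbations of the identity on a Hilbert space, $I + (D^n)^{-1} K^n$ is bijective if and only if it is injective.

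The second step is to verify injectivity, which is exactly Lemma \ref{lem:kap8.2.18}: if $Q^n h = 0$ for some $h \in \ell^2_{\hat n} \times \ell^2$, then $h = 0$. Since $D^n$ is injective, the injectivity of $Q^n$ is equivalent to the injectivity of $I + (D^n)^{-1} K^n$. Combining these observations, $I + (D^n)^{-1} K^n$ is an isomorphism, and therefore so is $Q^n$.

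The bulk of the work has already been done in the two preceding lemmas; the corollary itself is a short, essentially formal consequence, so there is no real obstacle to overcome here. The only detail to be careful about is to record that all statements are to be interpreted in the Hilbert space $\ell^2_{\hat n} \times \ell^2$ (with its natural inner product), which is where Lemma \ref{lem:kap8.2.17} places $D^n$ and $K^n$, and where the Fredholm alternative applies in its standard form.
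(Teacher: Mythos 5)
Your proof is correct and follows exactly the route the paper indicates: the text preceding the corollary states that it follows from the two preceding lemmas via the Fredholm alternative, and your argument is precisely the standard unwinding of that statement (factoring $Q^n = D^n(I + (D^n)^{-1}K^n)$, using compactness from Lemma \ref{lem:kap8.2.17} and injectivity from Lemma \ref{lem:kap8.2.18}).
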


Lemma \ref{lem:kap8.2.14} and Corollary \ref{cor:kap8.2.19} allow to apply the implicit function theorem to any particular  solution 
$s^n= (s_1^n,s_2^n)$ of $F^n(v,s_1,s_2)=0$ in $\Omega_{r,n} = \Omega_r\cap \Omega_n$.

\begin{prop}\label{prop:kap8.2.20}
For any $n\geq0$, there exists a real analytic map
\[
s^n= (s_1^n,s_2^n) : \Hr \to \ell_{\hat n}^2 \times \ell^2
\]
with graph in $\Omega_{r,n}$ so that $F^n(v,s^n(v))=0$ for any $v\in\Hr$. 
Actually, for any $v\in\Hr$, $\sigma_{1,m}^n \in G_{1,m}(v)$, for any $m\not=n$ and $\sigma_{2,m}^n\in G_{2,m}(v)$ for any $m\in\Z$. 
The map $s^n$ is unique within the class of all such real analytic maps with graph in $\Omega_{r,n}.$ For $v=0$,
\begin{equation}\label{eq:kap8.2.42}
\sigma_{1,k}^n = k\pi + s_{1,k}^n = \tau_{1,k} \;(k\not=n),
\qquad \sigma_{2,k}^n = k\pi + s_{2,k}^n = \tau_{2,k} \;(k\in\Z).
\end{equation}
\end{prop}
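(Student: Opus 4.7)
The plan is threefold: first, exhibit an explicit solution at $v=0$; second, apply the implicit function theorem to obtain a local real analytic family; third, extend globally over $\Hr$ via a continuation argument based on the a priori gap-confinement of the roots supplied by Lemma~\ref{lem:kap8.2.18.5}.

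\emph{Seed at $v=0$.} I would take $s^* = (s_1^*, s_2^*)$ with $s_{1,k}^* := \tau_{1,k}(0) - k\pi$ ($k \neq n$) and $s_{2,k}^* := \tau_{2,k}(0) - k\pi$ ($k \in \Z$). Since all gaps collapse at $v=0$, Lemma~\ref{lem:ProdRepresentationDl2-1} yields $\chi_{p,j}(\lm, 0) = f_j(\lm)^2$ for $f_1(\lm) := \prod_k (\tau_{1,k}(0) - \lm)/\pi_k$ and $f_2(\lm) := \prod_k (\tau_{2,k}(0) + (16\lm)^{-1})/\pi_k$; combined with $\sqrt[c]{\chi_p(\lm, 0)} = -\ii \sin(\omega(\lm))$ (Lemma~\ref{lem:standardRoot}(ii)), this forces $f_1(\lm) f_2(\lm)$ to be a nonzero constant multiple of $\sin(\omega(\lm))$. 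Since $f_{n,1}(s_1^*, \cdot)\, f_{n,2}(s_2^*, \cdot)$ is this same product with the factor $(\tau_{1,n}(0) - \lm)/\pi_n$ removed, one obtains
\[
\frac{f_n(s^*, \lm)}{\sqrt[c]{\chi_p(\lm, 0)}} = \frac{\alpha}{\tau_{1,n}(0) - \lm}
\]
for some nonzero $\alpha$. The pole at $\tau_{1,n}(0)$ lies outside every $\Gm[1,m]$ with $m \neq n$ and outside every $\Gm[2,m]$, so each contour integral vanishes and $F^n(0, s^*) = 0$.

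\emph{Local IFT.} Lemma~\ref{lem:kap8.2.14} gives the real analyticity of $F^n$ on $\Omega_n$, and Corollary~\ref{cor:kap8.2.19} shows that the partial differential $\partial_{(s_1,s_2)} F^n$ is a linear isomorphism at every point of $\Omega_{r,n}$, in particular at $(0, s^*)$. The real analytic implicit function theorem then produces a real analytic $s^n$ on some open neighborhood $\mathcal{V}_0 \subset \Hr$ of $0$ with $F^n(v, s^n(v)) = 0$ and graph in $\Omega_{r,n}$.

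\emph{Global extension and uniqueness.} Let $\mathcal{V} \subset \Hr$ be the maximal connected open set containing $\mathcal{V}_0$ on which $s^n$ extends as a real analytic map with graph in $\Omega_{r,n}$. Since $\Hr$ is connected, it suffices to show $\mathcal{V}$ is closed in $\Hr$. Take $v_k \in \mathcal{V}$ with $v_k \to v_\infty \in \Hr$. By Lemma~\ref{lem:kap8.2.18.5} applied to $f_n(s^n(v_k), \cdot)$ over each $U_{1,m}$ ($m \neq n$) and $U_{2,m}$, the roots $\sigma_{1,m}^n(v_k)$ and $-(16\sigma_{2,m}^n(v_k))^{-1}$ lie in $G_{1,m}(v_k)$ and $G_{2,m}(v_k)$ respectively. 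Proposition~\ref{prop:spectralQuantCompact} yields coordinatewise convergence along a subsequence; the uniform bound $|\sigma_{j,m}^n(v_k) - \tau_{j,m}(v_k)| \leq \gm[j,m](v_k)/2$ combined with the $\ell^2$-summability of $(\gm[j,m](v))_m$ from Lemma~\ref{lem:analyticityandGradientOfTaun} upgrades this to $\ell^2$-convergence by dominated convergence in the series. The limit $(v_\infty, s^\infty) \in \Omega_{r,n}$ satisfies $F^n(v_\infty, s^\infty) = 0$ by continuity, and a fresh IFT application extends $s^n$ through $v_\infty$; hence $\mathcal{V} = \Hr$. Along the way Lemma~\ref{lem:kap8.2.18.5} yields the localizations $\sigma_{1,m}^n \in G_{1,m}$ ($m \neq n$) and $\sigma_{2,m}^n \in G_{2,m}$ ($m \in \Z$) for all $v \in \Hr$. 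Uniqueness follows because any two such real analytic extensions must coincide at $v = 0$ (there Lemma~\ref{lem:kap8.2.18.5} forces $\sigma_{j,k}^n(0) = \tau_{j,k}(0)$, the unique point of each collapsed gap), and then agree on all of $\Hr$ by IFT-uniqueness and connectedness.

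The principal technical obstacle will be the $\ell^2$-compactness step in the closedness argument: coordinatewise control is immediate from the gap confinement, but promoting it to $\ell^2$-norm convergence relies on the uniform tail estimate $\gm[j,m](v) = \ell_m^2$ locally uniformly in $v$ from Lemma~\ref{lem:analyticityandGradientOfTaun}, combined with the spectral compactness of Proposition~\ref{prop:spectralQuantCompact}. The remaining steps amount to a routine deployment of the real analytic implicit function theorem.
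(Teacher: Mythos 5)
Your proof follows the same route as the paper's: seed at $v=0$, local real-analytic implicit function theorem via Lemma~\ref{lem:kap8.2.14} and Corollary~\ref{cor:kap8.2.19}, and global extension by a continuation argument using gap confinement (Lemma~\ref{lem:kap8.2.18.5}) and spectral compactness (Proposition~\ref{prop:spectralQuantCompact}). The only cosmetic difference is that you verify the seed at $v=0$ via the infinite-product identities of Sections~\ref{sec:prod}--\ref{sec:roots}, whereas the paper simply invokes Cauchy's formula; the closedness-of-$\mathcal V$ step and the uniqueness argument match the paper's.
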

\begin{proof}
First we note that any solution of $F^n(v,s) =0$ in $\Omega_{r,n}$ satisfies
\begin{equation}\label{eq:kap8.2.41}
\sigma_{1,m} = m\pi + s_{1,m} \in G_{1,m}(v),  \,\, \forall m\not=n , \qquad 
\sigma_{2,m} = m\pi + s_{2,m} \in G_{2,m} (v), \,\,  \forall m\in\Z ,
\end{equation}
where $s=(s_1,s_2)\in \ell_{\hat n}^2 \times \ell^2$. Indeed, by Lemma \ref{lem:kap8.2.18.5}, for any $m\not=n$, $f_{1,n}(s_1,\lm)$ 
has a root $\rho_{1,m} \in G_{1,m}$ and for any $m\in\Z$, $f_{2,n}(s_2,\lm)$ has a root $-(16\rho_{2,m})^{-1}\in G_{2,m}$.
By assumption, $(v,s)\in \Omega_{r,n}$ and hence $\sigma_{1,m}\in U_{1,m}$ $(m\not=n)$, $-(16\sigma_{2,m})^{-1} \in U_{2,m}$ $(m\in\Z)$. 
Since $\sigma_{1,m}$ $(m\not=n)$, $-(16\sigma_{2,m})^{-1}$ ($m\in\Z$) are the only roots of $f_n(v,s)$ and $U_{j,m}$ are pairwise disjoint 
it follows that $\rho_{1,m} = \sigma_{1,m}$ ($m\not=n$) and $\rho_{2,m} = \sigma_{2,m}$ ($m\in\Z$). By Lemma \ref{lem:kap8.2.14}, Corollary \ref{cor:kap8.2.19}, 
and the implicit function theorem, any given solution $(v^0,s^0)\in \Omega_{r,n}$ of $F^n(v,s)=0$ can be uniquely extended locally 
so that $s$ is given as a real analytic function of $v$. We claim that by the continuation method, this local solution can be extended 
along any path from $v^0$ to any given point in $\Hr$ since by Corollary \ref{cor:kap8.2.19}
\[
\partial_s F : \ell_{\hat n}^2 \times \ell^2 \to \ell_{\hat n}^2 \times \ell^2
\]
is a linear isomorphism at each point in $\Omega_{r,n}$. Indeed, let $(v^k,s(v^k))_{k\geq1}$ be any sequence in $\Omega_{r,n}$ with $F^n(v^k,s(v^k))=0$ for any $k\geq 1$ and $v =\lim_{k\to\infty} v^k$ in $\Hr$. 
By Proposition \ref{prop:spectralQuantCompact}, the endpoints of $G_{j,m}(v^k)$ converge to the endpoints of $G_{j,m}(v)$. 
As for any $j=1,2$, $m\in\Z$, and $k\geq 1$, $G_{j,m}(v^k)$ and $G_{j,m}(v)$ are compact intervals there exists a subsequence of $(v^k)_{k\geq1}$, 
which we again denote by $(v^k)_{k\geq 1}$ such that $(\sigma_{j,m}(v^k))_{k\geq 1}$ converges. Its limit, denoted by $\sigma_{j,m}(v)$,
 then satisfies $\sigma_{j,m}(v)\in G_{j,m}(v)$ and $F^n_{j,m}(v,s(v))=0$ where $s(v) = (s_1(v), s_2(v))$ and 
 $s_{1,k}(v) = \sigma_{1,k}(v) - k\pi$ ($k\not=n$), $s_{2,k}(v) = \sigma_{2,k}(v) - k\pi$ $(k\in\Z)$. 
 Hence $(v,s(v))\in \Omega_{r,n}$ and we can apply the implicit function theorem to $F^n$ at $(v,s(v))$. 
 This shows that the continuity method applies. Since $\Hr$ is simply connected, any particular solution $(v^0,s^0)\in \Omega_{r,n}$ of $F^n(v,s) =0$  thus extends uniquely 
 and globally to a real analytic map $s^n:\Hr\to \ell_{\hat n}^2\times \ell^2$ with graph in $\Omega_{r,n}$, satisfying $F^n(v,s^n(v))= 0$ everywhere. 
 At $v=0$, one verifies in a straightforward way with Cauchy's formula that a solution of $F^n(0,s^n)=0$ is given by
\[
 s_{1,k}^n : = \tau_{1,k} - k\pi \;(k\not=n), \qquad  s_{2,k}^n : = \tau_{2,k} - k\pi \; (k\in\Z).
\]
Clearly $(0,s^n)\in\Omega_{r,n}$. Note that this solution is also unique since $G_{j,m} = \{\tau_{j,m}(0)\}$ for any $j=1,2$ and $m\in\Z$. 
We thus have established \eqref{eq:kap8.2.42} and shown that there is exactly one such real analytic map.
\end{proof}

We now turn to the question of analytically extending the maps $s^n$ to a common complex neighborhood of $\Hr$ in \Hp.
\begin{lem}\label{lem:kap8.2.21}
The real analytic maps $s^n:\Hr \to \ell_{\hat n}^2\times \ell^2$, $n \ge 0$, of Proposition \eqref{eq:kap8.2.20} extend to a complex neighborhood $W\subset \hat W$ of $\Hr$
which is independent of $n$ so that for any potential $v \in\Hr$ and any $n \ge 0$, the restriction of the solution $s^n$ to $W\cap V_{v}$ satisfies $\sigma_{j,m}^n\in U_{j,m}$ (with $\sigma_{1,n}^n = \tau_{1,n}$) 
for any $m\in\Z$, $j=1,2$ where $(U_{j,m})_{m\in\Z, j=1,2}$ are isolating neighborhoods for $V_{v}$ (cf. \eqref{U_1m} - \eqref{U_2m}).
\end{lem}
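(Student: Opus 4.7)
The plan is to extend each $s^n$ analytically via the complex implicit function theorem, taking care that the resulting complex neighborhoods can be chosen with sizes bounded below uniformly in $n\geq 0$. To begin with, I would observe that the formulas \eqref{eq:kap8.2.20}-\eqref{eq:kap8.2.22} make $F^n$ a complex analytic map on $\Omega_n$ (viewed inside $\hat W\times\ell^2(\Z,\C)\times\ell^2(\Z,\C)$), since the integrands and infinite products are holomorphic in $(v,s_1,s_2)$ in their complex variables and the constraint $s_{1,n}=\tau_{1,n}(v)-n\pi$ defining $\Omega_n$ is itself holomorphic in $v$ on $\hat W$ by Lemma \ref{lem:analyticityandGradientOfTaun}. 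Combined with Corollary \ref{cor:kap8.2.19}, the complex implicit function theorem then produces, for every $v_0\in\Hr$ and every $n\geq 0$, a complex ball $W_{v_0,n}\subset V_{v_0}$ centered at $v_0$ on which $s^n$ extends analytically while preserving \eqref{eq:kap8.2.30}-\eqref{eq:kap8.2.31}.

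The heart of the matter, and the main obstacle, is to choose a radius $r_{v_0}>0$ independent of $n$. The quantitative implicit function theorem provides a lower bound on $r_{v_0,n}$ in terms of $\|Q^n(v_0,s^n(v_0))^{-1}\|^{-1}$ together with local Lipschitz constants of $F^n$ and $\partial_sF^n$, and Lemma \ref{lem:kap8.2.14} already supplies uniform-in-$n$ bounds on the latter. Hence the task reduces to proving
\[
\sup_{n\geq 0}\,\|Q^n(v_0,s^n(v_0))^{-1}\|<\infty \quad\text{locally uniformly in }v_0\in\Hr.
\]
For this I would exploit the decomposition $Q^n=D^n+K^n$ from Lemma \ref{lem:kap8.2.17}: by Lemma \ref{lem:kap8.2.16} together with Proposition \ref{prop:kap8.2.20}, the diagonal entries of $D^n$ stay bounded above and below uniformly in $n$ (the quotient $f_{n,1}(0)/f_{n,2}(\infty)$ being controlled because $\sigma_{j,k}^n\in G_{j,k}$), while $K^n$ is Hilbert--Schmidt with Hilbert--Schmidt norm bounded uniformly in $n$. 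A contradiction argument---assuming the existence of $n_k\to\infty$ and unit vectors $h^k$, embedded in the common Hilbert space $\ell^2(\Z,\C)\times\ell^2(\Z,\C)$ by setting $h_{1,n_k}^k=0$, with $\|Q^{n_k}h^k\|\to 0$, extracting a weakly convergent subsequence, passing to the limit on the compact part, and then running on the resulting limit function $\phi_\infty(\lm)$ the interpolation argument from the proof of Lemma \ref{lem:kap8.2.18}---would force $h^k\to 0$, contradicting $\|h^k\|=1$.

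Once the uniform lower bound $r_{v_0}$ is secured, I set $W:=\bigcup_{v_0\in\Hr}W_{v_0}$ with $W_{v_0}$ the complex ball of radius $r_{v_0}$ about $v_0$. Uniqueness in the implicit function theorem ensures that the extensions of $s^n$ agree on overlaps, so that $s^n$ is a well-defined analytic map on $W$. The containment $\sigma_{j,m}^n(v)\in U_{j,m}$ on $W\cap V_{v_0}$ then follows by continuity from the fact that $\sigma_{j,m}^n(v_0)\in G_{j,m}(v_0)\subset U_{j,m}$ on $\Hr$ (by Proposition \ref{prop:kap8.2.20}), the openness of the $U_{j,m}$, and the tail estimates \eqref{eq:kap8.2.29.1}-\eqref{eq:kap8.2.29.2} which handle all but finitely many indices---the remaining finite set being absorbed by shrinking $r_{v_0}$ a priori.
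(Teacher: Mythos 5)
Your skeleton is correct and essentially the same as the paper's: reduce the lemma to a uniform-in-$n$ bound on $\|Q^n(v_0,s)^{-1}\|$ on the set where $\sigma_{j,k}\in G_{j,k}$, then upgrade the bound to a complex neighborhood by a Neumann-series/Cauchy-estimate argument and apply the implicit function theorem together with the continuation method. The paper carries out the uniform invertibility step in Lemma \ref{lem:kap8.2.21bis}; your contradiction argument is a different packaging of the same content, not a different route.

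Where you are a bit too quick is precisely in the phrase \emph{``passing to the limit on the compact part, and then running on the resulting limit function $\phi_\infty(\lambda)$ the interpolation argument.''} With only a uniform Hilbert--Schmidt bound on $K^n$ and weak convergence $h^k\rightharpoonup h_\infty$, one cannot conclude $K^{n_k}h^k\to K^\infty h_\infty$: one needs the operators $K^{n_k}$ themselves to converge in operator norm (or the tails of the matrices to be uniformly small in $n$, which is the same thing). Likewise to conclude that $h^k$ converges strongly and that the strong limit has norm one, you need convergence of the diagonal part $D^{n_k}$ and that the diagonal entries are bounded away from zero \emph{uniformly in $n$}; neither follows from Lemma \ref{lem:kap8.2.16} alone without tracking how the factors $(n-m)/(\sigma_{1,n}^n-\lambda)$ and $\pi_n/(\sigma_{1,n}^n-\frac{1}{16\mu})$ behave as $n\to\infty$. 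Finally, $\phi_\infty$ must be identified as the directional derivative of the \emph{full} product $\prod_{k}\frac{\sigma_{1,k}-\lambda}{\pi_k}\prod_k\frac{\sigma_{2,k}+\frac{1}{16\lambda}}{\pi_k}$ (no index omitted), so the interpolation argument of Lemma \ref{lem:kap8.2.18} has to be re-run for a slightly different function, not applied verbatim (the factor $\sigma_{1,n}-\lambda$ you would multiply by has no counterpart when $n=\infty$). All of this is exactly what the paper codifies by constructing the limit operator $Q^*$ explicitly, proving $\|Q^n-Q^*\|\to 0$ locally uniformly on $\Omega_{r,*}$, and then invoking compactness of $\Pi(v)$. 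So your proposal points to the same facts, but the explicit construction of $Q^*$ is the main technical content of the lemma and needs to be supplied rather than inferred from compactness alone.
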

\begin{proof}
Recall that by Proposition \ref{prop:kap8.2.20}, there exists for each $n\geq 0$ and $v\in\Hr$ a solution $s^n(v)$ so that $F^n(v,s^n(v))= 0$. 
Furthermore, by Corollary \ref{cor:kap8.2.19}, $Q^n = \partial_sF^n:\ell_{\hat n}^2\times\ell^2 \to \ell_{\hat n}^2 \times \ell^2$ at $(v,s^n(v))$ is invertible 
and hence by the implicit function theorem there exists a (simply connected) complex neighborhood $V_{v,n}\subset \Omega$ of $v$ 
so that the solution $s^n$ extends as a real analytic map to $V_{v,n}$.  We now prove estimates of the operator norm $\norm{Q^n(v,s)^{-1}}$ of the inverse $Q^n(v,s)^{-1}$ 
which will allow for any $v\in\Hr$ to chose $V_{v,n}$ independently of $n$. As a first step we prove in Lemma \ref{lem:kap8.2.21bis} below that on
\[
\Omega_{r,*} : = \set{(v,s)\in\Omega_r}{\sigma_{1,k}\in G_{1,k}(v) , \; (-16\sigma_{2,k})^{-1}\in G_{2,k}\; \forall k\in\Z}
\]
the operator norm $\norm{Q^n(v,s)^{-1}}$ of $Q^n(v,s)^{-1}$ is bounded uniformly in $n$, locally uniformly in $v\in \Hr$, 
and for any given $v\in\Hr$ uniformly in $s$ with $(v, s) \in \Omega_{r,*}$. As above, write $\sigma_{j,k} = k\pi + s_{j,k}$ for any $j= 1,2$ and $k\in\Z$. 
We now extend Lemma \ref{lem:kap8.2.21bis} to a complex neighborhood. By Lemma \ref{lem:kap8.2.14}, the maps
\[
F^n = (F_1^n, F_2^n) : \Omega \to \ell_{\hat n}^2 \times \ell^2 , (v,s) \mapsto F^n(v,s),\; s=(s_1,s_2)
\]
are real analytic, uniformly bounded with respect to $n$, and locally uniformly bounded on the (simply connected) neighborhood 
$\Omega$ (contained in $\hat W \times \ell^2\times\ell^2$). Then the maps
\[
Q^n = \partial_s F^n : \Omega \to \mathcal{L} (\ell_{\hat n}^2\times \ell^2)
\]
are real analytic as well. By Cauchy's estimate, $Q^n$ is uniformly bounded with respect to $n$ and  locally uniformly bounded on $\Omega$. Hence $\mathcal{L}(\ell_{\hat n}^2\times \ell^2)$ denotes the space of bounded linear operators on $\ell_{\hat n}^2\times \ell^2$. It then follows again by Cauchy's estimate that the variation $\dl  Q^n$ of $Q^n$ with respect to $v$ and $s$ can be kept as small as needed by restricting oneself to a sufficiently small neighborhood of any given point $(v,s)\in\Omega$. Representing $(Q+\dl Q)^{-1}$ by its Neumann series one obtains the standard estimate
$\norm{(Q + \dl Q)^{-1}} \leq 2 \norm{Q^{-1}}$ for any $\dl Q$ with $\norm{\dl Q}\leq \frac{1}{2\norm{ Q^{-1}}}$ for $Q=Q^n$ with $n\geq 0$. Hence if for a given $(v,s)\in\Omega$, $\norm{(Q^n)^{-1}}$ can be bounded uniformly in $n$, the same holds for elements in a sufficiently small neighborhood of $(v,s)$ in $\Omega$. By Lemma \ref{lem:kap8.2.21bis} below it then follows that $\sup_{n\geq0} \norm{(Q^n)^{-1}}$ can be bounded locally uniformly on a simply connected complex neighborhood of $\Omega_{r,*}$, contained in
\[
 \bigcup_{v_0\in\Hr} V_{v_0} \times \set{(s_1,s_2)\in \ell^2\times \ell^2}{\sigma_{1,k}\in U_{1,k}, \; (-16 \sigma_{2,k})^{-1} \in U_{2,k}\; \forall k\in\Z}.
\]
Using again the continuation method, the solution $s^n$ can then be extended by the implicit function theorem to a complex neighborhood $W\subset \hat W$ of $\Hr$ which is independent of $n\geq 0$ so that for any $v\in V_{v_0}\cap W$ with $v_0\in\Hr$, the sequences $(\sigma_{j,m}^n(v))_{m\in\Z}$ given by
\[
\sigma_{j,m}^n (v) = m\pi + s_{j,m}^n(v) \quad \forall j=1,2, \; \forall m\in\Z
\]
satisfy
\[
\sigma_{1,j}^n (v)\in U_{1,m}, \qquad (-16\sigma_{2,m}^n(v))^{-1} \in U_{2,m}\, , \qquad \forall m\in\Z.
\]
\end{proof}

The following lemma was used in the proof of Lemma \ref{lem:kap8.2.21}. Recall that
\[
\Omega_{r,*} = \set{(v,s)\in \Omega}{\sigma_{1,k} \in G_{1,k}(v), \; (-16\sigma_{2,k})^{-1} \in G_{2,k}(v) \; \forall k\in\Z}
\]
\begin{lem}\label{lem:kap8.2.21bis}
For any $(v,s)\in\Omega_{r,*}$ $\norm{(Q^n(v,s))^{-1}}$ is uniformly bounded with respect to $n$, locally uniformly with respect to $(v,s)$, and for any given $v\in\Hr$, uniformly with respect to $s$ with $(v,s)\in \Omega_{r,*}$.
\end{lem}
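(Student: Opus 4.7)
The plan is to combine the Fredholm decomposition $Q^n=D^n+K^n$ from Lemma \ref{lem:kap8.2.17} with a compactness/contradiction argument, paying careful attention to uniformity in the index $n$. Since for fixed $v\in\Hr$ the slice $\{s:(v,s)\in\Omega_{r,*}\}$ is a compact subset of $\ell^2\times\ell^2$ (each $\sigma_{j,k}$ is confined to the bounded interval $G_{j,k}(v)$, respectively to the reciprocal of $-16\,G_{j,k}(v)$), the local-uniformity claim in $(v,s)$ will automatically subsume the uniformity-in-$s$ claim for fixed $v$, provided the estimates are obtained by a compactness argument.

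Step~1 is to establish uniform-in-$n$ bounds on the pieces of $Q^n$. By Lemma \ref{lem:kap8.2.16} the diagonal entries satisfy $Q^n_{11,mm}=2+\ell_m^2$ and $Q^n_{22,mm}=2\pi f_{n,1}(0)/f_{n,2}(\infty)+\ell_m^2$, with the $\ell_m^2$-terms uniform in $n$. On $\Omega_{r,*}$ one has $\sigma_{1,k}\in G_{1,k}(v)$ and $(-16\sigma_{2,k})^{-1}\in G_{2,k}(v)$, so the identity $\chi_{p,1}(0)=\chi_{p,2}(\infty)$ from Lemma \ref{lem:ProdRepresentationDl2-1} (cf.\ \eqref{eq:kap6.3.24}), together with the bounds $\tau_{j,k}=k\pi+\ell_k^2$ and $\gamma_{j,k}^2\in\ell^1$ from Lemma \ref{lem:analyticityandGradientOfTaun}, will imply that $f_{n,1}(0)/f_{n,2}(\infty)$ is bounded above and bounded away from zero, locally uniformly on $\Omega_{r,*}$ and uniformly in $n\ge 0$. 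Hence $D^n$ and $(D^n)^{-1}$ are uniformly bounded. The off-diagonal estimates of Lemma \ref{lem:kap8.2.16} are uniform in $n$; squaring and summing then yields a uniform bound on the Hilbert--Schmidt norm of $K^n$ locally uniformly on $\Omega_{r,*}$.

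Step~2 is the contradiction argument. Assume the conclusion fails: there exist $(v_k,s_k)\in\Omega_{r,*}$ in a fixed compact subset, indices $n_k\ge 0$, and unit vectors $h_k\in\ell_{\hat n_k}^2\times\ell^2$ with $Q^{n_k}(v_k,s_k)h_k\to 0$. After extracting a subsequence, either $n_k$ is eventually constant or $n_k\to\infty$. In the first case, writing $h_k=-(D^{n})^{-1}K^{n}h_k+(D^{n})^{-1}Q^{n}h_k$ and using compactness of the Hilbert--Schmidt operator $K^{n}$, one extracts a limit $h_*$ with $\|h_*\|=1$ satisfying $Q^{n}(v_*,s_*)h_*=0$; this contradicts the injectivity established in Lemma \ref{lem:kap8.2.18}. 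For the case $n_k\to\infty$, which is the main obstacle, I would shift indices by $n_k$, setting $\tilde h_{k,1,m}:=h_{k,1,m+n_k}$ and $\tilde h_{k,2,m}:=h_{k,2,m+n_k}$, and use the uniform asymptotics of Corollary \ref{cor:kap6.4.1} and Lemma \ref{lem:sindurchsqrtcEstimate}, together with the explicit structure of $A^n_{j,m}$ in \eqref{eq:kap8.2.20}--\eqref{eq:kap8.2.21}, to extract, entry-wise, a translated limit operator $Q^\infty$. The operator $Q^\infty$ inherits the diagonal-plus-Hilbert--Schmidt form from Step~1; the Hilbert--Schmidt bound on $K^{n_k}$ provides the tightness needed to upgrade weak convergence of $\tilde h_k$ to a nonzero strong limit $\tilde h_*$. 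A direct adaptation of the interpolation argument of Lemma \ref{ap:lem:interpolation} used in Lemma \ref{lem:kap8.2.18}, now for the shifted sequences $(\sigma_{j,k+n_k})_k$, will show that $Q^\infty$ is injective, so $Q^\infty\tilde h_*=0$ forces $\tilde h_*=0$, a contradiction.

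The hard part will be Step~2 in the subcase $n_k\to\infty$: precisely identifying the translated limit operator $Q^\infty$ (its entries, which rows need renormalization by the prefactors $(n-m)$ and $16\pi_m^2\pi_n$, and on what index set it acts), verifying tightness of the shifted sequences $\tilde h_k$ so that the weak limit is genuinely nonzero, and proving injectivity of $Q^\infty$ by the same interpolation template as in Lemma \ref{lem:kap8.2.18}. Once this is in place, the closure of the argument and the local-uniformity assertions in $(v,s)$ are routine consequences of compactness of $\Omega_{r,*}\cap(\{v\}\times\ell^2\times\ell^2)$ and real-analyticity of $Q^n$.
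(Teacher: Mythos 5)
Your Step~1 is sound and matches the paper's preliminary bounds, and your Step~2 in the subcase of a constant index $n_k=n$ is the standard Fredholm argument that the paper also uses implicitly. The gap is in the subcase $n_k\to\infty$, and it is a real one: the index-shift $\tilde h_{k,j,m}=h_{k,j,m+n_k}$ is the wrong move because the problem has no translation-invariant structure to exploit. The entries $Q^n_{jj',mr}$ are indexed by the fixed isolating neighborhoods $U_{j,m}$, and conjugating by a shift by $n_k$ would push the fixed ``core'' of the matrix (indices $|m|$ small) off to infinity, destroying precisely the part of the operator that carries the spectral information of $v$. There is no ``translated limit operator $Q^\infty$'' to be found; you would not be able to identify it, and the proposed adaptation of the interpolation lemma to shifted sequences $(\sigma_{j,k+n_k})_k$ has no meaning because these sequences leave the domain $\Omega_{r,*}$.

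The mechanism the paper actually uses, and which you are missing, is that the $n$-dependence of $Q^n$ enters only through benign prefactors (namely $\frac{n\pi-m\pi}{\sigma_{1,n}-\lambda}$ and $\frac{\pi_n}{\sigma_{1,n}-\frac{1}{16\mu}}$, cf.\ \eqref{eq:kap8.2.29.3} and the analogous decomposition for the $j=2$ rows), whose deviation from $1$ can be split as
$1 - \frac{n\pi - m\pi}{\sigma_{1,n}-\lambda} = \frac{\sigma_{1,n}-n\pi}{\sigma_{1,n}-\lambda} + \frac{\lambda - m\pi}{\sigma_{1,n}-\lambda}$
and estimated using the $\ell^2$-decay of $\gamma_{1,m}$, $\tau_{1,m}-m\pi$ and $\sigma_{1,n}-n\pi$. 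This yields genuine convergence in \emph{operator norm} $Q^n\to Q^\ast$ (after padding $Q^n$ to act on all of $\ell^2\times\ell^2$ by filling in a fixed diagonal entry at index $n$), where $Q^\ast$ is a direct, unshifted limit with the same index set $\mathbb Z$. Injectivity of $Q^\ast$ is then proved by exactly the interpolation argument of Lemma~\ref{lem:kap8.2.18}, and the Fredholm structure $Q^\ast = D^\ast + K^\ast$ (which you correctly anticipate) upgrades this to bounded invertibility. Once operator-norm convergence is available, the Neumann-series perturbation argument ``$\norm{\delta Q}\le \frac{1}{2\norm{Q^{-1}}}\implies\norm{(Q+\delta Q)^{-1}}\le 2\norm{Q^{-1}}$'' immediately gives the uniform bound for large $n$, and the finitely many small $n$ are handled by Corollary~\ref{cor:kap8.2.19}.

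A second, related weakness in your argument is the claim that ``the Hilbert--Schmidt bound on $K^{n_k}$ provides the tightness needed to upgrade weak convergence of $\tilde h_k$ to a nonzero strong limit''. A uniform Hilbert--Schmidt bound on a family of operators does not by itself give collective compactness; what it buys is that each $K^{n_k}$ is compact, not that the family is. What would be needed is a uniform-in-$k$ decay of the tails of the rows of $K^{n_k}$, which does follow from Lemma~\ref{lem:kap8.2.16} (the $\ell_m^2$ estimates are stated to be uniform in $n$), but you would then still have to identify the limiting equation $Q^\ast h_\ast = 0$ — which brings you back to needing some form of convergence of $Q^{n_k}$, and the shift destroys rather than supplies that. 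The clean way to close the argument is the operator-norm convergence $Q^n\to Q^\ast$, which removes the need for any sequential contradiction argument altogether.
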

\begin{proof}
We begin by investigating the asymptotics of $Q^n(v,s)$ as $|n|\to \infty$ for $(v,s)\in \Omega_{r,*}$. To this end we consider the infinite matrices $Q_{11}^n,$ $Q_{12}^n$, $Q_{21}^n$, and $Q_{22}^n$ in the matrix representation of $Q^n$
\[
Q^n = \begin{pmatrix} Q_{11}^n & Q_{12}^n\\ Q_{21}^n & Q_{22}^n\end{pmatrix}
\]
individually. First let us consider $Q_{11,mr}^n = \frac{\partial F_{1,m}^n}{\partial s_{1,r}}$ $(m,r\not=n)$. By \eqref{eq:kap8.2.301} one has for $r\not=m$
\[
Q_{11,mr}^n = (n-m) \int_{\Gm[1,m]} \frac{\sigma_{1,m}-\lm}{ w_{1,m}(\lm)} \frac{1}{\sigma_{1.r}-\lm} \zeta_{1,m}^n (\lm) \dlm
\]
which by \eqref{eq:kap8.2.29.3} - \eqref{eq:kap8.2.29.4} can be written as
\[
Q_{11,mr}^n = \frac{\ii }{\pi} \int_{\Gm[1,m]} \frac{\sigma_{1,m}-\lm}{\sigma_{1,r}-\lm} \frac{n\pi -m\pi}{\sigma_{1,n}-\lm} \frac{\zeta_{1,m}(\lm)}{w_{1,m}(\lm)}\dlm
\]
where
\begin{equation*}
\zeta_{1,m}(\lm)  = \prod_{k\not=m} \frac{\sigma_{1,k} - \lm}{w_{1,k}(\lm)}  \frac{f_{2}(\lm) / f_{2}(\infty)}{\sqrt[c]{\chi_2(\lm)} / \sqrt[c]{\chi_2(\infty)}}, \quad \sigma_{1,n} \in U_{1,n}.
\end{equation*}
Here we have dropped the subindex $n$ in $f_{n,2}(\lm)$ and simply write $f_2(\lm) = \prod_{k\in\Z} \frac{\sigma_{2,k}+ \frac{1}{16\lm}}{\pi_k}$ and $f_2(\infty) = \prod_{k\in\Z} \frac{\sigma_{2,k}}{\pi_k}$. Since $\sigma_{1,n}\in U_{1,n}$ we have
\[
\frac{n\pi - m\pi}{\sigma_{1,n}-\lm} = 1 + \frac{(n\pi - \sigma_{1,n}) + (\lm-m\pi)}{\sigma_{1,n} - \lm} = 1+O(\frac{1}{n-m}).
\]
It implies that $Q_{11,mr}^* : = \lim_{n\to\infty} Q_{11,mr}^n$ exists and
\[
Q^*_{11,mr}(v,s) = \frac{\ii}{\pi} \int_{\Gm[1,m]} \frac{\sigma_{1,m}-\lm}{\sigma_{1,r}-\lm} \frac{\zeta_{1,m}(\lm)}{w_{1,m}(\lm)} \dlm.
\]
Similarly, for $m=r$ one has
\[
Q_{11,mm}^n = \frac{\ii}{\pi} \int_{\Gm[1,m]} \frac{n\pi - m\pi }{\sigma_{1,n}-\lm} \frac{\zeta_{1,m}(\lm)}{w_{1,m}(\lm)} \dlm.
\]
Hence $Q_{11,mm}^* = \lim_{n\to\infty} Q_{11,mm}^n$ exists and
\[
Q_{11,mm}^* = \frac{\ii}{\pi} \int_{\Gm[1,m]} \frac{\zeta_{1,m}(\lm)}{w_{1,m}(\lm)} \dlm.
\]
By inspection of $\zeta_{1,m}(\lm)$ one concludes that $Q_{11,mm}^*\not=0$ for any $m\in\Z$.
Next let us consider $Q_{12,mr}^n$. By \eqref{eq:kap8.2.302} for any $m,r\not=n$,
\[
Q_{12,mr}^n = (n-m) \int_{\Gm[1,m]} \frac{\sigma_{1,m}-\lm}{w_{1,m}(\lm) } \big(\frac{1}{\sigma_{2,r}+ \frac{1}{16\lm}} - \frac{1}{\sigma_{2,r}}\big) \zeta_{1,m}^n(\lm) \dlm
\]
which by \eqref{eq:kap8.2.29.3} - \eqref{eq:kap8.2.29.4} can be written as
\[
Q_{12,mr}^n = \frac{\ii}{\pi} \int_{\Gm[1,m]} \frac{\sigma_{1,m}-\lm}{w_{1,m}(\lm)} \frac{n\pi - m\pi }{\sigma_{1,n}-\lm} \big(\frac{1}{\sigma_{2,r}+ \frac{1}{16\lm}} - \frac{1}{\sigma_{2,r}}\big) \zeta_{1,m}(\lm) \dlm.
\]
As above we conclude that for any $m,r\in\Z$, $Q^*_{12,mr} : = \lim_{n\to\infty} Q_{12,mr}^n$ exists and
\[
Q_{12,mr}^* = \frac{\ii}{\pi} \int_{\Gm[1,m]} \frac{\sigma_{1,m}-\lm}{w_{1,m}(\lm)}  \big(\frac{1}{\sigma_{2,r}+ \frac{1}{16\lm}} - \frac{1}{\sigma_{2,r}}\big) \zeta_{1,m}(\lm) \dlm.
\]
Next let us consider $Q_{22,mr}$. For $m=r$, one has by \eqref{eq:kap8.2.303}
\[
Q_{22,mm}^n = 16\pi_m^2 \int_{\Gm[2,m]} \big(1- \frac{\sigma_{2,m}+ \frac{1}{16\lm}}{\sigma_{2,m}}\big) \frac{\zeta_{2,m}^n(\lm)}{w_{2,m}(\lm}\dlm
 = 16\pi_m^2 \int_{\tilde{\Gamma}_{2,m}} \frac{\mu}{\sigma_{2,m}} \frac{\zeta_{2,m}^n(-\frac{1}{16\mu})}{w_{2,m}(-\frac{1}{16\mu})} \frac{\dmu}{16\mu^2}
\]
which by \eqref{eq:kap8.2.26bis} can be written as
\[
Q_{22,mm}^n = \ii \int_{\tilde{\Gamma}_{2,m}} \frac{\mu}{\sigma_{2,m}} \frac{\pi_m^2}{\mu^2} \frac{\pi_n}{\sigma_{1,n} - \frac{1}{16\mu}} \frac{\zeta_{2,m}(-\frac{1}{16\mu})}{w_{2,m}(-\frac{1}{16\mu})} \dmu
\]
where
\[
\zeta_{2,m}(\lm) = \big(\prod_{k\in\Z} \frac{\sigma_{1,k}-\lm}{\pi_k}\big) \frac{1}{\sqrt[c]{\chi_1(\lm)} / \sqrt[c]{\chi_1(0)}} \frac{1}{f_2(\infty)} \prod_{k\not=m} \frac{\sigma_{2,k} + \frac{1}{16\lm}}{w_{2,k}(\lm)}.
\]
Since $\frac{\pi_n}{\sigma_{1,n} - \frac{1}{16\mu}} = 1+ O(\frac{1}n)$ it follows that $Q_{22,mm}^* := \lim_{n\to\infty} Q_{22,mm}^n$ exists and
\[
Q_{22,mm}^* = \ii \int_{\tilde{\Gamma}_{2,m}} \frac{\mu}{\sigma_{2,m}} \frac{\pi_m^2}{\mu^2} \frac{\zeta_{2,m}(-\frac{1}{16\mu})}{w_{2,m}(-\frac{1}{16\mu})} \dmu \not=0.
\]
Arguing as in the proof of Lemma \ref{lem:kap8.2.16} one sees that
\begin{equation}\label{eq:kap8.2.50}
Q_{22,mm}^* = 2\pi \prod_{k\in\Z} \frac{\sigma_{1,k}}{\sigma_{2,k}} + \ell_m^2.
\end{equation}
Similarly, for $m\not=r$ one has
\begin{align*}
Q_{22,mr}^n =& 16\pi_m^2 \int_{\Gm[2,m]} \frac{\sigma_{2,m} + \frac{1}{16\lm}}{w_{2,m}(\lm)} \big(\frac{1}{\sigma_{2,r} + \frac{1}{16\lm}} - \frac{1}{\sigma_{2,r}}\big) \zeta_{2,m}^n(\lm)\dlm
\\=& \ii \int_{\tilde{\Gamma}_{2,m}} \frac{\sigma_{2,m} - \mu}{w_{2,m}(-\frac{1}{16\mu})} \big(\frac{1}{\sigma_{2,r}-\mu} - \frac{1}{\sigma_{2,r}}\big) \frac{\pi_n}{\sigma_{1,n}-\frac{1}{16\mu}} \zeta_{2,m}(-\frac{1}{16\mu}) \frac{\pi_m^2}{\mu^2}\dmu.
\end{align*}
It implies that $Q_{22,mr}^* = \lim_{n\to\infty} Q_{22,mr}^n$ exists and
\[
Q_{22,mr}^* = \ii \int_{\tilde{\Gamma}_{2,m}} \frac{\sigma_{2,m} - \mu}{w_{2,m}(-\frac{1}{16\mu})} \big(\frac{1}{\sigma_{2,r}-\mu} - \frac{1}{\sigma_{2,r}}\big) \zeta_{2,m}(-\frac{1}{16\mu}) \frac{\pi_m^2}{\mu^2}\dmu.
\]
Finally, for $m,r\in\Z$ one has
\begin{align*}
Q_{21,mr}^n =& 16\pi_m^2 \int_{\Gm[2,m]} \frac{\sigma_{2,m} + \frac{1}{16\lm}}{w_{2,m}(\lm)} \frac{1}{\sigma_{1,r}+\lm} \zeta_{2,m}^n(\lm)\dlm
 \\=& \ii \int_{\tilde{\Gamma}_{2,m}} \frac{\sigma_{2,m}^n - \mu}{w_{2,m}(-\frac{1}{16\mu})} \frac{1}{\sigma_{1,r}-\frac{1}{16\mu}} \frac{\pi_n}{\sigma_{1,n}- \frac{1}{16\mu}} \zeta_{2,m}(-\frac{1}{16\mu}) \frac{\pi_m^2}{\mu^2}\dmu.
\end{align*}
Again, the limit $Q_{21,mr}^* := \lim_{n\to\infty} Q_{21,mr}^n$ exists and
\[
Q_{21,mr}^*   = \ii \int_{\tilde{\Gamma}_{2,m}} \frac{\sigma_{2,m}^n - \mu}{w_{2,m}(-\frac{1}{16\mu})} \frac{1}{\sigma_{1,r}-\frac{1}{16\mu}} \zeta_{2,m}(-\frac{1}{16\mu}) \frac{\pi_m^2}{\mu^2}\dmu.
\]
By Lemma \ref{lem:kap8.2.16} and its proof one sees that the coefficients $Q_{jj',mr}^*$ ($1\leq j,j'\leq 2,$ $m,r\in\Z$) satisfy the same asymptotic estimates as the ones for $Q_{jj',mr}^n$ of Lemma \ref{lem:kap8.2.16} except the one for $Q_{22,mm}^*$ which is given by \eqref{eq:kap8.2.50}. Hence
\[
Q^* = \begin{pmatrix} Q_{11}^* & Q_{12}^* \\ Q_{21}^* & Q_{22}^*\end{pmatrix} : \ell^2\times \ell^2 \to \ell^2\times \ell^2
\]
defines a bounded linear operator on $\ell^2\times \ell^2$. By a slight abuse of notation we now view $Q^n$ as an operator on $\ell^2\times \ell^2$ by setting $Q_{11,nn}^n = 2$ and $Q_{11,nm}^n = 0$ , $Q_{11,mn}^n=0$ for any $m\in\Z\setminus\{n\}$ as well as $Q_{12,nm}^n =0$ for any $m\in\Z$. We claim that $\lim_{n \to \infty}Q^n = Q^*$
 in operator norm, locally uniformly on $\Omega_{r,*}$. To see this, split $Q^*$ into its diagonal part $D^*$ and its off-diagonal part $K^*$, $Q^*= D^* + K^*$. Arguing as in the proof of Lemma \ref{lem:kap8.2.16}, one sees that the following holds
\begin{align*}
0\not= D_{11,m}^* :=& Q_{11,mm}^* = 2+\ell_m^2, \quad \forall m\in\Z\, ,
\\ 0\not= D_{22,m}^*:=& Q_{22,mm}^*= 2\pi \prod_{k\in\Z} \frac{\sigma_{1,k}}{\sigma_{2,k}} + \ell_m^2, \quad \forall m\in\Z \, ,
\\ K_{11,mr}^* = & Q_{11,mr}^* = \frac{\ell_m^2}{|m-r|}\, , \quad \forall m,r\in\Z, \; m\not=r \, ,
\\ K_{22,mr}^* = & Q_{22,mr}^* = \frac{\ell_m^2}{|m-r|} +\frac{\ell_m^2}{\langle r\rangle } \, , \quad \forall m,r\in\Z, \; m\not=r \, ,
\\ K_{12,mr}^* = & Q_{12,mr}^* = \frac{\ell_m^2}{\langle r\rangle^2} \, , \quad \forall m,r\in\Z \, ,
\\ K_{21,mr}^* = & Q_{21,mr}^* = \frac{\ell_m^2}{\langle r\rangle} \, , \quad \forall m,r\in\Z \, .
\end{align*}
We now show that $D^n \to D^*$ and $K^n\to K^*$ in operator norm. For any $h_1\in\ell^2$, taking into account that $D_{11,nn}^n =2$, one gets
\[
\norm{(D_{11}^* - D_{11}^n) h_1}^2 = |(Q_{11,nn}^*-2) h_{1,n}|^2 + \sum_{m\not=n} \big| \frac{1}{\ii\pi} \int_{\Gm[1,m]} \Big( 1- \frac{n\pi-m\pi}{\sigma_{1,n} - \lm}\Big) \frac{\zeta_{1,m}(\lm)}{w_{1,m}(\lm)} \dlm \, h_{1,m}\big|^2.
\]
Note that $|(Q_{11,nn}^* -2)h_{1,n}|^2 = \norm{h_1}\cdot \ell_n^1.$ Moreover writing
\[
1- \frac{n\pi-m\pi}{\sigma_{1,n}-\lm} = \frac{\sigma_{1,n}-n\pi}{\sigma_{1,n}-\lm} + \frac{\lm-m\pi}{\sigma_{1,n}-\lm}
\]
and using Lemma \ref{lem:kap8.2.15}, one sees that
\[
\sum_{m\not=n} \big|  \int_{\Gm[1,m]} \frac{\sigma_{1,n}-n\pi}{\sigma_{1,n}-\lm}\frac{\zeta_{1,m}(\lm)}{w_{1,m}(\lm)} \dlm \, h_{1,m}\big|^2 = O(|\sigma_{1,n}-n\pi|^2 \norm{h_1}^2).
\]
Using in addition that for $\lm\in G_{1,m}$, $|\lm-m\pi| \leq |\lm-\tau_{1,m}| + |\tau_{1,m} - m\pi|$ one gets
\[
\sum_{m\not=n} \big|  \int_{\Gm[1,m]} \frac{\lm-m\pi}{\sigma_{1,n}-\lm}\frac{\zeta_{1,m}(\lm)}{w_{1,m}(\lm)} \dlm \, h_{1,m}\big|^2 = O(\norm{h_1}^2 \big(\sup_{m\not=n} \frac{|\gm[1,m]| +|\tau_{1,m} - m\pi|^2}{|n-m|}\big)^2).
\]
which can be bounded by
\[
C\norm{h_1}^2 \Big( \sum_{|m-n|\leq \frac{|n|}{2}} (|\gm[1,m]|^2 + |\tau_{1,m}-m\pi|^2 ) + \frac{1}{n^2}\Big).
\]
Since $\sigma_{1,n} -n\pi = \ell_n^2$, $\gm[1,m] = \ell_m^2$, and $\tau_{1,m}-m\pi =\ell_m^2$ we conclude that in operator norm $\lim_{n\to\infty}\norm{D_{11}^* - D_{11}^n}=0.$ Similarly, for $h_2\in\ell^2$, one gets
\[
\norm{(D_{22}^*- D_{22}^n)h_2}^2 = \sum_{m\in\Z} \big| \int_{\tilde{\Gamma}_{2,m}} \frac{\mu}{\sigma_{2,m}} \frac{\pi_m^2}{\mu^2} \big(1-\frac{\pi_n}{\sigma_{1,n}-\frac{1}{16\mu}}\big) \frac{\zeta_{2,m}(-\frac{1}{16\mu})}{w_{2,m}(-\frac{1}{16\mu})} \dmu h_{2,m} \big|^2.
\]
Using again Lemma \ref{lem:kap8.2.15} and $\lim_{n\to\infty} (1-\frac{\pi_n}{\sigma_{1,n}}) = 0$ one sees that $\lim_{n\to\infty}\norm{D_{22}^* - D_{22}^n} = 0.$ Next we turn to $K_{11}^*$. For $h_1\in \ell^2,$
\[
\norm{(K_{11}^*- K_{11}^n)h_1}^2 = \sum_{m\not=n} \big| \sum_{r\not=m} (Q_{11,mr}^* - Q_{11,mr}^n) h_{1,r}\big|^2 + \big| \sum_{r\not=n}(Q_{11,nr}^* - Q_{11,nr}^n)h_{1,r}\big|^2.
\]
Since by definition $Q_{11,nr}^n=0$ for any $r\not=n$ the Cauchy-Schwarz inequality yields
\[
\big| \sum_{r\not=n}(Q_{11,nr}^* - Q_{11,nr}^n)h_{1,r}\big|^2 \leq \sum_{r\not=n} |Q_{11,nr}^*|^2 \norm{h_1}^2.
\]
Since for $n\not=r$
\[
Q_{11,nr}^* = \frac{\ii}{\pi} \int_{\Gm[1,n]} \frac{\sigma_{1,n}-\lm}{\sigma_{1,r} - \lm} \frac{\zeta_{1,n}(\lm)}{w_{1,n}(\lm)} \dlm
\]
one gets again by Lemma \ref{lem:kap8.2.15} that
\[
\sum_{r\not=n} |Q_{11,nr}^*|^2 = \big(\sum_{r\not=n}\frac{1}{|n-r|^2} \big) \cdot \ell_n^1.
\]
Similarly, since by definition $Q_{11,mn}^n = 0$ for $m\not=n$ one has
\[
\sum_{m\not=n} | \sum_{r\not=m} (Q_{11,mr}^* - Q_{11,mr}^n)h_{1,r}|^2 \leq I + II
\]
where
\[
I : =\sum_{m\not=n } |\sum_{r\not=m,n} \frac{1}{\pi} \int_{\Gm[1,m]} \frac{\sigma_{1,m}-\lm}{\sigma_{1,r}-\lm} \big(1- \frac{n\pi - m\pi}{\sigma_{1,n}-\lm}\big) \frac{\zeta_{1,m}(\lm)}{w_{1,m}(\lm)} \dlm \, h_{1,r}|^2 \, ,
\]
\[
II : = \sum_{m\not=n} | Q_{11,mn}^* h_{1,n}|^2 = \sum_{m\not=n} \big| \frac{1}{\pi} \int_{\Gm[1,m]} \frac{\sigma_{1,m}-\lm}{\sigma_{1,n} -\lm} \frac{\zeta_{1,m}(\lm)}{w_{1,m}(\lm)} \dlm \,h_{1,n}|^2.
\]
Let us begin by estimating the latter sum:
\[
II \leq C \sum_{m\not=n} \frac{|\gm[1,m]|^2}{|n-m|^2} \norm{h_1}^2 \leq C\sum_{|m|\geq |n|/2} |\gm[1,m]|^2 \norm{h_1}^2 + C\frac{1}{n^2} \norm{\gm[1]}^2 \norm{h_1}^2.
\]
The sum $I$ is estimated similarly:
\begin{align*}
I\leq &\sum_{m\not=n} \sum_{r\not=m,n} \big(\sup_{\lm\in G_{1,m}} |\frac{\sigma_{1,m}-\lm}{\sigma_{1,r}-\lm}| \frac{|n\pi - \sigma_{1,n}| + |\lm-m\pi|}{|\sigma_{1,n}-\lm|} |\zeta_{1,m}(\lm)|\big)^2 \norm{h_1}^2
\\ \leq & C\norm{h_1}^2 \sum_{m\not=n} |\gm[1,m]|^2 \frac{|n\pi -\sigma_{1,n}|^2 + |\tau_{1,m} -m\pi|^2 + |\gm[1,m]|^2}{|n-m|^2} \sum_{r\not=m,n} \frac{1}{|r-m|^2}
\end{align*}
leading to an estimate of the same kind as for the sum $II$. As a result we conclude that in the $\ell^2$-operator norm
\[
\lim_{n\to\infty} \norm{K_{11}^*- K_{11}^n} = 0.
\]
In the same way one shows that
\[
\lim_{n\to\infty} \norm{K_{22}^*- K_{22}^n} = 0, \qquad \lim_{n\to\infty} \norm{K_{12}^*- K_{12}^n} = 0,
\qquad \lim_{n\to\infty} \norm{K_{21}^*- K_{21}^n} = 0.
\]
Hence we have established that
\[
\lim_{n\to\infty} \norm{D^* - D^n} = 0, \qquad \lim_{n\to\infty}\norm{K^*- K^n} =0.
\]
Going trough the arguments of the proof one verifies that the convergence is locally uniform in $\Omega_{r,*}$. For any $n\geq 0$, $Q^n$ is a continuous map 
$$
Q^n: \Omega_{r,*} \to \mathcal{L}(\ell^2\times \ell^2).
$$ 
By the locally uniform convergence $Q^n \to Q^*$ for $n\to\infty$, it then follows that $Q^*:\Omega_{r,*} \to \mathcal{L}(\ell^2\times \ell^2)$ is continuous as well. 
Arguing as in the proof of Lemma \ref{lem:kap8.2.18}, $Q^*$ is boundedly invertible at every point of $\Omega_{r,*}$. Since or any given $v\in\Hr$, the set
\[
\Pi (v) := \set{s=(s_1,s_2) \in\ell^2_\R \times \ell_\R^2}{\sigma_{1,k}\in G_{1,k}(v),\; (-16\sigma_{2,k})^{-1} \in G_{2,k}(v)\;\forall k\in\Z}
\]
is compact in $\ell_\R^2\times \ell_\R^2$, the operator $Q^*(v,s)$ is indeed uniformly boundedly invertible for any $s\in \Pi(v)$. 
By continuity, $Q^n(v,s)$ is also uniformly boundedly invertible for all large $n$ and $s\in\Pi(v)$, 
and hence by Corollary \ref{cor:kap8.2.19}, for all $n\geq 0$. This finishes the proof of Lemma \ref{lem:kap8.2.21bis}.
\end{proof}
Summarizing our results so far, the functions $\psi_n(\lm) =-\frac{1}{\pi_n} \frac{1}{\psi_{n,2}(\infty)} \psi_{n,1}(\lm) \psi_{n,2}(\lm)$ with
\[
\psi_{n,1}(\lm) = \prod_{k\not=n} \frac{\sigma_{1,k}^n - \lm}{\pi_k}, \qquad 
\psi_{n,2}(\lm) = \prod_{k\in\Z} \frac{\sigma_{2,k}^n + \frac{1}{16\lm}}{\pi_k}
\]
satisfy
\begin{equation} \label{eq:kap8.2.70}
\int_{\Gm[1,m]} \frac{\psi_{n}(\lm)}{\sqrt[c]{\chi_p(\lm)}} \dlm =0, \quad m\not=n, \qquad 
\int_{\Gm[2,m]} \frac{\psi_n(\lm)}{\sqrt[c]{\chi_p(\lm)}} \dlm =0, \quad \forall m\in\Z.
\end{equation}
We now check that they also satisfy the normalization condition
\[
\frac{1}{2\pi} \int_{\Gm[1,n]} \frac{\psi_n(\lm)}{\sqrt[c]{\chi_p(\lm)}} \dlm = 1.
\]
\begin{lem}
On the complex neighborhood $W\subset \hat W$ of $\Hr$ of Lemma \ref{lem:kap8.2.21} one has for any $n\geq0$
\[
\int_{\Gm[1,n]} \frac{\psi_n(\lm)}{\sqrt[c]{\chi_p(\lm)}} \dlm = 2\pi.
\]
\end{lem}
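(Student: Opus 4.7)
The plan is to apply Cauchy's theorem on the annulus $B_N \setminus \overline{B_{-N}}$ and let $N \to \infty$, reducing the statement to the evaluation of two boundary integrals. For fixed $v \in W$ and $n \ge 0$, I would choose $N > n$ large enough that the contours $\Gamma_{1,m}, \Gamma_{2,m}$ for $|m| \le N$ all lie in the annulus $B_N \setminus \overline{B_{-N}}$, while the remaining gaps $G_{1,m}$ and $G_{2,m}$ with $|m| > N$ lie outside it (the former outside $B_N$ and the latter inside $B_{-N}$). Since $\psi_n(\lambda)/\sqrt[c]{\chi_p(\lambda)}$ is analytic on this annulus away from the gaps (Lemma \ref{lem:standardRoot}(i)), Cauchy's theorem applied to the region obtained by cutting out the interiors of the $\Gamma_{j,m}$'s gives
\[
\int_{\partial B_N} \frac{\psi_n(\lambda)}{\sqrt[c]{\chi_p(\lambda)}}\, d\lambda - \int_{\partial B_{-N}} \frac{\psi_n(\lambda)}{\sqrt[c]{\chi_p(\lambda)}}\, d\lambda = \sum_{|m|\le N} \Big( \int_{\Gamma_{1,m}} + \int_{\Gamma_{2,m}} \Big) \frac{\psi_n(\lambda)}{\sqrt[c]{\chi_p(\lambda)}}\, d\lambda.
\]
By \eqref{eq:kap8.2.70}, every term on the right side vanishes except the one associated to $\Gamma_{1,n}$, and that integral is independent of $N$; so it suffices to show that the left-hand side converges to $2\pi$ as $N\to\infty$.

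For the large-circle integral, I use the definition \eqref{eq:kap6.3.40} of $\sqrt[c]{\chi_p}$ together with the identity $\sqrt[c]{\chi_1(0)} = \sqrt[c]{\chi_2(\infty)}$ from \eqref{eq:kap6.3.401} to factor
\[
\frac{\psi_n(\lambda)}{\sqrt[c]{\chi_p(\lambda)}} = \frac{i\,\sqrt[c]{\chi_1(0)}}{\pi_n\,\psi_{n,2}(\infty)} \cdot \frac{\psi_{n,1}(\lambda)}{\sqrt[c]{\chi_1(\lambda)}} \cdot \frac{\psi_{n,2}(\lambda)}{\sqrt[c]{\chi_2(\lambda)}}.
\]
Adopting $\sigma_{1,n}^n := \tau_{1,n}$, the sequence $(\sigma_{1,k}^n)_{k\in\Z}$ lies in $\ell^*$, so Corollary \ref{cor:kap6.4.1}(ii) gives $\prod_{k\in\Z}(\sigma_{1,k}^n-\lambda)/w_{1,k}(\lambda) = 1 + o(1)$ on $\partial B_N$; equivalently, $\psi_{n,1}(\lambda)/\sqrt[c]{\chi_1(\lambda)} = \pi_n(\sigma_{1,n}^n-\lambda)^{-1}(1+o(1))$. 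The factor $\psi_{n,2}(\lambda)/\sqrt[c]{\chi_2(\lambda)}$ is analytic and non-vanishing at infinity with limit $\psi_{n,2}(\infty)/\sqrt[c]{\chi_1(0)}$. Combining these gives
\[
\frac{\psi_n(\lambda)}{\sqrt[c]{\chi_p(\lambda)}} = -\frac{i}{\lambda - \sigma_{1,n}^n}\,(1+o(1)) \quad \text{on } \partial B_N,
\]
so by Cauchy's integral formula $\int_{\partial B_N}(\cdot)\, d\lambda \to -i \cdot 2\pi i = 2\pi$.

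For the small-circle integral, the change of variable $\mu = -(16\lambda)^{-1}$ maps $\partial B_{-N}$ (in $\lambda$) to $\partial B_N$ (in $\mu$). By the reciprocity \eqref{eq:kap6.3.55bis}, namely $\sqrt[c]{\chi_2(\lambda, q, p)} = \sqrt[c]{\chi_1(\mu, -q, p)}$, the ratio $\psi_{n,2}(\lambda)/\sqrt[c]{\chi_2(\lambda, v)}$ transforms into $\prod_{k\in\Z}(\sigma_{2,k}^n - \mu)/w_{1,k}(\mu, -q, p)$, to which Corollary \ref{cor:kap6.4.1}(ii) applied at the potential $(-q, p)$ yields $1 + o(1)$ on $\partial B_N$ in $\mu$. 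The other factor $\psi_{n,1}(\lambda)/\sqrt[c]{\chi_1(\lambda)}$ is analytic and non-vanishing at $\lambda = 0$, hence equals $\psi_{n,1}(0)/\sqrt[c]{\chi_1(0)} + O(N^{-1})$ on $\partial B_{-N}$. The integrand is therefore uniformly bounded on $\partial B_{-N}$, which has length $O(N^{-1})$, so $\int_{\partial B_{-N}}(\cdot)\, d\lambda = O(N^{-1}) \to 0$. Combined with the preceding paragraph, this yields $\int_{\Gamma_{1,n}} \psi_n/\sqrt[c]{\chi_p}\, d\lambda = 2\pi$. The main technical point will be justifying the application of Corollary \ref{cor:kap6.4.1}(ii) after the change of variable, which requires the sequence $(\sigma_{2,k}^n)$ to lie in $\ell^*$ and inside the appropriate isolating neighborhoods at the potential $(-q, p)$; this is automatic from the construction since $(-16\sigma_{2,k}^n)^{-1} \in U_{2,k}(q, p) = -U_{1,k}(-q, p)$ together with the asymptotics $\sigma_{2,k}^n = k\pi + \ell_k^2$.
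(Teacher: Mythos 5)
Your proof is correct and follows essentially the same route as the paper's: Cauchy's theorem to reduce to the two boundary integrals via \eqref{eq:kap8.2.70}, the factorization through $\sqrt[c]{\chi_1}$ and $\sqrt[c]{\chi_2}$ with the product asymptotics from Corollary \ref{cor:kap6.4.1} (equivalently \cite[Lemma~C.5]{nlsbook}) on the large circle, and the reciprocity-plus-boundedness argument for the small circle. The only cosmetic difference is that the paper carries out the change of variable $\mu=-(16\lambda)^{-1}$ explicitly and reads off $O(\mu^{-2})$ decay on $\partial B_N$, whereas you argue directly that the original integrand is bounded on $\partial B_{-N}$, which has circumference $O(1/N)$; these are the same estimate.
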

\begin{proof}
Let $v\in W$ and $n\geq 0$ be arbitrary. By \eqref{eq:kap8.2.70} it follows by Cauchy's theorem that for $N\geq1$ sufficiently large so that $U_{j,m}\cap \partial B_N =\emptyset,$ $U_{j,m} \cap \partial B_{-N} =\emptyset$ $\forall m\in\Z$, $k=1,2$, one has
\begin{equation}\label{eq:kap8.2.71}
\int_{\Gm[1,n]} \frac{\psi_n(\lm)}{\sqrt[c]{\chi_p(\lm)}} \dlm = I_N - II_N
\end{equation}
where
\[
I_N : = \int_{\partial B_N} \frac{\psi_n(\lm)}{\sqrt[c]{\chi_p(\lm)}} \dlm,
\qquad  II_N : = \int_{\partial B_{-N}} \frac{\psi_n(\lm)}{\sqrt[c]{\chi_p(\lm)}} \dlm.
\]
Let us first compute $I_N$. Using that $\sqrt[c]{\chi_1(0)} = \sqrt[c]{\chi_2(\infty)}$, one has $\sqrt[c]{\chi_p(\lm)} = \ii \sqrt[c]{\chi_1(\lm)} \sqrt[c]{\chi_2(\lm)}/ \sqrt[c]{\chi_2(\infty)}$. Letting $\sigma_{1,n}^n : = \tau_{1,n}$, the contour integral $I_N$ can be written as
\[
I_N = \frac{1}{\ii} \int_{\partial B_N} \frac{1}{\lm - \sigma_{1,n}^n} \big(\prod_{k\in\Z} \frac{\sigma^n_{1,k}-\lm}{ w_{1,k}(\lm)}\big) \frac{\psi_{n,2}(\lm) / \psi_{n,2}(\infty)}{\sqrt[c]{\chi_2(\lm)}/\sqrt[c]{\chi_2(\infty)} } \dlm.
\]
Note that as $N\to\infty$
\[
\sup_{\lm\in\partial B_N} |\psi_{n,2}(\lm) / \psi_{n,2}(\infty) - 1 | = O(\frac{1}{N}),
\qquad \sup_{\lm\in\partial B_N} |\sqrt[c]{\chi_2(\lm)}/ \sqrt[c]{\chi_2(\infty)} - 1 | = O(\frac{1}{N}).
\]
Furthermore, by \cite[Lemma C.5]{nlsbook},
\[
\sup_{\lm\in\partial B_N} |\prod_{k\in\Z} \frac{\sigma_{1,k}^n- \lm}{w_{1,k}(\lm)} - 1 | = o(1) \quad \text{as} \; N\to\infty.
\]
Hence we get by Cauchy's theorem
\begin{equation}\label{eq:kap8.2.72}
\lim_{N\to\infty} I_N = \lim_{N\to\infty} \frac{1}{\ii} \int_{\partial B_N} \frac{1}{\lm-\sigma_{1,n}^n} (1+o(1)) \dlm = 2\pi.
\end{equation}
Now let us turn towards $II_N$. By the change of variable of integration $\lm=-\frac{1}{16\mu}$,
\[
II_N = \int_{\partial B_{-N}}  \frac{\psi_n(\lm)}{\sqrt[c]{\chi_p(\lm)} }\dlm
= \int_{\partial B_{N}} \frac{\ii }{\pi_n} \frac{\psi_{n,1} (-\frac{1}{16\mu}) / \psi_{n,2}(\infty)}{\sqrt[c]{\chi_1(-\frac{1}{16\mu})} / \sqrt[c]{\chi_1(0)}} \prod_{k\in\Z} \frac{\sigma_{2,k}^n - \mu}{ w_{2,k}(-\frac{1}{16\mu})} \frac{\dmu}{16\mu^2}.
\]
By the definition of $w_{2,k}$, one has $w_{2,k}(-\frac{1}{16\mu}) = \sqrt[s]{(\lm_{2,k}^+-\mu)(\lm_{2,k}^--\mu)}$. Hence again 
by \cite[Lemma C.5]{nlsbook} one has
\[
\sup_{\mu\in\partial B_N} \big| \prod_{k\in\Z} \frac{\sigma_{2,k}^n - \mu}{w_{2,k}(-\frac{1}{16\mu})} -1\big| = o(1) \quad \text{as}\; N\to\infty.
\]
Furthermore, as $N\to\infty$,
\[
\sup_{\mu\in\partial B_N} |\psi_{n,1}(-\frac{1}{16\mu})- \psi_{n,1}(0)| = O(\frac{1}{N}),
\qquad \sup_{\mu\in\partial B_N} | \frac{\sqrt[c]{\chi_1(-\frac{1}{16\mu})}}{\sqrt[c]{\chi_1(0)}} -1 | = O(\frac{1}{N}).
\]
Altogether, one then concludes that
\begin{equation}\label{eq:kap8.2.73}
\lim_{N\to\infty} II_N = \lim_{N\to\infty} \int_{\partial B_N} \frac{\ii }{\pi_n} \frac{\psi_{n,1}(0)}{ \psi_{n,2}(\infty)} (1+o(1)) \frac{\dmu}{16\mu^2} =0.
\end{equation}
Combining \eqref{eq:kap8.2.72} - \eqref{eq:kap8.2.73} with \eqref{eq:kap8.2.71} yields the claimed identity.
\end{proof}
To finish the proof of Theorem \ref{thm:kap8.2.12} it remains to establish the claimed estimates of $\sigma_{1,m}^n,$ $\sigma_{2,m}^n$. Recall that for convenience we set $\sigma_{1,n}^n=\tau_{1,n}$.
\begin{lem}\label{lem:kap8.2.23}
On the complex neighborhood $W\subset \hat W$ of $\Hr$ of Lemma \ref{lem:kap8.2.21} one has for $j=1,2$
\begin{equation}\label{eq:kap8.2.74}
\sigma_{j,m}^n = \tau_{j,m} + \gm[j,m]^2 \ell_m^2
\end{equation}
uniformly in $n\geq 0$ and locally uniformly on $W$. It means that $|\sigma_{j,m}^n - \tau_{j,m} | \leq |\gm[j,m]|^2 a_{j,m}^n$ where $a_{j,m}^n\geq 0$ and $\sum_m (|a_{1,m}^n|^2 + |a_{2,m}^n|^2)\leq C$. The constant $C$ can be chosen uniformly in $n$ and locally uniformly on $W$.
\end{lem}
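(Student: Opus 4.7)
The strategy is to extract the estimate \eqref{eq:kap8.2.74} directly from the defining identities $F^n_{j,m}(v,s^n(v))=0$ by Taylor-expanding the integrand about the midpoint $\tau_{j,m}$ and exploiting the elementary contour integrals for $1/w_{j,m}$. Since the proof is essentially the same for $j=1,2$ (the $j=2$ case reduces to the $j=1$ case after the change of variable $\mu = -1/(16\lambda)$ as in \eqref{eq:kap8.2.29}), I focus on the case $j=1$, $m\neq n$. Using the identity \eqref{eq:kap8.2.24}, the vanishing of $F^n_{1,m}$ reads
\[
0 \,=\, \int_{\Gm[1,m]}\frac{\sigma_{1,m}^n-\lm}{w_{1,m}(\lm)}\,\zeta_{1,m}^n(\lm)\,\dlm.
\]
Write $\sigma_{1,m}^n-\lm=(\sigma_{1,m}^n-\tau_{1,m})+(\tau_{1,m}-\lm)$ and introduce the analytic function $\tilde\zeta^n_m$ on $U'_{1,m}$ determined by $\zeta_{1,m}^n(\lm)=\zeta_{1,m}^n(\tau_{1,m})+(\lm-\tau_{1,m})\tilde\zeta^n_m(\lm)$. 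Inserting these two expansions splits the integral into four pieces.

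The crucial elementary evaluations are, first, $\int_{\Gm[1,m]}d\lm/w_{1,m}(\lm) = -2\pi i$ (Lemma \ref{lem:standardRootAnalyticSizeinverseintegral}), and second, $\int_{\Gm[1,m]}(\lm-\tau_{1,m})/w_{1,m}(\lm)\,d\lm =0$ by symmetry (the integrand is odd about $\tau_{1,m}$ after deformation to $G_{1,m}$). Combined with Lemma \ref{lem:kap8.2.15}(i), which bounds $|\int f/w_{1,m}\,d\lm|\le 2\pi\max_{G_{1,m}}|f|$, the two remaining pieces (which each carry an extra factor $(\lm-\tau_{1,m})$) contribute at most $O(\gamma_{1,m}^2)$ thanks to the bound $|\lm-\tau_{1,m}|\le |\gamma_{1,m}|/2$ on $G_{1,m}$. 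One therefore arrives at
\[
(\sigma_{1,m}^n-\tau_{1,m})\bigl[-2\pi i\,\zeta_{1,m}^n(\tau_{1,m})+E_{1,m}^n\bigr]\,=\,\gamma_{1,m}^2\,H_{1,m}^n,
\]
with $|E_{1,m}^n|\le C|\gamma_{1,m}|^2\sup_{G_{1,m}}|\tilde\zeta^n_m|$ and $H_{1,m}^n$ a bounded convex combination of values of $\tilde\zeta^n_m$ on $G_{1,m}$. By \eqref{eq:kap8.2.25} (or, more precisely, by the form \eqref{eq:kap8.2.29.3}--\eqref{eq:kap8.2.29.3bis} used in the proof of the uniform bound in Lemma \ref{lem:kap8.2.14}), $\zeta_{1,m}^n(\tau_{1,m})$ is bounded away from zero for $|m|$ sufficiently large uniformly in $n$, so the bracket is invertible and
\[
\sigma_{1,m}^n-\tau_{1,m}\,=\,\gamma_{1,m}^2\,a_{1,m}^n,
\]
with $a_{1,m}^n$ bounded by $C\sup_{G_{1,m}}|\tilde\zeta^n_m|$.

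It remains to promote this pointwise bound to $\ell^2$-summability in $m$ uniform in $n$. By Cauchy's estimate applied on $U'_{1,m}\setminus G_{1,m}$, $\sup_{G_{1,m}}|\tilde\zeta^n_m|$ is bounded by a constant times $\sup_{U'_{1,m}}|\zeta_{1,m}^n|$, and the asymptotic analysis leading to \eqref{eq:kap8.2.29.4} together with Corollary \ref{cor:kap6.4.1}(i) already shows that $\sup_{\lm\in U_{1,m}}|\zeta_{1,m}(\lm)-1|=\ell^2_m$ locally uniformly on $\Omega$; combined with the factor $(n-m)/(\sigma_{1,n}-\lm)=1+O(|n-m|^{-1})$ appearing in \eqref{eq:kap8.2.29.3}, this converts into a bound $\sup_{G_{1,m}}|\tilde\zeta^n_m|\le b_m$ with $(b_m)_m\in\ell^2$ locally uniformly on $W$ and uniformly in $n\ge 0$. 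This yields the required estimate for all large $|m|$; the finitely many remaining indices $m$ are controlled by the continuity of $(v,s)\mapsto F^n$ together with the non-degeneracy of $Q^n$ established in Corollary \ref{cor:kap8.2.19} and Lemma \ref{lem:kap8.2.21bis}. The case $j=2$ is obtained verbatim after the substitution $\mu=-1/(16\lm)$ converts the integral $F_{2,m}^n$ into the same shape as $F_{1,m}^n$ with $w_{2,m}(-1/(16\mu))$ playing the role of $w_{1,m}$; alternatively, one may appeal to the symmetry $\sqrt[c]{\chi_p(-(16\lm)^{-1},-q,p)}=\sqrt[c]{\chi_p(\lm,q,p)}$ of Lemma \ref{lem:standardRoot}(iii) together with the definition of $\psi_{-n}$ in \eqref{def psi_-n}.

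The main obstacle is the uniformity in $n$: the Taylor-expansion argument is algebraically routine, but the subtle point is that both the leading coefficient $\zeta_{1,m}^n(\tau_{1,m})$ and the derivative quantity $\sup_{G_{1,m}}|\tilde\zeta^n_m|$ a priori depend on $n$ through the factor $\pi_n/(\sigma_{1,n}-\lm)$ (respectively $\pi_n/(\sigma_{2,n}^n-\mu)$) hidden in their definition. Showing that these dependences are actually harmless — i.e., that the relevant bounds can be taken uniform in $n\ge 0$ — requires the same $n$-uniform asymptotics already exploited in Lemma \ref{lem:kap8.2.14} and Lemma \ref{lem:kap8.2.21bis}, and this is where the argument must be carried out with care.
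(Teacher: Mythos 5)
Your proof follows essentially the same route as the paper's: rewrite $F^n_{1,m}(v,s^n(v))=0$ as a contour integral over $\Gm[1,m]$, Taylor-expand the smooth factor about $\tau_{1,m}$, exploit $\int_{\Gm[1,m]}\dlm/w_{1,m}=-2\pi i$ and $\int_{\Gm[1,m]}(\tau_{1,m}-\lm)/w_{1,m}\,\dlm=0$, apply Lemma \ref{lem:kap8.2.15}, and control the expansion coefficient via a Cauchy estimate on $U'_{1,m}$. Your extra decomposition $\sigma^n_{1,m}-\lm=(\sigma^n_{1,m}-\tau_{1,m})+(\tau_{1,m}-\lm)$ is a small genuine refinement: it pulls the unknown $(\sigma^n_{1,m}-\tau_{1,m})$ to the left-hand side in one shot, so you get the $\gamma^2$-factor directly, whereas the paper first obtains $|\sigma^n_{1,m}-\tau_{1,m}|=|\gm[1,m]|\ell_m^2$ and then re-substitutes into \eqref{eq:kap8.2.77} to upgrade to $|\gm[1,m]|^2$.

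There is, however, a genuine gap in the step inverting the bracket. You assert that $\zeta^n_{1,m}(\tau_{1,m})$ is bounded away from zero uniformly in $n$, citing \eqref{eq:kap8.2.25}. But \eqref{eq:kap8.2.25} says $(n-m)\,\zeta^n_{1,m}\big|_{U_{1,m}}=\tfrac{\ii}{\pi}+\ell_m^2$; hence $\zeta^n_{1,m}(\tau_{1,m})$ itself decays like $1/|n-m|$ and is \emph{not} bounded away from zero. Consequently the bracket $\bigl[-2\pi i\,\zeta^n_{1,m}(\tau_{1,m})+E^n_{1,m}\bigr]$ also decays like $1/|n-m|$, and the bound you state, $|\sigma^n_{1,m}-\tau_{1,m}|\le C|\gm[1,m]|^2\sup_{G_{1,m}}|\tilde\zeta^n_m|$, does not follow from what you wrote; the missing factor $|n-m|$ in the denominator and the factor $1/|n-m|$ hidden in $\sup|\tilde\zeta^n_m|$ in fact cancel, but this cancellation must be traced explicitly. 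The clean fix is exactly what the paper does: do not expand $\zeta^n_{1,m}$, but instead expand $\chi^n_{1,m}(\lm):=\frac{\pi(n-m)}{\sigma^n_{1,n}-\lm}\zeta^n_{1,m}(\lm)$, which satisfies $\sup_{U_{1,m}}|\chi^n_{1,m}-\ii|=\ell_m^2$ uniformly in $n$ and hence is bounded away from zero; equivalently, keep the factor $(n-m)$ on both sides of your equation before invoking the contour lemmas. Finally, a small slip: your stated bound $|E^n_{1,m}|\le C|\gm[1,m]|^2\sup_{G_{1,m}}|\tilde\zeta^n_m|$ should be $|E^n_{1,m}|\le C|\gm[1,m]|\sup_{G_{1,m}}|\tilde\zeta^n_m|$, since $E^n_{1,m}$ is the integral of $(\lm-\tau_{1,m})\tilde\zeta^n_m/w_{1,m}$, which by Lemma \ref{lem:kap8.2.15} carries only a single power of $|\gm[1,m]|$; this is harmless for the conclusion but should be corrected.
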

\begin{proof}
For $v\in W$ and  let $(U_{j,m})_{m\in \Z, j=1,2}$ be a set of isolating neighborhoods which work for a complex neighborhood of $v$ in $\hat W$ (cf. \eqref{U_1m} - \eqref{U_2m}).
Let $n\geq0$ be arbitrary. First we treat the case $j=1$. Multiplying the identity, $F_{1,m}^n(v,s^n(v)) =0$ by $\pi$ it can be written as
\begin{equation}\label{eq:kap8.2.75}
  0 = \int_{\Gm[1,m]} \frac{\sigma_{1,m}^n - \lm}{w_{1,m}(\lm)} \chi_{1,m}^n(\lm) \dlm
\end{equation}
where $\chi_{1,m}^n(\lm) = \frac{\pi(n-m)}{\sigma_{1,n}^n -\lm} \zeta_{1,m}^n(\lm)$ with $\sigma_{1,n}^n = \tau_{1,n}$ and (cf \eqref{eq:kap8.2.29.3} - \eqref{eq:kap8.2.29.3bis})
\[
 \zeta_{1,m}^n : = \ii \Big( \prod_{k\not=m} \frac{\sigma_{1,k}^n - \lm}{w_{1,k}(\lm)}\Big) \frac{\psi_{n,2}(\lm)/\psi_{n,2}(\infty)}{\sqrt[c]{\chi_2(\lm)}/\sqrt[c]{\chi_2(\infty)}}.
\]
Expanding $\chi_{1,m}^n$ at $\lm= \tau_{1,m}$ up to first order, one has
\[
 \chi_{1,m}^n(\lm) = \chi_{1,m}^n(\tau_{1,m})   + (\lm - \tau_{1,m})b_{1,m}^n(\lm).
\]
Since $\int_{\Gm[1,m]} \frac{\tau_{1,m}-\lm}{w_{1,m}(\lm)} \dlm = 0$ and $\int_{\Gm[1,m]}\frac{1}{w_{1,m}(\lm)}\dlm = -2\pi\ii$
one has
\[
\frac{1}{2\pi \ii} \int_{\Gm[1,m]} \frac{\sigma_{1,m}^n - \lm}{w_{1,m}(\lm)} \chi_{1,m}^n(\tau_{1,m}) \dlm = -(\sigma_{1,m}^n - \tau_{1,m}) \chi_{1,m}^n(\tau_{1,m})
\]
and hence \eqref{eq:kap8.2.75} becomes
\begin{equation}\label{eq:kap8.2.76}
\chi_{1,m}^n(\tau_{1,m}) (\sigma_{1,m}^n -\tau_{1,m}) = \frac{1}{2\pi\ii} \int_{\Gm[1,m]}\frac{\sigma_{1,m}^n-\lm}{w_{1,m}(\lm)} (\lm - \tau_{1,m}) b_{1,m}^n(\lm) \dlm.
\end{equation}
Note that by \eqref{eq:kap8.2.29.4}, $\sup_{\lm\in U_{1,m}} |\zeta_{1,m}(\lm) - 1| = \ell_m^2$ and 
by the definition of $U_{1,n}$, $\sigma_{1,n}^n = \tau_{1,n} \in U_{1,n}$, implying that for any $m \ne n$, $\sup_{\lm\in U_{1,m}} \big|\frac{\pi(n-m)}{\sigma_{1,n}^n-\lm} -1\big| = \ell_m^2$. 
Hence by the definition of $\chi_{1,m}^n(\lm)$
\[
\sup_{\lm\in U_{1,m}} \big| \chi_{1,m}^n(\lm)-\ii \big| = \ell_m^2, 
\qquad \inf_{m\not=n} \inf_{\lm \in U_{1,m}}|\chi_{1,m}^n(\lm)|>0 .
\]
By choosing $\Gm[1,m]$ so that $\inf_m \mathrm{dist}(\Gm[1,m],\partial U_{1,m})>0$ one then gets by Cauchy's estimate that
\[
\sum_{m\not=n} \sup_{\lm\in \Gm[1,m]} |b_{1,m}^n(\lm)|^2 \leq C
\]
for some $C>0$. It then follows from Lemma \ref{lem:kap8.2.15} applied to \eqref{eq:kap8.2.76} that for $m\not=n$
\[
|\sigma_{1,m}^n - \tau_{1,m}| = \max_{\lm\in G_{1,m}} |\sigma_{1,m}^n-\lm| |\gm[1,m]|\ell_m^2.
\]
 Since for any $\lm\in G_{1,m}$, $|\sigma_{1,m}^n-\lm| \leq |\sigma_{1,m}^n - \tau_{1,m}| + |\gm[1,m]/2|$ one gets
\begin{equation}\label{eq:kap8.2.77}
  |\sigma_{1,m}^n -\tau_{1,m}| = \big(|\sigma_{1,m}^n -\tau_{1,m}| + |\gm[1,m]/2| \big)|\gm[1,m]|\ell_m^2
\end{equation}
implying that $|\sigma_{1,m}^n - \tau_{1,m}| = |\gm[1,m]|\ell_m^2.$ 
Substituting the latter estimate into the right hand side of \eqref{eq:kap8.2.77} one obtains $|\sigma_{1,m}^n - \tau_{1,m}| = |\gm[1,m]|^2 a_{1,m}^n$ 
for some $a^n_{1,m}\geq0$ (and $a_{1,n}^n=0$) with $\sum_m |a_{1,m}^n|^2 \leq C.$ Going trough the arguments of the proof one verifies that $C>0$ can be chosen uniformly in $n\geq0$ and locally uniformly on $V_v\subset W$. The case $j=2$ is treated in a similar fashion.
\end{proof}

It remains to prove Corollary \ref{cor:kap8.2.12}.
\begin{proof}[Proof of Corollary \ref{cor:kap8.2.12}]
Let $v = (q,p)$ be in $W$. We restrict ourselves to
  consider the case $j=2$, $m\in\Z$,  since the case $j=1$, $m\in\Z$, is dealt with in a similar way. 
  By the change of variable $\mu = \frac{1}{16\lm}$ one has for any $n \ge 1,$
  \[
   \int_{\Gm[2,m](q,p)} \frac{\psi_{-n}(\lm,v)}{\sqrt[c]{\chi_p(\lm,v)}}\dlm
   = \int_{\Gm[2,m](q,p)} \frac{\psi_n(\frac{1}{16\lm},-q,p)}{\sqrt[c]{\chi_p(\lm,q,p)}}\frac{\dlm}{16\lm^2}
   = \int_{\Gm[1,-m](-q,p)} \frac{\psi_n(\mu,-q,p)}{\sqrt[c]{\chi_p(\frac{1}{16\mu},q,p)}}(-1)\dmu.
  \]
  Since $\sqrt[c]{\chi_p(\frac{1}{16\mu},q,p)} = -\sqrt[c]{\chi_p(\mu,-q,p)}$ (cf. \eqref{eq:kap6.44}) it then follows that
   \[
   \int_{\Gm[2,-m](q,p)} \frac{\psi_{-n}(\lm,v)}{\sqrt[c]{\chi_p(\lm,v)}} \dlm
   = \int_{\Gm[1,-m](-q,p)} \frac{\psi_n(\mu,-q,p)}{\sqrt[c]{\chi_p(\mu,-q,p)}}\dmu = 2\pi \dl_{-n,m}.
  \]
  All other claims follow from Theorem \ref{thm:kap8.2.12}.
  \end{proof}


\appendix
\section{Appendix A: Liouville's theorem on $\C^*$}\label{Liouville}
\setcounter{thm}{0}
\setcounter{equation}{0}
For the convenience of the reader, we state and prove the standard version of Liouville's theorem
on the punctured complex plane $\C^*$, used in several proofs of this paper.

Recall that $B_n$, $n\in\Z$, denote the discs given by 
$B_0 = \set{\lm}{|\lm| < \pi/2}$ and (cf. \eqref{definition Bn})
\[
 B_n = \set{\lm}{|\lm| < n\pi+\pi/2},\quad B_{-n} = \set{\lm}{|\lm| \leq \frac{1}{16(n\pi +\pi/2)}} \, , \qquad \forall n \geq 1\, .
\]

\begin{lem}\label{ap:lem:liouvilleCn}
Let $f$ be an analytic function on $\C^*$. If $f$ is uniformly bounded on the circles $\partial B_n$ and $\partial B_{-n}$ for any $n \in \N$, then $f$ is constant.
\end{lem}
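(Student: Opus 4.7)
The plan is to expand $f$ in its Laurent series on $\C^*$ and kill all non-constant coefficients by estimating the Cauchy integrals along the two families of circles on which $f$ is uniformly bounded.

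First I would record the Laurent expansion $f(\lambda)=\sum_{k\in\Z} a_k\lambda^k$, valid on all of $\C^*$ since $f$ is analytic there. Writing $r_n$ for the radius of $\partial B_n$ and $r_{-n}$ for the radius of $\partial B_{-n}$, the standard Cauchy formula gives
\[
a_k = \frac{1}{2\pi \ii}\int_{\partial B_n} \frac{f(\lambda)}{\lambda^{k+1}}\,\dlm = \frac{1}{2\pi\ii}\int_{\partial B_{-n}}\frac{f(\lambda)}{\lambda^{k+1}}\,\dlm
\]
for every $n\in\N$ and every $k\in\Z$, since any two such circles are homotopic in $\C^*$.

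Next, let $M>0$ be a uniform bound for $|f|$ on $\bigcup_{n\in\N}(\partial B_n\cup\partial B_{-n})$. For $k\geq 1$, estimating the first integral on $\partial B_n$ (whose radius $r_n=n\pi+\pi/2$ tends to $\infty$) yields
\[
|a_k| \leq \frac{1}{2\pi}\cdot\frac{M}{r_n^{k+1}}\cdot 2\pi r_n = \frac{M}{r_n^k}\xrightarrow[n\to\infty]{}0,
\]
so $a_k=0$ for all $k\geq 1$. Symmetrically, for $k\leq -1$, estimating the second integral on $\partial B_{-n}$ (whose radius $r_{-n}=(16(n\pi+\pi/2))^{-1}$ tends to $0$) yields
\[
|a_k|\leq \frac{1}{2\pi}\cdot\frac{M}{r_{-n}^{k+1}}\cdot 2\pi r_{-n} = M\, r_{-n}^{-k}\xrightarrow[n\to\infty]{}0,
\]
so $a_k=0$ for all $k\leq -1$ as well.

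Hence only $a_0$ survives and $f\equiv a_0$ is constant. There is no genuine obstacle here: the only thing to be mildly careful about is that the two families of circles allow us to separately control the positive and negative Laurent tails, which is exactly why the hypothesis has been formulated using both $\partial B_n$ and $\partial B_{-n}$.
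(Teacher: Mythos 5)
Your proof is correct and takes essentially the same approach as the paper: Laurent-expand $f$ on $\C^*$ and kill the positive and negative coefficients by Cauchy estimates on the growing circles $\partial B_n$ and the shrinking circles $\partial B_{-n}$ respectively. The only cosmetic difference is that the paper handles the negative coefficients by applying the same large-circle Cauchy estimate to $g(\lambda)=f(1/(16\lambda))$, rather than estimating directly on $\partial B_{-n}$ as you do.
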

\begin{proof}
First note that if $f$ is bounded by $M>0$ on all these circles then so is $g$, defined by $g(\lm) = f\pfrac{1}{16\,\lm}$. Furthermore if the Laurent series of $f$ around $0$ is given by
\[
 f(\lm) = \sum_{k\in\Z} a_k \lm^k
\]
then the Laurent series of $g$ is given by
\[
 g(\lm) = \sum_{k\in\Z} a_{-k}(16  \lm)^k.
\]
Hence by Cauchy's inequality $|a_k| \leq r^{-k} \sup_{|z|=r} |f(z)|$ for any $r>0$. Choosing $r=n\pi+\pi/2$ we can bound $|f(z)|$ by $M$ and hence $a_k=0$ for any $k\geq 1$. Using $g$, the same argument shows that $a_{-k}=0$ for any $k\geq 1$. Hence $f \equiv a_0$ is constant.
\end{proof}

\section{Appendix B: Interpolation}\label{interpolation}
\setcounter{thm}{0}
\setcounter{equation}{0}
In this appendix we present an interpolation lemma which is used for the construction of the  functions $\psi_n$ in Theorem \ref{thm:kap8.2.12}. 

Recall that for any $N\geq 1$, we denote by $A_N =B_N\setminus B_{-N}$ the annulus, centered at $0$, 
with boundary $\partial A_N = \partial B_N \cup \partial B_{-N}$ where $B_N = \set{\lm}{|\lm| < n\pi +\pi/2}$  
and $B_{-N} = \set{\lm}{|\lm|\leq (16(N\pi+\pi/2))^{-1}}$ (cf. \eqref{definition Bn}). 
Assume that $(\sigma_{1,n})_{n\in\Z}$, $(\sigma_{2,n})_{n\in\Z}$ 
are sequences in $\C^*$ which together with $\kappa_{2,n}:= (-16\sigma_{2,n})^{-1}$
have the following properties,
\begin{equation}\label{eq:D1}
\sigma_{1,n} \not=\sigma_{1,m} \;(n\not=m), \quad \kappa_{2,n} \not=\kappa_{2,m}\;(m\not=n), \quad \sigma_{1,n}\not=\kappa_{2,m} \, , \qquad \forall n,m \in \Z \, ,
\end{equation}
and
\begin{equation}\label{eq:D2}
\sigma_{1,n},\sigma_{2,n} = n\pi + \ell_n^2 \quad\text{as}\; |n| \to \infty.
\end{equation}
Let
\begin{equation}\label{eq:D3}
f(\lm) := f_1(\lm) f_2(\lm)
\end{equation}
where
\begin{equation}\label{eq:D4}
f_1(\lm) : = \prod_{n\in\Z} \frac{\sigma_{1,n}-\lm}{\pi_n} ,\qquad f_2(\lm) : = \prod_{n\in\Z} \frac{\sigma_{2,n} + \frac{1}{16\lm}}{\pi_n}.
\end{equation}
According to \cite[Lemma C.1]{nlsbook}, the infinite products $f_1$ and $f_2$ define analytic functions on $\C^*$ with roots $\sigma_{1,n}$, $n\in\Z$, and respectively, $\kappa_{2,n}$, $n\in\Z$. 
Note that $f_1(\lm)$ is analytic at $0$ with $ f_1(0)\not=0$ and $f_2(\lm)$ is analytic at $\infty$ with $f_2(\infty)\not=0$,
\[
f_1(0) = \prod_{n\in\Z} \frac{\sigma_{1,n}}{\pi_n},\qquad f_2(\infty) = \prod_{n\in\Z} \frac{\sigma_{2,n}}{\pi_n}.
\]
Furthermore, by \cite[Lemma C.4]{nlsbook},
\begin{equation} \label{eq:D5}
\sup_{\lm\in\partial B_N} \big| \frac{f_1(\lm)}{\sin(\lm)} + 1\big| = o(1) \quad \text{as} \; N\to\infty
\end{equation}
and
\begin{equation} \label{eq:D6}
\sup_{\lm\in\partial B_{-N}} \big| \frac{f_2(\lm)}{\sin(-\frac{1}{16\lm})} +1\big| =
\sup_{\mu\in\partial B_{N}} \big| \frac{f_2(-\frac{1}{16\mu})}{\sin(\mu)} + 1\big| =
o(1) \quad \text{as} \; N\to\infty \, .
\end{equation}
\begin{lem}\label{ap:lem:interpolation}
Assume that $\ph:\C^*\to \C$ is analytic and that $(\sigma_{1,n})_{n\in\Z}$, $(\sigma_{2,n})_{n\in\Z}$ are sequences in $\C^*$ so that  \eqref{eq:D1}-\eqref{eq:D2} are satisfied. 
If as $N\to\infty$
\begin{equation}\label{eq:D8}
 \sup_{\lm\in\partial B_N} |\frac{\ph(\lm)}{\sin(\lm)}| = o(1), \qquad 
 \sup_{\lm\in\partial B_{-N}} |\frac{\ph(\lm) }{\sin(-\frac{1}{16\lm})} |\;\; \Big(\,=\sup_{\mu\in\partial B_{N}} |\frac{\ph(-\frac{1}{16\mu})}{\sin(\mu)} | \Big)  = o(N) \, , 
\end{equation}
then for any $z\in\C^*$
\[
\ph(z) = \sum_{n\in\Z} \big( \frac{\ph(\sigma_{1,n})}{\dot f(\sigma_{1,n})} \frac{f(z)}{z-\sigma_{1,n}} + \frac{\ph(\kappa_{2,n})}{\dot f(\kappa_{2,n})} \frac{f(z)}{z - \kappa_{2,n}}\big)
\]
where $f$ is the function defined in \eqref{eq:D3}, $\dot f(\lm) =\partial_\lm f(\lm)$,  and $\kappa_{2,n} = -(16\sigma_{2,n})^{-1}$ for any $n\in\Z$. In particular, the latter sum converges.
\end{lem}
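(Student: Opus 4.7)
The plan is to apply the residue theorem to the meromorphic function
$h(\lm) := \ph(\lm)/\bigl(f(\lm)(\lm - z)\bigr)$ on the annulus $A_N = B_N \setminus B_{-N}$ and then send $N \to \infty$. First I would fix $z \in \C^*$ distinct from all $\sigma_{1,n}$ and $\kappa_{2,n}$; the remaining values of $z$ then follow by analytic continuation, since each summand $\ph(\sigma_{1,n})f(z)/\bigl(\dot f(\sigma_{1,n})(z - \sigma_{1,n})\bigr)$ is analytic at $\sigma_{1,n}$ (the simple zero of $f(z)$ at $z = \sigma_{1,n}$ cancels the factor $(z - \sigma_{1,n})^{-1}$), and similarly at $\kappa_{2,n}$. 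By \eqref{eq:D1} all poles of $h$ inside $A_N$ are simple: $z$ itself with residue $\ph(z)/f(z)$, and those $\sigma_{1,n}, \kappa_{2,n}$ lying in $A_N$ with residues $\ph(\sigma_{1,n})/\bigl(\dot f(\sigma_{1,n})(\sigma_{1,n}-z)\bigr)$ and $\ph(\kappa_{2,n})/\bigl(\dot f(\kappa_{2,n})(\kappa_{2,n}-z)\bigr)$, respectively. Hence for $N$ large, orienting $\partial B_N$ counterclockwise and $\partial B_{-N}$ clockwise,
\begin{equation*}
\frac{1}{2\pi \ii}\int_{\partial A_N} h(\lm)\,\dlm
= \frac{\ph(z)}{f(z)} + \sum_{\sigma_{1,n}\in A_N}\frac{\ph(\sigma_{1,n})}{\dot f(\sigma_{1,n})(\sigma_{1,n}-z)} + \sum_{\kappa_{2,n}\in A_N}\frac{\ph(\kappa_{2,n})}{\dot f(\kappa_{2,n})(\kappa_{2,n}-z)}.
\end{equation*}

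The heart of the proof is then to show the left hand side tends to $0$. On $\partial B_N$, combining \eqref{eq:D5} with the fact that $f_2$ is analytic at $\infty$ with $f_2(\infty)\neq 0$, one obtains $|f(\lm)| \asymp |f_2(\infty)|\,|\sin(\lm)|$ uniformly; since the arc length of $\partial B_N$ is $O(N)$ and $|\lm - z|^{-1} = O(1/N)$ for fixed $z$, the $\partial B_N$-contribution is bounded by a constant times $\sup_{\lm \in \partial B_N}|\ph(\lm)/\sin(\lm)|$, which is $o(1)$ by the first half of \eqref{eq:D8}. On $\partial B_{-N}$, the analogous reasoning using \eqref{eq:D6} and $f_1(0)\neq 0$ gives $|f(\lm)| \asymp |f_1(0)|\,|\sin(-1/(16\lm))|$; now $|\lm - z|^{-1}$ is bounded (as $\lm \to 0$ while $z$ stays fixed in $\C^*$), the arc length is $O(1/N)$, and $\sup_{\lm \in \partial B_{-N}}|\ph(\lm)/\sin(-1/(16\lm))| = o(N)$ by the second half of \eqref{eq:D8}, so this contribution is $O(N^{-1})\cdot o(N) = o(1)$. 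Multiplying the resulting limit identity by $-f(z)$ and rearranging the two sums yields the claimed interpolation formula; the convergence of each series is obtained as a by-product.

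The main (though mild) technical point is controlling $|f(\lm)|$ from below on the two families of circles. The standard inputs are the uniform lower bounds $|\sin(\lm)| \geq c > 0$ on $\partial B_N$ and $|\sin(-1/(16\lm))| \geq c > 0$ on $\partial B_{-N}$ --- the radii $N\pi + \pi/2$ and $(16(N\pi+\pi/2))^{-1}$ are chosen precisely so that these circles avoid the zeros $k\pi$ of $\sin$ by a fixed distance --- together with the ratios $f_1(\lm)/\sin(\lm)$ and $f_2(\lm)/\sin(-1/(16\lm))$ being close to $-1$ on the respective circles, as already recorded in \eqref{eq:D5} and \eqref{eq:D6}. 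Once these lower bounds are in place, the two contour integrals vanish in the limit and the proof reduces to the residue computation above.
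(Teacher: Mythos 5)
Your proposal is correct and follows essentially the same route as the paper: both apply the residue theorem to $\ph(\lm)/(f(\lm)(\lm-z))$ on the annulus $A_N$, identify the simple poles at $z$, $\sigma_{1,n}$, $\kappa_{2,n}$, and then kill the two boundary contributions using \eqref{eq:D5}--\eqref{eq:D6} together with \eqref{eq:D8} (the paper handles the $\partial B_{-N}$ piece by the change of variable $\mu=-1/(16\lm)$ rather than estimating in $\lm$ directly, but the bookkeeping of arc lengths and $o(\cdot)$ rates is the same). Your final remark about first excluding $z\in\{\sigma_{1,n},\kappa_{2,n}\}$ and then extending through the removable singularities is exactly how the paper closes the argument as well.
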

\begin{proof}
Consider for any $z\in\C^*\setminus\set{\sigma_{1,n},\kappa_{2,n}}{n\in\Z}$ the function
\[
g(\lm) : = \frac{\ph(\lm)}{(\lm-z)f(\lm)}
\]
where $f(\lm)$ is given by \eqref{eq:D3}. Then $g(\lm)$ is a meromorphic function on $\C^*$ with poles in $\sigma_{1,n}$ and $\kappa_{2,n}$,  $n\in\Z$, as well as $z$. Chose $N\geq 1$ so large that,
\[
z\in A_N,\quad
\sigma_{1,n}, \kappa_{2,n}\in A_N \;\forall |n|\leq N,\quad \sigma_{1,n},\kappa_{2,n} \in \C^*\setminus A_N \; \forall |n|\geq N+1.
\]
By the residue theorem it then follows that
\begin{align*}
\frac{1}{2\pi\ii } \int_{\partial A_N} g(\lm)\dlm =& \Res_z g + \sum_{|n|\leq N} \Res_{\sigma_{1,n}} g + \sum_{|n|\leq N} \Res_{\kappa_{2,n}} g
\\=& \frac{\ph(z)}{f(z)} + \sum_{|n|\leq N} \frac{\ph(\sigma_{1,n})}{\dot f(\sigma_{1,n})} \frac{1}{\sigma_{1,n}-z}  + \sum_{|n|\leq N} \frac{\ph(\kappa_{2,n})}{\dot f (\kappa_{2,n})} \frac{1}{\kappa_{2,n}-z} \, .
\end{align*}
On the other hand, write $\frac{1}{2\pi \ii } \int_{\partial A_N} g(\lm)\dlm = \frac{1}{2\pi\ii} I_N- \frac{1}{2\pi\ii} II_N$ where
\[
 I_N = \int_{\partial B_N} \frac{\ph(\lm)}{f(\lm)} \frac{1}{\lm-z} \dlm, \qquad II_N = \int_{\partial B_{-N}} \frac{\ph(\lm)}{f(\lm)} \frac{1}{\lm-z} \dlm.
\]
It follows from \eqref{eq:D5} and \eqref{eq:D8} that
\[
\sup_{\lm\in \partial B_N} \Big| \frac{\ph(\lm)}{f(\lm)}\Big| = o(1) \quad \text{as} \; N\to\infty \, ,
\]
implying that $\lim_{N\to\infty} I_N = 0.$ 
Similarly, it follows from \eqref{eq:D6}--\eqref{eq:D8} that
\[
\sup_{\lm\in\partial B_{-N}} \Big|\frac{\ph(\lm)}{f(\lm)} \Big| = \sup_{\mu\in\partial B_{N}} \Big| \frac{\ph(-\frac{1}{16\mu})}{f(-\frac{1}{16\mu})} \Big| =o(N) \quad \text{as} \; N\to\infty
\]
yielding that
\[
\int_{\partial B_{-N}} \frac{\ph(\lm)}{f(\lm)} \frac{1}{\lm-z} \dlm = \int_{\partial B_N} \frac{\ph(-\frac{1}{16\mu})}{f(-\frac{1}{16\mu})} \frac{1}{-\frac{1}{16\mu}-z} \frac{d\mu}{16\mu^2} \to 0 
\qquad \text{as}\; N\to \infty.
\]
Combining the results obtained one gets
\[
0 = \frac{\ph(z)}{f(z)} + \sum_{n\in\Z} \big( \frac{\ph(\sigma_{1,n})}{\dot f(\sigma_{1,n})} \frac{1}{\sigma_{1,n}-z} + \frac{\ph(\kappa_{2,n})}{\dot f(\kappa_{2,n})} \frac{1}{\kappa_{2,n}-z}\big)
\]
and hence
\begin{equation}\label{eq:kapD.8}
\ph(z) = \sum_{n\in\Z} \big( \frac{\ph(\sigma_{1,n})}{\dot f(\sigma_{1,n})} \frac{f(z)}{z-\sigma_{1,n}} + \frac{\ph(\kappa_{2,n})}{\dot f(\kappa_{2,n})} \frac{f(z)}{z - \kappa_{2,n}}\big) .
\end{equation}
Since $\frac{f(z)}{z-\sigma_{1,n}}$ has a removable singularity at $z=\sigma_{1,n}$ and $\frac{f(z)}{z-\kappa_{2,n}}$ has one at $z=\kappa_{2,n}$, one infers 
that the identity \eqref{eq:kapD.8} actually holds for any $z\in\C^*$.
\end{proof}


  \bibliographystyle{acm}

 \end{document}